\documentclass{article}
\usepackage{arxiv}
\usepackage[utf8]{inputenc}
\usepackage[ngerman,english]{babel}
\usepackage{csquotes}
\usepackage{amssymb,amsthm,amsmath,amsfonts}
\usepackage{bm}
\usepackage{bbm}
\usepackage{esint}
\usepackage{setspace}
\usepackage{mathtools}
\usepackage{booktabs, makecell, tabularx}
\usepackage{multirow}
\usepackage{placeins}
\usepackage{pgfplots}
\usepackage{pgfplotstable}
\pgfplotsset{compat=newest}
\usepgfplotslibrary{colormaps}
\usepackage{subfig}
\usepackage{enumitem}
\usepackage{tikz}
\usepackage{url}
\usetikzlibrary{
	decorations.pathreplacing,
    decorations.pathmorphing,
	automata,
	3d,
	fit, 
	graphs,
	plotmarks, 
	external,
	positioning,
	arrows,
	calc,
	spy,
	chains,
	shapes,
    fadings,
	backgrounds,
	quotes
	}
\usepackage[
    style=numeric-comp,
    sorting=nyt,
    backend=biber,
    defernumbers,
    doi=true,
    isbn=false,
    url=false,
    eprint=false,
    giveninits=true,
    maxnames=6, 
    block=none]{biblatex}
    
\usepgfplotslibrary{fillbetween}
\usepackage{float}
\usepackage{graphicx}
\usepackage{hyperref}
\usepackage{algorithm}
\usepackage{algpseudocode}  
\usepackage{newtxtext,newtxmath}
\usepackage{textcomp}
\newcommand{\E}{\mathbb{E}}
\newcommand{\Var}{\mathbb{V}}

\newcommand{\hdiv}{L^2_{\mathrm{div}}(\dom;U)}
\newcommand{\Div}{\mathrm{div}}

\newcommand{\dom}{\Omega}
\newcommand{\vis}{\varepsilon}

\newcommand{\Grad}{\bm{\nabla}}
\newcommand{\Laplace}{\bm{\Delta}}

\newcommand{\Hn}{L^2_{\Div}}

\newcommand{\eps}{\epsilon}
\allowdisplaybreaks
\def\addlegendimage{\csname pgfplots@addlegendimage\endcsname}
\pgfplotsset{select coords between index/.style 2 args={
    x filter/.code={
        \ifnum\coordindex<#1\fi
        \ifnum\coordindex>#2\fi
    }
}}

\newtheorem{theorem}{\bf Theorem}[section]

\newtheorem{corollary}{\bf Corollary}[section]
\newtheorem{proposition}{\bf Proposition}[section]
\newtheorem{hypothesis}{\bf Hypothesis}[section]

\newtheorem{lemma}{\bf Lemma}[section]
\newtheorem{assumption}{\bf Assumption}[section]
\theoremstyle{definition}
\newtheorem{definition}{\bf Definition}[section]

\theoremstyle{remark}
\newtheorem{remark}{\bf Remark}[section]

\title{Computing statistical Euler limits of the Navier--Stokes equations in three dimensions}

\author{
    Johannes L.\ Grafen\thanks{Equal contribution.}\\[.5em]
    \footnotesize LBRG Mathematical Modeling and Numerics Lab\\
    Lattice Boltzmann Research Group\\
    Institute of Mechanical Process Engineering and Mechanics\\
    Karlsruhe Institute of Technology (KIT)\\
    76131 Karlsruhe, Germany\\
    \And
    Tobias Rohner\\[.5em]
    \footnotesize Seminar for Applied Mathematics \\
    ETH Zurich\\
    8092 Z\"urich, Switzerland\\
    \And
    \href{https://orcid.org/0000-0001-8555-4245}{\includegraphics[scale=0.06]{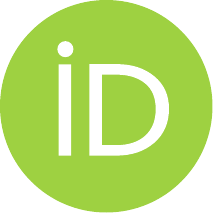}\hspace{1mm}Stephan Simonis}$^{*,}$\thanks{Corresponding author, email: \href{ssimonis@ethz.ch}{ssimonis@ethz.ch}, \href{stephan.simonis@kit.edu}{stephan.simonis@kit.edu}}\\[.5em]
	\footnotesize Seminar for Applied Mathematics \\
    ETH Zurich\\
    8092 Z\"urich, Switzerland \\[.5em]
    LBRG Mathematical Modeling and Numerics Lab\\
    Lattice Boltzmann Research Group\\
    Institute for Applied and Numerical Mathematics\\
    Karlsruhe Institute of Technology (KIT)\\
    76131 Karlsruhe, Germany\\
}

\renewcommand{\headeright}{J.\L.\ Grafen, T.\ Rohner, S.\ Simonis}
\renewcommand{\shorttitle}{Computing statistical Euler limits of the Navier--Stokes equations in 3D}

\hypersetup{
pdftitle={ComputingNSEtoEuler3D},
pdfsubject={},
pdfauthor={Stephan Simonis}
}

\begin{document}

\maketitle

\begin{abstract}
We develop a Monte Carlo lattice Boltzmann method to compute statistical solutions to the three-dimensional incompressible Navier--Stokes and Euler equations. 
Entropic space-time adaptive relaxation of the higher order kinetic moments yields stable numerical solutions with decreasing viscosity. 
We provide a convergence analysis that is conditional on four explicitly stated assumptions regarding the discrete dynamics. 
Under diffusive scaling, the laws of the discrete ensemble converge along a subsequence to a limit satisfying the Foias--Temam Liouville formulation of the Navier--Stokes equations. 
Consequently, provided the structure function scaling holds uniformly, the vanishing viscosity limit of these measures satisfies the multi-point statistical Euler hierarchy of Fjordholm, Mishra, and Weber. 
The limit measures inherit the known weak-strong uniqueness principle on the interval of existence of a strong Euler solution. 
Under explicit scaling assumptions, a Kuznetsov-type argument yields a fractional 1-Wasserstein convergence rate. 
We present three-dimensional computations of time-dependent statistical solutions along the inviscid limit of the incompressible Navier--Stokes equations together with convergence measurements in the Wasserstein metric. 
Numerical experiments on a randomized Taylor--Green vortex with 24-dimensional initial uncertainty recover Kolmogorov's K41 scaling for energy spectra and structure functions, exhibit the failure of pathwise strong convergence, and yield Wasserstein convergence rates of about $0.5$ at the onset of turbulence.
Finally, error measurements with respect to spectral hyperviscosity computations indicate that the computed limit measure is independent of the numerical regularization. 
\end{abstract}

\keywords{
statistical solutions \and 
Navier--Stokes equations \and
Euler equations \and
weak-strong uniqueness 
}

\subjclass{65M12, 76D06, 76M28, 76F65, 65C05, 60H35}

\clearpage
\newpage
\renewcommand{\baselinestretch}{0.75}\normalsize
\tableofcontents
\renewcommand{\baselinestretch}{1.0}\normalsize

\clearpage
\newpage
   
\section{Introduction}

In the force-free, viscous case (viscosity \(\vis >0\)), incompressible Newtonian fluid flows can be described by the incompressible Navier--Stokes equations (NSE)
\begin{align}\label{eq:incNSE}
    \begin{cases}
        \mathrm{div}_{\bm{x}}\left(\bm{u}\right) = 0 
        	\quad & \text{in } \Omega_{T}, \\
        \partial_{t} \bm{u} + \mathrm{div}_{\bm{x}} \left( \bm{u} \otimes \bm{u}\right) - \vis \Laplace_{\bm{x}} \bm{u} + \Grad_{\bm{x}} p = \bm{0} 
        	\quad & \text{in } \Omega_{T}, \\ 
        \bm{u}\vert_{t = 0} = \bm{u}_{0} 
        	\quad & \text{in } \Omega, \\
    \end{cases}
\end{align}
where \(\mathrm{div}_{\bm{x}} ( \bm{u} ) = \Grad_{\bm{x}} \cdot \bm{u}\) refers to the divergence operator, \(\bm{u}\) is the velocity field, \(p\) is the pressure, and \(\Omega_{T} \coloneqq \Omega \times I \subseteq \mathbb{R}^{3} \times \mathbb{R}_{\geq 0}\).  
In the inviscid limit \(1/R\!e \sim \vis \searrow 0\), \eqref{eq:incNSE} formally becomes the incompressible Euler equations (EE), where \(R\!e = U_{\mathrm{c}} l_{\mathrm{c}}/\vis\) is the Reynolds number defined by the characteristic flow velocity \(U_{\mathrm{c}}\) and the characteristic domain length \(l_{\mathrm{c}}\).  
Below, we refer to \eqref{eq:incNSE} in both cases. 
The existence of weak solutions to \eqref{eq:incNSE} for \(d\in \{2,3\}\) has been pioneered by Leray~\cite{leray1934mouvement} and Hopf~\cite{hopf1950anfangswertaufgabe}. 
Although uniqueness has been proven for \(d=2\), the uniqueness of such solutions to the incompressible NSE for \(d=3\) is still open~\cite{fefferman2000existence}. 
Many techniques and results to approach well-posedness questions for NSE and EE can be found in the literature: exponential attractors~\cite{eden1993exponential,eden1994exponential,eden1995exponential}, nonuniqueness for finite kinetic energy weak solutions~\cite{buckmaster2019nonuniqueness}, numerical investigations of blowup solutions~\cite{hou2024nearly,wang2023discovering}, and analytical and numerical local-in-space estimates near initial time~\cite{jia2014local,guillod2023numerical}.

Recent efforts to address the possible non-uniqueness in three dimensions are based on the concept of statistical solutions of the incompressible NSE~\cite{foias1976solutions,foias2001navier} and of the incompressible EE (for \(\vis \searrow 0 \) in \eqref{eq:incNSE})~\cite{fjordholm2021vanishing,chae1991vanishing,glimm2023smooth}.  
In this setting, Fjordholm~\textit{et al.}~\cite{fjordholm2021vanishing} have proved that a scaling assumption on statistical turbulence objects (e.g., energy spectra and structure functions) is sufficient for the statistical solutions of the incompressible NSE to converge for \(\vis \searrow 0\) to a statistical solution of the EE~\cite[Theorem 4.8]{fjordholm2021vanishing}. 
Computing time-dependent statistical solutions in three spatial dimensions is expensive, since a large number of samples has to be evolved in time at high resolution. 
Single level Monte Carlo (MC) methods evolve a large number of random samples in time with a deterministic solver, so that the cost grows with the number of samples and with the resolution. 
Several approaches have been proposed to meet this challenge. 
Combined with multi level MC methods, a finite difference (FD) discretization of the vorticity form of the incompressible NSE for \(d=2\) with periodic boundaries has been proposed in~\cite{leonardi2016numerical}. 
In~\cite{bansal2021numerical}, statistical solutions of the incompressible NSE for \(d=2\) with boundaries are computed using single level MC and an \(H(\mathrm{div})\)-based finite element method. 
Statistical solutions to the incompressible EE for \(d=2\) with periodic boundaries have been approximated successfully in~\cite{lanthaler2021statistical} by combining MC with a deterministic spectral hyperviscosity method. 
The implementation of these methods for periodic incompressible flows has been extended to \(d=3\) by Rohner and Mishra~\cite{rohner2024efficient}. 
To the knowledge of the authors, at this time, statistical solutions to the EE in three dimensions have been computed only by Rohner and Mishra~\cite{rohner2024efficient} based on an MC spectral hyperviscosity method proposed in~\cite{lanthaler2021statistical}. 

The computational demand of statistical solutions calls for highly parallelizable numerical schemes. 
The lattice Boltzmann method (LBM) combines a local collision step with a linear streaming step, which makes it well suited for parallel hardware, and its entropic variants provide stability at high Reynolds numbers. 
For these reasons, LBMs have become an established alternative to conventional approximation tools for the NSE, where scalability to HPC is crucial~\cite{lallemand2021lattice}. 
Many extensions for unsteady computer simulations of turbulent fluid flow have been established, such as assistive numerical diffusion~\cite{simonis2021linear}, large eddy simulation (LES) based on filtering in space~\cite{siodlaczek2021numerical,jahanshaloo2013review,malaspinas2012consistent}, or in time~\cite{simonis2022temporal}. 
Moreover, Simonis \textit{et al.}~\cite{simonis2024spectral} have numerically observed up to second order convergence of Karlin--Chikatamarla--B\"{o}sch (KBC) LBMs toward the incompressible NSE under diffusive scaling. 
On an industrial scale, the application of LBM to the LES methodology provides significant speedup over traditional methods (e.g., finite volume methods)~\cite{kajzer2014large,haussmann2020evaluation,lohner2019towards}. 
Moreover, the kinetic derivation of LBM allows for thermodynamically consistent extensions toward multi-physics modeling (e.g., see~\cite{krause2020openlb,aidun2010lattice}) and enables simulating compressible flows with strong discontinuities~\cite{dorschner2018particles,coreixas2020compressible,wilde2021high,kallikounis2022particles}. 
Combinations of LBM with intrusive and nonintrusive uncertainty quantification (UQ) methods have rarely been studied so far. 
To the knowledge of the authors, the available results are the following.  
An LBM is proposed in~\cite{zhao2021lattice} to approximate Galerkin-projected stochastic convection--diffusion equations. 
The scheme offers rigorous weighted \(L^{2}\)-stability and applicability to stochastic problems with complex boundaries. 
In~\cite{zhao2019stochastic}, a probabilistic collocation method has been combined with LBM to obtain statistical properties for fluid flows through porous media. 
A reduction of the computational effort by \(\geq \mathcal{O}(100)\) compared to single level MC LBM is reported for 2D flow simulations. 
Further, in~\cite{vandenbos2017nonIntrusive}, newly developed sparse quadrature and cubature rules are used in stochastic collocation methods combined with LBM for 2D cavity flows. 
Compared to single level MC, the reduced parameter grids show spectral convergence, achieving comparable performance to Smolyak sparse grid procedures. 
Recently, Zhong~\textit{et al.}~\cite{zhong2024stochastic} have established a stochastic Galerkin LBM for the simulation of fluid flows with uncertainty, where an average speedup greater than five is obtained compared to single level MC LBMs in multidimensional configurations in position and stochastic space. 
Further, Zhong~\textit{et al.}~\cite{zhong2025openlbuquncertaintyquantificationframework} have developed and validated a dedicated UQ-module for the scalable LBM library OpenLB~\cite{krause2020openlb}. 
OpenLB-UQ has been applied to simulate uncertain measurement-data assimilated wind flow over real urban geometries~\cite{zhong2025uncertaindataassimilationurban}. 
In summary, the combination of stability and scalability on current HPC hardware makes LBM a candidate for computing statistical solutions, which motivates the present work. 

In the present work, we develop a probabilistic MC LBM to approximate statistical solutions to the three-dimensional incompressible EE. 
Apart from the preliminary tests of the algorithm reported by Simonis~\cite{simonis2023lattice} and by Simonis and Mishra~\cite{simonis2024computing}, we are not aware of computations of three-dimensional statistical solutions to the NSE with a global-scale viscosity, nor of LBMs used for this purpose. 

We approximate the EE by computing a sequence of sample solutions to the incompressible NSE for decreasing but fixed values of viscosity $\vis >0$. 
The entropy-controlled, space-time dependent relaxation of the kinetic moments keeps the computations stable as the viscosity decreases. 
We complement the computations with a convergence analysis. 
Since the required stability and consistency properties of the fully discrete entropic scheme are not available as theorems in three dimensions, we isolate them in four explicitly stated standing assumptions and prove the limit statements conditional on them. 
First, we show that under diffusive scaling with fixed macroscopic viscosity, the bounds provided by the standing assumptions imply that the laws of the numerical ensemble converge along a subsequence to a limit satisfying the Foias--Temam Liouville formulation of the Navier--Stokes equations. 
Second, provided the structure-function scaling holds uniformly along the sequence, the vanishing viscosity theorem of Fjordholm, Mishra, and Weber (FMW)~\cite{fjordholm2021vanishing} applies to these limit families, so that the vanishing viscosity limit satisfies the FMW multi-point statistical Euler hierarchy. 
We further argue that the direct diagonal scaling of the LBM, in which the viscosity is coupled to the lattice spacing, leads to the same hierarchy without the coercivity assumption needed for the iterated limit. 
The limit measures inherit the weak-strong uniqueness principle for admissible measure-valued and statistical solutions~\cite{brenier2011weak,lanthaler2021statistical}: Whenever a strong Euler solution exists, an admissible statistical solution from the same initial data coincides with it. 
We recall the argument since it is used to interpret the computations.
For the computations, the approximate sample solutions to the NSE are obtained by statistically perturbing periodic Taylor--Green vortex (TGV) initial conditions~\cite{rohner2024efficient,simonis2024computing} and evolving them in time via the KBC LBM~\cite{simonis2024spectral} implemented in OpenLB-UQ~\cite{zhong2025openlbuquncertaintyquantificationframework}. 
First, we motivate the use of statistical solutions by demonstrating numerically the failure of pathwise strong convergence of the computed deterministic TGV solutions under the inviscid diagonal scaling. 
Moreover, we observe the scaling laws for the energy spectra and structure functions predicted by Kolmogorov's K41 theory~\cite{kolmogoroff1941local,kolmogorov1991local}, which are sufficient conditions for the convergence of statistical NSE solutions to statistical Euler solutions in three dimensions~\cite{fjordholm2021vanishing}. 
Next, we measure experimental orders of convergence of the computed statistical solutions in the Wasserstein metric (EOWC) along the diagonal scaling. 
Finally, we compare the computed measures with a reference produced by the hyperviscosity MC method from~\cite{rohner2024efficient}, extending the cross-solver comparison reported in two dimensions in~\cite{lanthaler2021statistical} to three dimensions and to the vanishing viscosity limit of the NSE.

This paper is structured as follows. 
In Section~\ref{sec:mathmodels} we recall the mathematical models used here, along with the respective statistical solutions and the kinetic models employed for their approximation. 
In Section~\ref{sec:methodology} we recall the entropic multi-relaxation LBM used as a deterministic solver for each sample in the present context. 
In Section~\ref{sec:approximation} we propose the approximation of statistical solutions based on the probabilistic MC LBM. 
In addition, we state the standing assumptions on the discrete dynamics, prove the convergence of the scheme toward Navier--Stokes solutions conditional on these assumptions, apply the vanishing viscosity theory of~\cite{fjordholm2021vanishing} to the limit families, recall the weak-strong uniqueness principle for the limit measures, and derive a convergence rate in the Wasserstein metric under explicit scaling assumptions.
Section~\ref{sec:numerics} documents the numerical experiments and discusses the findings. 
In Section~\ref{sec:conclusion} we conclude our work and suggest future research directions. 
The appendices collect the supporting material. 
Appendix~\ref{sec:summary} summarizes in Table~\ref{tab:overview} which statements are proven outright, which are known results applied to the limit objects, which are conditional on the standing assumptions, and which quantities are computed. 
Appendix~\ref{sec:appendix-algorithms} documents the implementation in OpenLB-UQ, the consumed computational resources, and the algorithms used to evaluate energy spectra, structure functions, and Wasserstein distances. 
Appendix~\ref{appsec:refSol} describes the spectral hyperviscosity scheme used to produce the independent reference solution, and Appendix~\ref{appsec:flowfields} provides further visualizations of the computed statistical flow fields.

\section{Mathematical models}
\label{sec:mathmodels}

We recall the concepts of statistical solutions to \eqref{eq:incNSE} for fixed viscosity and its vanishing limit, as well as a kinetic perspective to approximate deterministic solutions (here, single level MC samples) based on discretizing the BGK--Boltzmann equation in the diffusion limit.

\subsection{Incompressible Navier--Stokes equations and statistical solutions}

Given \eqref{eq:incNSE}, let \(\bm{u} \colon \Omega_{T} \to U\coloneqq \mathbb{R}^{d}, ( \bm{x}, t) \mapsto \bm{u}(\bm{x},t)\) denote the flow velocity as a function of space and time, where \(d\in \{2,3\}\), and let \(p \colon \Omega_{T} \to \mathbb{R}, (\bm{x}, t) \mapsto p (\bm{x}, t) \) denote the pressure acting as a Lagrange multiplier. 
The density is assumed to be constant, and \(p\) is rescaled accordingly.  
The initial data is defined by \(\bm{u}_{0}\colon \Omega \to \mathbb{R}^{d}\), which is assumed to be weakly divergence-free and in \(L^2 \left( \Omega; U\right) \), i.e.,
\begin{align}
    \bm{u}_{0}\in L^{2}_{\mathrm{div}}\left( \Omega ; U \right) = \left\{ \bm{u} \in L^{2}\left( \Omega; U \right) \; \left\vert \;	  \mathrm{div}_{\bm{x}} \bm{u} = 0 \text{ in the sense of distributions} \right. \right\} . 
\end{align}
The kinematic viscosity \(\vis>0 \) is given and is finite but arbitrarily small. 
Let \(\Omega = \mathbb{T}^{3}\). 
The system \eqref{eq:incNSE} is supplied only with an initial condition, i.e., we consider a Cauchy problem. 
As is known (see, e.g.,~\cite{buckmaster2019nonuniqueness,hou2024nearly,wang2023discovering,jia2014local,guillod2023numerical} and references therein), the well-posedness of \eqref{eq:incNSE} is questionable due to the lack of proven uniqueness. 
However, given the existence, we conjecture that, in the statistical framework, the multitude of solutions collapses to a unique statistical Euler solution as \(\vis \searrow 0\). 
To further support our approach, we briefly recall the concept of statistical solutions from~\cite{fjordholm2021vanishing} (and references therein) below. 

Considering statistical solutions (defined as families of probability measures on the tensor products \(U^{k}\) for \(k \in \mathbb{N}\)), let \(\bm{u}_{0} \in L^{2}_{\mathrm{div}}(\Omega ; U)\). 
We can interpret \eqref{eq:incNSE} as a Liouville equation on a function space, which defines the solution as a mapping of time \(t\in I \) to a probability measure on \(\hdiv\)~\cite{foias1976solutions,foias2001navier}. 
\begin{definition}[Statistical solutions]\label{def:foiastemamstatsol}
    A family of probability measures
    \begin{align}\label{eq:statsolCont}
        \bm{\mu}^{\vis}=(\mu_{t}^{\vis})_{0\leq t\leq T}, \quad \text{on }\hdiv
    \end{align}
    is a statistical solution of \eqref{eq:incNSE} with initial data $\mu_{0}^{\vis}$ and fixed viscosity \(\vis>0\), if the function 
    \begin{align}
        t\mapsto \int_{\hdiv} \beth(\bm{u})\,\mathrm{d}\mu_t^{\vis}(\bm{u})
    \end{align}
    is measurable on $[0,T]$ for every $\beth\in C_{\mathrm{b}}(\hdiv)$ (bounded continuous functions on \(\hdiv\)), and \(\bm{\mu}^{\vis}\) satisfies the additional conditions listed in~\cite[Definition 3.6 (b--d)]{fjordholm2021vanishing}, namely, a weak functional formulation of \eqref{eq:incNSE} for cylindrical test functions, a strengthened mean energy inequality, and that the function 
    \begin{align}
        t\mapsto \int_{\hdiv} \daleth \left( \left\| \bm{u} \right\|^{2}_{L^{2}(\Omega)} \right) \,\mathrm{d}\mu_{t}^{\vis}\left( \bm{u}\right) 
    \end{align}
    is continuous at \(t=0\) from the right for any nonnegative, nondecreasing \(\daleth\in C^{1}\left( \mathbb{R}; \mathbb{R}\right)\) with a bounded derivative. 
\end{definition}
\begin{remark}[Young measures]
    In our computations, the initial measure \(\mu_{0}^{\vis}\) is generated by an additive random perturbation \(\bm{u}_{0} = \bm{u}_{0}^{\mathrm{det}} + \bm{\mathfrak{s}}\) of a deterministic field \(\bm{u}_{0}^{\mathrm{det}}\), where \(\bm{\mathfrak{s}}\) is a random field such that $\bm{u}_{0} \sim \mu_{0}^{\vis}$. 
    In the sense of Foias and Prodi~\cite{foias1976solutions}, a statistical solution is a family $\bm{\mu}^\vis=(\mu^{\vis}_t)_{0\leq t\leq T}$ of probability measures on $\hdiv$ parametrized by time, i.e., a time-dependent Young measure. 
    See also~\cite[Definition 3.6]{fjordholm2021vanishing} and the references given there. 
\end{remark}

\subsection{Statistical vanishing viscosity limit and Kolmogorov scaling laws}

For \(\vis \searrow 0\), \eqref{eq:incNSE} formally passes into the incompressible EE. 
Deterministic weak solutions to the 3D Euler equations are susceptible to non-uniqueness and dissipative anomalies. 
In practice, access to initial conditions is often limited to a single realization only. 
To capture the underlying probabilistic nature of the flow, we still need to compute a statistical solution. 
By tracking a probability measure $\mu_t^\vis$ on the space of divergence-free vector fields, the Foias--Temam~\cite{foias2001navier} and FMW~\cite{fjordholm2021vanishing} frameworks provide a setting in which these flows can be described.
For the incompressible EE, a decay of the structure functions has been proven to be implied by the decay of time-averaged energy spectrum functions~\cite{lanthaler2021statistical}. 
Further, if a scaling assumption on the structure functions is fulfilled, it has been proven that the statistical solutions of the incompressible NSE converge with \(\vis \searrow 0\) to the statistical solutions of the EE~\cite[Theorem 4.8]{fjordholm2021vanishing}. 
Based on that, the evaluation of energy spectra in the sense of Kolmogorov's K41 theory~\cite{kolmogoroff1941local,kolmogorov1991local} can be used to at least indicate whether a computed sample is an approximation of a weak solution to the incompressible NSE for \(d=3\). 
Moreover, if the sequence of expectations of the energy spectra of several samples shows an asymptotic power law in the sense of K41 theory for \(\vis \searrow 0\), we deduce that a statistical solution of the incompressible EE is approximated~\cite{fjordholm2021vanishing}. 
We recall the necessary definitions for this reasoning below. 

The vanishing viscosity theorem \cite[Theorem 4.8]{fjordholm2021vanishing} assumes a weak statistical scaling relation between the second- and third-order longitudinal structure functions \cite[Assumption 1]{fjordholm2021vanishing}, whose only role in the proof is to provide a bound \(S^{2} (\tau, r) \leq C r^{\beta - 1/2}\) with \(\beta \in (1/2, 3/2)\) on the second-order structure functions, uniformly in \(\vis\) \cite[Remark 4.6 and Eq.~(4.35)]{fjordholm2021vanishing} (cf.\ \eqref{eq:energyScaling}).
Under this bound, the statistical solutions of the NSE converge to a statistical solution of the EE as \(\vis \searrow 0\). 
Here, the \(p\)th order time-integrated structure function for \(\tau \in (0,T]\), \(r>0\) reads
\begin{align} \label{eq:structures}
    S^p(\tau,r):= \left( \int_0^\tau S^{p}(r,t)\,\mathrm{d}t \right)^{\frac{1}{p}} 
\end{align}
and 
\begin{align} \label{eq:structuresLocal}
    S^p(r, t) = \int_{\Hn}\!\!\fint_{\mathbb{S}^2}\int_{\dom}\big\vert(\bm{u}(\bm{x}+r \bm{n})-\bm{u}(\bm{x}))\cdot \bm{n}\big\vert^p \,\mathrm{d}\bm{x}\,\mathrm{d}S(\bm{n})\,\mathrm{d}\mu_{t}^{\vis}(\bm{u})
\end{align}
denotes the time-local structure function. 
The global~\eqref{eq:structures} and time-local~\eqref{eq:structuresLocal} objects are distinguished by the order of their arguments. 
Note that for the signed odd-order structure functions of K41 phenomenology, e.g., the $4/5$-law, the modulus is omitted. 
Here only $p=2$ is used, where both conventions coincide. 
For small \(\vis\), Lanthaler, Mishra, and Par\'{e}s-Pulido~\cite{lanthaler2021statistical} prove the scaling implication
\begin{align}\label{eq:energyScaling}
    E_{T}\left( \bm{\mu}^{\vis}, \kappa\right) \lesssim \kappa^{-2\beta}  
    \quad 
    \Rightarrow \quad S^{2}\left( T, r\right) \lesssim r^{\beta- 1/2},
\end{align}
for \(1<2\beta <3\), where 
\begin{align}
    E_{T}\left( \bm{\mu}^{\vis}, \kappa\right) 
    \coloneqq \int_{0}^{T} \int_{\hdiv} E(\kappa, t; \bm{u}) \,\mathrm{d}\mu_{t}^{\vis}(\bm{u}) \,\mathrm{d}t
\end{align}
denotes the global energy spectrum,
\begin{align}\label{eq:energySpec}
E \left(\kappa, t; \bm{u}\right) 
=  
\oiint\limits_{S(\kappa)} \frac{1}{2} \Phi  (\bm{k}, t; \bm{u}) \,\mathrm{d} S(\bm{k}) 
\end{align}
is the energy contained in the wavenumber shell \(S(\kappa) = \{ \bm{k} \in \mathcal{K} : \vert \bm{k} \vert = \kappa\} \) of radius \(\kappa\), where \(\mathcal{K}\) is the set of admissible wavenumbers. 
We use \(\mathcal{K} = \mathbb{Z}^{d}\) for the periodic domain considered here, in which case the surface integral is approximated by shell averages over the integer lattice, see Appendix~\ref{sec:appendix-energySpec}.
Here  
\begin{align}\label{eq:fourierVelocityEnergy}
    \Phi (\bm{k}, t; \bm{u}) = \left\| \tilde{\bm{u}}(\bm{k}, t) \right\|_{2}^{2}
\end{align}
is the squared modulus of the Fourier coefficient 
\begin{align}
    \tilde{\bm{u}} \left( \bm{k}, t \right) = \frac{1}{\sqrt{2\pi}^{d}}\int_{\Omega} \bm{u} \left(\bm{x}, t \right) \exp\left(-\mathsf{i}\bm{k}\cdot \bm{x} \right)\,\mathrm{d} \bm{x} , \quad \bm{k} \in \mathbb{Z}^{d} . 
\end{align}

In the present work, we limit the discussion to homogeneous isotropic turbulence, i.e., \(\beta = \frac{5}{6}\) in \eqref{eq:energyScaling}~\cite{lanthaler2021statistical}. 
In addition, since we explicitly consider statistically nonstationary problems, we omit the time-integration in the computation of energy spectra and structure functions in Section~\ref{sec:numerics}. 

To quantify the weak-strong uniqueness of the statistical solutions, we rely on the optimal transport topology.
\begin{definition}[$p$-Wasserstein distance]\label{def:wasserstein}
    Let $X$ be a separable Banach space. For $p \geq 1$, the time-local $p$-Wasserstein distance between a (computed) statistical solution $\mu^{\vis}_{t}$ and a reference statistical solution $\mu^{\mathrm{ref}}_{t}$, both with finite $p$th moments ($\mu^{\vis}_{t}, \mu^{\mathrm{ref}}_{t} \in \mathcal{P}_{p}(X)$), is defined by
    \begin{equation}\label{eq:wasserstein}
        W_{p}(\mu^{\vis}_{t}, \mu^{\mathrm{ref}}_{t} )^p = \inf\limits_{\pi \in \Pi(\mu^{\vis}_{t}, \mu^{\mathrm{ref}}_{t} )} \int_{X^{2}} \| x - y \|_X^{p} \,\mathrm{d}\pi(x,y),
    \end{equation}        
    where the infimum is taken over all transport plans $\pi \in \Pi(\mu^{\vis}_{t}, \mu^{\mathrm{ref}}_{t}) \subset \mathcal{P}(X^{2})$. A probability measure $\pi$ is a valid transport plan if its marginals coincide with $\mu^{\vis}_{t}$ and $\mu^{\mathrm{ref}}_{t}$, meaning that
    \begin{equation}
        \int_{X^{2}} \left( F(x) + G(y) \right) \,\mathrm{d} \pi(x,y)  = \int_{X} F(x)\, \mathrm{d}\mu^{\vis}_{t}(x) + \int_{X} G(y) \,\mathrm{d} \mu^{\mathrm{ref}}_{t}(y)
    \end{equation}
    holds for all bounded continuous functions $F, G \in C_{b}(X)$. 
\end{definition}
In our subsequent derivation of numerical convergence rates via a Kuznetsov-type argument (see Section~\ref{sec:formalWrate}), we will specifically focus on the \(1\)-Wasserstein distance (\(p=1\)).
\begin{lemma}[Wasserstein metrization of weak convergence]\label{prop:weak-strong_continous}
	Let \(1 \leq p < 2\), and let the time-local measures \((\mu_{t}^{\vis})_{\vis > 0}\) and \(\mu_{t}^{\mathrm{ref}}\) satisfy the mean energy inequality of Definition~\ref{def:foiastemamstatsol}. 
    Then, for every \(t \in [0,T]\),
    \begin{align}
        \mu_{t}^{\vis} \rightharpoonup \mu_{t}^{\mathrm{ref}} \ (\vis \searrow 0)   \quad \Longleftrightarrow \quad W_{p} ( \mu_{t}^{\vis}, \mu_{t}^{\mathrm{ref}} ) \to 0 \ (\vis  \searrow 0 ).
    \end{align}
\end{lemma}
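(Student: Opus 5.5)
The argument reduces to the standard characterization of Wasserstein convergence on a Polish space (Villani, Theorem 6.9). On the separable Hilbert space $X=\hdiv$, $W_p(\mu_n,\mu)\to 0$ holds if and only if $\mu_n\rightharpoonup\mu$ narrowly and the $p$th moments $\int\|\bm{u}\|^p_{L^2}\,\mathrm{d}\mu_n$ converge to that of $\mu$. Equivalently, one can ask for uniform integrability of $\|\bm{u}\|^p$. The plan is to use the mean energy inequality of Definition~\ref{def:foiastemamstatsol} to supply this uniform integrability for free, which is exactly where $p<2$ enters.

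The direction ``$\Leftarrow$'' is immediate, since $W_p$-convergence always implies narrow convergence. For $\phi\in C_b(X)$ that is Lipschitz, the estimate $|\int\phi\,\mathrm{d}\mu_t^\vis-\int\phi\,\mathrm{d}\mu_t^{\mathrm{ref}}|\le \mathrm{Lip}(\phi)\,W_1\le \mathrm{Lip}(\phi)\,W_p$ follows from Kantorovich--Rubinstein duality and Jensen. Bounded Lipschitz functions determine narrow convergence by the Portmanteau theorem.

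For ``$\Rightarrow$'', I first extract a uniform second-moment bound from the mean energy inequality. In the FMW formulation it gives, for every $t\in[0,T]$,
\begin{align*}
\int_{\hdiv}\|\bm{u}\|_{L^2}^2\,\mathrm{d}\mu_t^{\vis}(\bm{u})\le \int_{\hdiv}\|\bm{u}\|_{L^2}^2\,\mathrm{d}\mu_0^{\vis}(\bm{u})\le C,
\end{align*}
uniformly in $\vis$. The hypothesis I am implicitly using is that the initial measures have uniformly bounded energy, for instance because they are independent of $\vis$ or because the $\mu_0^\vis$ form a fixed family with bounded second moment; the proof should state this explicitly. The same bound holds for $\mu_t^{\mathrm{ref}}$. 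Since $p<2$, Chebyshev then gives
\begin{align*}
\sup_\vis\int_{\{\|\bm{u}\|>R\}}\|\bm{u}\|^p\,\mathrm{d}\mu_t^\vis\le R^{p-2}C\to0 \quad (R\to\infty),
\end{align*}
so $\|\cdot\|^p$ is uniformly integrable. Combined with narrow convergence, this yields convergence of the $p$th moments. I approximate $\|\bm{u}\|^p$ by the bounded continuous truncation $\min(\|\bm{u}\|^p,R^p)$ and control the tails uniformly. Villani's theorem then gives $W_p\to0$. All measures lie in $\mathcal P_p(X)$ by the same moment bound, so $W_p$ is well defined.

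The main obstacle is the first step of ``$\Rightarrow$'': reading the mean energy inequality as a bound that is uniform in $\vis$ and holds at each fixed $t$, not merely almost everywhere or integrated in time. If the definition only yields a time-integrated inequality, I would invoke the right-continuity/weak continuity in time of the statistical solution, or restrict to almost every $t$, and note this in the statement. The strict inequality $p<2$ is essential. At $p=2$ one would additionally need convergence of the energies, which can fail through anomalous dissipation.
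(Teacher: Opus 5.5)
Your proposal is correct and follows essentially the paper's route: the paper also cites Villani's characterization of $W_p$-convergence as narrow convergence plus uniform integrability of the $p$th moments, and obtains the latter from the uniform second-moment bound given by the mean energy inequality, which your tail estimate $\sup_{\vis}\int_{\{\|\bm{u}\|>R\}}\|\bm{u}\|^p\,\mathrm{d}\mu_t^{\vis}\le CR^{p-2}$ for $p<2$ makes explicit. You also state that the bound is uniform in $\vis$ only if the initial energies are uniformly bounded (true here, since the initial measures have bounded support); the paper's one-line proof uses this tacitly.
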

\begin{proof}
    This is \cite[Theorem 7.12]{Villani2003} (see also \cite{lanthaler2021conservation,lanthaler2021statistical,fjordholm2021vanishing}): Weak convergence is equivalent to convergence in the \(p\)-Wasserstein metric provided that the \(p\)th moments are uniformly integrable. 
    The mean energy inequality inherent to the Foias--Temam statistical solutions yields a uniform bound on the second moments, which implies uniform integrability of the \(p\)th moments for every \(p \in [1,2)\).
\end{proof}
\begin{remark}[Weak-strong uniqueness in the inviscid limit]
    If the energy spectra scale according to \eqref{eq:energyScaling}, then by \cite[Theorem 4.8]{fjordholm2021vanishing} the statistical solutions of the NSE converge to a statistical solution of the EE as \(\vis \searrow 0\). 
    Lemma~\ref{prop:weak-strong_continous} allows measuring this convergence equivalently in the \(p\)-Wasserstein metric, which underlies the numerical convergence studies in Section~\ref{sec:numerics}. 
    If, in addition, a strong Euler solution exists and the limit is admissible, the weak-strong uniqueness principle of \cite{brenier2011weak,lanthaler2021statistical}, recalled in Section~\ref{sec:approximation}, singles out the limit uniquely. 
    An illustration is provided in Figure~\ref{fig:limitIllustration}. 
\end{remark}
\begin{figure}[ht!]
    \centering
    \includegraphics[scale=1]{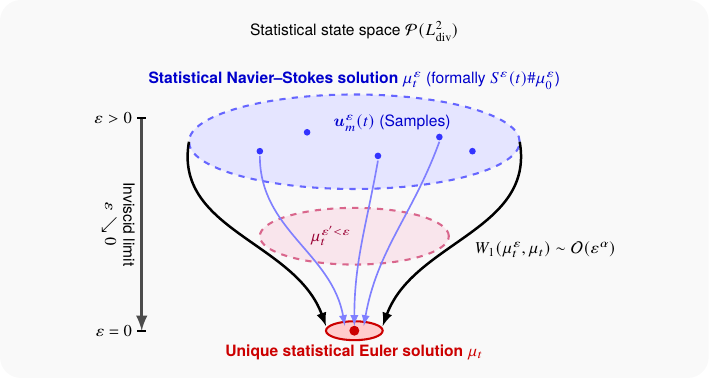}
    \caption{Schematic illustration of the weak-strong contraction: If the EE admit a strong solution, the family of Navier--Stokes weak statistical solutions contracts to it ($W_p(\mu_t^\vis, \mu_t) \to 0$ for $\vis \searrow 0$).}
    \label{fig:limitIllustration}
\end{figure}

\subsection{Boltzmann equation with Bhatnagar--Gross--Krook collision}

Let $\Omega \subseteq \mathbb{R}^d$ with \(d=3\) be a volume of rarefied gas which comprises many interacting particles. 
Assuming that all particles carry the same mass $m \in \mathbb{R}_{>0}$, we interpret them as point masses. 
The state of a one-particle system is assumed to depend on position $\bm{x} \in \Omega$ and velocity $\bm{v} \in \Xi$ at time $t \in I = [0,T]$, where $\Omega \subseteq \mathbb{R}^d$ denotes the positional space, $\Xi = \mathbb{R}^d$ is the velocity space, \(\mathfrak{P} \coloneqq \Omega \times \Xi\) is the phase space, and the Cartesian product \(\mathfrak{R} \coloneqq \Omega \times \Xi \times I \) defines the phase--time domain. 
\begin{definition}[Boltzmann equation]
    The one-particle phase-space density
    \begin{align} \label{eq:statisticPDF}
        f\colon\; \mathfrak{R} \to \mathbb{R}_{>0} ,\, (\bm{x},\bm{v},t) \mapsto f(\bm{x} , \bm{v}, t ) 
    \end{align}
    of the particles' positions \(\bm{x}\in \Omega\) and velocities \(\bm{v}\in \Xi\) at time \(t\in I\) defines the state of the dynamical system which is governed by the Boltzmann equation (BE)
    \begin{align} \label{eq:continuousBE}
        \left( \partial_{t} + \bm{v} \cdot \Grad_{\bm{x}} + \frac{\bm{F}}{m} \cdot \Grad_{\bm{v}} \right) f = \widetilde{J}(f,f ) \quad \text{in } \mathfrak{R}, 
    \end{align}
    supplemented with the initial condition \(f \vert_{t=0} = f_{0}\) in \(\mathfrak{P}\). 
    Here, \(\bm{F}\) denotes an external body force, which is set to \(\bm{0}\) in the force-free setting considered below, and the operator \(\widetilde{J}(f,f)\) models binary hard-sphere collisions. 
    Its explicit integral form is given, e.g., in~\cite{babovsky1998boltzmann}. 
\end{definition}
\begin{definition}[Kinetic moments]\label{def:moments}
    Let \(f\) be given in the sense of \eqref{eq:statisticPDF}. 
    Then, via weighted integration over \(\Xi = \mathbb{R}^{d}\), we define the moments 
    \begin{align}
        n_f &\colon \Omega \times I \to\mathbb{R}_{>0}, (\bm{x},t)\mapsto n_f(\bm{x},t) \coloneqq \int_{\mathbb{R}^d} f(\bm{x},\bm{v},t) \,\mathrm{d}\bm{v}, \label{eq:statisticPDFparticleDensity} \\
        \rho_f &\colon \Omega \times I \to\mathbb{R}_{>0}, (\bm{x},t)\mapsto \rho_f(\bm{x},t) \coloneqq mn_f(\bm{x},t),  \label{eq:statisticPDFmassDensity} \\
        \bm{u}_f &\colon \Omega \times I \to\mathbb{R}^{d}, (\bm{x},t)\mapsto \bm{u}_f(\bm{x},t) \coloneqq \frac{1}{n_f(\bm{x},t)} \int_{\mathbb{R}^d} \bm{v} f(\bm{x},\bm{v},t) \,\mathrm{d}\bm{v}, \label{eq:statisticPDFvelocity} \\
        p_f &\colon \Omega \times I  \to \mathbb{R}_{>0}, (\bm{x},t) \mapsto p_f(\bm{x},t) \coloneqq \frac{m}{d} \int_{\mathbb{R}^d} |\bm{v} - \bm{u}_f(\bm{x},t)|^2 f(\bm{x},\bm{v},t)\,\mathrm{d}\bm{v}, \label{eq:statisticPDFpressure}
    \end{align}
    respectively as particle density, mass density, velocity, and pressure. 
    Here and below, the moments of $f$ are indexed with \(\cdot_{f}\). 
\end{definition}
Notably, the absolute temperature $\theta$ is determined implicitly by an ideal gas assumption \(p_f=n_f R\theta\), where $R>0$ is the universal gas constant. 

To a controlled order in the characteristic scales, the above moments (Definition~\ref{def:moments}) approximate the macroscopic quantities conserved by the incompressible NSE~\cite{gorban2018hilbert}. 
Equilibrium states $f^{\mathrm{eq}}$, i.e., states with vanishing collision term \(\widetilde{J}(f^{\mathrm{eq}}, f^{\mathrm{eq}}) = 0\) in \(\mathfrak{R}\), exist~\cite{gorban2018hilbert}. 
In terms of the gas constant \(R = k_{\mathrm{B}}/m \in\mathbb{R}_{>0}\) (where \(k_{\mathrm{B}}\in\mathbb{R}_{>0}\) is the Boltzmann constant), the temperature $\theta\in\mathbb{R}_{>0}$, the particle density $n_f$, and the velocity $\bm{u}_f$, the equilibrium state is found to be of Maxwellian form
\begin{align} \label{eq:feq}
	f^{\mathrm{eq}} \colon \mathfrak{R} \to \mathbb{R}_{>0}, 
	(\bm{x},\bm{v},t) \mapsto \frac{n_f(\bm{x},t)}{\left(2 \pi R \theta \right)^{\frac{d}{2}}} \exp\left(-\frac{\left\vert \bm{v} - \bm{u}_f(\bm{x},t) \right\vert^2}{2 R \theta}\right) .
\end{align}
\begin{remark}[Normal distribution]
    We identify $f^{\mathrm{eq}}/n_f$ as a $d$-dimensional normal distribution for $\bm{v}\in\mathbb{R}^d$ with expectation $\bm{u}_f$ and covariance $R\theta \mathbf{I}_d$. 
    In this regard, the arguments of \(f^{\mathrm{eq}}\) regularly appear in terms of moments \(f^{\mathrm{eq}} ( n_{f}, \bm{u}_{f}, \theta)\) (see, for example,~\cite{he1997theory,junk2005asymptotic,lallemand2000theory}). 
\end{remark}
\begin{lemma}[Moment matching of the Maxwellian]
    The Maxwellian \eqref{eq:feq} reproduces the moments of \(f\), i.e., \(\rho_{f^{\mathrm{eq}}} = \rho_{f}\), \(\bm{u}_{f^{\mathrm{eq}}} = \bm{u}_{f}\), and \(p_{f^{\mathrm{eq}}} = p_{f}\). 
\end{lemma}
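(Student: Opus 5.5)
The plan is to compute the three relevant velocity moments of the Maxwellian \eqref{eq:feq} directly, using the Gaussian interpretation from the preceding remark. Write \(f^{\mathrm{eq}}(\bm{x},\bm{v},t) = n_f\, g(\bm{v})\), where \(g\) is the density of the normal distribution \(\mathcal{N}(\bm{u}_f, R\theta\mathbf{I}_d)\). Here \(n_f\), \(\bm{u}_f\), \(\theta\) are fixed at the point \((\bm{x},t)\), and \(\theta\) is determined by the ideal gas relation \(p_f = n_f R\theta\). Three standard Gaussian identities are all that is needed:
\begin{align}
\int_{\mathbb{R}^d} g\,\mathrm{d}\bm{v} = 1, \qquad \int_{\mathbb{R}^d} \bm{v}\, g\,\mathrm{d}\bm{v} = \bm{u}_f, \qquad \int_{\mathbb{R}^d} |\bm{v}-\bm{u}_f|^2 g\,\mathrm{d}\bm{v} = d\,R\theta .
\end{align}
Each follows from the substitution \(\bm{w} = (\bm{v}-\bm{u}_f)/\sqrt{R\theta}\), after which the integral factorizes into \(d\) one-dimensional standard Gaussian integrals. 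The first moment uses that \(\bm{w}\mapsto \bm{w}e^{-|\bm{w}|^2/2}\) is odd.

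The steps, in order, are as follows. First, the zeroth moment gives \(n_{f^{\mathrm{eq}}} = n_f\), and hence \(\rho_{f^{\mathrm{eq}}} = m n_{f^{\mathrm{eq}}} = \rho_f\) by \eqref{eq:statisticPDFmassDensity}. Second, from \eqref{eq:statisticPDFvelocity} we get \(\bm{u}_{f^{\mathrm{eq}}} = n_{f^{\mathrm{eq}}}^{-1} \int \bm{v} f^{\mathrm{eq}}\,\mathrm{d}\bm{v} = n_f^{-1} n_f \bm{u}_f = \bm{u}_f\). Third, because \(\bm{u}_{f^{\mathrm{eq}}} = \bm{u}_f\), the centered second moment in \eqref{eq:statisticPDFpressure} is taken about the same mean. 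This gives \(p_{f^{\mathrm{eq}}} = \frac{m}{d} n_f\, d R\theta = m n_f R\theta\).

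The main obstacle is this last step, specifically reconciling the normalization of the pressure with the ideal gas law. As written, \(p_f = n_f R\theta\) with \(R = k_{\mathrm{B}}/m\) differs from \(m n_f R\theta = \rho_f R\theta\) by a factor of \(m\). I would handle this by fixing \(\theta\) through the same convention in which \(p_f\) is defined by \eqref{eq:statisticPDFpressure}. That is, \(\theta\) is defined so that \(\frac{m}{d}\int|\bm{v}-\bm{u}_f|^2 f\,\mathrm{d}\bm{v} = \rho_f R\theta\), which is the usual \(p=\rho R\theta\) with specific gas constant \(R\); equivalently one may work in units with \(m=1\). With this consistent reading, \(p_{f^{\mathrm{eq}}} = p_f\) holds by construction. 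Since the three parameters of \(f^{\mathrm{eq}}\) are exactly the moments \(n_f\), \(\bm{u}_f\), \(\theta\) of \(f\), the statement then follows pointwise in \((\bm{x},t)\). Integrability is not an issue, because all integrals are Gaussian and therefore finite.
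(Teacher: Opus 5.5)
Your proof is correct and follows essentially the same route as the paper, which also reads off the normalization, mean, and covariance $R\theta\mathbf{I}_d$ of the Gaussian $f^{\mathrm{eq}}/n_f$. Your explicit computation of $p_{f^{\mathrm{eq}}} = m n_f R\theta$ also fixes a normalization the paper passes over: with $R = k_{\mathrm{B}}/m$, the ideal gas relation has to be read as $p_f = \rho_f R\theta$ (or one sets $m=1$), not $p_f = n_f R\theta$, for the pressure step to close.
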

\begin{proof}
    From $f^{\mathrm{eq}}/n_f$ being a density function, we find
    \begin{align} \label{eq:boltzStatisticEquilibrium7}
      \rho_{f^{\mathrm{eq}}} &\stackrel{\eqref{eq:statisticPDFmassDensity}}{=} m \int_{\mathbb{R}^d} f^{\mathrm{eq}} (\bm{x},\bm{v},t)\,\mathrm{d}\bm{v}~= mn_f =\rho_f , \\
        \label{eq:boltzStatisticEquilibrium8}
      \bm{u}_{f^{\mathrm{eq}}} &\stackrel{\eqref{eq:statisticPDFvelocity}}{=} \frac{1}{n_{f^{\mathrm{eq}}}} \int_{\mathbb{R}^d} \bm{v} f^{\mathrm{eq}} (\bm{x},\bm{v},t)\,\mathrm{d}\bm{v}~= \bm{u}_f .
    \end{align}
    The covariance matrix of $f^{\mathrm{eq}}/n_f$ for a perfect gas verifies the matching of the pressure 
    \begin{align} \label{eq:boltzStatisticEquilibrium10}
      p_{f^{\mathrm{eq}}} &\stackrel{\eqref{eq:statisticPDFpressure}}{=}  \frac{1}{d}m \int_{\mathbb{R}^d} \left\vert\bm{v} - \bm{u}_{f^{\mathrm{eq}}}\right\vert^{2} f^{\mathrm{eq}} (\bm{x},\bm{v},t)\,\mathrm{d} \bm{v} = p_f. 
    \end{align}
\end{proof}
\begin{definition}[Bhatnagar--Gross--Krook collision]
    According to~\cite{bhatnagar1954model}, we simplify the collision operator $\widetilde{J}$ in \eqref{eq:continuousBE} to a BGK-type  
    \begin{align} \label{eq:collisionOperatorJ}%\label{eq: boltz statistic bgk 1}
        J(f) \coloneqq -\frac{1}{\tau_{\mathrm{rel}}}(f-{f}^{\mathrm{eq}}) \quad & \text{in } \mathfrak{R} ,  
    \end{align}
    where $\tau_{\mathrm{rel}} > 0$ denotes the relaxation time between collisions, and \(f^{\mathrm{eq}}(\bm{x},\bm{v},t)\) is now a formal particular Maxwellian determined by \(n_{f}\) and \(\bm{u}_{f}\).
\end{definition}
\begin{remark}[BGK moment conservation]
    Since the Maxwellian in \eqref{eq:collisionOperatorJ} is constructed from the moments of \(f\) itself, the moment matching \eqref{eq:boltzStatisticEquilibrium7} and \eqref{eq:boltzStatisticEquilibrium8} implies
    \begin{align}
        \int_{\mathbb{R}^{d}} \psi(\bm{v}) J(f) \,\mathrm{d}\bm{v} = 0, 
    \end{align}
    for \(\psi(\bm{v}) \in \{1, \bm{v}\}\). 
    Hence, \(\rho_{f}\) and \(\bm{u}_{f}\) are conserved during the BGK collision.
\end{remark}
\begin{definition}[BGK--Boltzmann equation]
    With $J$ from \eqref{eq:collisionOperatorJ} inserted into \eqref{eq:continuousBE}, the BGK--Boltzmann equation reads
    \begin{align}\label{eq:BGKBE}
        \left( \partial_t + \bm{v} \cdot \Grad_{\bm{x}} + \frac{\bm{F}}{m} \cdot \Grad_{\bm{v}}\right)  f = J(f) \quad & \text{in } \mathfrak{R} , 
    \end{align}
    where \( f (\cdot , \cdot , 0  ) = f_{0} (\cdot,\cdot) \coloneqq f^{\mathrm{eq}}(\cdot,\cdot,0) \) sets a suitable initial condition. 
    Here and below, we reuse the symbol \(f\) for the solution of \eqref{eq:BGKBE} instead of \eqref{eq:continuousBE}. 
\end{definition}
\begin{remark}[Global existence]
    The global existence of solutions to the BGK--Boltzmann equation \eqref{eq:BGKBE} has been rigorously proven in~\cite{perthame1989global}. 
    Weighted \(L^{\infty}\) bounds and uniqueness have later been established on bounded domains~\cite{perthame1993weighted} and in \(\mathbb{R}^{d}\)~\cite{mischler1996uniqueness}.  
\end{remark}
\begin{remark}[Diffusive limit to Navier--Stokes]\label{rem:convergenceBGKcontinuous}
    The BGK--Boltzmann equation \eqref{eq:BGKBE} is connected to the NSE \eqref{eq:incNSE} via the diffusive limit~\cite{saint-raymond2003bgk}. 
    To this end, a formal verification of the continuum balance equations \eqref{eq:incNSE} for the moments \(\rho_{f}\) and \(\bm{u}_{f}\) in Definition~\ref{def:moments} is conducted, e.g., in ~\cite{simonis2022limit}. 
    The vanishing of higher order terms in the diffusive limit \(K\!n \searrow 0\) is rigorously proven by Saint-Raymond~\cite{saint-raymond2003bgk}, where \(K\!n\) denotes the Knudsen number. 
    There, solutions \(f_{K\!n}\) to the \(K\!n\)-scaled BGK--Boltzmann equation are passed to the limit, and the corresponding velocity moments \(\bm{u}_{f_{K\!n}}\) are shown to converge to Leray weak solutions~\cite{leray1934mouvement} of the incompressible NSE~\cite[Theorem 1.2]{saint-raymond2003bgk}.  
    Here, we neglect the additional temperature equation appearing in the limit by imposing an ideal gas.  
    Transferring this continuous limit to the fully discrete scheme while sending the viscosity to zero requires discrete energy bounds.
    In the subsequent sections, these energy bounds are derived under explicit assumptions on the discrete dynamics.
\end{remark}
\begin{remark}[Low-resolution divergence]
    While the standard single-relaxation-time BGK operator~\eqref{eq:collisionOperatorJ} provides a foundational bridge to the NSE, once discretized, it is prone to numerical instabilities in the vanishing viscosity limit ($\vis \searrow 0$). 
    Because a single relaxation time couples the kinematic viscosity to the decay of all higher-order kinetic moments, under-resolved turbulent simulations typically suffer from spectral energy accumulation and blow-up. 
    This motivates the use of the entropic multi-relaxation KBC model in the discrete setting.
\end{remark}

\section{Deterministic numerical methodology}\label{sec:methodology}

Every sample of the MC method in Section~\ref{sec:approximation} is evolved with the same deterministic solver, which we fix in this section: a lattice Boltzmann scheme on the \(D3Q19\) velocity set with a second-order truncated equilibrium and the entropic multi-relaxation collision of KBC type~\cite{karlin2014gibbs,boesch2015entropic}, in the reduced form of~\cite{simonis2024spectral}. 
Preliminary tests of this algorithm as a sampler for statistical solutions are reported in~\cite{simonis2023lattice,simonis2024computing}. 
The randomness enters through the initial datum of \eqref{eq:incNSE} only. 
It is passed to the populations by the equilibrium initialization of Section~\ref{subsec:wellprepared}. 
Below, the moments of \(f\) are written without the index \(\cdot_{f}\).
\subsection{Entropic multi-relaxation lattice Boltzmann scheme}
Starting from the BGK--Boltzmann equation \eqref{eq:BGKBE}, the scheme is obtained in two steps: the velocity space \(\Xi\) is replaced by a finite set of \(q\) velocities, and space and time are discretized on a lattice with an implicit shift of the populations. 
Both steps are limit consistent with the incompressible NSE in the sense of~\cite{simonis2022limit}. 
The velocity-discrete system has commuting advection matrices~\cite{simonis2020relaxation,simonis2022constructing} and therefore exactly \(q\) scalar moments, one per velocity, i.e.,
\begin{align}
    \partial_{t} f_{i} + \bm{c}_{i} \cdot \Grad f_{i} = J_{i}(\bm{f}), 
\end{align}
for \(i=0,\ldots, q-1\), where \(\bm{c}_{i}\) are the velocities of the set \(DdQq\), the populations \(f_{i}\) are collected in \(\bm{f}\in \mathbb{R}^{q}\), and \(J_{i}\) is the velocity-discrete BGK collision. 
We use \(D3Q19\), i.e., \(q = 19\), which keeps the cost per lattice node low. 
The populations relax toward the second-order truncation of the discrete Maxwellian~\cite{bhatnagar1954model},
\begin{align}\label{eq:equilibrium}
    f_{i}^{\mathrm{eq}} \left( \rho, \boldsymbol{u} \right) =   \rho w_{i} \left[ 1 + \frac{1}{c_{s}^{2}}  c_{i\alpha} u_{\alpha} + \frac{1}{2c_{s}^{4}} (c_{i\alpha}c_{i\beta} - c_{s}^{2} \delta_{\alpha\beta}) u_{\alpha} u_{\beta} \right] ,
\end{align} 
with summation over repeated Greek indices, the lattice speed of sound \(c_{s} = 1/\sqrt{3}\), and the usual weights \(w_{i}\) of \(D3Q19\) (see, e.g.,~\cite{coreixas2019comprehensive}). 
Density and momentum are the conserved moments
\begin{align}
    \rho(\bm{x}, t) &= \sum_{i=0}^{q-1} f_i (\bm{x}, t),  \\
    \rho \bm{u} (\bm{x}, t) &= \sum_{i=0}^{q-1} \bm{c}_i f_i (\bm{x}, t).
\end{align}
\begin{remark}[Discrete velocity convergence]\label{rem:convergenceDVBGK}
    For the velocity-discrete BGK--Boltzmann equation, \cite{gu2024incompressible} constructs a local weak solution of the three-dimensional incompressible NSE as the diffusive limit and finds the limit to be of second order in the Knudsen number. 
    This agrees with the formal order of the velocity discretization derived in~\cite{simonis2022limit}. 
\end{remark}
Discretizing space and time with second order and shifting the populations implicitly~\cite{simonis2022limit} gives the lattice Boltzmann equation (LBE)
\begin{align}\label{eq:lbIteration}
    f_{i} \left( \bm{x} + \triangle t \bm{c}_i, t + \triangle t\right) \overset{\text{stream}}{=} f_{i}^{\star}(\bm{x}, t) \overset{\text{collide}}{=} f_{i}\left( \bm{x}, t\right) + \triangle t J_{i} \left(\bm{f}(\bm{x}, t)\right), 
\end{align}
for \(\left( \bm{x}, t\right) \in \Omega_{\triangle x} \times I_{\triangle t}\). 
Note that the limit toward the NSE survives the discretization as well~\cite{simonis2022limit}. 
\begin{remark}[Diffusive scaling]\label{rem:convergenceLBE}
    With the BGK collision, the mass and momentum moments of \(\bm{f}\) approximate \eqref{eq:incNSE} with first and second order in space, respectively~\cite{simonis2022limit,junk2003rigorous}. 
    Under the diffusive scaling \(\triangle t \sim \triangle x^{2}\) used throughout, the order in time is one. 
\end{remark}
For small \(\vis > 0\), the single relaxation time of the BGK collision is replaced by the entropy-controlled multi-relaxation of~\cite{karlin2014gibbs}. 
We take the reduced variant of~\cite{simonis2024spectral}, which pairs this collision with \(D3Q19\) and the equilibrium \eqref{eq:equilibrium}. 
The multi-relaxation collision reads
\begin{align}\label{eq:bgkCollisionOp}
    J_{i} (\bm{f}) = -  \bm{K}_{i} \left[ \bm{f} - \bm{f}^{\mathrm{eq}} \right], 
\end{align}
with the equilibrium vector \(\bm{f}^{\mathrm{eq}} = ( f_{i}^{\mathrm{eq}} )_{i} \in \mathbb{R}^{q}\) and the rows \(\bm{K}_{i} = ( K_{i,j} )_{j}\) of
\begin{align}
    \mathbf{K} = \mathbf{M}^{-1} \mathbf{S} \mathbf{M} \in \mathbb{R}^{q \times q}.
\end{align}
Here \(\mathbf{S} = \mathrm{diag}\left(\bm{s}\right)\) holds the relaxation frequencies \(\bm{s} = ( s_{i} )_{i}\), and \(\mathbf{M} \in \mathrm{GL}_{q}(\mathbb{R})\) maps populations to moments,
\begin{align}\label{eq:momentInnerProd}
    m_{i} = \langle \bm{\phi}^{i}, \bm{f} \rangle ,
\end{align}
with the Euclidean inner product \(\langle\cdot, \cdot\rangle\) on \(\mathbb{R}^{q}\). 
The rows \(\bm{\phi}^{i} = ( \phi^{i}_{j} )_{j}\) of \(\mathbf{M}\) are \(q\) linearly independent polynomials evaluated at the discrete velocities. 
The monomials give the raw moments
\begin{align}\label{eq:momentDefin}
    \rho \Gamma_{p_{1}, p_{2}, \ldots, p_{d}} = \bigl\langle \bigl( (c_{j})_{1}^{p_{1}} (c_{j})_{2}^{p_{2}} \cdots (c_{j})_{d}^{p_{d}} \bigr)_{j}, \bm{f} \bigr\rangle ,
\end{align}
and the tensors \(\mathbf{N}\), \(\mathbf{\Pi}\), \(\mathbf{T}\), \(\mathbf{Q}\), and \(\mathbf{A}\) of Table~\ref{tab:frequencySummary} are the combinations of raw moments listed in~\cite{simonis2024spectral}. 
Since
\begin{align}
    \mathbf{M} = \left(M_{i,j}\right)_{i,j} = \bigl( \phi_{j}^{i} \bigr)_{i,j} \in \mathbb{R}^{q\times q}
\end{align}
is invertible, \(\bm{m} = \mathbf{M} \bm{f}\) identifies population space with moment space. 
Table~\ref{tab:frequencySummary} sorts the moments into three groups, the conserved (kinematic) moments, the shear moments, and the remaining kinetic moments of order two to four, and assigns one relaxation frequency to each group. 
\begin{table}[ht!]
    \caption{Relaxation frequencies of moment tensors normalized with \(\rho\) (Table from~\cite{simonis2024spectral} with permission of the authors).}
    \label{tab:frequencySummary}
    \small
    \centering
        \begin{tabular}{c c c c} 
        \toprule
        Type & Tensor & Order & Relaxation frequency \\
        \midrule 
        \multirow{2}{*}{\fbox{\makecell{kinematic}}} 	& \(1\) & 0 & \multirow{2}{*}{\(\omega_{k} = 0\)} \\
            											& \( \bm{u}\) & 1 & \\
        \cmidrule{1-1}\cmidrule{4-4}
        \multirow{2}{*}{\fbox{\makecell{shear}}} 	& \(\mathbf{N}\) & 2 & \multirow{2}{*}{\makecell{\(\omega_{\vis} = \frac{2c_{s}^{2}}{2\vis + c_{s}^{2}}\)}} \\ 
                                                    & \(\mathbf{\Pi}\) & 2 & \\
        \cmidrule{1-1}\cmidrule{4-4}
        \multirow{3}{*}{\fbox{\makecell{kinetic \\ (hom)}}}		& \(\mathbf{T}\)	& 2 & \multirow{3}{*}{\makecell{\(\omega_{\eta}\)\\ \(\omega_{Q}\)\\ \(\omega_{A}\)}\makecell{\(\Bigg\}  \equiv \omega = \frac{\omega_{\vis}}{2} \gamma :\) \fbox{\makecell{entropy\\ controlled}}}} \\
        & \(\mathbf{Q}\) & 3 &  \\
    	& \(\mathbf{A}\) 			& 4 &  \\
        \bottomrule 
        \end{tabular}
\end{table}
The populations split accordingly,
\begin{align}
    \bm{f}^{\mathrm{neq}} \coloneqq  \bm{f} - \bm{f}^{\mathrm{eq}} = \bm{f}_{\vis}^{\mathrm{neq}} + \bm{f}^{\mathrm{neq}}_{\mathrm{hom}},
\end{align}
where \(\bm{f}_{\vis}^{\mathrm{neq}}\) carries the shear moments and \(\bm{f}^{\mathrm{neq}}_{\mathrm{hom}}\) the higher-order moments (hom). 
Note that the conserved part \(\bm{f}_{\mathrm{con}}\) has no non-equilibrium contribution. 
Two frequencies remain in \eqref{eq:lbIteration}, \(\omega_{\vis}\) for the shear moments and \(\omega(\bm{x}, t) = \frac{\omega_{\vis}}{2} \gamma (\bm{x}, t)\) for the higher-order moments (Table~\ref{tab:frequencySummary}), and the collision step becomes
\begin{equation}\label{eq:kbcCollision}
    f_i^\star (\bm{x}, t) = f_i(\bm{x}, t) - \omega_{\vis} f_{\vis,i}^{\mathrm{neq}} (\bm{x}, t) - \omega(\bm{x}, t) f_{\mathrm{hom},i}^{\mathrm{neq}} (\bm{x}, t). 
\end{equation}
Both frequencies are dimensionless, i.e., the factor \(\triangle t\) of \eqref{eq:lbIteration} is absorbed in lattice units, and \(\omega_{\vis} \in (0,2)\) is fixed by the viscosity of \eqref{eq:incNSE} in lattice units through the relation in Table~\ref{tab:frequencySummary}. 
For \(\gamma = 2\), both frequencies coincide, and \eqref{eq:kbcCollision} reduces to the BGK collision. 
The KBC model instead selects \(\gamma\) at every node and time step as the value for which the post-collision state has the smallest discrete \(\mathcal{H}\)-function,
\begin{align} 
    \gamma (\bm{x},t) 
     \coloneqq  \arg\min_{\gamma^{\prime}} \mathcal{H}(\bm{f}^\star(\bm{x}, t; \gamma^{\prime})) = \arg\min_{\gamma^{\prime}} \sum_{i=0}^{q-1} f_i^\star (\bm{x}, t; \gamma^{\prime}) \ln\left( \frac{f_i^\star(\bm{x}, t; \gamma^{\prime})}{w_i} \right), \label{eq:kbc_entropy}
\end{align}
where \(\bm{f}^{\star}(\cdot\,, \cdot\,; \gamma^{\prime})\) is \eqref{eq:kbcCollision} evaluated with \(\omega = \frac{\omega_{\vis}}{2} \gamma^{\prime}\). 
Solving \eqref{eq:kbc_entropy} exactly at every node is expensive. 
Expanding the stationarity condition to first order in the non-equilibrium gives the closed form of~\cite{boesch2015entropic}:
\begin{align}\label{eq:entropyControl}
    \gamma (\bm{x},t ) 
    \approx 
    \frac{2}{\omega_{\vis}}  - \left( 2  - \frac{2}{\omega_{\vis}} \right) \frac{\langle  \bm{f}_{\vis}^{\mathrm{neq}}(\bm{x},t) \vert \bm{f}_{\mathrm{hom}}^{\mathrm{neq}}(\bm{x}, t) \rangle}{\langle  \bm{f}_{\mathrm{hom}}^{\mathrm{neq}}(\bm{x}, t) \vert \bm{f}_{\mathrm{hom}}^{\mathrm{neq}} (\bm{x}, t)\rangle} .
\end{align}
Hence, the implementation evaluates \eqref{eq:entropyControl} and thus minimizes the \(\mathcal{H}\)-function in \eqref{eq:kbc_entropy} up to the truncation error of the expansion. 
The brackets denote the inner product on \(\mathbb{R}^{q}\) weighted by the equilibrium,
\begin{align}
    \langle \bm{X} \vert \bm{Y} \rangle = \sum_{i=0}^{q-1} \frac{ X_{i} Y_{i}}{ f_{i}^{\mathrm{eq}} } .
\end{align}
\begin{remark}[Entropic stabilization]
    In under-resolved turbulent flows, the frequency selected by \eqref{eq:kbc_entropy} removes energy from the highest resolved wavenumbers. 
    The numerical Fourier analysis in~\cite{simonis2024spectral} shows this effect on the spectra of the velocity field and of \(\gamma\) itself, so that the KBC collision acts as an implicit structural model. 
    With the exact minimizer of \eqref{eq:kbc_entropy}, the collision does not increase the discrete \(\mathcal{H}\)-function~\cite{karlin2014gibbs}. 
    With the approximation \eqref{eq:entropyControl}, this holds only up to the truncation error, which is why the corresponding property enters the analysis of Section~\ref{sec:approximation} as Assumption~\ref{ass:coercivity} (see also Definition~\ref{def:entropy_production}). 
    The entropic construction goes back to~\cite{karlin1999perfect} and stable computations at high Reynolds numbers are documented in~\cite{boesch2015entropic,simonis2024spectral}. 
\end{remark}
\begin{remark}[Convergence of the deterministic scheme]\label{rem:convergenceKBC}
    For the TGV flow, the integral turbulence quantities computed with this scheme converge under grid refinement in diffusive scaling with rates between first and second order, locally in time up to second order, and the frequencies \(\omega(\bm{x},t)\) approach the BGK value as the resolution increases~\cite{simonis2024spectral}. 
    The hydrodynamic limit of the KBC collision has been derived by a formal Chapman--Enskog expansion~\cite{karlin2014gibbs,boesch2015entropic}. 
    Note that, to the knowledge of the authors, a convergence proof for the fully discrete scheme is not available to date. 
\end{remark}
\begin{remark}[Appropriateness of the deterministic scheme]
    The formal and numerical results recalled above indicate that the KBC LBM approximates single solutions of the NSE, and its collision is local, so the parallel efficiency of LBM is retained. 
    Both properties are crucial for a sampler that is run thousands of times in three dimensions. 
\end{remark}

\section{Discrete approximation and limit theory for statistical solutions}\label{sec:approximation}

In the present work, the probability measure \(\bm{\mu}^{\vis}\) is approximated empirically using an ensemble of \(M\) discrete samples \(\bm{u}_{m}^{N}\) (\(m=1,2,\ldots, M\)). 
For a computational grid with resolution \(N\) per spatial dimension, the empirical measure at time \(t\in I\) is defined as
\begin{align}\label{eq:statSolDisc}
    \mu_{t}^{\vis} \approx \mu^{\vis, N,M}_{t} \coloneqq \frac{1}{M} \sum\limits_{m=1}^{M} \delta_{\bm{u}_{m}^{N} (t)} ,
\end{align}
where \(\delta_{\bm{u}_{m}^{N}(t)}\) denotes the Dirac measure centered at the state \(\bm{u}_{m}^{N}(t)\). 
Each sample \(\bm{u}_{m}^{N}\) is obtained by evolving independent and identically distributed (IID) initial data \(\bm{u}_{0,m}\) for \eqref{eq:incNSE} in time using the LBM at a fixed macroscopic viscosity \(\vis>0\). 
Thus, for a given viscosity \(\vis\) and spatial grid resolution \(N\), we draw \(M\) IID samples from the well-prepared initial probability measure
\begin{align}
    \{\bm{u}_{0,1}, \bm{u}_{0,2}, \ldots, \bm{u}_{0,M}\} \sim \mu_{0}^{\vis}.
\end{align}
These samples are independently evolved via the KBC LBM to construct the discrete ensemble \(\{ \bm{u}_{m}^{N}(t) \mid m=1, 2, \ldots, M\}\) for each \(\vis\).

We work on the lattice $\Lambda_\eps = \eps \mathbb{Z}^3 \cap \Omega$ with spacing $\eps = 2\pi/N$. 
In Section~\ref{sec:numerics}, lengths are normalized by the domain length $2\pi$, so that $\eps = 1/N$ in normalized units. 
Throughout this section, the dual space $H^{-3}(\Omega)$ is used for the temporal estimates. 
Thus, any Sobolev index strictly larger than $5/2$ is admissible, and we fix the value $3$ for definiteness. 
Moreover, $\tilde{\bm{u}}^{\eps} = P^{\eps} \bm{u}^{\eps}$ denotes the piecewise constant spatiotemporal extension of the discrete random velocity field 
\begin{align}
    \bm{u}^\eps = (1/\rho^{\eps})\sum_{i=0}^{q-1} \bm{c}_{i} f_{i}
\end{align}
obtained from the moments in Definition~\ref{def:moments}. 
The extension operator $P^{\eps}$ maps lattice fields to $L^\infty(0,T; L^2(\Omega))$.

\paragraph{Standing assumptions.}
The convergence statements of this section are conditional. 
The following assumptions isolate the properties of the fully discrete KBC dynamics that are supported by formal Chapman--Enskog analysis and the numerical evidence of Section~\ref{sec:numerics}, but that are not proven for the scheme. 
All subsequent results reference these assumptions explicitly. 
We write 
\begin{align}
    \langle g, h \rangle \coloneqq \eps^{3} \sum_{\bm{x} \in \Lambda_{\eps}} g(\bm{x}) h(\bm{x})
\end{align} 
for the discrete spatial duality pairing, extended to the piecewise constant representatives.
\begin{assumption}[Uniform low Mach regime, well-prepared evolution]\label{ass:lowMach}
    There exist deterministic constants $G > 0$ and $\eps_{0} > 0$ such that, $\mu_{0}^{\eps}$-almost surely, for all $\eps \in (0, \eps_{0}]$, all nodes $\bm{x} \in \Lambda_{\eps}$, all $t \in I_{\triangle t}$, and all $i$,
    \begin{enumerate}
        \item[(i)] the populations admit the representation $f_{i}^{\eps} = w_{i} \left( 1 + \eps g_{i}^{\eps} \right)$ with $\vert g_{i}^{\eps} \vert \leq G$, and
        \item[(ii)] the density fluctuation satisfies $\vert \rho^{\eps}(\bm{x}, t) - 1 \vert \leq G \eps^{2}$.
    \end{enumerate}
\end{assumption}
Assumption~\ref{ass:lowMach} encodes that the diffusive (low Mach) scaling of the initialization (Section~\ref{subsec:wellprepared}) persists under the discrete dynamics: (i) yields the uniform positivity 
\begin{align}
    f_{i}^{\eps} \geq w_{i}/2, \quad \text{ for } \eps \leq \min(\eps_{0}, 1/(2G))
\end{align}
and, via $\rho^{\eps} \tilde{u}^{\eps}_{\alpha} = \sum_{i} w_{i} g_{i}^{\eps} c_{i\alpha}$, the uniform velocity bound 
\begin{align}
    \|\tilde{\bm{u}}^{\eps}\|_{L^{\infty}((0,T) \times \Omega)} \leq C_{G} ;
\end{align} 
and (ii) states that no $\mathcal{O}(\eps)$ acoustic waves are generated, so that the rescaled pressure $\pi^{\eps} \coloneqq c_{s}^{2} \eps^{-2} (\rho^{\eps} - 1)$ is uniformly bounded.
\begin{assumption}[Entropic coercivity]\label{ass:coercivity}
    There exist constants $c > 0$ and $C \geq 0$, independent of $\eps$ and $\vis$, such that the global discrete entropy production of Definition~\ref{def:entropy_production} satisfies, $\mu_{0}^{\eps}$-almost surely,
    \begin{align}
        \mathcal{D}^\eps(\bm{f}(t)) \ge c \, \vis \, \eps^{2} \, \|\bm{\nabla}^\eps \tilde{\bm{u}}^\eps(t)\|_{L^2(\Omega)}^2 - C \eps^{5/2} \quad \text{for all } t \in I_{\triangle t}.
    \end{align}
\end{assumption}
Any exponent strictly larger than $2$ is admissible in the error term. 
We fix the value $5/2$ for definiteness. 
For the exact entropic selection of the relaxation parameter (solving the minimization \eqref{eq:kbc_entropy} exactly), the non-negativity $\mathcal{D}^{\eps} \geq 0$ holds by construction~\cite{karlin2014gibbs}. 
Assumption~\ref{ass:coercivity} additionally quantifies that the entropy production controls the shear dissipation, which is the leading-order content of the Chapman--Enskog expansion~\cite{boesch2015entropic,simonis2022limit}.
\begin{assumption}[Weak consistency of the discrete moment balances]\label{ass:consistency}
    There exist residual fields $\mathcal{R}_{0}^{\eps}$ and $\bm{\mathcal{R}}^{\eps}$ such that for every $\chi \in C^{\infty}(\Omega)$, every $\bm{\phi} \in C^{\infty}(\Omega; \mathbb{R}^{3})$, and every $t \in I_{\triangle t}$,
    \begin{align}
        \langle \partial_t^\eps \rho^{\eps}, \chi \rangle &= \langle \rho^{\eps} \tilde{\bm{u}}^{\eps}, \bm{\nabla} \chi \rangle + \langle \mathcal{R}_{0}^{\eps}, \chi \rangle , \label{eq:massConsistency}\\
        \langle \partial_t^\eps (\rho^{\eps} \tilde{\bm{u}}^{\eps}), \bm{\phi} \rangle &= \langle \rho^{\eps} \tilde{\bm{u}}^{\eps} \otimes \tilde{\bm{u}}^{\eps}, \bm{\nabla} \bm{\phi} \rangle + \vis \langle \tilde{\bm{u}}^{\eps}, \bm{\Delta} \bm{\phi} \rangle + \langle \pi^{\eps}, \bm{\nabla} \cdot \bm{\phi} \rangle + \langle \bm{\mathcal{R}}^{\eps}, \bm{\phi} \rangle , \label{eq:momentumConsistency}
    \end{align}
    where $\partial_{t}^{\eps} h(t) \coloneqq \eps^{-2}(h(t + \eps^{2}) - h(t))$, $\pi^{\eps} = c_{s}^{2} \eps^{-2} (\rho^{\eps} - 1)$, and the residuals are uniformly bounded and vanish in the limit,
    \begin{align}
        \sup_{\eps} \E_{\mu_{0}^{\eps}} \left[ \| \mathcal{R}_{0}^{\eps} \|_{L^{2}(0,T; H^{-3}(\Omega))}^{2} + \| \bm{\mathcal{R}}^{\eps} \|_{L^{2}(0,T; H^{-3}(\Omega))}^{2} \right] & < \infty , 
        \\ 
        \lim_{\eps \to 0} \E_{\mu_{0}^{\eps}} \left[ \| \mathcal{R}_{0}^{\eps} \|^{2} + \| \bm{\mathcal{R}}^{\eps} \|^{2} \right] &= 0 ,
    \end{align}
    uniformly for $\vis \in (0, \vis_{0}]$.
\end{assumption}
Assumption~\ref{ass:consistency} is the weak-form statement of the limit consistency of the scheme: The exact discrete moment balances of \eqref{eq:lbIteration} evaluate, through the Chapman--Enskog expansion, to the incompressible NSE fluxes up to residuals. 
For the underlying BGK dynamics, this chain is established in~\cite[Theorem 2 and Propositions 1 and 3]{simonis2022limit}, where the lattice Boltzmann equation is shown to be limit consistent with the discrete velocity Boltzmann equation of order $\mathcal{O}(\eps^{2})$, the second-order moment closes with the Newtonian stress $\bm{P} = p \bm{I} - 2 \vis \rho \bm{D}(\bm{u})$, and the resulting weak formulation converges to the incompressible NSE. 
There, the residuals $\mathcal{R}_{0}^{\eps}$ and $\bm{\mathcal{R}}^{\eps}$ collect the corresponding truncation terms, which are formally of order $\mathcal{O}(\eps^{2})$. 
A rigorous counterpart for smooth solutions of BGK-type LBM is given in~\cite{junk2003rigorous}. 
For the dynamically relaxed KBC operator employed here, the analogous statement is open. 
To this end, we impose it as an assumption. 
The uniformity in $\vis$ makes the assumption applicable along the diagonal path $\vis_{\eps} = c \eps$.
\begin{assumption}[Uniform structure-function scaling]\label{ass:scaling}
    There exist $C > 0$ and $\beta \in (1/2, 3/2)$ such that the second-order structure functions of the discrete statistical solutions satisfy
    \begin{align}
        S^{2}(\tau, r) \leq C r^{\beta - 1/2} \quad \text{for all } \tau \in (0,T], \ r > 0,
    \end{align}
    uniformly along the considered sequence of solutions (in $\vis$ for the iterated path of Section~\ref{subsec:vanishingViscosityLimit}, in $\eps$ for the diagonal path of Section~\ref{subsec:diagonalLimit}). 
    By \cite[Lemma 4.7, Eq.~(4.35), and Theorem 2.4(iii)]{fjordholm2021vanishing}, this implies that every weak-$\ast$ limit point of the associated correlation measures is diagonally continuous.
\end{assumption}
Assumption~\ref{ass:scaling} is the discrete counterpart of the structure-function bound \cite[Remark 4.6]{fjordholm2021vanishing} through which the scaling assumption enters the proof of \cite[Theorem 4.8]{fjordholm2021vanishing}, and it is consistent with the K41 scaling observed for the computed solutions in Section~\ref{sec:numerics}.
Since $S^{2}(\tau,r)$ is obtained from the time-local structure functions \eqref{eq:structuresLocal} by integration over $(0,\tau]$, a bound $S^{2}(r,t) \leq C r^{2\beta - 1}$ on the latter that is uniform in $t \in (0,T]$ implies the stated bound up to a factor $T^{1/2}$.
The numerical evaluation in Section~\ref{sec:numerics} is performed time-locally.

It is noteworthy that the passage from finite to infinite sample size is unconditional and can be condensed into the following statement. 
\begin{lemma}[Empirical consistency]\label{lem:empirical}
    Fix $\eps$ (equivalently $N$), $\vis > 0$, and $t \in I$. 
    Then, as $M \to \infty$, the empirical measures \eqref{eq:statSolDisc} converge weakly to $\mu_{t}^{\vis, N} = S^{\eps}(t) \# \mu_{0}^{\eps}$, almost surely with respect to the law of the IID sample sequence.
\end{lemma}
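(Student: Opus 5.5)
The plan is to reduce the statement to Varadarajan's theorem: on a separable metric space, the empirical measures of an IID sequence converge weakly to the common law almost surely. Fix $\eps$, $\vis$, and $t$. The fully discrete KBC scheme \eqref{eq:lbIteration} with the entropic selection \eqref{eq:entropyControl} is a deterministic map on the finite-dimensional lattice state space. Composed with the equilibrium initialization and the velocity moment, it defines the solution operator $S^{\eps}(t)$ taking an initial datum to the piecewise constant field $P^{\eps}\bm{u}^{\eps}(t)$. This field lies in a finite-dimensional subspace of $L^{2}(\Omega;U)$, which we endow with the $L^2$ norm.

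First I check that $S^{\eps}(t)$ is Borel measurable. The number of steps $t/\triangle t$ is finite. Each step is a composition of streaming, which is a linear permutation of nodes, and collision, which is a rational and hence continuous function of the populations wherever $\rho>0$ and $\langle \bm{f}^{\mathrm{neq}}_{\mathrm{hom}}\vert\bm{f}^{\mathrm{neq}}_{\mathrm{hom}}\rangle\neq 0$. Off this set, the implementation uses a fixed fallback value, for example $\gamma=2$. The map is therefore piecewise continuous on Borel pieces, so it is Borel. The initialization is a projection onto grid averages followed by \eqref{eq:equilibrium}, which is continuous from $\hdiv$. Hence $\bm{u}_m^N(t)=S^{\eps}(t)\bm{u}_{0,m}$ are IID random elements of a separable space with law $S^{\eps}(t)\#\mu_0^{\eps}$.

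Next I apply Varadarajan's theorem. A countable convergence-determining class of bounded Lipschitz functions $\{\beth_k\}$ exists on the separable space $L^2$. For each $k$, the strong law of large numbers gives
\begin{align}
\frac1M\sum_{m=1}^M \beth_k(\bm{u}_m^N(t)) \to \int \beth_k \,\mathrm{d}(S^{\eps}(t)\#\mu_0^{\eps})
\end{align}
almost surely. Intersecting these countably many full-measure events yields a single full-measure event on which convergence holds for all $k$, and hence weak convergence. Alternatively, the target space is finite dimensional, so one can take the functions $\beth_k$ as a dense family of $C_c$ functions together with tightness of the single limiting law.

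The main obstacle is the measurability and well-definedness of $S^{\eps}(t)$ around the degenerate entropic denominator. I would handle it by fixing the fallback convention explicitly, as above. No assumption from Assumptions~\ref{ass:lowMach}--\ref{ass:scaling} is needed, which is why the statement is unconditional. If one prefers weak convergence as measures on $\hdiv$, where the discrete fields are only approximately divergence-free, it suffices to view them in $L^2(\Omega;U)$ with the same argument.
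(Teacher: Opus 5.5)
Your proposal is correct and follows the paper's route: the samples are IID images of IID initial data under the deterministic, measurable map $S^{\eps}(t)\circ I^{\eps}$, and Varadarajan's theorem on the separable space $L^{2}(\Omega;U)$ then gives almost sure weak convergence; your sketch of that theorem, via a countable convergence-determining class of bounded Lipschitz functions and the strong law of large numbers, is also correct. Your measurability argument is more careful than the paper's remark that $S^{\eps}(t)\circ I^{\eps}$ is a finite composition of continuous maps, since the closed-form $\gamma$ in \eqref{eq:entropyControl} is only a rational function; note, though, that the degenerate set must also include the nodes where some $f_{i}^{\mathrm{eq}}$ vanishes, because these enter as denominators in $\langle\cdot\vert\cdot\rangle$, and your fallback convention handles that case in the same way.
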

\begin{proof}
    The samples $\bm{u}_{m}^{N}(t)$ are IID with law $\mu_{t}^{\vis, N}$, since they arise from IID initial data under the deterministic, measurable solution map $S^{\eps}(t) \circ I^{\eps}$ (a finite composition of continuous maps, see Section~\ref{subsec:wellprepared}). 
    The claim is Varadarajan's theorem on the separable metric space $L^{2}(\Omega; U)$~\cite{varadarajan1958convergence}. 
\end{proof}
\begin{proposition}[Conditional statistical convergence to Euler]\label{prop:weak-strong_discrete}
    Let Assumptions~\ref{ass:lowMach}--\ref{ass:scaling} hold, and assume that the limit families provided by Theorem~\ref{thm:nse_limit} satisfy the remaining conditions of \cite[Definition 3.6]{fjordholm2021vanishing}. 
    Then the following holds.
    \begin{enumerate}
        \item[(i)] For every $\vis > 0$, $N$, and $t \in I$, the empirical measures converge weakly, $\mu^{\vis,N,M}_{t} \rightharpoonup \mu^{\vis,N}_{t}$ as $M \to \infty$, $\mu_{0}^{\eps}$-almost surely.
        \item[(ii)] For every $\vis > 0$, there is a sequence $\eps_{k} \to 0$ such that the laws of $\tilde{\bm{u}}^{\eps_{k}}$ on $L^{2}(0,T; L^{2}(\Omega))$ converge weakly as $k \to \infty$ to the law of a field inducing a family $\bm{\mu}^{\vis}$ that satisfies the Foias--Temam Liouville formulation of \eqref{eq:incNSE}.
        \item[(iii)] There is a sequence $\vis_{l} \to 0$ such that the correlation measures of $\bm{\mu}^{\vis_{l}}$ converge weakly-$\ast$ as $l \to \infty$ to a statistical solution of the incompressible Euler equations in the sense of \cite{fjordholm2021vanishing}.
    \end{enumerate}
    Along the diagonal $\vis_{\eps} = c\eps$, the first equation of the Euler hierarchy is obtained under Assumptions~\ref{ass:lowMach}, \ref{ass:consistency}, and \ref{ass:scaling} alone (Proposition~\ref{thm:diagonal_limit}). 
    Whenever time-local measures converge weakly in (i) or (iii), the convergence holds equivalently in $W_{p}$, $p \in [1,2)$, by Lemma~\ref{prop:weak-strong_continous}, since the $p$th moments are uniformly integrable by Assumption~\ref{ass:lowMach}. 
\end{proposition}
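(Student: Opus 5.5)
The proof splits along the three items; each step invokes an earlier result or a standard compactness argument.

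\emph{Item (i).} This is Lemma~\ref{lem:empirical}. The samples are IID with law $S^{\eps}(t)\#\mu_0^{\eps}$, because the solution map is a finite composition of continuous maps and hence measurable. Varadarajan's theorem on the separable space $L^2(\Omega;U)$ then gives almost sure weak convergence. By Assumption~\ref{ass:lowMach}, the $L^\infty$ velocity bound gives a uniform $L^2$ bound on the samples. The $p$th moments, $p<2$, are therefore uniformly integrable, and Lemma~\ref{prop:weak-strong_continous} upgrades weak convergence to $W_p$ convergence.

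\emph{Item (ii).} This is the main step, and it rests on a tightness-plus-identification argument in the spirit of Aubin--Lions. Three uniform bounds, each with finite expectation, are needed.
\begin{enumerate}
\item[(a)] An $L^\infty(0,T;L^2)$ bound, taken directly from the velocity bound $C_G$ of Assumption~\ref{ass:lowMach}.
\item[(b)] A discrete $L^2(0,T;H^1)$-type bound. Sum the discrete $\mathcal{H}$-balance over time steps and use Assumption~\ref{ass:coercivity}. Since $\mathcal{H}$ is bounded below and, by (i), $\mathcal{H}-\mathcal{H}^{\mathrm{eq}}=\mathcal{O}(\eps^2)$ initially, the sum of $\mathcal{D}^\eps$ over the $T/\eps^2$ steps is $\mathcal{O}(\eps^2)$. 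Hence
\begin{align*}
  c\,\vis\,\eps^2\sum_n \eps^2\|\bm{\nabla}^\eps\tilde{\bm{u}}^\eps\|^2 \lesssim \eps^2 + C T\eps^{1/2},
\end{align*}
so that $\vis\int_0^T\|\bm{\nabla}^\eps\tilde{\bm{u}}^\eps\|^2\,\mathrm{d}t$ is bounded uniformly in $\eps$ for fixed $\vis$.
\item[(c)] A time-difference estimate in $H^{-3}$. It comes from the momentum balance \eqref{eq:momentumConsistency}: every flux term is bounded in $L^2(0,T;H^{-3})$ by (a), the pressure bound (ii) and the residual bound of Assumption~\ref{ass:consistency}. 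The Sobolev index $3>5/2$ makes $\nabla\bm{\phi}$ bounded in $L^\infty$.
\end{enumerate}
The piecewise-constant version of the Aubin--Lions--Simon lemma (in the Dreher--J\"ungel form for step functions) then yields relative compactness in $L^2(0,T;L^2)$ on sets of large probability. Tightness of the laws follows, and Prokhorov gives a subsequence $\eps_k$. Skorokhod representation upgrades this to almost sure strong convergence on a new probability space.

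Passing to the limit in \eqref{eq:massConsistency} with the residuals vanishing, $\rho^\eps\to1$ and bounded $\pi^\eps$ gives $\Div\bm{u}=0$. Testing \eqref{eq:momentumConsistency} with divergence-free $\bm{\phi}$ removes the pressure. The nonlinear term converges by strong $L^2$ convergence, using the $L^\infty$ bound. This produces a martingale-free (deterministic pathwise) weak NSE solution for each realization.

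The Foias--Temam Liouville formulation for cylindrical test functions then follows by the chain rule, applied to $\beth(\langle\bm{u},\bm{\phi}_1\rangle,\dots)$ and integrated against the law. The energy inequality and the continuity at $t=0$ are exactly what the hypothesis ``remaining conditions of \cite[Definition 3.6]{fjordholm2021vanishing}'' supplies. I expect the main obstacle to be (b), namely making the discrete entropy-to-energy identity and the commutation between $\bm{\nabla}^\eps$ and $P^\eps$ precise.

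\emph{Item (iii).} Assumption~\ref{ass:scaling} holds uniformly in $\vis$ for the families $\bm{\mu}^\vis$. This uniformity is inherited by the limits, by weak lower semicontinuity of $S^2$ under the $L^2$ convergence from (ii). One then applies \cite[Theorem 4.8]{fjordholm2021vanishing} verbatim: its proof uses the scaling only through the bound in Assumption~\ref{ass:scaling}, which gives diagonal continuity of limit correlation measures via \cite[Lemma 4.7]{fjordholm2021vanishing}.

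The diagonal remark is deferred to Proposition~\ref{thm:diagonal_limit}. There the $\vis\langle\tilde{\bm{u}},\Delta\bm{\phi}\rangle$ term vanishes trivially, since $\vis_\eps\to0$ and (a) holds, so no gradient bound, and hence no Assumption~\ref{ass:coercivity}, is needed.
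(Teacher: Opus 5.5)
Your proposal follows the paper's route. The paper's proof of this proposition only assembles earlier results: Lemma~\ref{lem:empirical} gives (i), Theorem~\ref{thm:nse_limit} gives (ii), Corollary~\ref{thm:eulerConsistency} gives (iii), Proposition~\ref{thm:diagonal_limit} gives the diagonal statement, and \eqref{eq:sup_bound} supplies the uniform integrability for the $W_p$ upgrade. Your item (ii) re-derives Theorem~\ref{thm:nse_limit} with the same ingredients: entropy telescope plus coercivity, the $H^{-3}$ time-difference bound, Dreher--J\"ungel compactness, tightness, Prokhorov and Skorokhod.

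\textbf{Correction in step (b).} The exact balance is
$\mathcal{H}_{\mathrm{rel}}(T)+\eps^{2}\sum_{n}\mathcal{D}^{\eps}(\bm{f}(t_n))=\mathcal{H}_{\mathrm{rel}}(0)\le C_{\mathrm{wp}}\eps^{2}$.
So it is the $\eps^{2}$-weighted sum of $\mathcal{D}^{\eps}$ that is $\mathcal{O}(\eps^2)$; the plain sum is only $\mathcal{O}(1)$.

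The coercivity remainder $C\eps^{5/2}$ is summed over $T/\eps^{2}$ steps, each carrying weight $\eps^{2}$. It therefore contributes $CT\eps^{5/2}$, not $CT\eps^{1/2}$.

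As you wrote it, dividing by $c\eps^{2}$ would give $\vis\int_0^T\|\bm{\nabla}^{\eps}\tilde{\bm{u}}^{\eps}\|_{L^2}^2\,\mathrm{d}t\lesssim 1+CT\eps^{-3/2}$, which is unbounded. The correct remainder gives the uniform bound $(C_{\mathrm{wp}}+CT\eps^{1/2})/c$. This is exactly why Assumption~\ref{ass:coercivity} needs an error exponent strictly larger than $2$.

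\textbf{Smaller points.}
\begin{itemize}
\item The momentum balance controls $\partial_t^{\eps}\bm{m}^{\eps}$, not $\partial_t^{\eps}\tilde{\bm{u}}^{\eps}$, so the compactness argument has to be run on $\bm{m}^{\eps}$. The paper bounds $\bm{\nabla}^{\eps}\bm{m}^{\eps}$ via the discrete product rule and an inverse inequality for $\rho^{\eps}-1$. It then transfers tightness to $\tilde{\bm{u}}^{\eps}$ using $\|\bm{m}^{\eps}-\tilde{\bm{u}}^{\eps}\|_{L^\infty(0,T;L^2)}\le C\eps^{2}$.
\item The obstacle you anticipate, $\bm{\nabla}^{\eps}$ versus $P^{\eps}$, is not resolved through an $H^1$ embedding. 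The paper uses the translation estimate \eqref{eq:translationEstimate} for piecewise constant functions together with Kolmogorov--Riesz.
\item In (iii), you transfer Assumption~\ref{ass:scaling} from the discrete solutions to the limit families by lower semicontinuity. This is valid, since the structure-function functional is continuous and nonnegative on $L^2(0,T;L^2)$, and it is a small addition. The paper instead imposes the bound directly on the limit families in Corollary~\ref{thm:eulerConsistency}.
\end{itemize}
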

The proof is given at the end of Section~\ref{subsec:diagonalLimit}. 
\begin{remark}
    The entropic stabilization of the KBC LBM provides the bounds used along the inviscid limit, but it does not guarantee convergence to a \textit{strong} statistical solution of the Euler equations. 
    To the knowledge of the authors, in three dimensions, the global existence of strong Euler solutions is open. 
    What can be expected is convergence to an admissible (dissipative) statistical solution, i.e., a limit measure satisfying the mean energy inequality. 
    Under Assumption~\ref{ass:coercivity}, the entropy balance \eqref{eq:entropyTelescope} suggests that the computed limits are admissible, but we do not verify this here (see Remark~\ref{rem:afterThm4.1} and the end of the proof of Proposition~\ref{thm:diagonal_limit}). 
    For admissible limits, the weak-strong uniqueness principle of~\cite{brenier2011weak,lanthaler2021statistical}, recalled in Proposition~\ref{thm:weakStrong}, applies: If a strong Euler solution exists for the given initial data, the limit measure coincides with it on its interval of existence.
\end{remark}
Recall that \(\Omega = \mathbb{T}^{3}\) (Section~\ref{sec:mathmodels}). 
Due to the lack of global uniqueness of strong solutions in 3D, we utilize the measure-valued Foias--Temam framework~\cite{foias2001navier}, tracking a probability measure $\mu_t^\vis$ on the space of divergence-free vector fields. 
In the inviscid limit, we adopt the FMW multi-point correlation framework~\cite{fjordholm2021vanishing}. 
We consider two paths: First, the iterated limit (fixing $\vis > 0$ to recover the NSE statistical solutions, then taking $\vis \searrow 0$), and second, the direct diagonal path ($\vis_\eps \sim \eps \to 0$), which leads to the FMW statistical Euler hierarchy under Assumptions~\ref{ass:lowMach}, \ref{ass:consistency}, and \ref{ass:scaling}, i.e., without Assumption~\ref{ass:coercivity}.

\subsection{Discretization and well-prepared initial data}\label{subsec:wellprepared}

To prevent the formation of $\mathcal{O}(1)$ acoustic waves that would destroy the analytical compactness required for the incompressible diffusive limit, the initial data must be well-prepared. 
We formally initialize the LBM ensemble using a Leray-projected randomized TGV (RTGV). 
Let $\bm{u}_{0}^{\mathrm{det}} \colon \Omega \to \mathbb{R}^{3}$ denote the deterministic TGV flow (details are given in Section~\ref{sec:numerics}). 
The TGV initial condition is perturbed with IID random variables $X_{\alpha,i,j,k} \sim \mathcal{U}_{[-\mathfrak{w},\mathfrak{w}]}$ to obtain $\bm{u}_{0} = \bm{u}_{0}^{\mathrm{det}} + \mathbf{P}\bm{\mathfrak{s}}$, where $\mathbf{P}$ is the Leray projection. 
The pre-projected perturbation $\bm{\mathfrak{s}} = (\mathfrak{s}_{\alpha})_{1\leq\alpha\leq 3}$ is
\begin{align}\label{eq:alpha_perturbation}
\mathfrak{s}_{\alpha} = \frac{1}{8} \sum\limits_{(i,j,k) \in \{0,1\}^3} X_{\alpha,i,j,k} \mathfrak{a}_{i}(2x) \mathfrak{a}_{j}(2y) \mathfrak{a}_{k}(2z),
\end{align}
with $\mathfrak{a}_{0}(x) = \sin(x)$ and $\mathfrak{a}_{1}(x) = \cos(x)$. 
Note that $\bm{u}_{0} \in L^{\infty}(\Omega; U)$ holds almost surely with a deterministic bound, since $\bm{u}_{0}^{\mathrm{det}}$ is a fixed trigonometric field and the noise has compact support. 
The KBC LBM algorithm \eqref{eq:kbcCollision} is initialized with the equilibrium populations \eqref{eq:equilibrium} based on
\begin{align}\label{eq:initialPopulations}
    \bm{f}_{0} (\bm{x})= \bm{f}^{\mathrm{eq}}(1,\bm{u}_{0}(\bm{x})) . 
\end{align}

The initial measure and its evolution in time are both push-forwards, and we separate the two as follows. 
First, the initial probability measure $\mu_0^\eps$ is constructed as the push-forward of a base noise distribution, specifically, the independent tensor product of the $24$ uniform random variables $\bigotimes_{\alpha=1}^{3} \bigotimes_{(i,j,k) \in \{0,1\}^{3}} \mathcal{U}_{[-\mathfrak{w},\mathfrak{w}]}$, under an initialization mapping $I^\eps$. 
This mapping assembles the perturbation \eqref{eq:alpha_perturbation}, applies the Leray projection $\mathbf{P}$, restricts to the lattice $\Lambda_{\eps}$, and generates the initial lattice populations via the equilibrium initialization \eqref{eq:initialPopulations}. 
This yields
\begin{equation}
    \mu_0^\eps = I^\eps \# \left( \bigotimes_{\alpha=1}^{3} \bigotimes_{(i,j,k) \in \{0,1\}^{3}} \mathcal{U}_{[-\mathfrak{w},\mathfrak{w}]} \right).
\end{equation}
Subsequently, the time-dependent statistical solution $\mu_t^\eps$ is defined as the push-forward of this initial measure under the discrete solution operator $S^\eps(t)$ of the lattice Boltzmann scheme. 
Since $S^{\eps}(t)$ is a well-defined deterministic mapping, this is an exact identity rather than an approximation (the approximation enters only through the finite sample size $M$ in \eqref{eq:statSolDisc}). 
Consequently, the statistical flow state at any time $t$ is the mapping of the raw initial noise through the composed initialization and time-stepping operators
\begin{equation}
    \mu_t^\eps = S^\eps(t) \# \mu_0^\eps = \left( S^\eps(t) \circ I^\eps \right) \# \left( \bigotimes_{\alpha=1}^{3} \bigotimes_{(i,j,k) \in \{0,1\}^{3}} \mathcal{U}_{[-\mathfrak{w},\mathfrak{w}]} \right).
\end{equation}
This composition tracks how the initial randomness propagates through the deterministic dynamics of the LBM to form the time-evolving statistical ensemble.

The equilibrium initialization is well-prepared in the following quantitative, entropic sense. 
For lattice populations $\bm{f}$, define the local and global (Bregman) relative entropies
\begin{align}
    \mathcal{H}_{\mathrm{rel,loc}}(\bm{f}) &\coloneqq \sum_{i=0}^{q-1} \left[ f_i \ln\left(\frac{f_i}{w_i}\right) - f_{i} + w_{i} \right] \geq 0 , \label{eq:relEntropy1}
    \\
    \mathcal{H}_{\mathrm{rel}}(t) &\coloneqq \eps^{3} \sum_{\bm{x} \in \Lambda_\eps} \mathcal{H}_{\mathrm{rel,loc}}(\bm{f}(\bm{x},t)) ,
    \label{eq:relEntropy2}
\end{align}
respectively, where the non-negativity of the integrand follows from the convexity of $x \mapsto x \ln x$. 
On any interval $f_{i}/w_{i} \in [1/2, 3/2]$, the integrand is two-sided comparable to its quadratic expansion,
\begin{align}\label{eq:bregmanEquivalence}
    \frac{1}{3} w_{i} \left( \frac{f_{i}}{w_{i}} - 1 \right)^{2} \leq f_i \ln\left(\frac{f_i}{w_i}\right) - f_{i} + w_{i} \leq w_{i} \left( \frac{f_{i}}{w_{i}} - 1 \right)^{2} .
\end{align}
\begin{lemma}[Well-prepared initialization]\label{lem:wellprepared}
    There exist deterministic constants $C_{\mathrm{wp}} > 0$ and $\eps_{1} > 0$ such that, $\mu_{0}^{\eps}$-almost surely, $\mathcal{H}_{\mathrm{rel}}(0) \leq C_{\mathrm{wp}} \eps^{2}$ for all $\eps \in (0, \eps_{1}]$.
\end{lemma}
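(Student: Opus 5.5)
The plan is to bound the relative entropy at $t=0$ node by node, using the explicit equilibrium initialization \eqref{eq:initialPopulations}, and then sum over the lattice. First I would establish a deterministic $L^{\infty}$ bound on the initial velocity. The pre-projected perturbation \eqref{eq:alpha_perturbation} is a finite trigonometric polynomial with Fourier modes in $\{\pm 2\}^{3}$ and coefficients bounded by $\mathfrak{w}/8$. The Leray projection $\mathbf{P}$ acts mode-wise as the orthogonal projector $\mathbf{I}-\bm{k}\otimes\bm{k}/|\bm{k}|^{2}$, so it does not increase the modulus of any coefficient vector. Hence $\mathbf{P}\bm{\mathfrak{s}}$ is again a trigonometric polynomial on the same finitely many modes, and $|\mathbf{P}\bm{\mathfrak{s}}(\bm{x})|\le C\mathfrak{w}$ pointwise, surely. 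Together with the fixed trigonometric TGV field this gives $\|\bm{u}_{0}\|_{L^{\infty}}\le U_{\infty}$ with $U_{\infty}$ deterministic. Restricting to $\Lambda_{\eps}$ preserves this bound.

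Second, I would make the diffusive (low Mach) scaling explicit. In lattice units the velocity entering \eqref{eq:equilibrium} is $\bm{u}^{\mathrm{lat}}=\eps\,\bm{u}_{0}$, because $\triangle t\sim\eps^{2}$ and $\triangle x=\eps$, so the lattice Mach number is $\mathcal{O}(\eps)$. This is the same convention under which Assumption~\ref{ass:lowMach}(i) is formulated. With $\rho=1$, \eqref{eq:equilibrium} then gives $f_{i}/w_{i}-1=\eps g_{i}$ with
\begin{align*}
g_{i}=3\,c_{i\alpha}u_{0,\alpha}+\tfrac{9}{2}\,\eps\,(c_{i\alpha}c_{i\beta}-\tfrac13\delta_{\alpha\beta})u_{0,\alpha}u_{0,\beta}.
\end{align*}
Since $|\bm{c}_{i}|\le\sqrt{2}$ for $D3Q19$, we get $|g_{i}|\le G_{0}\coloneqq 3\sqrt{2}\,U_{\infty}+C U_{\infty}^{2}$ for $\eps\le 1$, again deterministically. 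I expect this to be the main obstacle. The only real issue is to state clearly that \eqref{eq:initialPopulations} is read in lattice units with the $\eps$-scaled velocity; if the velocity were not rescaled, $f_{i}/w_{i}-1$ would be $\mathcal{O}(1)$ and no $\eps^{2}$ bound could hold.

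Third, I would choose $\eps_{1}=\min\{1,1/(2G_{0})\}$, so that $f_{i}/w_{i}\in[1/2,3/2]$ at every node. The upper bound in \eqref{eq:bregmanEquivalence} then yields
\begin{align*}
\mathcal{H}_{\mathrm{rel,loc}}(\bm{f}_{0}(\bm{x}))\le\sum_{i}w_{i}\eps^{2}g_{i}^{2}\le G_{0}^{2}\eps^{2},
\end{align*}
using $\sum_{i}w_{i}=1$. Summing with weight $\eps^{3}$ over the $N^{3}$ nodes of $\Lambda_{\eps}$ gives $\eps^{3}N^{3}=|\Omega|=(2\pi)^{3}$. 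Therefore $\mathcal{H}_{\mathrm{rel}}(0)\le(2\pi)^{3}G_{0}^{2}\eps^{2}$, and $C_{\mathrm{wp}}=(2\pi)^{3}G_{0}^{2}$. All constants depend only on $\bm{u}_{0}^{\mathrm{det}}$ and $\mathfrak{w}$, so the bound holds $\mu_{0}^{\eps}$-almost surely (indeed surely).
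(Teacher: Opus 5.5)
Your proof is correct and follows essentially the paper's argument: read the equilibrium initialization with the diffusively scaled lattice velocity $\eps\bm{u}_{0}$, use the deterministic $L^{\infty}$ bound on $\bm{u}_{0}$ to get $|f_{0,i}/w_{i}-1|\le 1/2$ for small $\eps$, then apply the upper Bregman bound \eqref{eq:bregmanEquivalence} and sum over the $N^{3}$ lattice nodes. The differences are minor. The paper sharpens the constant through $\sum_{i} w_{i} c_{i\alpha}c_{i\beta}=c_{s}^{2}\delta_{\alpha\beta}$, which ties it to the discrete kinetic energy, whereas you use the cruder $\sum_{i} w_{i} g_{i}^{2}\le G_{0}^{2}$; you also justify the $L^{\infty}$ bound after the Leray projection via the finite Fourier support, which the paper merely asserts.
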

\begin{proof}
    By \eqref{eq:initialPopulations} and \eqref{eq:equilibrium} evaluated at unit density and lattice velocity $\eps \bm{u}_{0}$ (diffusive scaling, see also the scaled discrete Maxwellian in~\cite[Definition 13]{simonis2022limit}), we have 
    \begin{align}
        f_{0,i}/w_{i} - 1 = \eps c_{s}^{-2} \bm{c}_{i} \cdot \bm{u}_{0} + \eps^{2} Q_{i}(\bm{u}_{0})
    \end{align}
    with $Q_{i}$ quadratic. 
    Further, since $\|\bm{u}_{0}\|_{L^{\infty}} \leq C$ holds almost surely with a deterministic constant, we obtain 
    \begin{align}
        \vert f_{0,i}/w_{i} - 1 \vert \leq C' \eps \leq 1/2
    \end{align} 
    for $\eps \leq \eps_{1}$. 
    The upper bound in \eqref{eq:bregmanEquivalence} then yields
    \begin{align}
        \mathcal{H}_{\mathrm{rel}}(0) &\leq \eps^{3} \sum_{\bm{x}} \sum_{i} w_{i} \left( \eps c_{s}^{-2} \bm{c}_{i} \cdot \bm{u}_{0} + \eps^{2} Q_{i} \right)^{2} \\
        &\leq C \eps^{2} \, \eps^{3} \sum_{\bm{x}} \vert \bm{u}_{0}(\bm{x}) \vert^{2} \left( 1 + C \eps \right) \\
        &\leq C_{\mathrm{wp}} \eps^{2} ,
    \end{align}
    using $\sum_{i} w_{i} c_{i\alpha} c_{i\beta} = c_{s}^{2} \delta_{\alpha\beta}$ and the almost sure uniform bound on $\bm{u}_{0}$.
\end{proof}

\subsection{Convergence to statistical Navier--Stokes solutions (fixed viscosity)}

We analyze the numerical limit where the viscosity $\vis > 0$ is fixed and the lattice spacing $\eps$ (and with it the time step $\triangle t = \eps^2$ in diffusive scaling) is variable. 
Before passing to any limit, we establish stability bounds for the discrete scheme. 
The foundation of our analysis relies on the discrete entropy structure of the KBC collision operator.
\begin{definition}[Discrete entropy production] \label{def:entropy_production}
    Let $\bm{f}(\bm{x}, t) = (f_i(\bm{x}, t))_{i=0}^{q-1}$ denote the vector of discrete populations at a lattice node $\bm{x} \in \Lambda_\eps$. 
    The local discrete entropy production rate $\mathcal{D}^\eps_{\mathrm{loc}}$, driven by the multi-relaxation collision operator \eqref{eq:kbcCollision}, is defined as the rate of relative-entropy dissipation during the relaxation step, i.e.,
    \begin{equation} \label{eq:local_dissipation}
        \mathcal{D}^\eps_{\mathrm{loc}}(\bm{f}(\bm{x}, t)) \coloneqq - \frac{1}{\eps^2} \Big( \mathcal{H}_{\mathrm{rel,loc}}(\bm{f}^\star(\bm{x}, t)) - \mathcal{H}_{\mathrm{rel,loc}}(\bm{f}(\bm{x}, t)) \Big),
    \end{equation}
    where $\triangle t = \eps^2$ and $\bm{f}^\star$ is the post-collision state vector. 
    Since the collision conserves the local mass $\sum_{i} f_{i}$, the definition agrees with the rate computed from the local $\mathcal{H}$-function $\sum_{i} f_{i} \ln (f_{i}/w_{i})$, which differs from $\mathcal{H}_{\mathrm{rel,loc}}$ in \eqref{eq:relEntropy1} only by that conserved mass. 
    The exact entropic selection of \(\gamma\) via the minimization \eqref{eq:kbc_entropy} guarantees $\mathcal{D}^\eps_{\mathrm{loc}} \geq 0$~\cite{karlin2014gibbs}. 
    For the closed-form approximation \eqref{eq:entropyControl}, the corresponding quantitative statement is Assumption~\ref{ass:coercivity}. 
    The volume-weighted summation of the local rates yields the global discrete entropy production
    \begin{equation}
        \mathcal{D}^\eps(\bm{f}(t)) \coloneqq \eps^{3} \sum_{\bm{x} \in \Lambda_\eps} \mathcal{D}^\eps_{\mathrm{loc}}(\bm{f}(\bm{x}, t))
    \end{equation}
    over the lattice $\Lambda_\eps$.
\end{definition}
\begin{lemma}[Uniform discrete bounds] \label{lem:energy}
    Under Assumption~\ref{ass:lowMach}, the macroscopic fields satisfy $\mu_{0}^{\eps}$-almost surely and uniformly in $\eps \leq \min(\eps_{0}, \eps_{1}, 1/(2G))$ and $\vis \in (0, \vis_{0}]$,
    \begin{equation}\label{eq:sup_bound}
        \sup_{t \in [0,T]} \|\tilde{\bm{u}}^\eps(t)\|_{L^{\infty}(\Omega)} \leq C_{G} , 
        \qquad 
        \sup_{t \in [0,T]} \| \rho^{\eps}(t) - 1 \|_{L^{\infty}(\Omega)} \leq G \eps^{2} .
    \end{equation}
    If, in addition, Assumption~\ref{ass:coercivity} holds, then there exists $C > 0$, independent of $\eps$ and $\vis$, such that $\mu_{0}^{\eps}$-almost surely,
    \begin{equation} \label{eq:energy_bound_as}
        \vis \int_0^T \|\bm{\nabla}^\eps \tilde{\bm{u}}^\eps(t^{\prime})\|_{L^2(\Omega)}^2 \,\mathrm{d}t^{\prime} \leq C . 
    \end{equation}
    In particular, the expectation bound
    \begin{equation} \label{eq:energy_bound}
        \E_{\mu_0^\eps} \left[ \sup_{t \in [0,T]} \|\tilde{\bm{u}}^\eps(t)\|_{L^2(\Omega)}^2 + \vis \int_0^T \|\bm{\nabla}^\eps \tilde{\bm{u}}^\eps(t^{\prime})\|_{L^2(\Omega)}^2 \,\mathrm{d}t^{\prime} \right] \leq C
    \end{equation}
    holds.
\end{lemma}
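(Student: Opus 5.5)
The first pair of bounds in \eqref{eq:sup_bound} needs no dynamics beyond Assumption~\ref{ass:lowMach}. Part (ii) of that assumption is, after taking the supremum over nodes and the discrete times in $I_{\triangle t}$, exactly the density statement. The piecewise constant extension $P^{\eps}$ does not increase $L^{\infty}$ norms, so the bound transfers to all $t\in[0,T]$.

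For the velocity, I would use the representation $f_i^{\eps}=w_i(1+\eps g_i^{\eps})$. Because $\sum_i w_i\bm{c}_i=\bm{0}$, we get $\rho^{\eps}\bm{u}^{\eps}=\eps\sum_i w_i g_i^{\eps}\bm{c}_i$, which is $\eps$ times the macroscopic velocity in lattice units. This gives $|\tilde{\bm{u}}^{\eps}|\le G\sum_i w_i|\bm{c}_i|/\rho^{\eps}$ in macroscopic units. For $\eps\le \min(\eps_0,1/(2G))$ we have $\rho^{\eps}\ge 1-G\eps^2\ge 1/2$, so $C_G=2G\sum_i w_i|\bm{c}_i|$ works. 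The restriction $\eps\le\eps_1$ is included only so that Lemma~\ref{lem:wellprepared} is available later. Since $\Omega=\mathbb{T}^3$ has finite volume, the $L^\infty$ bound immediately gives $\sup_t\|\tilde{\bm{u}}^{\eps}(t)\|_{L^2}^2\le (2\pi)^3C_G^2$ almost surely, and hence in expectation.

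For \eqref{eq:energy_bound_as} I would telescope the relative entropy. One time step splits into collision and streaming. Streaming is a permutation of node values on the periodic lattice for each $i$, so it leaves $\mathcal{H}_{\mathrm{rel}}$ unchanged. By Definition~\ref{def:entropy_production}, collision changes $\mathcal{H}_{\mathrm{rel}}$ by exactly $-\eps^2\mathcal{D}^{\eps}(\bm{f}(t))$. Summing over the $T/\eps^2$ steps gives
\begin{align}
\mathcal{H}_{\mathrm{rel}}(T)+\eps^2\sum_{t\in I_{\triangle t}}\mathcal{D}^{\eps}(\bm{f}(t))=\mathcal{H}_{\mathrm{rel}}(0).
\end{align}
The first term on the left is nonnegative, and the right-hand side is at most $C_{\mathrm{wp}}\eps^2$ by Lemma~\ref{lem:wellprepared}. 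Inserting Assumption~\ref{ass:coercivity} yields
\begin{align}
c\,\vis\,\eps^2\cdot\eps^2\sum_{t}\|\bm{\nabla}^{\eps}\tilde{\bm{u}}^{\eps}(t)\|_{L^2}^2\le C_{\mathrm{wp}}\eps^2+C\eps^{5/2}\cdot\eps^2\cdot\frac{T}{\eps^2}.
\end{align}
The Riemann sum $\eps^2\sum_t$ equals the time integral of the piecewise constant extension. Dividing by $c\,\eps^2$ gives
\begin{align}
\vis\int_0^T\|\bm{\nabla}^{\eps}\tilde{\bm{u}}^{\eps}\|_{L^2}^2\,\mathrm{d}t'\le (C_{\mathrm{wp}}+CT\eps^{1/2})/c .
\end{align}
This is uniform in $\eps\le 1$ and in $\vis$, and it holds almost surely. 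Taking expectations of the deterministic bounds then gives \eqref{eq:energy_bound}.

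The main obstacle is the bookkeeping of the $\eps$-powers. The key point is that the well-prepared entropy is $\mathcal{O}(\eps^2)$, which matches the factor $\eps^2$ in the coercivity. I would check carefully that this scaling is consistent, i.e.\ that $\mathcal{H}_{\mathrm{rel}}\sim\eps^2\|\bm{u}\|^2$ under Assumption~\ref{ass:lowMach}(i), so that the error term $\eps^{5/2}$ accumulated over $T\eps^{-2}$ steps really is $o(\eps^2)$. The other point to verify is that streaming exactly preserves $\mathcal{H}_{\mathrm{rel}}$. This relies on periodicity: the shifts $\bm{x}\mapsto\bm{x}+\triangle t\,\bm{c}_i$ must map $\Lambda_{\eps}$ bijectively onto itself.
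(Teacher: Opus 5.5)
Your proposal is correct and follows the paper's proof essentially step for step. The density bound is read off from Assumption~\ref{ass:lowMach}(ii), and the velocity bound comes from $\sum_i w_i\bm{c}_i=\bm{0}$ together with $\rho^{\eps}\ge 1/2$. The $L^2$ bound follows from the finite volume of $\mathbb{T}^3$, and the dissipation bound comes from telescoping the exact entropy balance (streaming conserves $\mathcal{H}_{\mathrm{rel}}$, collision removes $\eps^2\mathcal{D}^{\eps}$), combined with Lemma~\ref{lem:wellprepared} and Assumption~\ref{ass:coercivity}; you arrive at the same constant $(C_{\mathrm{wp}}+CT\eps^{1/2})/c$.

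One small point, which the paper shares: $\eps\le 1/(2G)$ alone does not give $1-G\eps^2\ge 1/2$ when $G<1/2$. However, $\rho^{\eps}=\sum_i f_i^{\eps}\ge \sum_i w_i/2=1/2$ follows directly from $f_i^{\eps}\ge w_i/2$, i.e., from Assumption~\ref{ass:lowMach}(i).
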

\begin{proof}
    All estimates are pathwise, with deterministic constants, such that \eqref{eq:energy_bound} then follows by taking the expectation. 
    
    For \eqref{eq:sup_bound}, Assumption~\ref{ass:lowMach}(ii) is the second bound directly. 
    For the first, we write $f_{i} = w_{i}(1 + \eps g_{i}^{\eps})$ as in Assumption~\ref{ass:lowMach}(i). 
    Using $\sum_{i} \bm{c}_{i} w_{i} = \bm{0}$, the momentum moment reads 
    \begin{align}
        \rho^{\eps} \tilde{u}^{\eps}_{\alpha} = \sum_{i} w_{i} g_{i}^{\eps} c_{i\alpha} ,
    \end{align} 
    hence
    \begin{align}
        \vert \rho^{\eps} \tilde{u}^{\eps}_{\alpha} \vert \leq G \sum_{i} w_{i} \vert c_{i\alpha} \vert \leq C G ,
    \end{align} 
    and 
    \begin{align}
        \rho^{\eps} \geq 1 - G\eps^{2} \geq 1/2
    \end{align}
    yields 
    \begin{align}
        \|\tilde{\bm{u}}^{\eps}\|_{L^{\infty}} \leq C_{G}.
    \end{align}

    The kinetic energy bound in \eqref{eq:energy_bound} follows from 
    \begin{align}
        \|\tilde{\bm{u}}^{\eps}(t)\|_{L^{2}(\Omega)} \leq \vert \Omega \vert^{1/2} C_{G} .
    \end{align}

    For the dissipation bound \eqref{eq:energy_bound_as}, we use the exact entropy balance of the scheme. 
    The collision step changes $\mathcal{H}_{\mathrm{rel}}$ by exactly $-\eps^{2} \mathcal{D}^{\eps}(\bm{f}(t_{n}))$ per Definition~\ref{def:entropy_production}, while the streaming step merely permutes the population values over the periodic lattice and hence conserves $\mathcal{H}_{\mathrm{rel}}$ exactly. 
    Telescoping over the time steps $t_{n} = n \eps^{2}$ yields the identity
    \begin{align}\label{eq:entropyTelescope}
        \mathcal{H}_{\mathrm{rel}}(T) + \eps^{2} \sum_{n \,:\, n\eps^{2} < T} \mathcal{D}^{\eps}(\bm{f}(t_{n})) = \mathcal{H}_{\mathrm{rel}}(0) .
    \end{align}
    Since $\mathcal{H}_{\mathrm{rel}}(T) \geq 0$ by \eqref{eq:relEntropy1} and \eqref{eq:relEntropy2}, Lemma~\ref{lem:wellprepared} gives 
    \begin{align}
        \eps^{2} \sum_{n} \mathcal{D}^{\eps}(\bm{f}(t_{n})) \leq C_{\mathrm{wp}} \eps^{2} .
    \end{align}
    Inserting the lower bound of Assumption~\ref{ass:coercivity} and using that the piecewise constant time extension turns the sum into the exact integral, 
    \begin{align} 
        \eps^{2} \sum_{n} \|\bm{\nabla}^{\eps} \tilde{\bm{u}}^{\eps}(t_{n})\|_{L^{2}}^{2} = \int_{0}^{T} \|\bm{\nabla}^{\eps} \tilde{\bm{u}}^{\eps}(t^{\prime})\|_{L^{2}}^{2} \,\mathrm{d}t^{\prime}, 
    \end{align}
    we obtain
    \begin{align}
        c \, \vis \, \eps^{2} \int_{0}^{T} \|\bm{\nabla}^\eps \tilde{\bm{u}}^\eps(t^{\prime})\|_{L^2(\Omega)}^2 \,\mathrm{d}t^{\prime} \leq C_{\mathrm{wp}} \eps^{2} + C T \eps^{5/2} ,
    \end{align}
    and dividing by $c \eps^{2}$ yields \eqref{eq:energy_bound_as} with $C = (C_{\mathrm{wp}} + C T \eps_{0}^{1/2})/c$. 
    We remark that, by \eqref{eq:bregmanEquivalence} and Assumption~\ref{ass:lowMach}, the relative entropy is two-sided comparable to the discrete kinetic-energy-type quantity $\eps^{2} \cdot \eps^{3} \sum_{\bm{x}} \sum_{i} w_{i} (g_{i}^{\eps})^{2}$, which links the entropic and energetic viewpoints. 
\end{proof}
\begin{remark}
    The KBC collision bounds the non-hydrodynamic moments through the approximate entropy inequality of Definition~\ref{def:entropy_production}, which keeps the ghost moments from growing. 
    In the derivation above, this structural property enters through exactly two assumptions: the persistence of the low Mach regime (Assumption~\ref{ass:lowMach}) and the quantitative coercivity of the entropy production on the shear moments (Assumption~\ref{ass:coercivity}). 
    We point out that both are consequences of the exact entropic construction at the formal Chapman--Enskog level but neither is proven for the fully discrete KBC dynamics in 3D.
\end{remark}
While Lemma~\ref{lem:energy} provides spatial compactness via the discrete $H^1$-norm, applying classical compactness theorems requires simultaneous control over the temporal oscillations. 
We secure this by bounding the discrete time derivative of the momentum $\bm{m}^{\eps} \coloneqq \rho^{\eps} \tilde{\bm{u}}^{\eps}$ in a dual space. 
Note that 
\begin{align}
    \|\bm{m}^{\eps} - \tilde{\bm{u}}^{\eps}\|_{L^{\infty}(0,T; L^{2}(\Omega))} \leq \|\rho^{\eps} - 1\|_{L^{\infty}} \|\tilde{\bm{u}}^{\eps}\|_{L^{2}} \leq C \eps^{2}
\end{align}
by Lemma~\ref{lem:energy}, so the two fields are asymptotically equivalent. 
\begin{lemma}[Discrete time derivative] \label{lem:time_deriv}
    Let Assumptions~\ref{ass:lowMach} and \ref{ass:consistency} hold. 
    Then
    \begin{align}
        \E_{\mu_0^\eps} \left[ \| \partial_t^\eps \bm{m}^\eps \|_{L^2(0,T; H^{-3}(\Omega))}^2 \right] \leq C 
    \end{align}
    with $C$ independent of $\eps$ and $\vis \in (0, \vis_{0}]$.
\end{lemma}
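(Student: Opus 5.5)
The plan is a duality estimate on the momentum balance of Assumption~\ref{ass:consistency}. We write $\bm{m}^{\eps} = \rho^{\eps}\tilde{\bm{u}}^{\eps}$ and, for fixed $t \in I_{\triangle t}$, test \eqref{eq:momentumConsistency} against an arbitrary $\bm{\phi} \in C^{\infty}(\Omega;\mathbb{R}^{3})$ with $\|\bm{\phi}\|_{H^{3}(\Omega)} \leq 1$. By density of smooth fields in $H^{3}$ on the torus, the supremum of $\langle \partial_t^{\eps}\bm{m}^{\eps}(t),\bm{\phi}\rangle$ over this set is $\|\partial_t^{\eps}\bm{m}^{\eps}(t)\|_{H^{-3}}$. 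One technical point needs a remark: the discrete pairing $\langle\cdot,\cdot\rangle$ acts on piecewise constant representatives, so it agrees with the $L^{2}$ pairing. We then bound each term on the right of \eqref{eq:momentumConsistency} separately.

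The Sobolev embedding $H^{3}(\mathbb{T}^{3}) \hookrightarrow C^{1}$ holds because $3 > 5/2$. Hence $\|\bm{\nabla}\bm{\phi}\|_{L^{\infty}} \leq C$, and $\|\bm{\Delta}\bm{\phi}\|_{L^{2}}$ and $\|\bm{\nabla}\cdot\bm{\phi}\|_{L^{2}}$ are both at most $C$. We use these together with the pathwise bounds of Lemma~\ref{lem:energy}, which rest only on Assumption~\ref{ass:lowMach}.

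\emph{Convective term.} We have $|\langle \rho^{\eps}\tilde{\bm{u}}^{\eps}\otimes\tilde{\bm{u}}^{\eps},\bm{\nabla}\bm{\phi}\rangle| \leq (1+G\eps^{2}) C_{G}^{2}|\Omega|\,\|\bm{\nabla}\bm{\phi}\|_{L^{\infty}} \leq C$, using the $L^{\infty}$ velocity bound and $\rho^{\eps} \leq 1+G\eps^{2}$.

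\emph{Viscous term.} We have $\vis|\langle\tilde{\bm{u}}^{\eps},\bm{\Delta}\bm{\phi}\rangle| \leq \vis_{0} |\Omega|^{1/2} C_{G}\,\|\bm{\phi}\|_{H^{2}}$. This is uniform in $\vis \in (0,\vis_{0}]$, so no dissipation bound, and hence no Assumption~\ref{ass:coercivity}, is required.

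\emph{Pressure term.} Assumption~\ref{ass:lowMach}(ii) gives $\|\pi^{\eps}\|_{L^{\infty}} \leq c_{s}^{2}G$, so $|\langle\pi^{\eps},\bm{\nabla}\cdot\bm{\phi}\rangle| \leq C$.

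\emph{Residual.} We have $|\langle\bm{\mathcal{R}}^{\eps}(t),\bm{\phi}\rangle| \leq \|\bm{\mathcal{R}}^{\eps}(t)\|_{H^{-3}}$.

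Taking the supremum over $\bm{\phi}$ gives, pathwise, $\|\partial_t^{\eps}\bm{m}^{\eps}(t)\|_{H^{-3}} \leq C + \|\bm{\mathcal{R}}^{\eps}(t)\|_{H^{-3}}$, with $C$ deterministic and independent of $\eps$ and $\vis$.

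Next we square this, using $(a+b)^{2}\leq 2a^{2}+2b^{2}$, and integrate over $(0,T)$. The piecewise constant time extension turns the step-wise bound into a bound on the integral, and the short final interval near $T$ is handled by the same estimate. This gives $\|\partial_t^{\eps}\bm{m}^{\eps}\|^{2}_{L^{2}(0,T;H^{-3})} \leq 2C^{2}T + 2\|\bm{\mathcal{R}}^{\eps}\|^{2}_{L^{2}(0,T;H^{-3})}$. Taking expectations and invoking the uniform-in-$\vis$ supremum bound on the residuals in Assumption~\ref{ass:consistency} yields the claim. The mass balance \eqref{eq:massConsistency} is not needed here.

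The step I expect to be most delicate is the pressure term. Naively, $\pi^{\eps}$ carries an $\eps^{-2}$ factor, and the estimate survives only because Assumption~\ref{ass:lowMach}(ii) rules out $\mathcal{O}(\eps)$ density fluctuations. Without it, one would have to project onto divergence-free test functions, where the pressure term vanishes, and bound only the Leray-projected derivative. If the pressure bound turned out to be unavailable, that projection would be the fallback, and it would still suffice for the subsequent Aubin--Lions argument on $\mathbf{P}\bm{m}^{\eps}$.
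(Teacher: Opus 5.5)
Your proposal is the paper's proof: test \eqref{eq:momentumConsistency} against $\bm{\phi}\in H^{3}(\Omega)$, bound the convective, viscous, pressure and residual terms with the pathwise bounds of Lemma~\ref{lem:energy} (which need only Assumption~\ref{ass:lowMach}) and $\|\pi^{\eps}\|_{L^{\infty}}\leq c_{s}^{2}G$, then square, integrate over $(0,T)$ and use the uniform residual bound of Assumption~\ref{ass:consistency}; bounding the convective term via the $L^{\infty}$ velocity bound instead of $2\|\tilde{\bm{u}}^{\eps}\|_{L^{2}}^{2}$ is an immaterial variant. One small correction to your side remark: the nodal pairing $\eps^{3}\sum_{\bm{x}\in\Lambda_{\eps}}\partial_t^{\eps}\bm{m}^{\eps}(\bm{x})\cdot\bm{\phi}(\bm{x})$ is the $L^{2}$ pairing of $\partial_t^{\eps}\bm{m}^{\eps}$ with the piecewise constant nodal interpolant of $\bm{\phi}$, not with $\bm{\phi}$ itself, so reading its supremum as the $H^{-3}$ norm is a convention rather than an identity --- but the paper's proof makes exactly the same identification, so this does not separate your argument from it.
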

\begin{proof}
    The zeroth and first discrete velocity moments of \eqref{eq:lbIteration} yield exact update rules for $\rho^{\eps}$ and $\bm{m}^{\eps}$. 
    Assumption~\ref{ass:consistency} states their weak-form evaluation \eqref{eq:momentumConsistency}. 
    We bound the $H^{-3}(\Omega)$ norm of each term on the right-hand side of \eqref{eq:momentumConsistency} by testing against $\bm{\phi} \in H^3(\Omega)$.
 
    \textit{1. Convective term:}
    By Lemma \ref{lem:energy}, $\rho^{\eps} \tilde{\bm{u}}^\eps \otimes \tilde{\bm{u}}^\eps \in L^\infty(0,T; L^1(\Omega))$ almost surely with 
    \begin{align}
        \|\rho^{\eps} \tilde{\bm{u}}^\eps \otimes \tilde{\bm{u}}^\eps\|_{L^{1}} \leq 2 \|\tilde{\bm{u}}^{\eps}\|_{L^{2}}^{2} , 
    \end{align} 
    hence
    \begin{align}\label{eq:discTimeConvective}
        |\langle \rho^{\eps} \tilde{\bm{u}}^\eps \otimes \tilde{\bm{u}}^\eps, \bm{\nabla} \bm{\phi} \rangle| \leq 2 \|\tilde{\bm{u}}^\eps\|_{L^2}^2 \|\bm{\nabla} \bm{\phi}\|_{L^\infty} \leq C \|\bm{\phi}\|_{H^{3}} ,
    \end{align}
    where the Sobolev embedding $H^{2}(\Omega) \hookrightarrow L^\infty(\Omega)$ in 3D is applicable because $2 > 3/2$. 
    We emphasize that this is the step that requires the Sobolev index to exceed $5/2$. 
    
    \textit{2. Viscous term:}
    Directly, 
    \begin{align}\label{eq:discTimeViscous}
        \vert \vis \langle \tilde{\bm{u}}^{\eps}, \bm{\Delta} \bm{\phi} \rangle \vert \leq \vis \|\tilde{\bm{u}}^{\eps}\|_{L^{2}} \|\bm{\Delta} \bm{\phi}\|_{L^{2}} \leq C \vis \|\bm{\phi}\|_{H^{2}} \leq C \|\bm{\phi}\|_{H^{3}} .
    \end{align}
    
    \textit{3. Pressure term:}
    By Assumption~\ref{ass:lowMach}(ii), the rescaled pressure is uniformly bounded by 
    \begin{align}
        \|\pi^{\eps}\|_{L^{\infty}((0,T) \times \Omega)} \leq c_{s}^{2} G , 
    \end{align}
    hence
    \begin{align}\label{eq:discTimePressure}
        \vert \langle \pi^\eps, \bm{\nabla} \cdot \bm{\phi} \rangle \vert \leq \|\pi^{\eps}\|_{L^{2}} \|\bm{\nabla} \bm{\phi}\|_{L^{2}} \leq C \|\bm{\phi}\|_{H^{1}} . 
    \end{align}
    
    \textit{4. Residual:} We observe that 
    \begin{align}
        \vert \langle \bm{\mathcal{R}}^{\eps}, \bm{\phi} \rangle \vert \leq \|\bm{\mathcal{R}}^{\eps}\|_{H^{-3}} \|\bm{\phi}\|_{H^{3}}
    \end{align} 
    and that the residual norm is uniformly square-integrable in expectation by Assumption~\ref{ass:consistency}.
    
    \textit{Conclusion:}
    Combining the four bounds, squaring, integrating over time, and taking the expectation $\E_{\mu_0^\eps}$, the almost sure uniform bounds of Lemma~\ref{lem:energy} (in particular $\|\tilde{\bm{u}}^{\eps}\|_{L^{2}}^{4} \leq C$ almost surely) and the residual bound of Assumption~\ref{ass:consistency} render the right-hand side bounded by a constant independent of $\eps$ and $\vis$.
\end{proof}

Equipped with uniform spatial and temporal bounds, we apply a discrete-in-time version of the Aubin--Lions--Simon compactness theorem \cite{simon1986,dreher2012compact} to pass to the limit in the nonlinear convective flux.
\begin{theorem}[Subsequential limit at fixed viscosity] \label{thm:nse_limit}
    Let Assumptions~\ref{ass:lowMach}, \ref{ass:coercivity}, and \ref{ass:consistency} hold and fix $\vis > 0$. 
    Then there exists a subsequence $\eps_{k} \to 0$ and a family of probability measures $\bm{\mu}^{\vis} = (\mu_{t}^{\vis})_{0 \leq t \leq T}$ on $\Hn$ with uniformly bounded mean energy such that the laws of $\tilde{\bm{u}}^{\eps_{k}}$ converge weakly on $L^{2}(0,T; L^{2}(\Omega))$ to the law of a limit field inducing $\bm{\mu}^{\vis}$, and $\bm{\mu}^{\vis}$ satisfies the Foias--Temam Liouville (weak) formulation of the 3D NSE \eqref{eq:incNSE}. 
\end{theorem}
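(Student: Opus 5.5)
The plan is a stochastic compactness argument in the spirit of Skorokhod--Jakubowski, carried out with the pathwise bounds of Lemma~\ref{lem:energy} and the dual-space time bound of Lemma~\ref{lem:time_deriv}.

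\textbf{Step 1: tightness.} Fix $\vis>0$ and set
\begin{align}
    X \coloneqq \Bigl\{ \bm{v} \in L^{2}(0,T;L^{2}(\Omega)) :\ \textstyle\int_0^T \|\bm{\nabla}^{\eps}\bm{v}\|_{L^2}^2\,\mathrm{d}t \le R,\ \|\partial_t^{\eps}(\rho^\eps\bm{v})\|_{L^2(0,T;H^{-3})}\le R \Bigr\}.
\end{align}
By \eqref{eq:energy_bound_as} (with constant $C/\vis$ at fixed viscosity), $\tilde{\bm{u}}^{\eps}$ lies almost surely in a set bounded in $L^{2}(0,T;H^{1}_{\eps})$. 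By Lemma~\ref{lem:time_deriv} and Chebyshev's inequality, $\partial_t^\eps \bm{m}^\eps$ lies in the second set with probability at least $1-C/R^2$. Two ingredients then give that such sets are relatively compact in $L^2(0,T;L^2(\Omega))$. The first is the discrete Aubin--Lions--Simon theorem of \cite{simon1986,dreher2012compact}, applied to piecewise constant functions with discrete gradients, via discrete Rellich compactness on the torus and the chain $H^1_\eps \Subset L^2 \hookrightarrow H^{-3}$. The second is the estimate $\|\bm{m}^\eps-\tilde{\bm{u}}^\eps\|\le C\eps^2$, which transfers compactness from $\bm{m}^\eps$ to $\tilde{\bm{u}}^\eps$. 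Hence the laws are tight, and Prokhorov's theorem gives a weakly convergent subsequence $\eps_k\to0$.

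\textbf{Step 2: identification.} Because the target is Polish, Skorokhod's representation theorem provides a new probability space carrying copies $\hat{\bm{u}}^{\eps_k}\to\hat{\bm{u}}$ almost surely, strongly in $L^2(0,T;L^2)$. Equality of laws transports the bounds of Lemma~\ref{lem:energy} to these copies, and the consistency identities \eqref{eq:massConsistency}--\eqref{eq:momentumConsistency} are measurable functionals of the path, so they transfer as well. We then pass to the limit term by term in the time-integrated form tested with $\bm{\phi}\in C^\infty$ and $\theta\in C_c^\infty([0,T))$, using summation by parts in $\partial_t^\eps$.
\begin{itemize}
    \item[(a)] The mass equation, multiplied by $\eps^2$, together with $\rho^\eps-1=\mathcal{O}(\eps^2)$, gives $\langle\hat{\bm{u}},\bm{\nabla}\chi\rangle=0$, i.e.\ the limit is divergence free.
    \item[(b)] The convective term converges because strong $L^2$ convergence makes $\rho^\eps\hat{\bm{u}}^\eps\otimes\hat{\bm{u}}^\eps\to\hat{\bm{u}}\otimes\hat{\bm{u}}$ in $L^1$.
    \item[(c)] The viscous term is linear, so weak convergence suffices.
    \item[(d)] The pressure term disappears when we restrict to divergence-free $\bm{\phi}$, which is exactly what the Liouville formulation uses.
    \item[(e)] The residuals converge to $0$ in $L^1(\text{prob})$ by Assumption~\ref{ass:consistency}.
    \item[(f)] The initial term converges because $\tilde{\bm{u}}^\eps(0)$ is the lattice restriction of $\bm{u}_0$.
\end{itemize}
This shows that almost every path of $\hat{\bm{u}}$ is a weak solution with $\hat{\bm{u}}\in L^\infty(0,T;L^2)\cap L^2(0,T;H^1)$. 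For the $H^1$ regularity we use weak lower semicontinuity of the discrete gradient bound, identifying weak limits of $\bm{\nabla}^\eps$ with $\bm{\nabla}$ through consistency of difference quotients on smooth test functions.

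\textbf{Step 3: the measures.} We define $\mu^\vis_t$ as the law of $\hat{\bm{u}}(t)$. To make $t\mapsto\hat{\bm{u}}(t)$ well defined for every $t$ rather than for a.e.\ $t$, we use weak continuity in $H^{-3}$, which follows from the $\partial_t$ bound, combined with the $L^\infty L^2$ bound; this gives weak continuity in $L^2_{\mathrm{div}}$. Measurability of $t\mapsto\int\beth\,\mathrm{d}\mu_t^\vis$ then follows. For cylindrical test functionals $\Phi(\bm{u})=\varphi(\langle\bm{u},\bm{\phi}_1\rangle,\dots,\langle\bm{u},\bm{\phi}_n\rangle)$, the pathwise weak formulation and the chain rule give the Foias--Temam Liouville equation after taking expectations. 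The mean energy bound holds uniformly because $\|\hat{\bm{u}}(t)\|_{L^\infty}\le C_G$.

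\textbf{Main obstacle.} I expect the hardest step to be the discrete compactness, namely making the discrete Aubin--Lions argument rigorous for piecewise constant extensions. This requires controlling time translates from the $\partial_t^\eps$ bound and space translates from the discrete gradient bound, which is a Kolmogorov--Riesz criterion. I would prove it directly via that criterion rather than cite an abstract theorem. A secondary subtlety is that the statement claims only the weak Liouville formulation: the energy inequality is not asserted, consistent with Remark~\ref{rem:afterThm4.1}.
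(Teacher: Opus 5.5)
Your proposal is correct and follows essentially the paper's proof, with the same chain of steps:
\begin{itemize}
\item tightness of the laws of $\bm{m}^{\eps}$ from the almost sure discrete gradient bound together with Chebyshev applied to Lemma~\ref{lem:time_deriv}, via a Kolmogorov--Riesz/discrete Aubin--Lions--Simon criterion;
\item transfer to $\tilde{\bm{u}}^{\eps}$ through $\|\bm{m}^{\eps}-\tilde{\bm{u}}^{\eps}\|\le C\eps^{2}$;
\item Prokhorov and Skorokhod, then strong convergence of the convective flux;
\item divergence-freeness from the mass balance, and divergence-free test fields to remove the pressure.
\end{itemize}
The only real difference is in the identification step. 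You first show that almost every Skorokhod path is a weak solution and then apply the chain rule to cylindrical functionals, whereas the paper passes to the limit directly in the cylindrical formulation and exchanges limit and expectation by Vitali's theorem.

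Two small corrections. First, in (a) it is the summation by parts in time that makes the left-hand side vanish: it turns the time-derivative term into pairings of $\rho^{\eps}-1=\mathcal{O}(\eps^{2})$ with difference quotients of the test function, which yields $\int_0^T\langle\hat{\bm{u}},\bm{\nabla}\chi\rangle\,\mathrm{d}t=0$. Multiplying the mass balance by $\eps^{2}$ does not do this, since it leaves both sides of order $\eps^{2}$. Second, the relatively compact set in Step~1 should be a deterministic union over $\eps$ of piecewise constant fields whose translation moduli and discrete time-derivative norms are bounded by $R$, as in the paper. It should not be defined through the random density $\rho^{\eps}$.
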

\begin{proof}
    Let $B = L^2(\Omega)$ and $Y = H^{-3}(\Omega)$. 
    We realize that piecewise constant extensions of lattice fields do not belong to $H^{1}(\Omega)$. 
    Hence, the spatial compactness is instead obtained from the discrete gradient bound through the translation estimate
    \begin{align}\label{eq:translationEstimate}
        \|\tilde{\bm{u}}^{\eps}(\cdot + \bm{h}) - \tilde{\bm{u}}^{\eps}\|_{L^{2}(\Omega)}^{2} \leq C \, \vert \bm{h} \vert \left( \vert \bm{h} \vert + \eps \right) \|\bm{\nabla}^{\eps} \tilde{\bm{u}}^{\eps}\|_{L^{2}(\Omega)}^{2}, \qquad \bm{h} \in \mathbb{R}^{3},
    \end{align}
    which holds for piecewise constant functions on a lattice \cite{eymard2000finite} and whose right-hand side vanishes as $\vert \bm{h} \vert \to 0$ uniformly in $\eps$. 
    By Lemma~\ref{lem:energy} (with $\vis$ fixed), 
    \begin{align}
        \int_{0}^{T} \|\bm{\nabla}^{\eps} \tilde{\bm{u}}^{\eps}\|_{L^{2}}^{2} \,\mathrm{d}t    
    \end{align} 
    is almost surely bounded by a deterministic constant. 
    The same holds for $\bm{m}^{\eps} = \rho^{\eps} \tilde{\bm{u}}^{\eps}$, since the discrete product rule together with the inverse inequality (Lemma~\ref{lem:energy})
    \begin{align}
        \|\bm{\nabla}^{\eps} (\rho^{\eps} - 1)\|_{L^{2}} \leq (C/\eps) \|\rho^{\eps} - 1\|_{L^{2}} \leq C \eps
    \end{align}
    and the uniform velocity bound give 
    \begin{align}
        \|\bm{\nabla}^{\eps} \bm{m}^{\eps}\|_{L^{2}} \leq C (1 + \|\bm{\nabla}^{\eps} \tilde{\bm{u}}^{\eps}\|_{L^{2}}). 
    \end{align}
    Lemma~\ref{lem:time_deriv} bounds the discrete time derivative of $\bm{m}^{\eps}$ in $L^{2}(0,T; Y)$ in expectation. 
    By the Kolmogorov--Riesz theorem and the discrete-in-time Aubin--Lions--Simon theorem \cite{simon1986,dreher2012compact}, a family of piecewise constant (in space and time) functions with time-integrated translation moduli of the type \eqref{eq:translationEstimate} bounded uniformly in $\eps$ and with discrete time derivatives bounded in $L^{2}(0,T;Y)$ is relatively compact in $L^{2}(0,T; B)$. 
    Since the time-derivative bound holds in expectation only, we conclude by tightness. 
    For $R > 0$, let $K_{R}$ denote the set of such functions whose time-integrated translation modulus and discrete time-derivative norm are bounded by $R$. 
    Then $K_{R}$ is relatively compact in $L^{2}(0,T;B)$. 
    By Chebyshev's inequality and Lemma~\ref{lem:time_deriv}, we have 
    \begin{align}
        \mu_{0}^{\eps}(\bm{m}^{\eps} \notin K_{R}) \leq C/R^{2} 
    \end{align} 
    uniformly in $\eps$.
    Hence, the family of laws of $\bm{m}^{\eps}$ is tight on $L^{2}(0,T; L^{2}(\Omega))$. 
    Since 
    \begin{align}
        \|\bm{m}^{\eps} - \tilde{\bm{u}}^{\eps}\|_{L^{\infty}(0,T;L^{2})} \leq C \eps^{2} ,
    \end{align} 
    the same holds for $\tilde{\bm{u}}^{\eps}$. 
    By Prokhorov's theorem (see, e.g., \cite{Billingsley1999}), a subsequence of the laws converges weakly. 
    The Skorokhod representation theorem \cite{Skorokhod1956,Billingsley1999} provides that random variables $\hat{\bm{u}}^{\eps_k} \to \hat{\bm{u}}^\vis$ strongly in $L^2(0,T; L^2(\Omega))$ almost surely, and hence
    \begin{equation} \label{eq:strong_conv}
        \| \hat{\bm{u}}^{\eps_k} \otimes \hat{\bm{u}}^{\eps_k} - \hat{\bm{u}}^\vis \otimes \hat{\bm{u}}^\vis \|_{L^1(0,T; L^1(\Omega))} \to 0 \quad \text{a.s.}
    \end{equation}
    The limit is divergence-free: Testing the mass consistency \eqref{eq:massConsistency} with $\chi \in C^{\infty}_{c}((0,T) \times \Omega)$ and summing by parts in time, the left-hand side converges to
    \begin{align}
        -\int_{0}^{T} \langle \rho^{\eps}, \partial_{t} \chi \rangle \,\mathrm{d}t \to -\int_{0}^{T} \langle 1, \partial_{t} \chi \rangle \,\mathrm{d}t = 0
    \end{align} 
    by Lemma~\ref{lem:energy}, while $\mathcal{R}_{0}^{\eps} \to 0$. 
    Hence 
    \begin{align}
        \int_{0}^{T} \langle \hat{\bm{u}}^{\vis}, \bm{\nabla} \chi \rangle \,\mathrm{d}t = 0
    \end{align}
    for all such $\chi$, i.e., $\hat{\bm{u}}^{\vis}(t) \in \Hn$ for a.e.\ $t$. 
    We then test the momentum consistency \eqref{eq:momentumConsistency} against divergence-free cylindrical test functions (for which the pressure term vanishes identically), and pass to the limit via \eqref{eq:strong_conv} and Assumption~\ref{ass:consistency}. 
    Interchanging limits and expectations by Vitali's convergence theorem (uniform integrability is granted by the almost sure uniform bounds, see \cite[Theorem 16.14]{Billingsley1995}), the time slices of the limit law satisfy the Foias--Temam generalized Liouville equation \cite{foias2001navier}. 
    The mean energy bound follows from Fatou's lemma and \eqref{eq:energy_bound}. 
\end{proof}
\begin{remark}\label{rem:afterThm4.1}
    Theorem~\ref{thm:nse_limit} verifies the measurability condition, the Liouville weak formulation, and a uniform mean energy bound. 
    The remaining structural conditions of \cite[Definition 3.6]{fjordholm2021vanishing}, i.e., the sharp form of the mean energy inequality and the right-continuity at $t = 0$, are expected to follow from the entropy balance \eqref{eq:entropyTelescope} and the well-preparedness (Lemma~\ref{lem:wellprepared}), respectively. 
    Note that we do not verify these here. 
\end{remark}

\subsection{Vanishing viscosity limit to statistical Euler}\label{subsec:vanishingViscosityLimit}

Having established the consistency of the MC KBC LBM with the 3D NSE \eqref{eq:incNSE}, we now turn to the inviscid limit $\vis \searrow 0$. 
In this regime, the physical dissipation vanishes. 
From the uniform bounds established in Lemma~\ref{lem:energy}, the continuous kinetic energy inequality inherited by the Foias--Temam measure dictates via \eqref{eq:energy_bound} that 
\begin{align}
    \vis \int_0^T \|\bm{\nabla} \bm{u}^\vis\|_{L^2}^2 \,\mathrm{d}t \le C.
\end{align}
Consequently, as $\vis \searrow 0$, the bound on the spatial gradient degenerates, i.e., the inequality only yields
\begin{align}
    \int_{0}^{T} \|\bm{\nabla} \bm{u}^\vis\|_{L^2}^2 \,\mathrm{d}t \leq \frac{C}{\vis} ,
\end{align}
and strong spatial compactness in $L^2$ is no longer available.
Following~\cite{fjordholm2021vanishing}, we therefore pass to the multi-point correlation framework, in which the nonlinear convective term becomes linear in the two-point correlation measure.
\begin{definition}[Correlation measures]
    For any integer $k \ge 1$ and spatial coordinates $\bm{x} = (\bm{x}_1, \dots, \bm{x}_k) \in \Omega^k$, we define the $k$-point correlation measure $\boldsymbol{\nu}^{\vis, k}_{t} \in \mathcal{M}^+(\Omega^k \times \mathbb{R}^{3k})$ generated by the Foias--Temam measure $\mu^\vis_t$. 
    Let $\bm{\xi} \in \Hn$ denote the dummy variable of integration representing a single realization of the fluid velocity field. 
    For any bounded continuous test function $\psi \in C_b(\Omega^k \times \mathbb{R}^{3k})$, the measure is defined via the spatial integration of the velocity fields, i.e.,
    \begin{equation} \label{eq:correlation_def}
        \langle \boldsymbol{\nu}^{\vis, k}_{t}, \psi \rangle \coloneqq \int_{\Hn} \int_{\Omega^k} \psi(\bm{x}, \bm{\xi}(\bm{x}_1), \dots, \bm{\xi}(\bm{x}_k)) \, \mathrm{d}\bm{x} \, \mathrm{d}\mu^\vis_t(\bm{\xi}),
    \end{equation}
    which is well-defined for Lebesgue-a.e.\ $\bm{x} \in \Omega^{k}$ and $\mu_{t}^{\vis}$-a.e.\ $\bm{\xi}$ in the duality sense of \cite[Section 2]{fjordholm2021vanishing}. 
\end{definition}
Because the macroscopic kinetic energy is uniformly bounded by
\begin{align}\label{eq:expectationBound}
    \sup_{t \in [0,T]} \int_{\Hn} \|\bm{\xi}\|_{L^2(\Omega)}^2 \,\mathrm{d}\mu_{t}^{\vis}(\bm{\xi}) \le C ,
\end{align} 
the sequence of measures $\{ \boldsymbol{\nu}^{\vis, k}_{t} \}_{\vis > 0}$ has uniformly bounded second moments. 
Together with the uniform diagonal continuity provided by Assumption~\ref{ass:scaling}, this is the input of the compactness theory for correlation measures of \cite[Section 2]{fjordholm2021vanishing} (see also \cite[Theorem 2.4]{lanthaler2021statistical}). 
This then yields a subsequence $\vis \searrow 0$ along which the correlation measures converge weakly-$\ast$ to limit correlation measures $\boldsymbol{\nu} = \{\nu^k\}_{k=1}^\infty$ and along which one- and two-point observables of quadratic growth converge as well. 
In this framework, the nonlinear convective tensor product is evaluated via the diagonal trace of the $2$-point measure, which does not require strong spatial convergence. 
Note that the diagonal continuity granted by Assumption~\ref{ass:scaling} is what makes this trace well-defined.
\begin{corollary}[Consistency with statistical Euler]\label{thm:eulerConsistency}
    Let Assumption~\ref{ass:scaling} hold along the sequence $\vis \searrow 0$ of limit families provided by Theorem~\ref{thm:nse_limit}, and assume that these families satisfy the remaining conditions of \cite[Definition 3.6]{fjordholm2021vanishing} (cf.\ Remark~\ref{rem:afterThm4.1}). 
    Then, by \cite[Theorem 4.8]{fjordholm2021vanishing}, whose proof uses the scaling assumption only through the bound \cite[Eq.~(4.35)]{fjordholm2021vanishing} that Assumption~\ref{ass:scaling} provides via \cite[Lemma 4.7]{fjordholm2021vanishing}, the limit hierarchy $\boldsymbol{\nu}$ is a statistical solution of the incompressible Euler equations in the sense of \cite{fjordholm2021vanishing}, i.e., it satisfies the FMW multi-point statistical Euler hierarchy. 
    Moreover, by \cite[Remark 4.9, Eq.~(4.41)]{fjordholm2021vanishing}, its one-point marginal satisfies the energy admissibility \eqref{eq:admissibility}. 
\end{corollary}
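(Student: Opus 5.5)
The corollary follows by checking that the limit families of Theorem~\ref{thm:nse_limit} satisfy every hypothesis of \cite[Theorem 4.8]{fjordholm2021vanishing}, and then invoking that theorem. Fix a sequence $\vis_l \searrow 0$. For each $l$, Theorem~\ref{thm:nse_limit} gives a family $\bm{\mu}^{\vis_l}$ satisfying the Foias--Temam Liouville formulation. By the hypothesis of the corollary (cf.\ Remark~\ref{rem:afterThm4.1}), these families also satisfy the sharp mean energy inequality and right-continuity at $t=0$. Hence each $\bm{\mu}^{\vis_l}$ is a statistical solution of the NSE in the sense of \cite[Definition 3.6]{fjordholm2021vanishing}. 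We then generate the correlation hierarchies $\boldsymbol{\nu}^{\vis_l,k}_t$ via \eqref{eq:correlation_def}. By the equivalence between Foias--Temam measures and correlation hierarchies \cite[Section 2]{fjordholm2021vanishing}, these are the objects to which the FMW theorem applies.

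Next we assemble the uniform inputs. First, the mean energy bound \eqref{eq:expectationBound} gives $\sup_t \int \|\bm{\xi}\|_{L^2}^2\,\mathrm{d}\mu_t^{\vis_l} \le C$. This bound is inherited from \eqref{eq:energy_bound} through Fatou's lemma in Theorem~\ref{thm:nse_limit}, and its constant is independent of $\vis$ because Lemma~\ref{lem:energy} is uniform in $\vis\in(0,\vis_0]$. It yields tightness and uniformly bounded second moments of the $k$-point measures. Second, Assumption~\ref{ass:scaling} supplies $S^2(\tau,r)\le C r^{\beta-1/2}$ uniformly in $l$. This is precisely the bound \cite[Eq.~(4.35)]{fjordholm2021vanishing}, which by \cite[Lemma 4.7 and Theorem 2.4(iii)]{fjordholm2021vanishing} gives uniform diagonal continuity. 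Therefore the compactness theorem for correlation measures extracts a subsequence converging weakly-$\ast$ to a limit hierarchy $\boldsymbol{\nu}$, along which one- and two-point observables of quadratic growth also converge.

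The core step is the passage to the limit in the hierarchy equations. I would follow the proof of \cite[Theorem 4.8]{fjordholm2021vanishing} line by line. The viscous term carries an explicit factor $\vis_l$ and is tested against smooth functions, so it vanishes. The convective term is linear in the two-point measure. Its diagonal trace converges by diagonal continuity, and that is where the scaling bound enters. The main obstacle, and the point needing care, is confirming that the FMW proof really uses its Assumption~1 only through \cite[Eq.~(4.35)]{fjordholm2021vanishing}, so that replacing that assumption by Assumption~\ref{ass:scaling} loses nothing. I would check this by scanning every appearance of the structure-function exponent in their argument: the diagonal-trace convergence and the pressure/divergence-free limit. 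Each should only require the $r^{\beta-1/2}$ modulus.

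Finally, the energy admissibility \eqref{eq:admissibility} of the one-point marginal follows from \cite[Remark 4.9, Eq.~(4.41)]{fjordholm2021vanishing}. The argument is lower semicontinuity of the mean energy under weak-$\ast$ convergence, applied to the NSE mean energy inequality, whose dissipation term is nonnegative and can be dropped.
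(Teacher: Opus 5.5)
Your route is the paper's: check the hypotheses of \cite[Theorem 4.8]{fjordholm2021vanishing} for the families of Theorem~\ref{thm:nse_limit}, invoke that theorem, and take admissibility from \cite[Remark 4.9]{fjordholm2021vanishing}. The two mechanisms you sketch (the factor $\vis_l$ killing the viscous term, diagonal continuity giving the trace of the convective term) are the ones the paper indicates. However, you miss one hypothesis, and the bound you use in its place does not do its job.

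You derive convergence of the quadratic (convective) observables from the uniform mean energy bound \eqref{eq:expectationBound} plus diagonal continuity. A second-moment bound gives tightness but not uniform integrability of $\|\bm{\xi}\|_{L^2}^2$. Diagonal continuity controls spatial oscillations, not concentration in probability. Take $\mu^n = (1-\frac{1}{n})\delta_{\bm{0}} + \frac{1}{n}\delta_{\sqrt{n}\,\bm{v}}$ with a fixed smooth divergence-free $\bm{v}$. For every $n$ it has second moment $\|\bm{v}\|_{L^2}^2$ and exactly the structure functions of $\bm{v}$, and its correlation measures converge weakly-$\ast$ to those of $\delta_{\bm{0}}$. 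Yet $\int \langle \bm{\xi}\otimes\bm{\xi}, \Grad\bm{\phi}\rangle\,\mathrm{d}\mu^n(\bm{\xi}) = \langle \bm{v}\otimes\bm{v},\Grad\bm{\phi}\rangle$ for all $n$. This is nonzero for a suitable divergence-free $\bm{\phi}$ unless $\bm{v}$ is a steady Euler flow, so the limit of the convective term is wrong.

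This is why \cite[Theorem 4.8]{fjordholm2021vanishing} requires an initial measure with bounded support. Insert into the strengthened mean energy inequality of \cite[Definition 3.6]{fjordholm2021vanishing} (assumed in the corollary) a nonnegative, nondecreasing $C^1$ function $\daleth$ with bounded derivative that vanishes on $[0,R^2]$ and is positive beyond. This keeps every $\mu^{\vis_l}_t$ in the closed ball of radius $R$, which is the uniform support bound \cite[Eq.~(2.12)]{fjordholm2021vanishing} under which quadratic observables pass to the limit.

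The paper checks exactly this hypothesis. Since $\bm{u}_0$ is a fixed trigonometric field plus compactly supported noise, $\|\bm{u}_0\|_{L^\infty(\Omega)} \leq C$ almost surely, so the initial measure has bounded support in $\Hn$. Alternatively, use the almost sure bound \eqref{eq:sup_bound} of Lemma~\ref{lem:energy}, which is uniform in $\vis$. It survives the Skorokhod limit of Theorem~\ref{thm:nse_limit}, because the set of fields with $\|\bm{v}(t)\|_{L^2}\le R$ for a.e.\ $t$ is closed in $L^2(0,T;L^2(\Omega))$. With this check added, your argument is complete at the level of the paper.
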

We do not reproduce the proof of \cite[Theorem 4.8]{fjordholm2021vanishing}. 
The bounded support of the initial measure required there holds since $\|\bm{u}_{0}\|_{L^{\infty}(\Omega)} \leq C$ almost surely. 
We only indicate the two mechanisms in the first hierarchy equation that are reused in the discrete argument of Section~\ref{subsec:diagonalLimit}.

First, the viscous dissipation term vanishes in the limit. 
Applying the Cauchy--Schwarz inequality against the uniform $L^2$ kinetic energy bound \eqref{eq:expectationBound} yields
\begin{equation} \label{eq:viscous_vanishing}
    \left| \vis \int_{\Hn} \langle \bm{\xi}, \bm{\Delta} \bm{\phi} \rangle \, \mathrm{d}\mu^\vis_{t}(\bm{\xi}) \right| \leq \vis \left( \int_{\Hn} \|\bm{\xi}\|_{L^2}^2 \, \mathrm{d}\mu^\vis_{t} \right)^{1/2} \|\bm{\Delta} \bm{\phi}\|_{L^2} \leq C \vis \to 0
\end{equation}
for every smooth divergence-free test field $\bm{\phi}$. 

Second, the convective flux is linear in the 2-point correlation measure $\boldsymbol{\nu}^{\vis, 2}$ and passes to the limit as the diagonal trace, 
\begin{equation} \label{eq:fmw_diagonal}
    \lim_{\vis \searrow 0} \int_{\Hn} \langle \bm{\xi} \otimes \bm{\xi}, \bm{\nabla} \bm{\phi} \rangle \, \mathrm{d}\mu^\vis_{t}(\bm{\xi}) = \int_{\Omega} \int_{\mathbb{R}^{6}} (\bm{\xi}_1 \otimes \bm{\xi}_2) : \bm{\nabla} \bm{\phi}(\bm{x}) \, \mathrm{d}\nu^2_{t, \bm{x}, \bm{x}}(\bm{\xi}_1, \bm{\xi}_2) \, \mathrm{d}\bm{x}.
\end{equation}
Since the spatial diagonal $\{\bm{x}_{1} = \bm{x}_{2}\}$ is a Lebesgue null set in $\Omega^{2}$, weak-$\ast$ convergence of the correlation measures alone does not control the trace on it. 
The diagonal evaluation \eqref{eq:fmw_diagonal} is well-defined precisely under the diagonal continuity granted by Assumption~\ref{ass:scaling} \cite[Definition 2.1 and Theorem 2.4]{fjordholm2021vanishing}.
We emphasize that this is the property whose scaling signature is observed numerically in Section~\ref{sec:numerics} below.

\subsection{Direct diagonal convergence to statistical Euler}\label{subsec:diagonalLimit}

The iterated limit passes through the Navier--Stokes statistical solutions at fixed viscosity. 
The computations in Section~\ref{sec:numerics}, however, use a diagonal scaling in which the macroscopic viscosity is coupled to the grid resolution ($\vis_\eps \sim \eps$), so that the grid Reynolds number is constant. 
We now argue that the limit of the KBC LBM ensemble along this path satisfies the FMW statistical Euler hierarchy under Assumptions~\ref{ass:lowMach}, \ref{ass:consistency}, and \ref{ass:scaling}. 
Note that the coercivity Assumption~\ref{ass:coercivity} is not needed, since no gradient control is required on the diagonal path.
\begin{proposition}[Diagonal limit to statistical Euler] \label{thm:diagonal_limit}
    Let the macroscopic kinematic viscosity scale strictly with the grid resolution to maintain a constant grid Reynolds number, $\vis_\eps = c \eps$, and let Assumptions~\ref{ass:lowMach}, \ref{ass:consistency} (with $\vis = \vis_{\eps}$), and \ref{ass:scaling} hold. 
    Then there exists a subsequence $\eps_{j} \to 0$ such that the correlation measures $\boldsymbol{\nu}^{\eps_{j}, k}$ generated by the push-forward measures $\mu^{\eps_{j}}_{t}$ of the discrete KBC LBM fields converge weak-$\ast$ to a limit $\boldsymbol{\nu}$ satisfying the first equation of the FMW statistical Euler hierarchy.
\end{proposition}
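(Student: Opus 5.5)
The plan is to work directly with the discrete push-forward measures $\mu_t^{\eps}$ and their correlation measures $\boldsymbol{\nu}^{\eps,k}_t$ (defined by \eqref{eq:correlation_def} with $\mu^\vis_t$ replaced by the law of $\tilde{\bm{u}}^{\eps}(t)$), bypassing the fixed-viscosity limit of Theorem~\ref{thm:nse_limit}. The first step is compactness. By Lemma~\ref{lem:energy}, which uses only Assumption~\ref{ass:lowMach}, $\sup_t\|\tilde{\bm{u}}^{\eps}(t)\|_{L^\infty}\le C_G$ almost surely. Hence the correlation measures have uniformly bounded (indeed uniformly compactly supported in the state variable) moments. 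The time-dependent compactness theory of \cite[Section 2]{fjordholm2021vanishing} (cf.\ \cite[Theorem 2.4]{lanthaler2021statistical}) then yields a subsequence $\eps_j\to 0$ along which $\boldsymbol{\nu}^{\eps_j,k}\rightharpoonup^\ast\boldsymbol{\nu}^k$ for all $k$. By Assumption~\ref{ass:scaling}, taken uniformly in $\eps$ along the diagonal, the limit is diagonally continuous, and two-point observables of quadratic growth converge. Assumption~\ref{ass:coercivity} is not used at any point, since no gradient bound is needed.

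The second step passes to the limit in the first hierarchy equation. Take a smooth divergence-free $\bm{\phi}$ and $\theta\in C_c^\infty([0,T))$. Test \eqref{eq:momentumConsistency} with $\theta(t)\bm{\phi}$, sum over time steps, and perform a discrete summation by parts in time; this moves $\partial_t^\eps$ onto $\theta$, up to an $\mathcal{O}(\eps^2)$ error, and produces the initial term $\langle \bm{m}^\eps(0),\bm{\phi}\rangle$. The terms are handled as follows.
\begin{enumerate}
\item[(a)] The pressure term vanishes identically, because $\bm{\nabla}\cdot\bm{\phi}=0$.
\item[(b)] The viscous term satisfies $|\vis_\eps\langle\tilde{\bm{u}}^\eps,\bm{\Delta}\bm{\phi}\rangle|\le c\eps C_G|\Omega|^{1/2}\|\bm{\Delta}\bm{\phi}\|_{L^2}\to0$, the discrete analogue of \eqref{eq:viscous_vanishing}.
\item[(c)] The residual vanishes in expectation, by Assumption~\ref{ass:consistency} uniformly in $\vis\le\vis_0$.
\item[(d)] Replacing $\bm{m}^\eps$ by $\tilde{\bm{u}}^\eps$ and $\rho^\eps\tilde{\bm{u}}^\eps\otimes\tilde{\bm{u}}^\eps$ by $\tilde{\bm{u}}^\eps\otimes\tilde{\bm{u}}^\eps$ costs $\mathcal{O}(\eps^2)$, by Lemma~\ref{lem:energy}.
\end{enumerate}
Taking expectations, the linear terms become integrals against $\nu^{\eps,1}$ and converge by weak-$\ast$ convergence. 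Since the one-point marginal is supported on a fixed ball, the test function $\xi\cdot\bm{\phi}$ may be truncated to a bounded one.

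The third step, which I expect to be the main obstacle, is the convective term. It equals $\E\int_\Omega(\tilde{\bm{u}}^\eps\otimes\tilde{\bm{u}}^\eps):\bm{\nabla}\bm{\phi}\,\mathrm{d}\bm{x}$, i.e.\ a diagonal trace of $\nu^{\eps,2}$. I will not attempt to prove the passage myself. Instead I will mollify following \cite[Lemma 4.7 and Theorem 4.8]{fjordholm2021vanishing}:
\begin{enumerate}
\item[(1)] Replace the trace by the averaged two-point quantity $\int\!\!\int\varrho_r(\bm{x}-\bm{y})\,\tilde{\bm{u}}^\eps(\bm{x})\otimes\tilde{\bm{u}}^\eps(\bm{y}):\bm{\nabla}\bm{\phi}(\bm{x})$.
\item[(2)] Bound the error of this replacement, uniformly in $\eps$, by $C\|\bm{\nabla}\bm{\phi}\|_\infty C_G\, S^2(T,r)$. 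Assumption~\ref{ass:scaling} makes this $\le Cr^{\beta-1/2}$.
\item[(3)] For fixed $r$, the mollified quantity is a continuous two-point observable of quadratic growth, so it converges by weak-$\ast$ convergence.
\item[(4)] Send $r\to0$ using diagonal continuity of $\boldsymbol{\nu}$ to recover \eqref{eq:fmw_diagonal}.
\end{enumerate}
A subtlety is that the piecewise constant extension makes structure functions at $r<\eps$ nontrivial. This is covered by requiring Assumption~\ref{ass:scaling} for all $r>0$.

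The final step is the initial datum. Well-preparedness (Lemma~\ref{lem:wellprepared}) and the construction of $\mu_0^\eps$ as a push-forward of the fixed 24-dimensional noise under restrictions of a smooth field imply that $\nu^{\eps,k}_0$ converges to the correlation measure of $\mu_0$. Combined with the three steps above, this yields the weak form of the first FMW hierarchy equation for $\boldsymbol{\nu}$.
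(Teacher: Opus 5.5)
Your proposal is correct and follows the paper's proof essentially step by step. The shared structure is:
\begin{itemize}
\item compactness of the correlation measures from the almost sure $L^{\infty}$ bound of Assumption~\ref{ass:lowMach} together with the uniform diagonal continuity of Assumption~\ref{ass:scaling};
\item a term-by-term passage to the limit in the summed-by-parts weak momentum balance, in which the pressure drops out by $\bm{\nabla}\cdot\bm{\phi}=0$, the viscous term vanishes like $c\eps$, the density corrections cost $\mathcal{O}(\eps^{2})$, and the residual vanishes by Assumption~\ref{ass:consistency}.
\end{itemize}
The only difference is the convective flux: you spell out the mollify, pass to the limit, remove the mollifier argument, whereas the paper takes this step directly from the convergence of quadratic observables and diagonal traces in the FMW compactness theorem.
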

\begin{proof}
    By Lemma~\ref{lem:energy} (first part, requiring only Assumption~\ref{ass:lowMach}), the kinetic energy bound
    \begin{align}
        \sup_{t \in [0,T]} \| \tilde{\bm{u}}^\eps(t) \|_{L^2(\Omega)}^2 \le C \quad \mu_{0}^{\eps}\text{-a.s.}
    \end{align}
    holds uniformly in $\eps$. 
    Note that no gradient control is available or needed along the diagonal path. 
    The discrete $k$-point correlation measures $\{ \boldsymbol{\nu}^{\eps, k} \}_{\eps > 0}$ are well-defined including their diagonal traces (since the discrete fields are piecewise constant) and uniformly bounded in the space of Radon measures with uniformly bounded second moments. 
    Together with the uniform diagonal continuity of Assumption~\ref{ass:scaling}, the compactness theorem for correlation measures \cite[Theorem 2.4]{fjordholm2021vanishing} (see also \cite[Theorem 2.4]{lanthaler2021statistical}) provides a subsequence $\eps_j \to 0$ along which $\boldsymbol{\nu}^{\eps_j} \rightharpoonup^\ast \boldsymbol{\nu}$ and along which the quadratic observables appearing below converge. 
    The uniform support bound \cite[Eq.~(2.12)]{fjordholm2021vanishing} required for the latter holds by \eqref{eq:sup_bound}.
    
    We employ the weak momentum consistency \eqref{eq:momentumConsistency} with $\vis = \vis_{\eps} = c \eps$, tested against a smooth, divergence-free, space-time test function $\bm{\phi} \in C_c^\infty([0,T) \times \Omega)$. 
    For such test functions, the pressure term $\langle \pi^{\eps}, \bm{\nabla} \cdot \bm{\phi} \rangle$ vanishes identically. 
    Applying discrete summation by parts in time and taking the expectation, the weak formulation reads
    \begin{align} \label{eq:diagonal_weak_form_proof}   
        & \E_{\mu_0^\eps} \left[ \int_0^T \left( \langle \bm{m}^\eps, \partial_t^{-\eps} \bm{\phi} \rangle + \langle \rho^{\eps} \tilde{\bm{u}}^\eps \otimes \tilde{\bm{u}}^\eps, \bm{\nabla} \bm{\phi} \rangle + c\eps \langle \tilde{\bm{u}}^\eps, \bm{\Delta} \bm{\phi} \rangle \right) \,\mathrm{d}t + \langle \bm{m}^{\eps}(0), \bm{\phi}(0, \cdot) \rangle \right] \nonumber \\
        & = - \E_{\mu_0^\eps} \left[ \int_{0}^{T} \langle \bm{\mathcal{R}}^\eps, \bm{\phi} \rangle \,\mathrm{d}t \right],
    \end{align}
    where $\partial_t^{-\eps}$ is the backward discrete time derivative. 
    We now pass to the limit $\eps \to 0$ term by term. 
    
    \textit{1. Time derivative, initial term, and residual:} 
    Since the test function $\bm{\phi}$ is smooth, $\partial_t^{-\eps} \bm{\phi} \to \partial_t \bm{\phi}$ uniformly. 
    Since (Lemma~\ref{lem:energy})
    \begin{align} 
        \|\bm{m}^{\eps} - \tilde{\bm{u}}^{\eps}\|_{L^{\infty}(0,T; L^{2})} \leq C \eps^{2} , 
    \end{align} 
    the time and initial terms may be expressed through $\tilde{\bm{u}}^{\eps}$ up to $\mathcal{O}(\eps^{2})$. 
    By the definition of the one-point correlation measure, the resulting time term equals
    \begin{align}
        \E_{\mu_{0}^{\eps}} \left[ \int_{0}^{T} \langle \tilde{\bm{u}}^{\eps}, \partial_{t} \bm{\phi} \rangle \,\mathrm{d}t \right] 
        = \int_{0}^{T} \int_{\Omega} \int_{\mathbb{R}^{3}} \bm{\xi} \cdot \partial_{t} \bm{\phi}(t,\bm{x}) \,\mathrm{d}\nu^{\eps,1}_{t, \bm{x}}(\bm{\xi}) \,\mathrm{d}\bm{x} \,\mathrm{d}t , 
    \end{align}
    and the initial term equals the same expression at $t = 0$, without the time integral and with $\bm{\phi}(0,\cdot)$ in place of $\partial_{t} \bm{\phi}$. 
    Both integrands are linear in $\bm{\xi}$ and involve the velocity at a single point only, so that the one-point measures suffice and no diagonal trace enters. 
    Their linear growth in $\bm{\xi}$ is controlled by the uniform second moment bound. 
    Specifically, truncating the integrand at $\vert \bm{\xi} \vert = R$, the truncated part passes to the limit along $\eps_{j} \to 0$ by the weak-$\ast$ convergence $\nu^{\eps_{j},1} \rightharpoonup^{\ast} \nu^{1}$, while the remainder is bounded by $C/R$ uniformly in $\eps$, since $\vert \bm{\xi} \vert \leq \vert \bm{\xi} \vert^{2}/R$ on $\{ \vert \bm{\xi} \vert > R \}$. 
    Letting $R \to \infty$ yields
    \begin{align}
        \lim_{j \to \infty} \E_{\mu_{0}^{\eps_{j}}} \left[ \int_{0}^{T} \langle \tilde{\bm{u}}^{\eps_{j}}, \partial_{t} \bm{\phi} \rangle \,\mathrm{d}t \right] 
        &= \int_{0}^{T} \int_{\Omega} \int_{\mathbb{R}^{3}} \bm{\xi} \cdot \partial_{t} \bm{\phi}(t, \bm{x}) \,\mathrm{d}\nu^{1}_{t, \bm{x}}(\bm{\xi}) \,\mathrm{d}\bm{x} \,\mathrm{d}t , \\
        \lim_{j \to \infty} \E_{\mu_{0}^{\eps_{j}}} \left[ \langle \tilde{\bm{u}}^{\eps_{j}}(0), \bm{\phi}(0, \cdot) \rangle \right] 
        &= \int_{\Omega} \int_{\mathbb{R}^{3}} \bm{\xi} \cdot \bm{\phi}(0, \bm{x}) \,\mathrm{d}\nu^{1}_{0, \bm{x}}(\bm{\xi}) \,\mathrm{d}\bm{x} , 
    \end{align}
    where, for the second identity, $\nu^{1}_{0,\bm{x}}$ denotes the law of $\bm{u}_{0}(\bm{x})$, i.e., the initial datum of the limit hierarchy, and the convergence holds since the discrete initial data are the lattice restrictions of the same random field $\bm{u}_{0}$.
    The residual vanishes in expectation by Assumption~\ref{ass:consistency}, whose bound is uniform in $\vis \in (0, \vis_{0}]$ and hence applies with $\vis = c\eps$.
    
    \textit{2. Numerical dissipation:} 
    The scaled viscous term now acts purely as artificial numerical dissipation. 
    Using the Cauchy--Schwarz inequality against the intact $L^2$ energy bound, it vanishes, 
        \begin{align}
            \left| \E_{\mu_0^\eps} \left[ \int_0^T c\eps \langle \tilde{\bm{u}}^\eps, \bm{\Delta} \bm{\phi} \rangle \,\mathrm{d}t \right] \right| \leq c\eps \, \E_{\mu_0^\eps} \left[ \| \tilde{\bm{u}}^\eps \|_{L^2(L^2)} \right] \| \bm{\Delta} \bm{\phi} \|_{L^2(L^2)} \to 0.
        \end{align}
    \textit{3. Convective flux:} 
    Because strong $L^2$ compactness is unavailable, the quadratic nonlinear flux is lifted to the $2$-point correlation measure $\boldsymbol{\nu}^{\eps, 2}$. 
    The density factor is removed first via 
    \begin{align}
        \|(\rho^{\eps} - 1) \tilde{\bm{u}}^{\eps} \otimes \tilde{\bm{u}}^{\eps}\|_{L^{1}} \leq C \eps^{2} .
    \end{align} 
    Using the convergence of quadratic observables established above, together with the diagonal continuity granted by Assumption~\ref{ass:scaling} \cite{fjordholm2021vanishing}, the tensor product passes to the limit as the diagonal trace of the multi-point measure
    \begin{align}
        \lim_{\eps \to 0}  \E_{\mu_0^\eps} \left[ \int_0^T \langle \tilde{\bm{u}}^\eps \otimes \tilde{\bm{u}}^\eps, \bm{\nabla} \bm{\phi} \rangle \,\mathrm{d}t \right] 
        = \int_0^T \int_\Omega \int_{\mathbb{R}^6} (\bm{\xi}_1 \otimes \bm{\xi}_2) : \bm{\nabla} \bm{\phi}(\bm{x}) \, \mathrm{d}\nu^2_{t,\bm{x},\bm{x}}(\bm{\xi}_1, \bm{\xi}_2) \, \mathrm{d}\bm{x} \,\mathrm{d}t.
    \end{align}

    \textit{Conclusion:} 
    Substituting these limits into \eqref{eq:diagonal_weak_form_proof} recovers the continuous FMW statistical Euler weak formulation for the first hierarchy equation. 
\end{proof}
\begin{remark}
    Note that the higher-order equations in the FMW hierarchy can be obtained in the same way from the $k$-point moment balances, which we do not carry out here.
    Note further that the dissipation anomaly itself resides in the energy balance rather than in the momentum equation. 
    Its sign in the limit would follow from a lower bound on the discrete entropy production (cf.\ Assumption~\ref{ass:coercivity}), and a rigorous identification of the anomaly would require passing to the limit in the energy equation, which we do not pursue here.
\end{remark}
\begin{proof}[Proof of Proposition~\ref{prop:weak-strong_discrete}]
    Based on the results above, we assemble the statements as follows: (i) is Lemma~\ref{lem:empirical}, (ii) is Theorem~\ref{thm:nse_limit}, and (iii) is Corollary~\ref{thm:eulerConsistency}. 
    The statement on the diagonal path is Proposition~\ref{thm:diagonal_limit}. 
    The uniform integrability of the $p$th moments follows from the uniform bound \eqref{eq:sup_bound}, so that Lemma~\ref{prop:weak-strong_continous} applies. 
\end{proof}

\subsection{Weak-strong uniqueness}

A generalized solution concept should reduce to the classical one whenever a classical solution is available. 
For admissible measure-valued solutions of the incompressible Euler equations, this weak-strong uniqueness principle was proven by Brenier, De Lellis, and Sz\'{e}kelyhidi~\cite{brenier2011weak} by the relative energy method of Dafermos~\cite{Dafermos1979} and DiPerna~\cite{DiPerna1979}. 
For dissipative statistical solutions of the incompressible Euler equations, uniqueness for initial measures concentrated near classical data, including short-time uniqueness for $H^{m}$-concentrated initial measures and weak-strong uniqueness in two dimensions, is established in~\cite[Theorem 3.1, Corollaries 3.1 and 3.2]{lanthaler2021statistical}. 

Since the first equation of the FMW hierarchy for the one-point marginal $\nu^{1}$ coincides with the measure-valued formulation of DiPerna and Majda without concentration (see~\cite[Remark 3.1]{lanthaler2021statistical}), the result of~\cite{brenier2011weak} applies to the limit objects of the preceding sections, provided that they satisfy the energy admissibility \eqref{eq:admissibility}. 
On the iterated path, the admissibility \eqref{eq:admissibility} of the limit follows from \cite[Remark 4.9, Eq.~(4.41)]{fjordholm2021vanishing} under the hypotheses of Corollary~\ref{thm:eulerConsistency}. 
On the diagonal path it is not verified, and we impose it as a hypothesis.
We state the result for later reference and sketch the argument, which is also used for the stability estimate in Remark~\ref{rem:statStability}. 
Note that it omits the multi-point structure of the statistical solution.
\begin{proposition}[Weak-strong uniqueness, {\cite{brenier2011weak,lanthaler2021statistical}}]\label{thm:weakStrong}
    Let $\bar{\bm{u}} \in C([0, T^*]; W^{1,\infty}(\Omega))$ be a strong solution to the 3D incompressible Euler equations (so that $\partial_{t} \bar{\bm{u}} \in C([0,T^{*}]; L^{2}(\Omega))$ by the equation itself). 
    Let $\nu^{1} = (\nu^{1}_{t, \bm{x}})$ be a one-point statistical solution satisfying the first FMW hierarchy equation together with the energy admissibility
    \begin{align}\label{eq:admissibility}
        \int_{\Omega} \int_{\mathbb{R}^{3}} \frac{1}{2} \vert \bm{\xi} \vert^{2} \,\mathrm{d}\nu^{1}_{t, \bm{x}}(\bm{\xi}) \,\mathrm{d}\bm{x} \leq \int_{\Omega} \int_{\mathbb{R}^{3}} \frac{1}{2} \vert \bm{\xi} \vert^{2} \,\mathrm{d}\nu^{1}_{0, \bm{x}}(\bm{\xi}) \,\mathrm{d}\bm{x} \quad \text{for a.e.\ } t \in [0, T^{*}] .
    \end{align}
    If $\nu^{1}_{0, \bm{x}} = \delta_{\bar{\bm{u}}_{0}(\bm{x})}$ for a.e.\ $\bm{x} \in \Omega$, then $\nu_{t,\bm{x}}^1 = \delta_{\bar{\bm{u}}(t,\bm{x})}$ for a.e.\ $(t, \bm{x}) \in [0, T^{*}] \times \Omega$.
\end{proposition}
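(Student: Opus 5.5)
We will use the relative energy method of Dafermos and DiPerna, in the form of Brenier, De Lellis, and Sz\'{e}kelyhidi. The plan is to derive a Gronwall inequality for
\begin{align}
    E(t) \coloneqq \int_{\Omega} \int_{\mathbb{R}^{3}} \frac{1}{2} \vert \bm{\xi} - \bar{\bm{u}}(t,\bm{x}) \vert^{2} \,\mathrm{d}\nu^{1}_{t,\bm{x}}(\bm{\xi}) \,\mathrm{d}\bm{x} ,
\end{align}
which vanishes at $t=0$ because $\nu^{1}_{0,\bm{x}} = \delta_{\bar{\bm{u}}_{0}(\bm{x})}$. Once $E \equiv 0$ on $[0,T^{*}]$, the measure $\nu^{1}_{t,\bm{x}}$ is supported on $\{\bar{\bm{u}}(t,\bm{x})\}$ for a.e.\ $(t,\bm{x})$, which is the claim.

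\textbf{Step 1: expand $E(t)$.} Write
\begin{align}
    E(t) = \int\!\!\int \tfrac12 \vert\bm{\xi}\vert^{2}\,\mathrm{d}\nu^{1}_{t,\bm{x}}\,\mathrm{d}\bm{x} - \int_{\Omega} \langle \nu^{1}_{t,\bm{x}}, \bm{\xi}\rangle \cdot \bar{\bm{u}}\,\mathrm{d}\bm{x} + \tfrac12 \|\bar{\bm{u}}(t)\|_{L^{2}}^{2} .
\end{align}
The three terms are handled as follows.
\begin{itemize}
    \item The first term is bounded by its value at $t=0$ via the admissibility \eqref{eq:admissibility}.
    \item The last term is conserved, since $\bar{\bm{u}}$ is a strong solution.
    \item For the cross term, use $\bar{\bm{u}}$ as test field in the first FMW hierarchy equation. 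It is divergence-free, lies in $W^{1,\infty}$, and satisfies $\partial_{t}\bar{\bm{u}} \in C([0,T^{*}];L^{2})$. It is not compactly supported in time, so mollify and apply a cutoff in time on $[0,t]$; this is routine given the stated regularity and the uniform second moments. The pressure drops out because the test field is divergence-free. The outcome is
    \begin{align}
        \int \langle \nu^{1}_{t}, \bm{\xi}\rangle\cdot\bar{\bm{u}}(t) = \|\bar{\bm{u}}_{0}\|^{2}_{L^{2}} + \int_{0}^{t}\!\!\int_{\Omega} \Big( \langle\nu^{1},\bm{\xi}\rangle\cdot\partial_{t}\bar{\bm{u}} + \langle \nu^{2}_{s,\bm{x},\bm{x}}, \bm{\xi}_{1}\otimes\bm{\xi}_{2}\rangle : \bm{\nabla}\bar{\bm{u}} \Big) .
    \end{align}
\end{itemize}
The first FMW equation involves the diagonal trace of $\nu^{2}$. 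As noted before the proposition, it coincides with the DiPerna--Majda formulation without concentration. By consistency of marginals of the correlation measures \cite{fjordholm2021vanishing}, the diagonal trace acts on quadratic integrands through the one-point measure: $\langle \nu^{2}_{\bm{x},\bm{x}}, \bm{\xi}_{1}\otimes\bm{\xi}_{2}\rangle = \langle \nu^{1}_{\bm{x}}, \bm{\xi}\otimes\bm{\xi}\rangle$. I expect this identification to be the main subtlety. It requires diagonal continuity and a uniform second-moment bound (granted in our setting by Assumption~\ref{ass:scaling} and \eqref{eq:sup_bound}), or alternatively taking the stated one-point formulation as given.

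\textbf{Step 2: algebraic rearrangement and Gronwall.} Substitute $\partial_{t}\bar{\bm{u}} = -\bar{\bm{u}}\cdot\bm{\nabla}\bar{\bm{u}} - \bm{\nabla}\bar p$. Use $\bm{\nabla}\cdot\bar{\bm{u}}=0$ to remove the pressure term after integrating against $\langle\nu^{1},\bm{\xi}\rangle$; this needs the mean $\langle\nu^{1},\bm{\xi}\rangle$ to be weakly divergence-free, which follows from the hierarchy or from the divergence-free support of the statistical solution. Use also the identity $\int (\bar{\bm{u}}\otimes\bar{\bm{u}}):\bm{\nabla}\bar{\bm{u}} = 0$. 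Collecting terms yields
\begin{align}
    E(t) \leq E(0) - \int_{0}^{t}\!\!\int_{\Omega} \big\langle \nu^{1}_{s,\bm{x}}, (\bm{\xi}-\bar{\bm{u}})\otimes(\bm{\xi}-\bar{\bm{u}})\big\rangle : \bm{\nabla}\bar{\bm{u}} \,\mathrm{d}\bm{x}\,\mathrm{d}s \leq 2\|\bm{\nabla}\bar{\bm{u}}\|_{L^{\infty}}\int_{0}^{t}E(s)\,\mathrm{d}s .
\end{align}
With $E(0)=0$, Gronwall's lemma gives $E\equiv 0$ for a.e.\ $t\in[0,T^{*}]$.

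The delicate points are the justification of $\bar{\bm{u}}$ as a test function and the "a.e.\ $t$" in the admissibility, which is handled by Lebesgue points of $t\mapsto E(t)$. The remaining steps are bookkeeping with the cancellation identities.
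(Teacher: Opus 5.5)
Your proposal is correct and follows the same route as the paper's sketch. You split the relative energy $\mathcal{E}_{\mathrm{rel}}$ into the statistical energy (bounded by admissibility), the conserved strong energy, and a cross term obtained by testing the first hierarchy equation with $\bar{\bm{u}}$ and pairing the strong Euler equations with the mean velocity; completing the square into the Reynolds-stress contraction and applying Gr\"onwall then finishes the argument. The two points you flag, namely identifying the diagonal trace of $\nu^{2}$ with $\langle \nu^{1}, \bm{\xi}\otimes\bm{\xi}\rangle$ and requiring a weakly divergence-free mean, are exactly what the paper takes for granted through the DiPerna--Majda correspondence and ``the divergence-free constraints''.
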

\begin{proof}[Sketch of the proof]
    Following \cite{brenier2011weak}, we define the relative energy functional
    \begin{equation}
        \mathcal{E}_{\mathrm{rel}}(t) \coloneqq \int_{\Omega} \int_{\mathbb{R}^3} \frac{1}{2} |\bm{\xi} - \bar{\bm{u}}(t,\bm{x})|^2 \, \mathrm{d}\nu_{t,\bm{x}}^1(\bm{\xi}) \, \mathrm{d}\bm{x}
    \end{equation}
    to quantify the distance between the statistical measure and the strong deterministic solution. 
    Expanding the quadratic integrand yields three components: the statistical kinetic energy $\frac{1}{2}|\bm{\xi}|^2$, the strong classical energy $\frac{1}{2}|\bar{\bm{u}}|^2$, and the cross-term $-\bm{\xi} \cdot \bar{\bm{u}}$. 
    We compare each component at time $t$ with its initial value. 
    Since $\mathcal{E}_{\mathrm{rel}}$ is a priori not differentiable in time, all balances are used in time-integrated form.
    
    \textit{1. Statistical energy (admissibility):} 
    By \eqref{eq:admissibility},
    \begin{align}
        \int_\Omega \int_{\mathbb{R}^{3}} \frac{1}{2} |\bm{\xi}|^2 \, \mathrm{d}\nu_{t,\bm{x}}^1 \, \mathrm{d}\bm{x} - \int_\Omega \int_{\mathbb{R}^{3}} \frac{1}{2} |\bm{\xi}|^2 \, \mathrm{d}\nu_{0,\bm{x}}^1 \, \mathrm{d}\bm{x} \leq 0 \quad \text{for a.e.\ } t . 
    \end{align}
    
    \textit{2. Strong energy:} 
    Smooth classical Euler solutions on periodic domains conserve kinetic energy,
    \begin{align}
        \int_{\Omega} \frac{1}{2} |\bar{\bm{u}}(t)|^2 \, \mathrm{d}\bm{x} - \int_{\Omega} \frac{1}{2} |\bar{\bm{u}}_{0}|^2 \, \mathrm{d}\bm{x} = 0  .
    \end{align}
    
    \textit{3. Cross-term:} 
    We use the strong solution $\bar{\bm{u}}$ as a (Lipschitz-in-time, admissible by density) test function in the first hierarchy equation for $\nu^{1}$, and pair the strong Euler equations with the mean velocity field $\int_{\mathbb{R}^{3}} \bm{\xi} \,\mathrm{d}\nu_{t,\bm{x}}^{1}(\bm{\xi})$. 
    Adding the two time-integrated identities, the pressure gradients drop by the divergence-free constraints, and the remaining terms combine into the time-integrated evolution of the cross-term. 
    
    \textit{Conclusion:}
    Summing the three balances, all linear parts cancel, and the remaining nonlinear convective tensor products complete the square, yielding the time-integrated Reynolds stress contraction
    \begin{equation} \label{eq:relative_energy_bound}
        \mathcal{E}_{\mathrm{rel}}(t) \leq \mathcal{E}_{\mathrm{rel}}(0) - \int_{0}^{t} \int_{\Omega} \int_{\mathbb{R}^3} \Big( (\bm{\xi} - \bar{\bm{u}}) \otimes (\bm{\xi} - \bar{\bm{u}}) \Big) : \bm{\nabla} \bar{\bm{u}}(t^{\prime},\bm{x}) \, \mathrm{d}\nu_{t^{\prime},\bm{x}}^1(\bm{\xi}) \, \mathrm{d}\bm{x} \,\mathrm{d}t^{\prime}
    \end{equation}
    for a.e.\ $t \in [0, T^{*}]$. 
    Because $\bar{\bm{u}} \in W^{1,\infty}(\Omega)$ up to time $T^*$, its spatial gradient is uniformly bounded by the constant $L_{\bar{\bm{u}}} = \|\bm{\nabla} \bar{\bm{u}}\|_{L^\infty([0,T^*] \times \Omega)}$. 
    Bounding the integrand pointwise via 
    \begin{align}
    |(\bm{\xi} - \bar{\bm{u}}) \otimes (\bm{\xi} - \bar{\bm{u}}) : \bm{\nabla} \bar{\bm{u}}| \leq L_{\bar{\bm{u}}} |\bm{\xi} - \bar{\bm{u}}|^{2}    
    \end{align}
    yields
    \begin{equation}
        \mathcal{E}_{\mathrm{rel}}(t) \leq \mathcal{E}_{\mathrm{rel}}(0) + 2 L_{\bar{\bm{u}}} \int_{0}^{t} \mathcal{E}_{\mathrm{rel}}(t^{\prime}) \,\mathrm{d}t^{\prime} .
    \end{equation}
    By hypothesis, $\mathcal{E}_{\mathrm{rel}}(0) = 0$. 
    The integral form of Gr\"onwall's inequality then gives $\mathcal{E}_{\mathrm{rel}}(t) = 0$ for a.e.\ $t \in [0, T^*]$, which implies $\nu_{t,\bm{x}}^1 = \delta_{\bar{\bm{u}}(t,\bm{x})}$ for a.e.\ $(t, \bm{x}) \in [0,T^{*}] \times \Omega$.
\end{proof}
\begin{remark}[Statistical stability]\label{rem:statStability}
    For non-atomic initial measures with fixed perturbation width $\mathfrak{w} > 0$, as employed in our RTGV computations, the hypothesis $\mathcal{E}_{\mathrm{rel}}(0) = 0$ does not hold. 
    In this case, the same Gr\"onwall argument yields the stability estimate $\mathcal{E}_{\mathrm{rel}}(t) \leq e^{2 L_{\bar{\bm{u}}} t} \mathcal{E}_{\mathrm{rel}}(0)$ for a.e.\ $t \in [0, T^{*}]$: as long as a strong Euler solution exists, the statistical solution depends continuously on the initial measure, and the collapse onto the Dirac mass is recovered in the limit of vanishing initial uncertainty. 
    This is the estimate underlying the construction of the generic set of initial data in~\cite[Section 3.1, Eq.~(3.3)]{lanthaler2021statistical}, where the factor $e^{-C(\bar{\bm{v}})T}$ enters in the same role. 
    On the iterated path, the energy admissibility \eqref{eq:admissibility} of the limits holds under the hypotheses of Corollary~\ref{thm:eulerConsistency} by \cite[Remark 4.9]{fjordholm2021vanishing}. 
    For the diagonal path, it is expected from the entropy balance \eqref{eq:entropyTelescope} under Assumption~\ref{ass:coercivity}, but this inheritance remains formal (cf.\ the remark after Proposition~\ref{thm:diagonal_limit}).
\end{remark}

\subsection{Formal derivation of Wasserstein convergence rates}\label{sec:formalWrate}

The relative energy argument of Proposition~\ref{thm:weakStrong} is confined to an interval on which a strong Euler solution exists. 
Beyond such an interval, whether it terminates at a finite critical time $T^{*}$ or whether its extension is unknown, the Lipschitz bound $L_{\bar{\bm{u}}}$ is unavailable, and the Gr\"onwall constant in the relative energy estimate diverges, so that the argument yields no control. 
Independently of the question of singularity formation, the fully developed turbulent regime exhibits exponential separation of neighboring trajectories, so that comparing individual samples pathwise ceases to be informative (cf.\ Section~\ref{sec:sampleDivergence}).

To quantify the convergence of the statistical ensemble, we use the 1-Wasserstein metric $W_1$ and follow the structure of Kuznetsov's argument for conservation laws \cite{Kuznetsov1976}: the error is split by inserting a spatially mollified measure $\mu_{t,\lambda}$ that is smoothed at a finite length scale $\lambda > 0$. 
The two scaling inputs are formulated as explicit assumptions, and the resulting rate is then elementary.
\begin{assumption}[Kuznetsov scaling]\label{ass:kuznetsov}
    There exist constants $C_{1}, C_{2} > 0$, a regularity index $s \in (0, 1]$, and an amplification exponent $\zeta > 0$ such that, for all sufficiently small $\eps$ and $\lambda$,
    \begin{enumerate}
        \item[(K1)] \textit{Spatial interpolation:} $W_1(\mu_{t,\lambda}, \mu_t) \leq C_{1} \lambda^{s}$, and
        \item[(K2)] \textit{Numerical truncation:} $W_1(\mu^\eps_t, \mu_{t,\lambda}) \leq C_{2} \eps \lambda^{-\zeta}$.
    \end{enumerate}
\end{assumption}
Both parts are motivated as follows. 
\begin{itemize}
    \item Assumption (K1) is a consequence of the structure function scaling in Assumption~\ref{ass:scaling}: By the mollification estimate in the proof of \cite[Theorem 2.1, Eq.~(A.1)]{lanthaler2021statistical}, $W_1(\mu_{t,\lambda}, \mu_t)$ is bounded by a constant times the (square-rooted) second-order structure function at separation $\lambda$, which scales as $\lambda^{s}$ with $s = 1/3$ under Kolmogorov's K41 scaling \cite{kolmogoroff1941local}. 
    This corresponds to velocity increments of fractional Besov regularity $B^s_{3,\infty}(\Omega)$. 
    \item (K2) is a modeling assumption: Under the diagonal inviscid scaling $R\!e \sim \eps^{-1}$, the macroscopic viscosity $\vis_\eps = c\eps$ acts as a first-order dissipative perturbation of the mollified field, and its effect is expected to grow as the filter scale $\lambda$ decreases. 
    The exponent $\zeta$ quantifies this amplification and is not derived here.
\end{itemize}
Based on Assumption~\ref{ass:kuznetsov}, we can provide the following statement. 
\begin{lemma}[Optimal mollification rate]\label{lem:kuznetsovRate}
    Under Assumption~\ref{ass:kuznetsov}, 
    \begin{align}
        W_1(\mu^\eps_t, \mu_t) \leq (C_1 + C_2) \, \eps^{\frac{s}{s+\zeta}} .
    \end{align}
\end{lemma}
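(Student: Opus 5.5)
The plan is to apply the triangle inequality for $W_1$ through the mollified measure $\mu_{t,\lambda}$ and then balance the two error contributions by choosing $\lambda$ as a power of $\eps$. For every admissible $\lambda$, $W_1$ is a metric on $\mathcal{P}_1(L^2(\Omega))$, and all three measures have finite first moments: for $\mu^\eps_t$ this follows from the almost sure bound \eqref{eq:sup_bound}, and for $\mu_t$ and $\mu_{t,\lambda}$ from the mean energy bound. Hence
\begin{align}
    W_1(\mu^\eps_t,\mu_t) \leq W_1(\mu^\eps_t,\mu_{t,\lambda}) + W_1(\mu_{t,\lambda},\mu_t) \leq C_2\,\eps\,\lambda^{-\zeta} + C_1\,\lambda^{s},
\end{align}
using (K2) for the first term and (K1) for the second.

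Next, I would equalize the powers of $\eps$ in the two terms. Setting $\lambda = \eps^{a}$ gives the terms $\eps^{1-a\zeta}$ and $\eps^{as}$. These exponents coincide when $1-a\zeta = as$, that is, when $a = 1/(s+\zeta)$, and the common exponent is then $s/(s+\zeta)$. With this choice, $\lambda^{s} = \eps^{s/(s+\zeta)}$ and $\eps\lambda^{-\zeta} = \eps^{1-\zeta/(s+\zeta)} = \eps^{s/(s+\zeta)}$. Substituting into the display yields the claimed bound with constant $C_1+C_2$. One could instead minimize $C_2\eps\lambda^{-\zeta}+C_1\lambda^s$ exactly in $\lambda$. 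This gives the same exponent with a slightly better constant, but the power choice is enough for the stated constant and is simpler.

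The only point needing care, and the closest thing to an obstacle, is admissibility: Assumption~\ref{ass:kuznetsov} holds only for sufficiently small $\eps$ and $\lambda$, say $\eps\le\eps_*$ and $\lambda\le\lambda_*$. Since $s+\zeta>0$, the choice $\lambda=\eps^{1/(s+\zeta)}$ tends to $0$ with $\eps$. Both conditions therefore hold once $\eps \leq \min(\eps_*,\lambda_*^{s+\zeta})$, so the lemma should be read for such $\eps$, consistent with the assumption's own smallness qualifier. Beyond this, the argument is elementary, and all the substantive content lies in (K1) and (K2).
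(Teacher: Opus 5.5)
Your proposal is correct and follows the paper's proof. You pass the triangle inequality through $\mu_{t,\lambda}$ and insert $\lambda=\eps^{1/(s+\zeta)}$ into $C_1\lambda^{s}+C_2\eps\lambda^{-\zeta}$; the paper motivates this scale via $\partial E/\partial\lambda=0$, while you balance exponents directly, which gives the same substitution and constant. Your remark that the bound only holds for $\eps\le\min(\eps_*,\lambda_*^{s+\zeta})$, so that (K1)--(K2) apply, is left implicit in the paper and you handle it correctly.
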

\begin{proof}
    By the triangle inequality, the total Wasserstein error splits into the two components of Assumption~\ref{ass:kuznetsov}, i.e., 
    \begin{equation} \label{eq:kuznetsov_split}
        W_1(\mu^\eps_t, \mu_t) \leq \underbrace{W_1(\mu^\eps_t, \mu_{t,\lambda})}_{\text{Numerical truncation}} + \underbrace{W_1(\mu_{t,\lambda}, \mu_t)}_{\text{Spatial interpolation}} \leq E(\eps, \lambda),
    \end{equation}
    where
    \begin{equation} \label{eq:error_functional}
        E(\eps, \lambda) = C_1 \lambda^s + C_2 \eps \lambda^{-\zeta} .
    \end{equation}
    We minimize this error functional with respect to the filter scale $\lambda$ by setting its partial derivative to zero, i.e., 
    \begin{align}
        \frac{\partial E}{\partial \lambda} = s C_1 \lambda^{s-1} - \zeta C_2 \eps \lambda^{-\zeta-1} = 0 
        \quad \implies \quad \lambda^{s+\zeta} &= \left( \frac{\zeta C_2}{s C_1} \right) \eps,
    \end{align}
    which is the unique global minimizer since $E(\eps, \cdot)$ is strictly convex on $(0, \infty)$ with $E \to \infty$ at both ends. 
    Solving for the optimal smoothing scale $\lambda_\mathrm{opt}$ yields an algebraic coupling between the numerical grid and the turbulent filter width:
    \begin{equation} \label{eq:optimal_lambda}
        \lambda_\mathrm{opt} \sim \mathcal{O}\left( \eps^{\frac{1}{s+\zeta}} \right).
    \end{equation}
    Inserting $\lambda = \eps^{1/(s+\zeta)}$ into \eqref{eq:error_functional} gives
    \begin{align}
        W_1(\mu^\eps_t, \mu_t) 
        \leq 
        C_1 \left( \eps^{\frac{1}{s+\zeta}} \right)^s + C_2 \eps \left( \eps^{\frac{1}{s+\zeta}} \right)^{-\zeta} 
        = 
        (C_1 + C_2) \eps^{\frac{s}{s+\zeta}}. \label{eq:final_derivation}
    \end{align}
\end{proof}
\begin{hypothesis}[Wasserstein convergence rate]\label{hyp:WassersteinRate}
    Assumption~\ref{ass:kuznetsov} holds for the MC KBC LBM statistical solutions with the K41 regularity index $s \approx 1/3$ and an amplification exponent $\zeta \approx s$, so that, by Lemma~\ref{lem:kuznetsovRate},
    \begin{equation} \label{eq:alpha_prediction}
        W_1(\mu^\eps_t, \mu_t) \sim \mathcal{O}(\eps^\alpha), \quad \text{where } \alpha = \frac{s}{s+\zeta} \approx 0.5 .
    \end{equation}
\end{hypothesis}
We formulate the explicit rate in \eqref{eq:alpha_prediction} as a hypothesis since the value $\zeta \approx s$ is not derived. 
It is the value for which Lemma~\ref{lem:kuznetsovRate} reproduces the rates measured in Section~\ref{subsec:numWasserstein}. 
For comparison, if the numerical dissipation acted on the mollified field like a single derivative, i.e., $\zeta = 1$, the resulting rate would be $\alpha = s/(1+s) = 1/4$. 
Hypothesis~\ref{hyp:WassersteinRate} is therefore rather an empirical fit within the derived Kuznetsov-type structure than a prediction. 
As such, the computations in Section~\ref{sec:numerics} do not confirm it independently but motivate the validity of our Hypothesis~\ref{hyp:WassersteinRate} and suggest that \(\zeta \approx s\) is plausible in turbulent regimes.

\subsection{Numerical validation of convergence rates}

To measure the rates entering Hypothesis~\ref{hyp:WassersteinRate}, we compute the 1-Wasserstein distance with respect to the $L^1(\Omega)$ ground metric. 
We approximate the true continuous measures using discrete MC empirical measures $\mu^\eps_t = \frac{1}{M} \sum_{m=1}^M \delta_{\bm{u}^{\eps}_{m}(t)}$ generated by evaluating $M$ independent realizations of the KBC LBM ensemble.
To numerically realize the diagonal inviscid limit ($\vis_\eps \searrow 0$), we do not target a fixed physical Reynolds number. 
Instead, the macroscopic kinematic viscosity of the KBC LBM scheme is coupled directly to the spatial lattice resolution $N$ (where the grid spacing is $\eps = 1/N$), and \(N\) denotes the number of grid points in one coordinate direction of a periodic box domain. 
To maintain a constant grid Reynolds number, we enforce the linear scaling $R\!e_{\mathrm{eff}} \sim \eps^{-1}$. 
For example, in the computational setup for computing the Wasserstein convergence rate, we scale the effective Reynolds number as $R\!e_{N} = 40 N$. 
Consequently, as the grid is refined ($N \to \infty$, $\eps \to 0$), the physical dissipation vanishes.
Because the empirical marginal measures evaluated at any spatial location $\bm{x}$ are composed of an equal number of $M$ uniform Dirac masses, the general Kantorovich optimal transport relaxation locally simplifies to a classical Monge assignment problem.
By the Birkhoff--von Neumann theorem (see, e.g., \cite{Peyre2019, Villani2003}), the polytope of doubly stochastic transport matrices has permutation matrices as its extreme points. 
Since the cost functional is linear, the minimum is achieved at an extreme point, meaning that the localized optimal transport plan is defined by a pure bijection. 
Thus, the exact $1$-Wasserstein distance between the time-local 1-point marginals at a specific location $\bm{x}$ is obtained by minimizing the distance cost over the symmetric group of all possible permutations $\mathcal{S}_M$ independently at that evaluation point, i.e., 
\begin{equation}\label{eq:wassersteinPermutations}
    W_1(\nu_{t,\bm{x}}^{\eps_1}, \nu_{t,\bm{x}}^{\eps_2}) = \min_{\pi_{\bm{x}} \in \mathcal{S}_M} \frac{1}{M} \sum_{m=1}^M \left| \bm{u}_{m}^{\eps_1}(t, \bm{x}) - \bm{u}_{\pi_{\bm{x}}(m)}^{\eps_2}(t, \bm{x}) \right|.
\end{equation}
The approximation of the integrated marginal distance $\|W_1(\nu_{t,\bm{x}}^{\eps_1}, \nu_{t,\bm{x}}^{\eps_2})\|_{L^1(\Omega)}$ is then obtained by solving these independent assignment problems at each point and averaging the minimized costs over the discrete spatial domain.
Note that \(\pi_{\bm{x}}\) denotes a specific permutation function from the symmetric group $\mathcal{S}_M$, which acts as a mapping that shuffles the order of the points to find the optimal minimum-distance matching between the two distributions at location \(\bm{x}\).

\subsection{Error metrics between statistical solutions}

While the previous section derived, under the scaling assumptions of Section~\ref{sec:formalWrate}, a convergence bound for the full global measure $\mu_t$, practical computation requires evaluating localized objects. 
Since statistical solutions are infinite-dimensional, we constrain ourselves to tracking distances between their finite-dimensional correlation marginals. 
This reduces the problem to computing the Wasserstein distance between probability measures on $\mathbb{R}^{3k}$.
\begin{remark}[Transfer of the convergence rate to marginals]
    While the formal Kuznetsov-type argument yields a fractional convergence rate $\mathcal{O}(\eps^\alpha)$ for the global measure $\mu_t$ over the infinite-dimensional phase space, evaluating this globally is computationally intractable. 
    This bound transfers to our localized marginal computations. 
    For the $1$-Wasserstein distance evaluated with an $L^1(\Omega)$ spatial ground metric, Fubini's theorem and the sub-additivity of the infimum guarantee that the integrated distance between the time-local 1-point marginals is bounded by the global distance between the corresponding time-local measures, i.e., for every $t \in I$, 
    \begin{equation} \label{eq:marginal_bound}
        \|W_1(\nu_{t,\bm{x}}^{1,1}, \nu_{t,\bm{x}}^{1,2})\|_{L^1(\Omega)} \leq W_1(\mu_{t}^{1}, \mu_{t}^{2}) \sim \mathcal{O}(\eps^\alpha),
    \end{equation}
    where $\nu^{1,i}_{t,\bm{x}}$ denotes the one-point marginal of $\mu_{t}^{i}$, $i \in \{1, 2\}$.
    We emphasize that this estimate \eqref{eq:marginal_bound} is stated already for general $k$-point marginals in \cite[Eq.~(5.8)]{lanthaler2021statistical}. 
    Because the local pointwise transport plans are less constrained than a global field-to-field transport plan, the computationally feasible marginal metric inherits the fractional convergence bound of the global statistical Euler limit. 
    Note that the rate on the right-hand side of \eqref{eq:marginal_bound} is the one stated in Hypothesis~\ref{hyp:WassersteinRate} rather than a proven bound, and that the estimate is one-sided. 
    Observing a rate in the localized measurements is therefore consistent with, but does not establish, the same rate for the global measure.
\end{remark}
Generally, we are interested in the value of $\|W_p(\nu^{k,1},\nu^{k,2})\|_{L^p(\Omega^k)}$, where $\nu^{k,1}$ and $\nu^{k,2}$ are the $k$-point correlation marginals of the empirical measures induced by $M$ MC samples $\{\bm{u}_m^i\}_{m=1}^M$ of two different statistical solutions $i \in \{1, 2\}$ (below identified with the resolutions $N$ and $N_{\mathrm{ref}}$), given by
\begin{equation}
    \nu^{k,i}_{\bm{x}_1,\dots,\bm{x}_k} = \frac{1}{M}\sum_{m=1}^M \delta_{\bm{u}_m^i(\bm{x}_1),\dots,\bm{u}_m^i(\bm{x}_k)}.
\end{equation}
By choosing a separable ground metric in phase space, we can compute the distance component-wise. We approximate the $L^p$ norm in physical space by a Riemann sum to obtain
\begin{align}
    & \left\|W_p(\nu^{k,1},\nu^{k,2})\right\|_{L^p(\Omega^k)}^p &\nonumber \\
    & = \int_{\Omega^k}\! \left|W_p(\nu^{k,1}_{\bm{x}_1,\dots,\bm{x}_k},\nu^{k,2}_{\bm{x}_1,\dots,\bm{x}_k})\right|^p \,\mathrm{d}\bm{x}_1\dots\mathrm{d}\bm{x}_k \label{eq:W1}\\
    & \approx \frac{1}{Q^{dk}}\sum_{(\bm{x}_1,\dots,\bm{x}_k) \in I_Q^k} \left|W_p(\nu^{k,1}_{\bm{x}_1,\dots,\bm{x}_k},\nu^{k,2}_{\bm{x}_1,\dots,\bm{x}_k})\right|^p \nonumber \\
    & = \frac{1}{Q^{dk}}\sum_{(\bm{x}_1,\dots,\bm{x}_k) \in I_Q^k} \left|\begin{pmatrix}
        W_p\left(\frac{1}{M}\sum_{m=1}^M \delta_{u_{1,m}^{1}(\bm{x}_1),\dots,u_{1,m}^{1}(\bm{x}_k)},\frac{1}{M}\sum_{m=1}^M \delta_{u_{1,m}^{2}(\bm{x}_1),\dots,u_{1,m}^{2}(\bm{x}_k)}\right) \\
        W_p\left(\frac{1}{M}\sum_{m=1}^M \delta_{u_{2,m}^{1}(\bm{x}_1),\dots,u_{2,m}^{1}(\bm{x}_k)},\frac{1}{M}\sum_{m=1}^M \delta_{u_{2,m}^{2}(\bm{x}_1),\dots,u_{2,m}^{2}(\bm{x}_k)}\right) \\
        W_p\left(\frac{1}{M}\sum_{m=1}^M \delta_{u_{3,m}^{1}(\bm{x}_1),\dots,u_{3,m}^{1}(\bm{x}_k)},\frac{1}{M}\sum_{m=1}^M \delta_{u_{3,m}^{2}(\bm{x}_1),\dots,u_{3,m}^{2}(\bm{x}_k)}\right)
    \end{pmatrix}\right|^p, \label{eq:W1approx}
\end{align}
where the index set $I_Q$ is given by $I_Q = \left\{ \bm{x} \in \mathbb{R}^d : 0 < \bm{x} \leq 1, Q\bm{x} \in \mathbb{N}^d \right\}$ (in coordinates normalized by the domain length $2\pi$), $d=3$ is the spatial dimensionality of the domain, $Q^{dk}$ is the total number of evaluation points, the outer $\vert \cdot \vert$ denotes the $\ell^{p}$-norm over the velocity components, and $u_{c,m}^i$ ($c = 1, 2, 3$) are the three scalar velocity components of the flow field $\bm{u}_m^i$, matching the component notation of the algorithms in Appendix~\ref{appsec:compWasserstein}. 
The prefactor $Q^{-dk}$ is the uniform quadrature weight of the Riemann sum (each evaluation point represents a cell of volume $Q^{-dk}$ in the normalized domain). 
Since all resolutions $N$ are evaluated on the same downsampled grid with $Q = N_{\mathrm{ds}} = 8$ points per direction (see Appendix~\ref{appsec:compWasserstein}), this weight is a fixed constant that only scales the absolute magnitude of the error and cancels in the experimental orders of convergence.

Alternatively, for the special case of the 2-point correlation marginals ($k=2$), we can avoid separating the components and instead evaluate the distance directly on the full vector-valued states. 
In this approach, the ground metric is defined on the $\mathbb{R}^6$ phase space of the concatenated velocity vectors, yielding the scalar distance
\begin{align}
     \left\|W_p(\nu^{2,1},\nu^{2,2})\right\|_{L^p(\Omega^2)}^p \approx \frac{1}{Q^{2d}}\sum_{(\bm{x}_1,\bm{x}_2) \in I_Q^2} W_p\left( \frac{1}{M}\sum_{m=1}^M \delta_{\bm{u}_m^1(\bm{x}_1),\bm{u}_m^1(\bm{x}_2)}, \frac{1}{M}\sum_{m=1}^M \delta_{\bm{u}_m^2(\bm{x}_1),\bm{u}_m^2(\bm{x}_2)} \right)^p, \label{eq:W1_vector}
\end{align}
where the inner Wasserstein distance $W_p$ is computed using the Euclidean distance in $\mathbb{R}^6$ between the evaluated state vectors.

In practice, the two cases are evaluated differently. 
For the 1-point correlation marginals ($k=1$), the component-wise distances in \eqref{eq:W1approx} are one-dimensional optimal transport problems between empirical measures with equal uniform weights. 
Their optimal coupling is the monotone (sorted) rearrangement, so the local $W_1$ in \eqref{eq:wassersteinPermutations} coincides with the $L^1$ distance between the empirical cumulative distribution functions of the two samples, which is compared for each velocity component at each point in space and time. 
For the 2-point correlation marginals ($k=2$), both component-wise \eqref{eq:W1approx} and vector-valued \eqref{eq:W1_vector}, the joint empirical distributions at the point pair $(\bm{x}_1, \bm{x}_2)$ are compared by assembling the $M \times M$ cost matrix of pairwise Euclidean distances in $\mathbb{R}^{2}$ and $\mathbb{R}^{6}$, respectively, and solving the resulting discrete optimal transport problem. 
The corresponding implementations are given in Algorithms~\ref{alg:WassersteinDistance1}, \ref{alg:WassersteinDistance2}, and \ref{alg:WassersteinDistance2_vector} of Appendix~\ref{appsec:compWasserstein}, which realize the quantities $W_{1,1}$, $W_{1,2}$, and $W_{1,2}^{\mathrm{v}}$ reported in Section~\ref{subsec:numWasserstein}.

Therefore, computing the Wasserstein distance between two statistical solutions based on MC samples requires solving an optimal transport problem between the samples for a given set of points in the domain of the correlation marginals of interest. 
Then, the $L^p$-norm is taken over all the obtained pointwise Wasserstein distances. 
The computational complexity of the two cases differs substantially. 
For the 1-point marginals, the sorted-rearrangement evaluation of the one-dimensional distances requires only $\mathcal{O}(M \log M)$ operations per evaluation point and component, yielding a total complexity of $\mathcal{O}(Q^{d} M \log M)$. 
For the $k$-point marginals with $k \geq 2$, the joint distributions are no longer one-dimensional, and the discrete optimal transport (minimum-cost flow) problem must be solved from the $M \times M$ cost matrix, which requires $\mathcal{O}(M^3)$ operations per evaluation tuple if both solutions are approximated by empirical measures based on the same number of samples, and $\mathcal{O}(M^3 \log M)$ otherwise. 
This leads to a total complexity of $\mathcal{O}(Q^{dk}M^3)$ and $\mathcal{O}(Q^{dk}M^3\log M)$, respectively. 
These computational complexities make it clear that computing the Wasserstein distance quickly becomes infeasible when the order of the correlation marginals is increased. 
We will therefore restrict the computation to the 1-point and 2-point marginals, where the 2-point computations already necessitate approximating the Riemann sum with a lower resolution than the underlying grid utilized for the simulations.

\section{Numerical experiments}\label{sec:numerics}

\subsection{Randomized Taylor--Green vortex flow}

Let the domain be a periodic box \(\Omega = \left[ 0, 2\pi l_{\mathrm{c}}\right]^3\).  
We define a reference length \(l_{\mathrm{c}}\), a reference velocity \(U_{\mathrm{c}}\), and a reference density \(\rho_{\mathrm{c}}\), where \(l_{\mathrm{c}} = 1 \mathrm{m}\) and \(U_{\mathrm{c}}= 1 \mathrm{m}/\mathrm{s}\) simplify the Reynolds number to \(R\!e = (1\mathrm{m}^{2}/\mathrm{s})/ \vis\).  
The deterministic part of the initial velocity is the TGV field (see for example~\cite{brachet1991direct} and references therein)
\begin{align} \label{eq:tgv_init_velo}
    \bm{u}_{0}^{\mathrm{det}} \left(\bm{x} \right) = 
    \begin{pmatrix} U_{\mathrm{c}} \mathrm{sin}\left(\frac{x}{l_{\mathrm{c}}}\right) \mathrm{cos}\left(\frac{y}{l_{\mathrm{c}}}\right) \mathrm{cos}\left(\frac{z}{l_{\mathrm{c}}}\right) \\
    -U_{\mathrm{c}}\mathrm{cos}\left(\frac{x}{l_{\mathrm{c}}}\right) \mathrm{sin}\left(\frac{y}{l_{\mathrm{c}}}\right) \mathrm{cos}\left(\frac{z}{l_{\mathrm{c}}}\right) \\
    0 \end{pmatrix}.
\end{align}

As motivated by~\cite{rohner2024efficient,simonis2024computing}, we extend the classical TGV flow benchmark with a probabilistic initial velocity that retains its symmetry. 
To this end, the TGV flow initial condition \eqref{eq:tgv_init_velo} is perturbed with \(8d = 24\) IID random variables $X_{\alpha,i,j,k} \sim \mathcal{U}_{[-\mathfrak{w},\mathfrak{w}]}$ to obtain the RTGV initial velocity field 
\begin{align}\label{eq:initialRTGV}
    \bm{u}_{0} = \bm{u}_{0}^{\mathrm{det}} + \bm{\mathfrak{s}} \in \mathbb{R}^{d}, 
\end{align}
where the \(\alpha\)th perturbation in \(\bm{\mathfrak{s}} = (\mathfrak{s}_{\alpha})_{1\leq\alpha\leq d}\) is chosen as in \eqref{eq:alpha_perturbation}. 
In this randomized setting, the computational domain is still \(\Omega = [0, 2\pi]^{3}\), and the support parameter of the uniform distribution is set to \(\mathfrak{w}=0.025\). 

In our numerical computations, in contrast to the construction in Section~\ref{sec:approximation}, we omit the explicit Leray projection $\mathbf{P}$ both in the initialization \eqref{eq:initialRTGV} and when evaluating the statistical metrics. 
The perturbation $\bm{\mathfrak{s}}$ is then not exactly solenoidal, with $\bm{\nabla} \cdot \bm{\mathfrak{s}} = \mathcal{O}(\mathfrak{w})$ independent of the resolution, so that the equilibrium initialization \eqref{eq:initialPopulations} emits a weak initial acoustic transient. 
The initial measures of Section~\ref{sec:approximation} and the present section coincide in the limit $\mathfrak{w} \to 0$. 
For the evolved fields, the solver enforces a discrete divergence-free condition, so that the velocity field for the \(m\)th sample satisfies $\bm{\nabla} \cdot \bm{u}_{m} = \mathcal{O}(\triangle x^{2})$, which is the spatial order of accuracy of the KBC LBM in diffusive scaling. 
Since the discrepancy between the discretely divergence-free fields and their exact projection onto the divergence-free manifold is bounded by the spatial truncation error, omitting the projection when evaluating the local metrics is not expected to affect the measured convergence orders. 
We therefore compute the local metrics directly on the discrete fields. 
All initial samples are thus initialized with \eqref{eq:initialRTGV} according to \eqref{eq:initialPopulations} and evolved in time with the deterministic KBC LBM until \(t=T\) to obtain 
\begin{align}
    \bigl\{ \bigl. \bm{u}^{\vis}_{m} (t) ~ \bigr\vert ~ m=1, 2, \ldots, M \bigr\} \sim \mu_{t}^{\vis,N,M} .
\end{align}

\begin{figure}[ht!]%trim = {left low right up}
	\centerline{
       \subfloat[\(t=0\)]{\includegraphics[width=0.24\linewidth ,trim={0cm 12cm 0cm 0cm},clip]{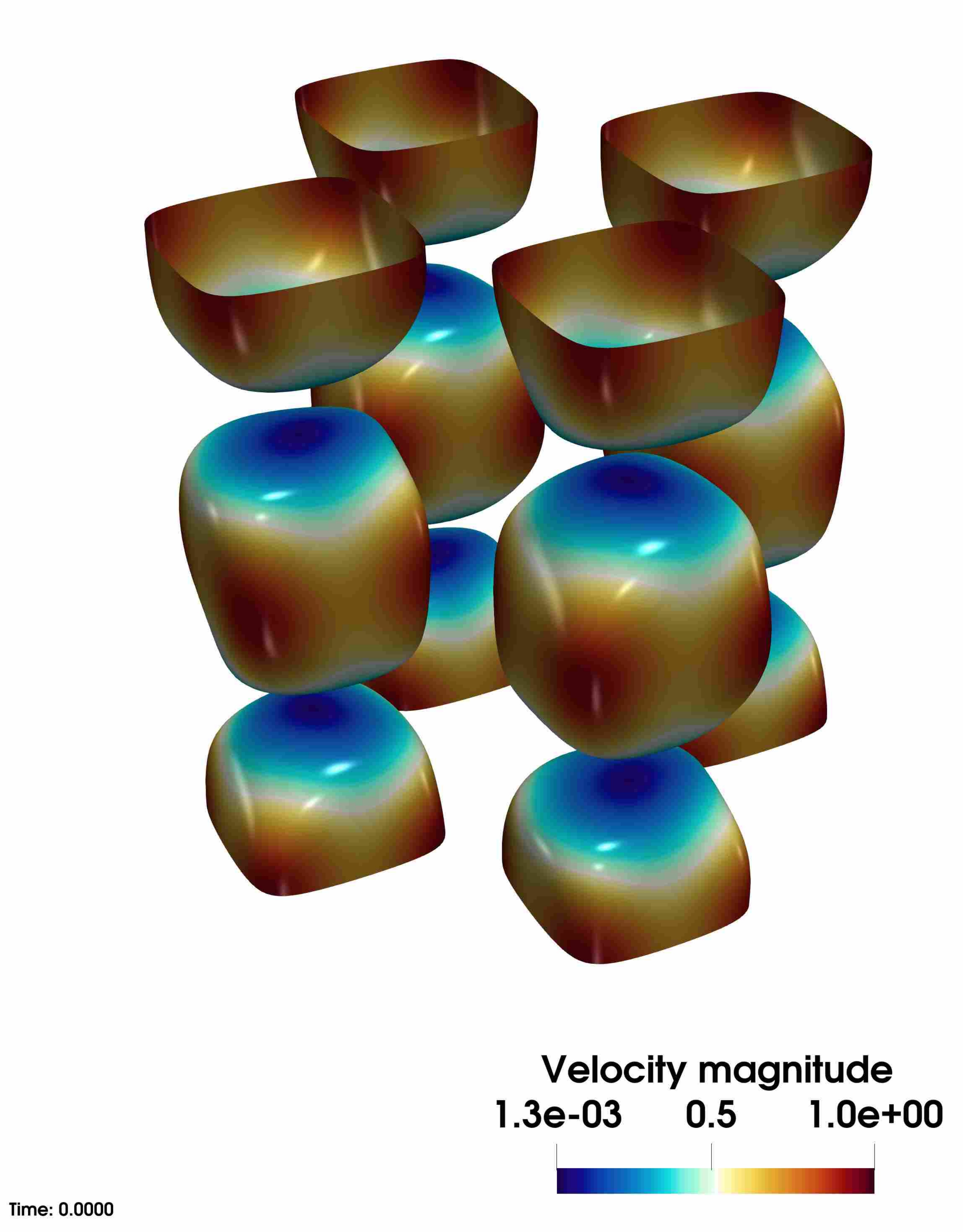}}
       \subfloat[\(t\approx 2\)]{\includegraphics[width=0.24\linewidth ,trim={0cm 12cm 0cm 0cm},clip]{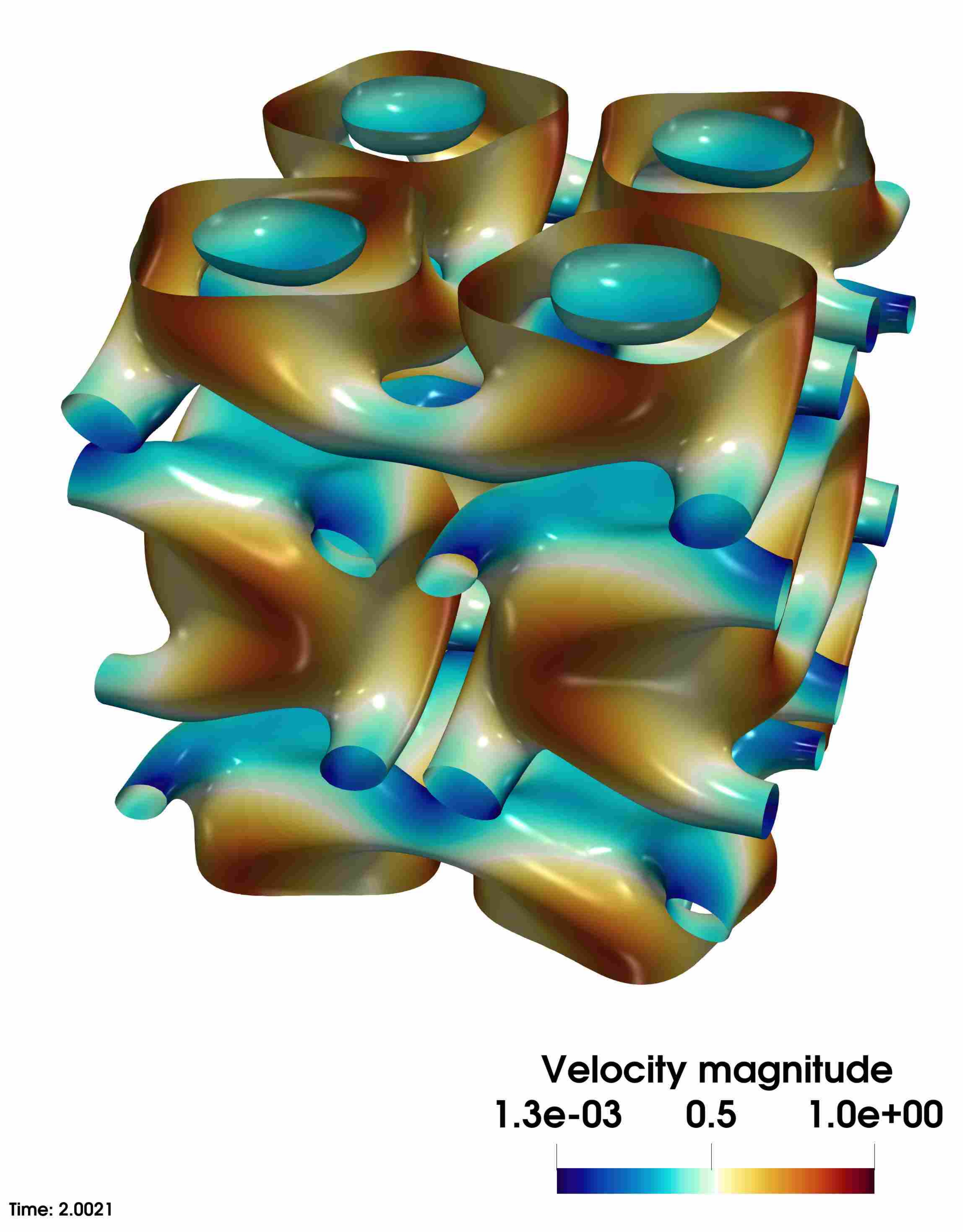}}
       \subfloat[\(t\approx 4\)]{\includegraphics[width=0.24\linewidth ,trim={0cm 12cm 0cm 0cm},clip]{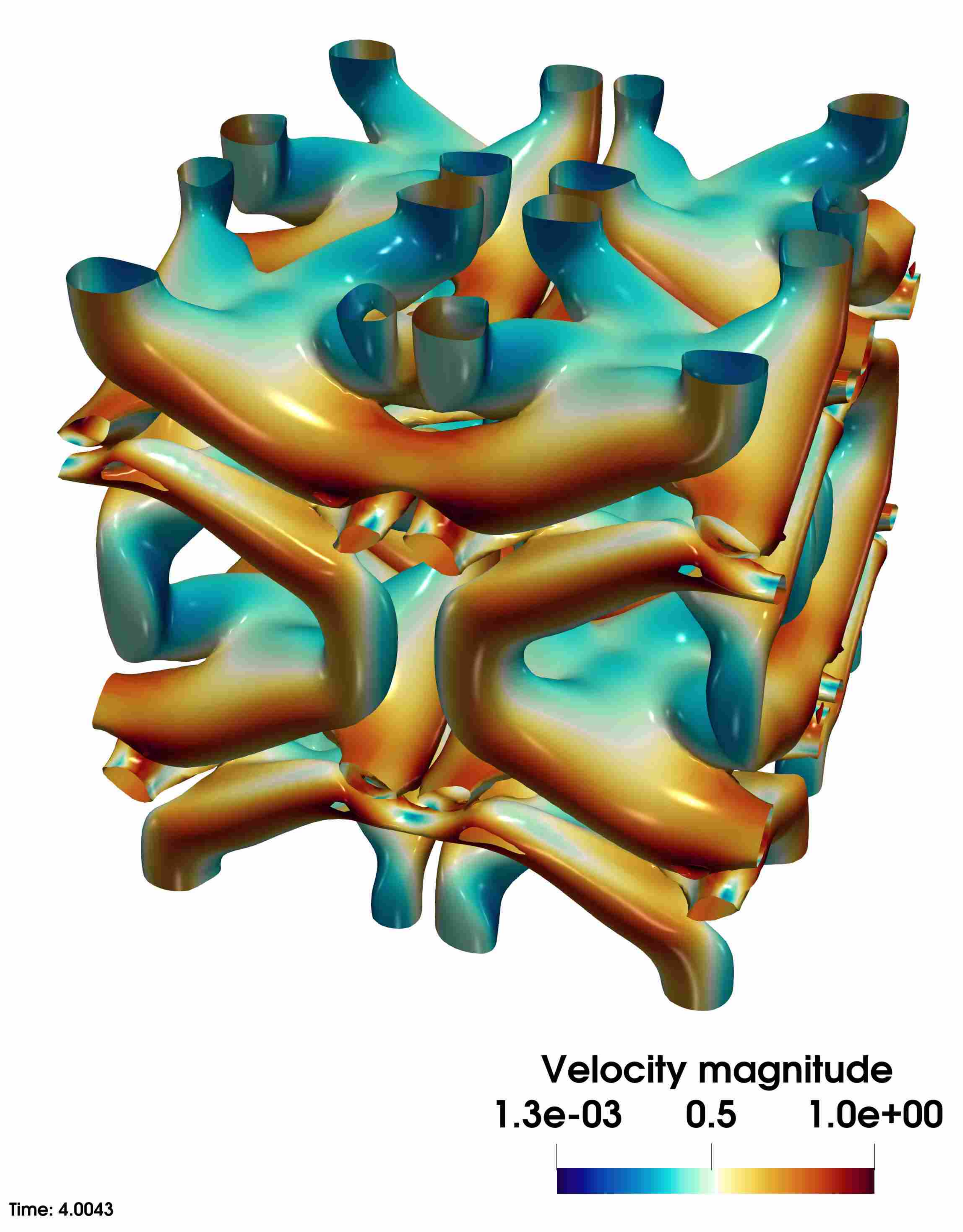}}
       \subfloat[\(t\approx 6\)]{\includegraphics[width=0.24\linewidth ,trim={0cm 12cm 0cm 0cm},clip]{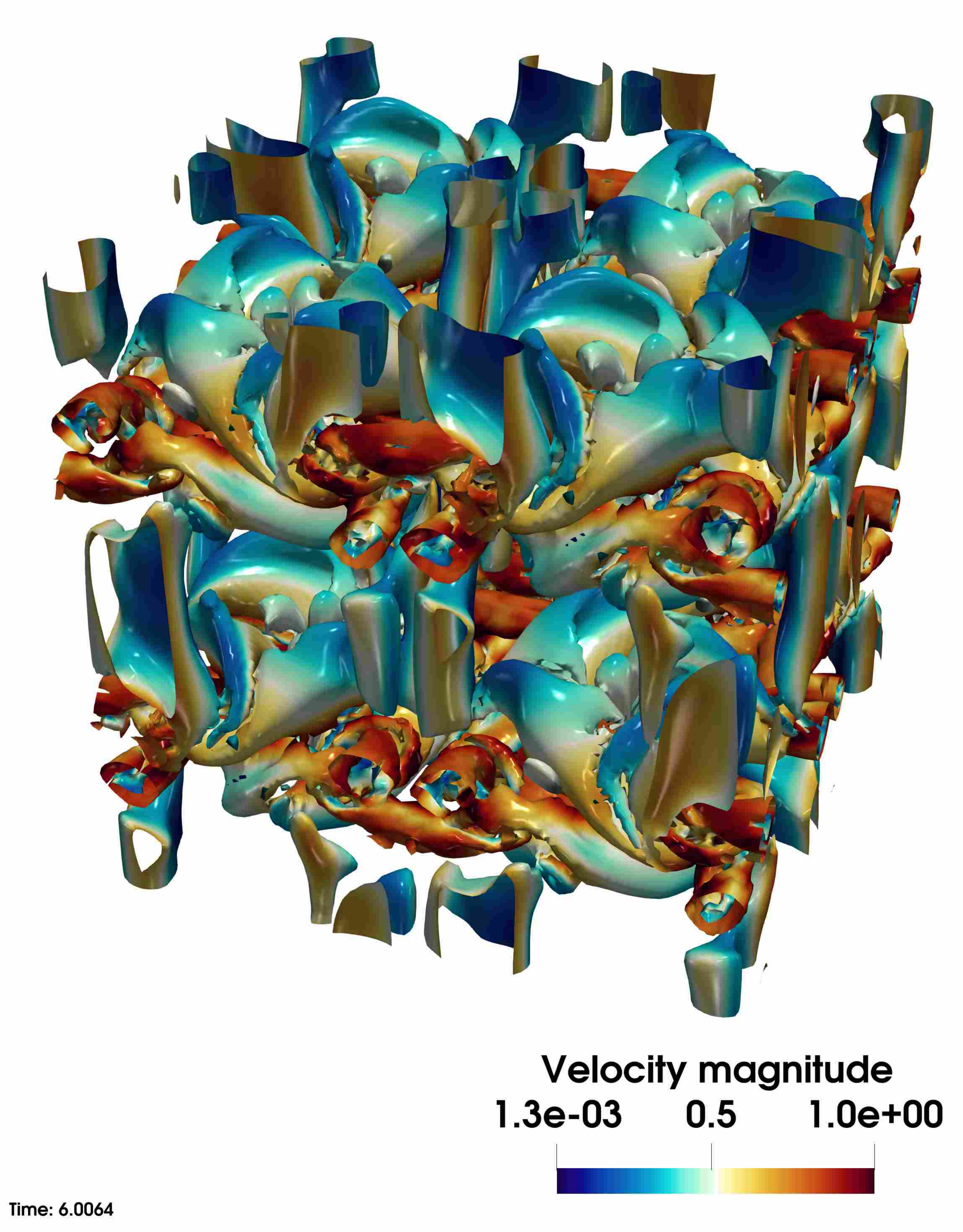}}
       
	}
	\centerline{
       \subfloat[\(t\approx 8\)]{\includegraphics[width=0.24\linewidth ,trim={0cm 12cm 0cm 0cm},clip]{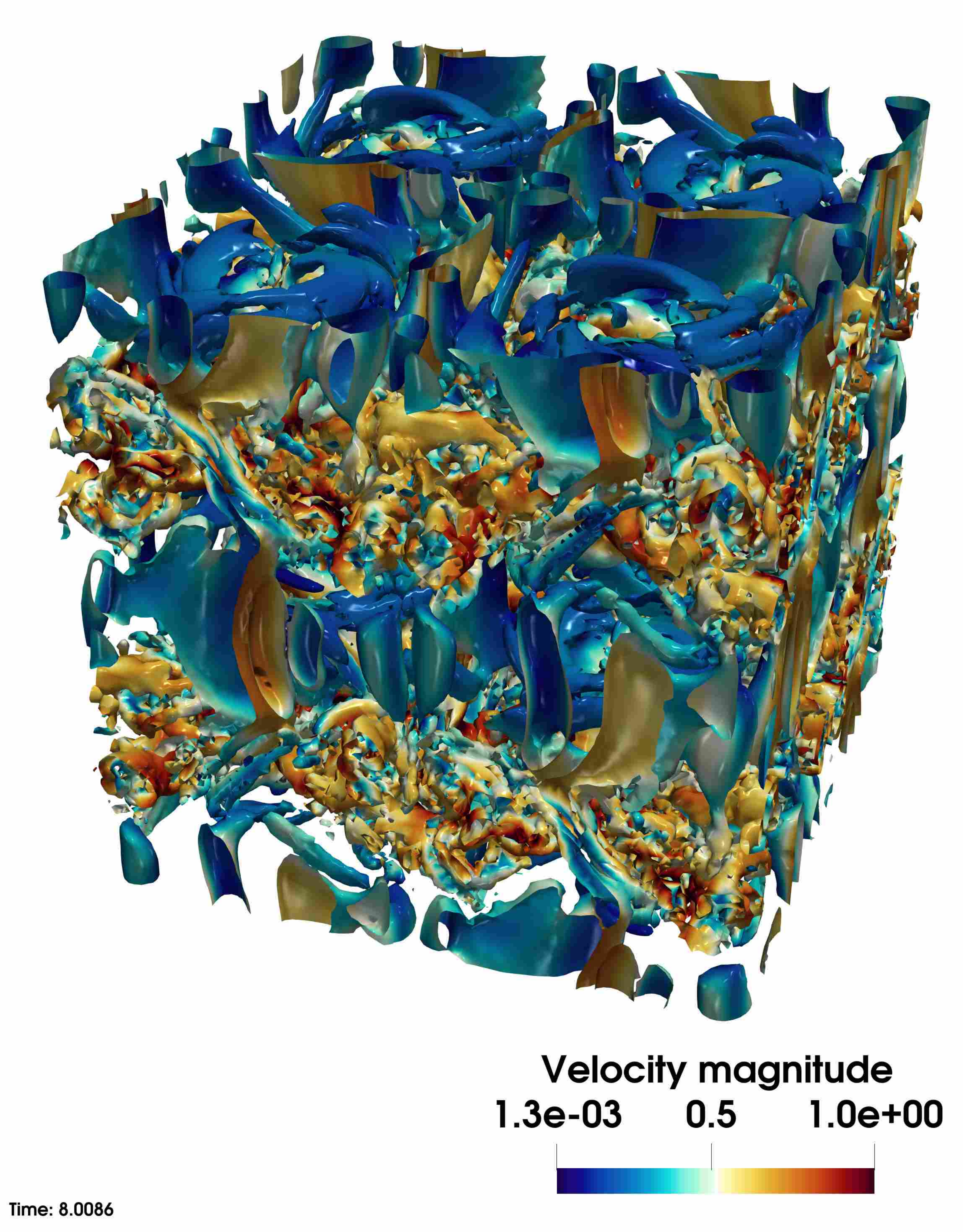}}
       \subfloat[\(t\approx 10\)]{\includegraphics[width=0.24\linewidth ,trim={0cm 12cm 0cm 0cm},clip]{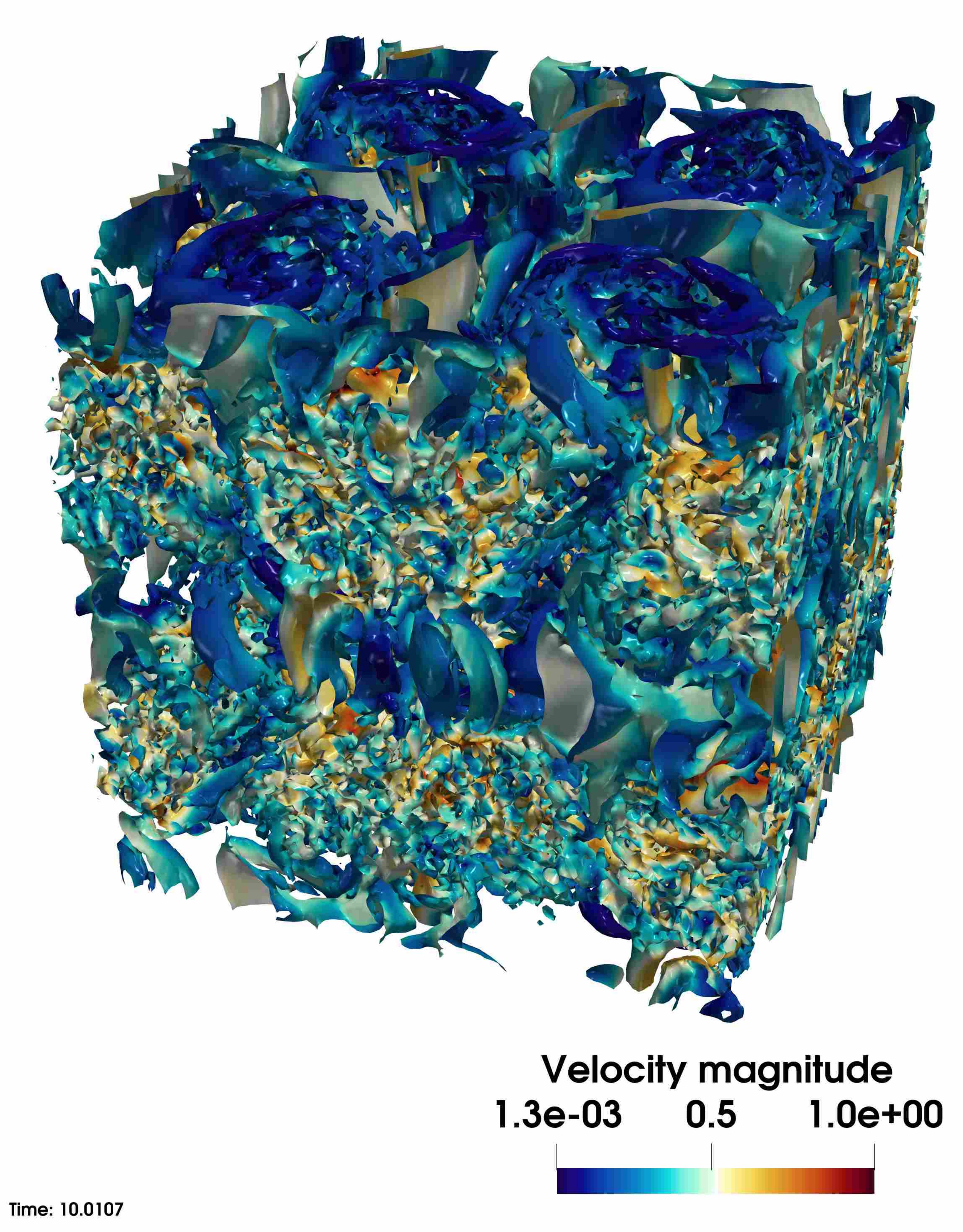}}
       \subfloat[\(t\approx 12\)]{\includegraphics[width=0.24\linewidth ,trim={0cm 12cm 0cm 0cm},clip]{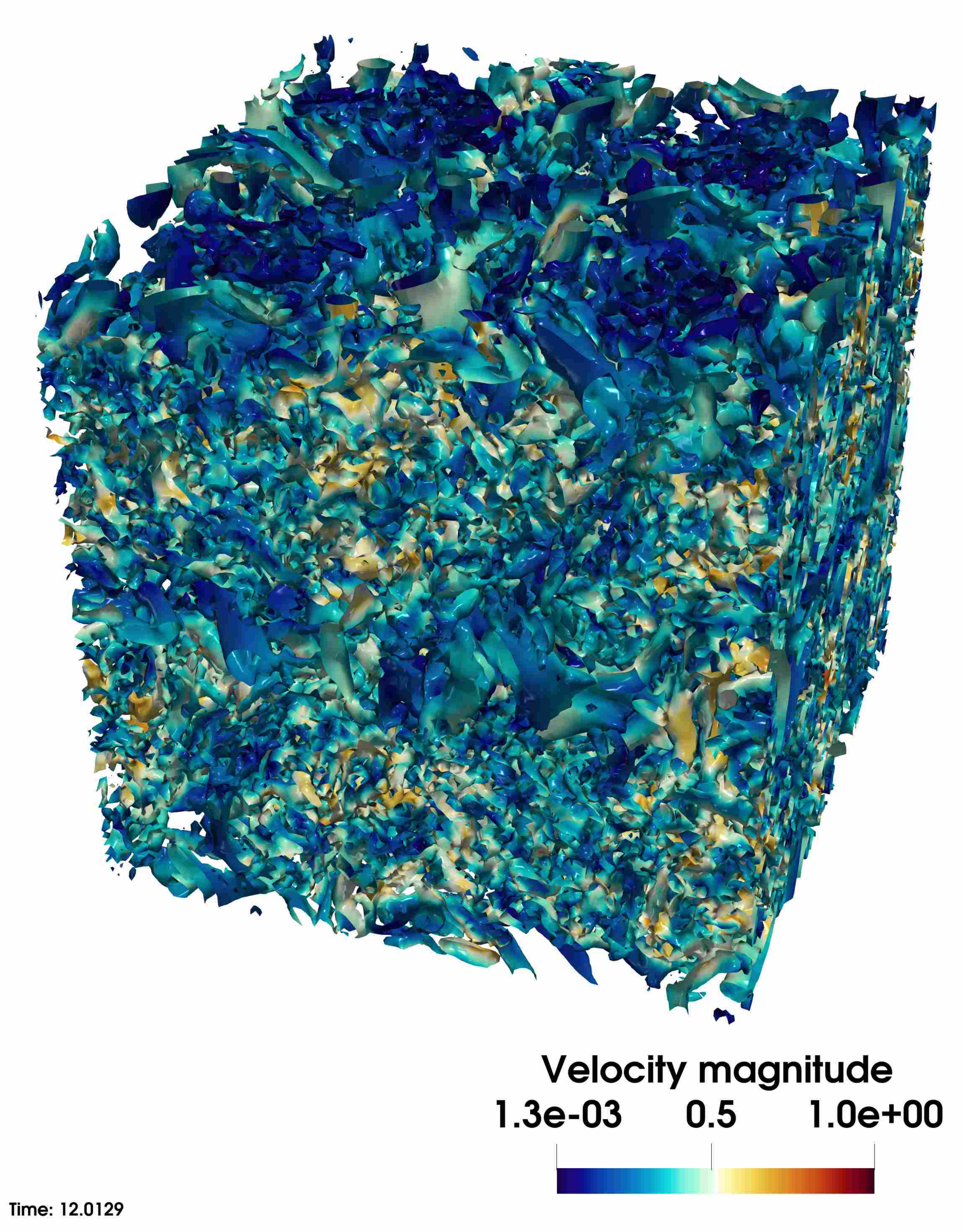}}
       \subfloat[\(t\approx 14\)]{\includegraphics[width=0.24\linewidth ,trim={0cm 12cm 0cm 0cm},clip]{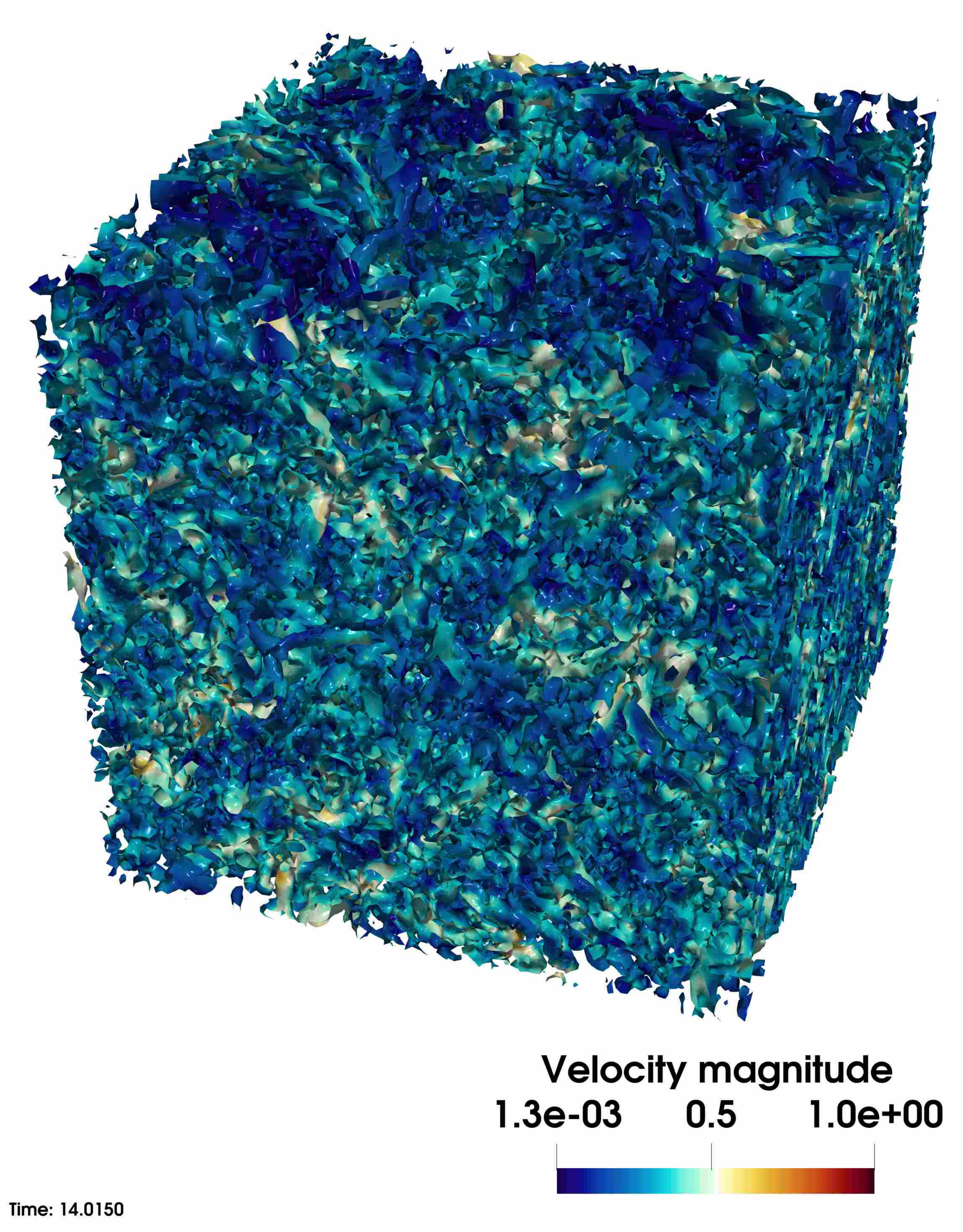}}    
    }
    \centerline{
       \subfloat[\(t\approx 16\)]{\includegraphics[width=0.24\linewidth ,trim={0cm 12cm 0cm 0cm},clip]{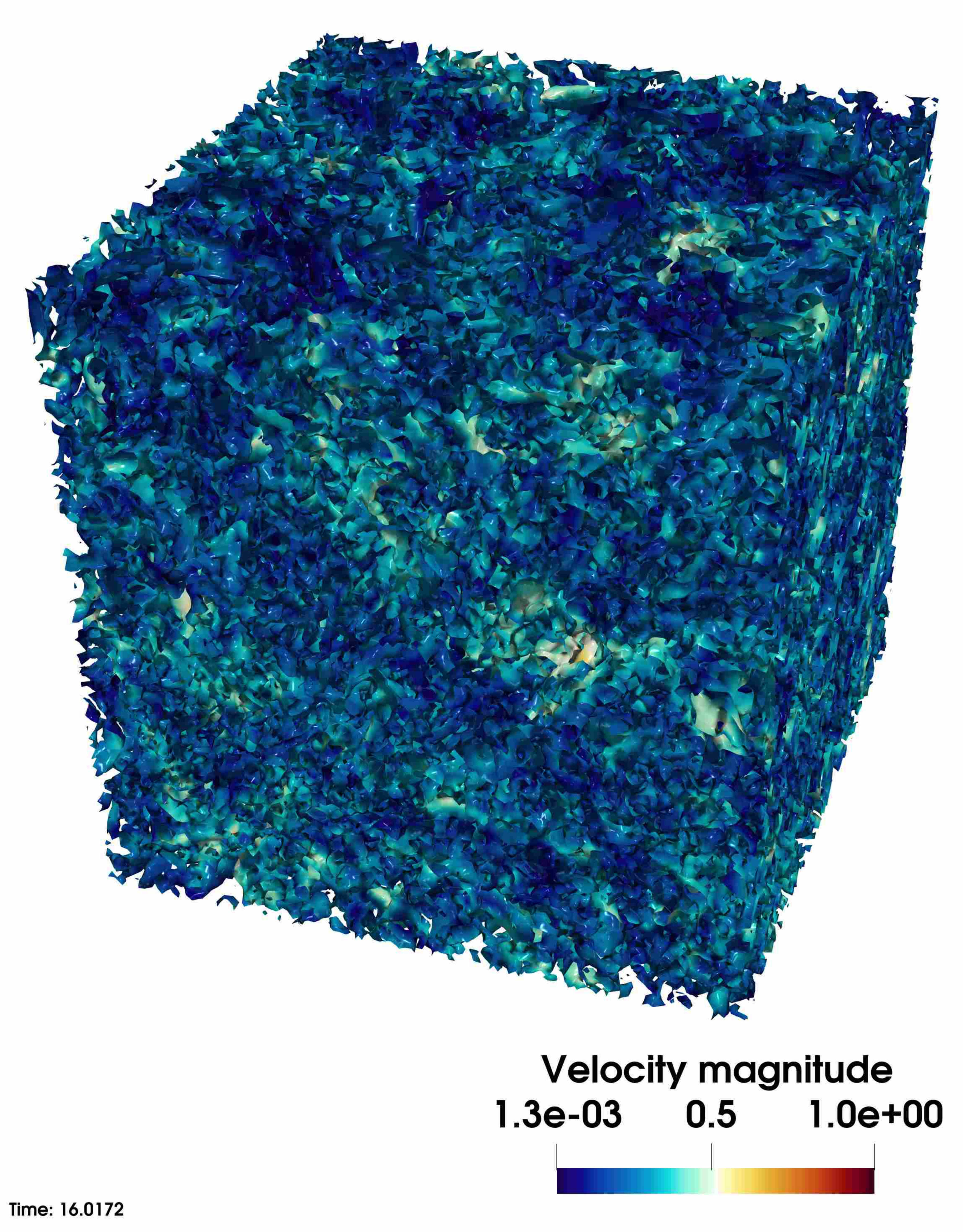}}
       \subfloat[\(t\approx 20\)]{\includegraphics[width=0.24\linewidth ,trim={0cm 12cm 0cm 0cm},clip]{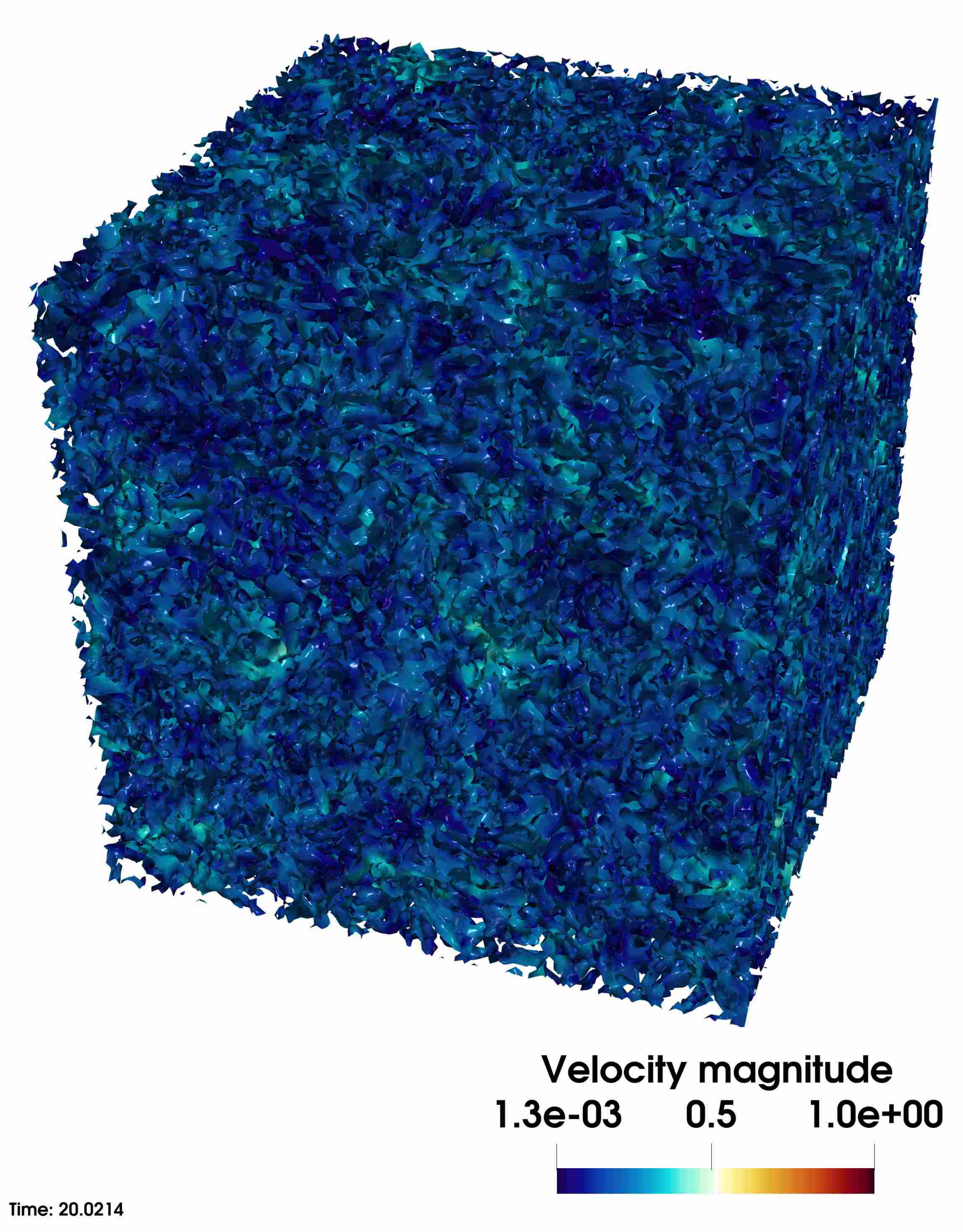}}
       \subfloat[\(t\approx 24\)]{\includegraphics[width=0.24\linewidth ,trim={0cm 12cm 0cm 0cm},clip]{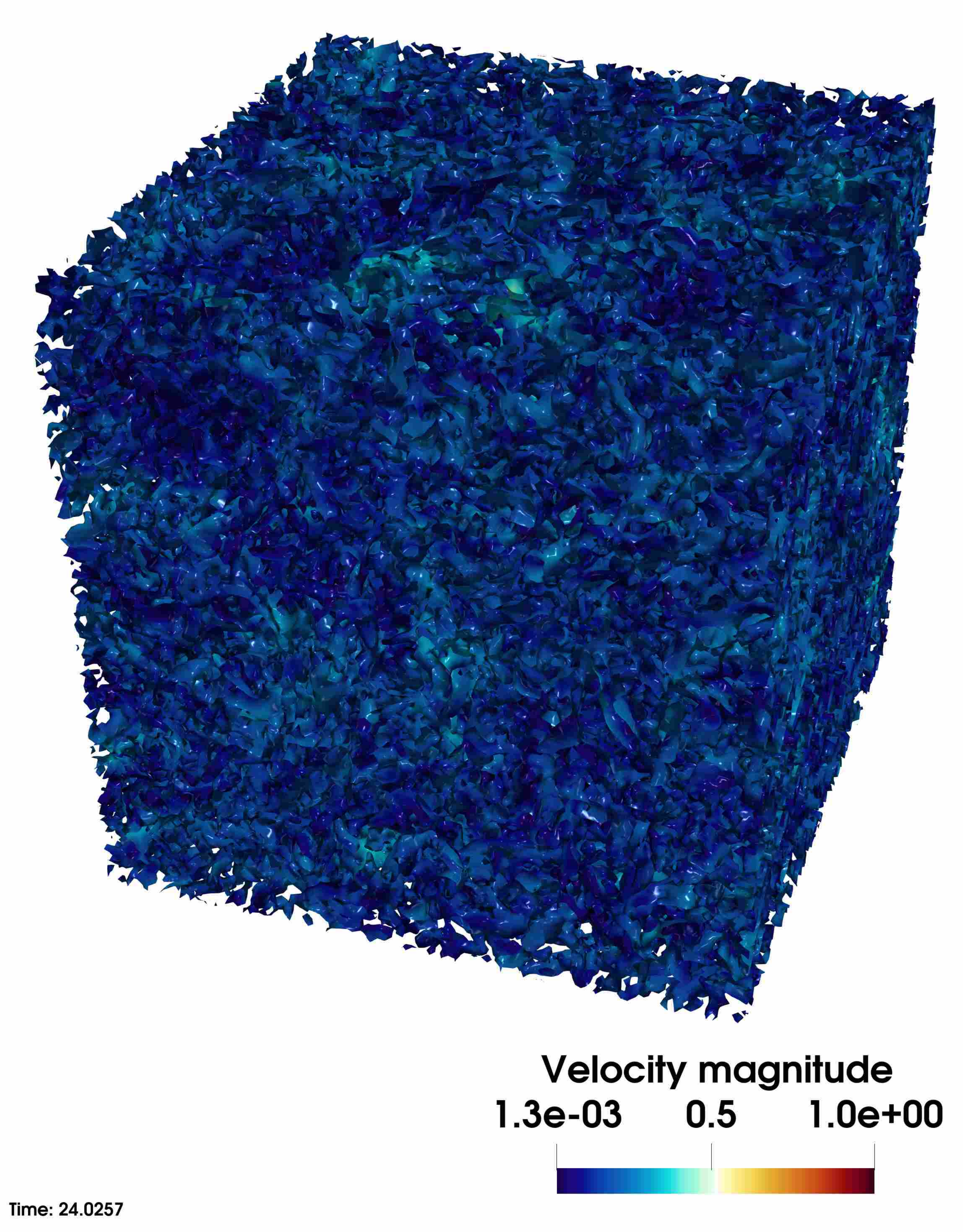}}
       \subfloat[\(t\approx 30\)]{\includegraphics[width=0.24\linewidth ,trim={0cm 12cm 0cm 0cm},clip]{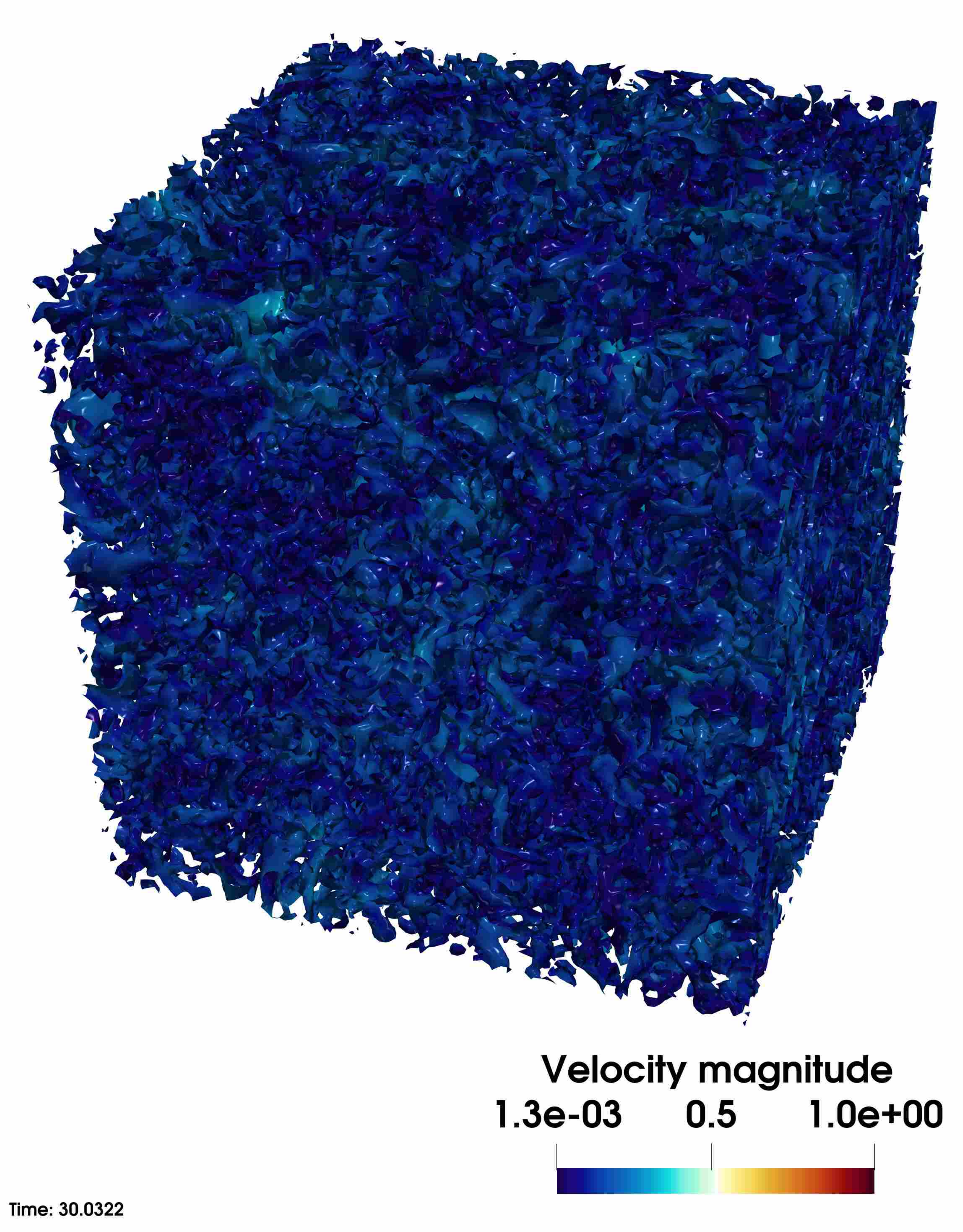}
	   }    
    }\vspace{.5em} 
    \centerline{
        \includegraphics[scale=1]{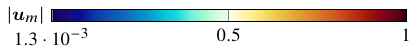}        
	}\vspace{-.5em} 
    \caption{Time evolution of the \(Q\)-criterion (\(Q_{m}=0.1\)) colored by \(|\bm{u}_{m}|\) of a single RTGV flow sample computed with KBC LBM for \(R\!e=10240\), resolution \(N=256\), and \(M\!a=0.0125\) in a separate run not belonging to Set~1 or Set~2 (Figure from \cite{simonis2023lattice} with permission of the author).}
    \label{fig:sampleQ}
\end{figure}
To illustrate the probabilistic extension of the classical TGV flow, Figure~\ref{fig:sampleQ} highlights the random perturbation in the \(Q\)-criterion~\cite{hunt1988eddies} of a single sample RTGV flow field, where
\begin{align}
    Q_{m} = \frac{1}{2} ( \| \mathbf{N}_{m} \|^{2}_{2} - \| \mathbf{D}_{m} \|^{2}_{2} )
\end{align}
is computed using the spectral norm \(\|\cdot \|_{2}\) of the rate of strain \(\mathbf{D}_{m}\) and its antisymmetric counterpart $\mathbf{N}_{m} = \frac{1}{2} [ \Grad_{\bm{x}} \bm{u}_{m} - ( \Grad_{\bm{x}} \bm{u}_{m} )^{\mathrm{T}} ]$, respectively.

\subsection{Visualizations of statistical flow fields} 

We compute the flow field induced by the uniformly RTGV initial condition. 
The single samples and mean fields are contrasted in Figure~\ref{fig:statSolRTGV}. 
\begin{figure}[ht!]
    \subfloat[Sample, \(R\!e = 1280\)]{\includegraphics[width=0.27\textwidth,trim={0cm 1.5cm 0cm 2cm},clip]{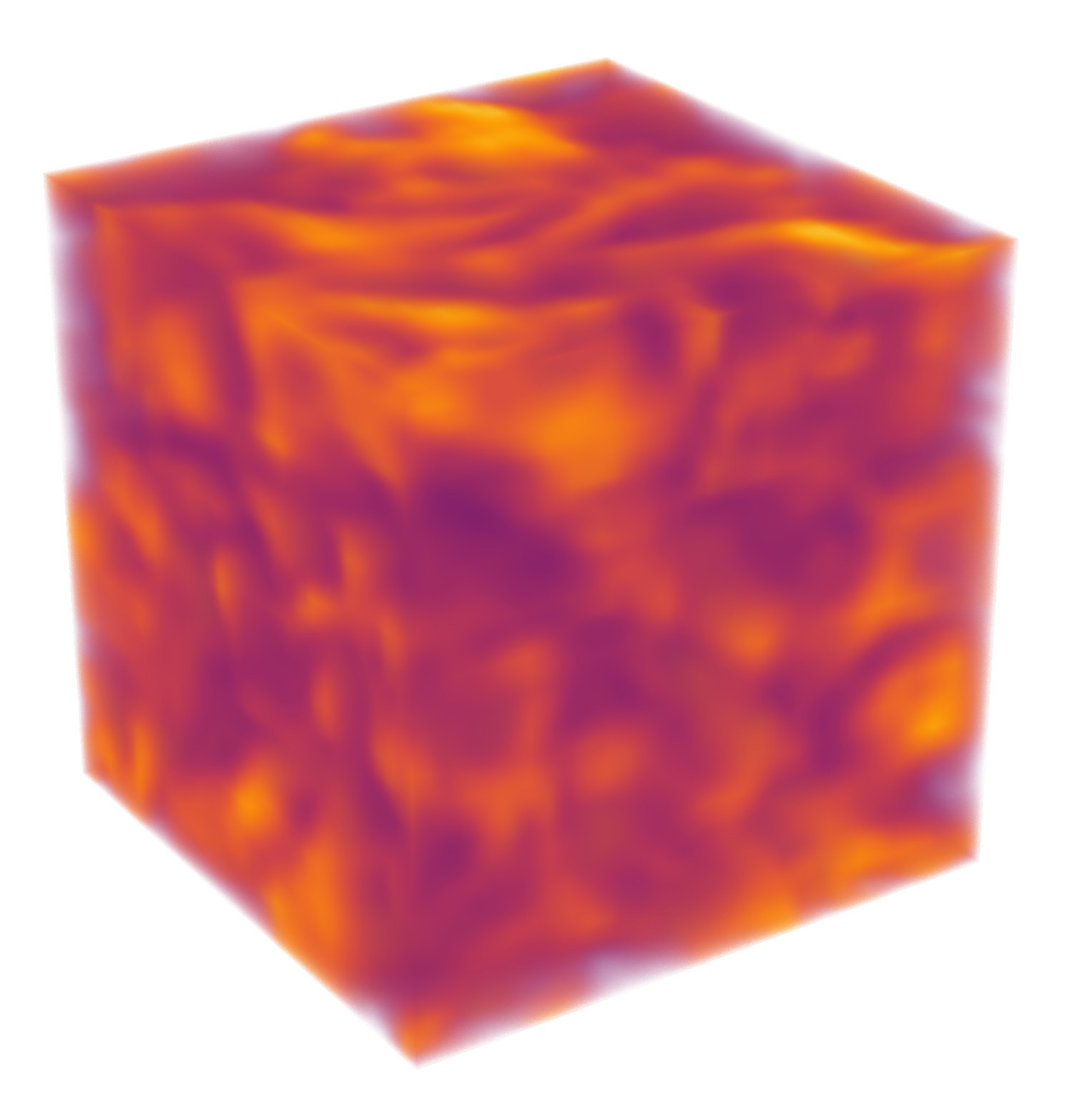}
          \label{subfig:statSolRTGV-sample-1280}}
    \subfloat[Sample, \(R\!e = 2560\)]{\includegraphics[width=0.27\textwidth,trim={0cm 1.5cm 0cm 2cm},clip]{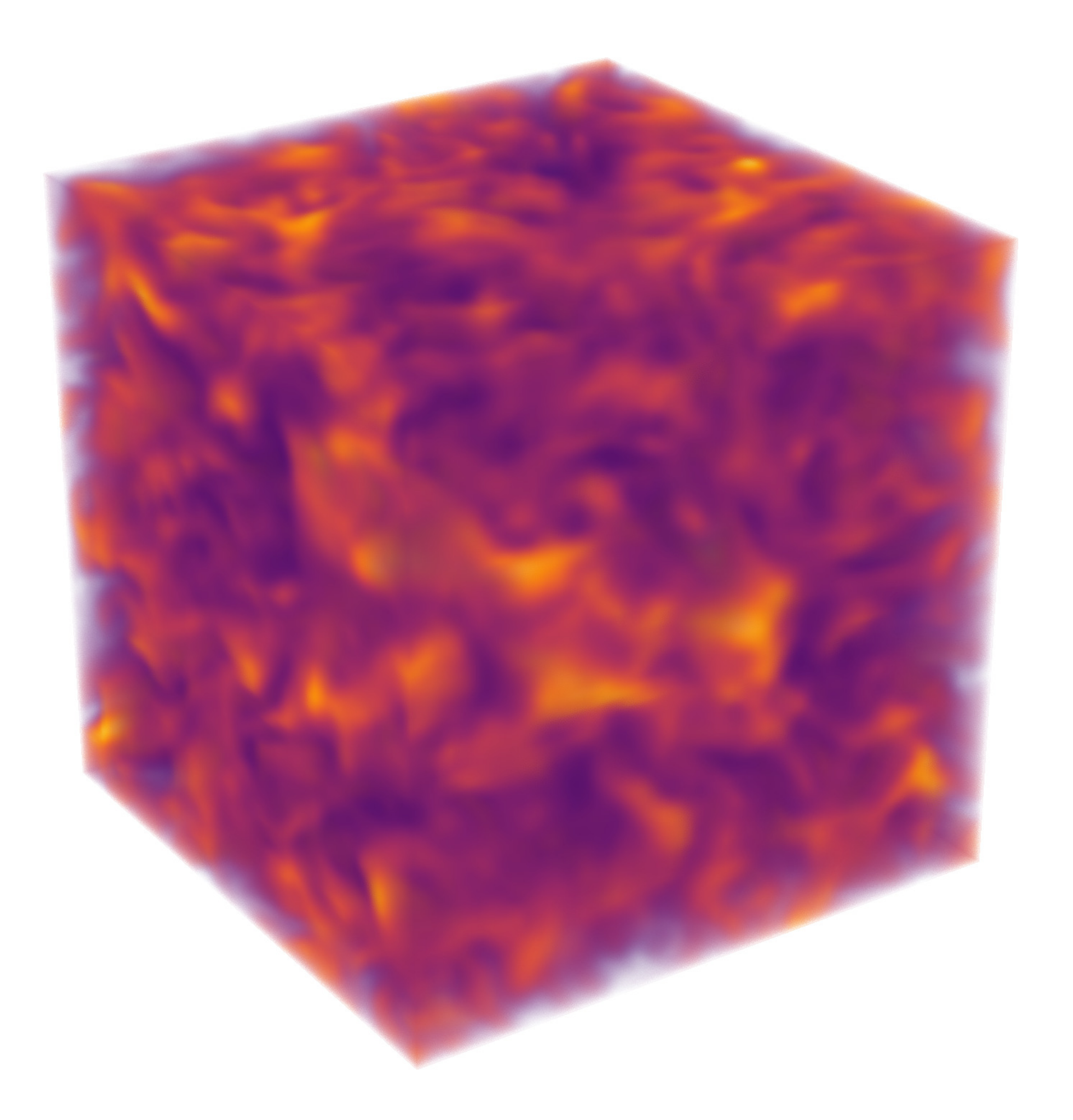}
          \label{subfig:statSolRTGV-sample-2560}}
   	\subfloat[Sample, \(R\!e = 5120\)]{\includegraphics[width=0.27\textwidth,trim={0cm 1.5cm 0cm 2cm},clip]{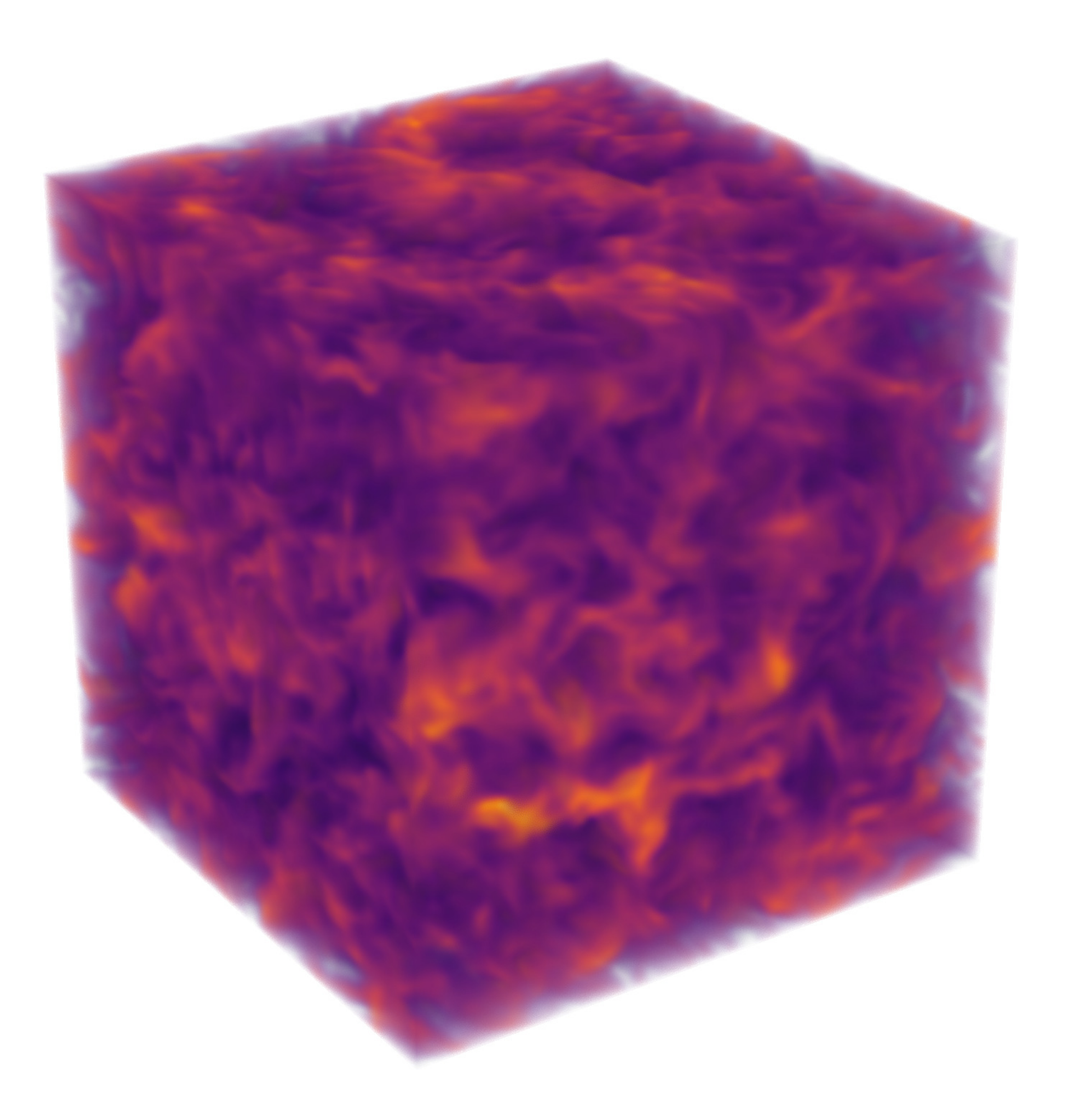}
          \label{subfig:statSolRTGV-sample-5120}} 
    \hspace{.1em}
    \includegraphics[scale=1]{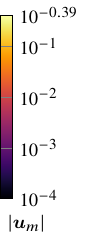}
    \\
    \subfloat[Mean, \(R\!e = 1280\)]{\includegraphics[width=0.27\textwidth,trim={0cm 1.5cm 0cm 2cm},clip]{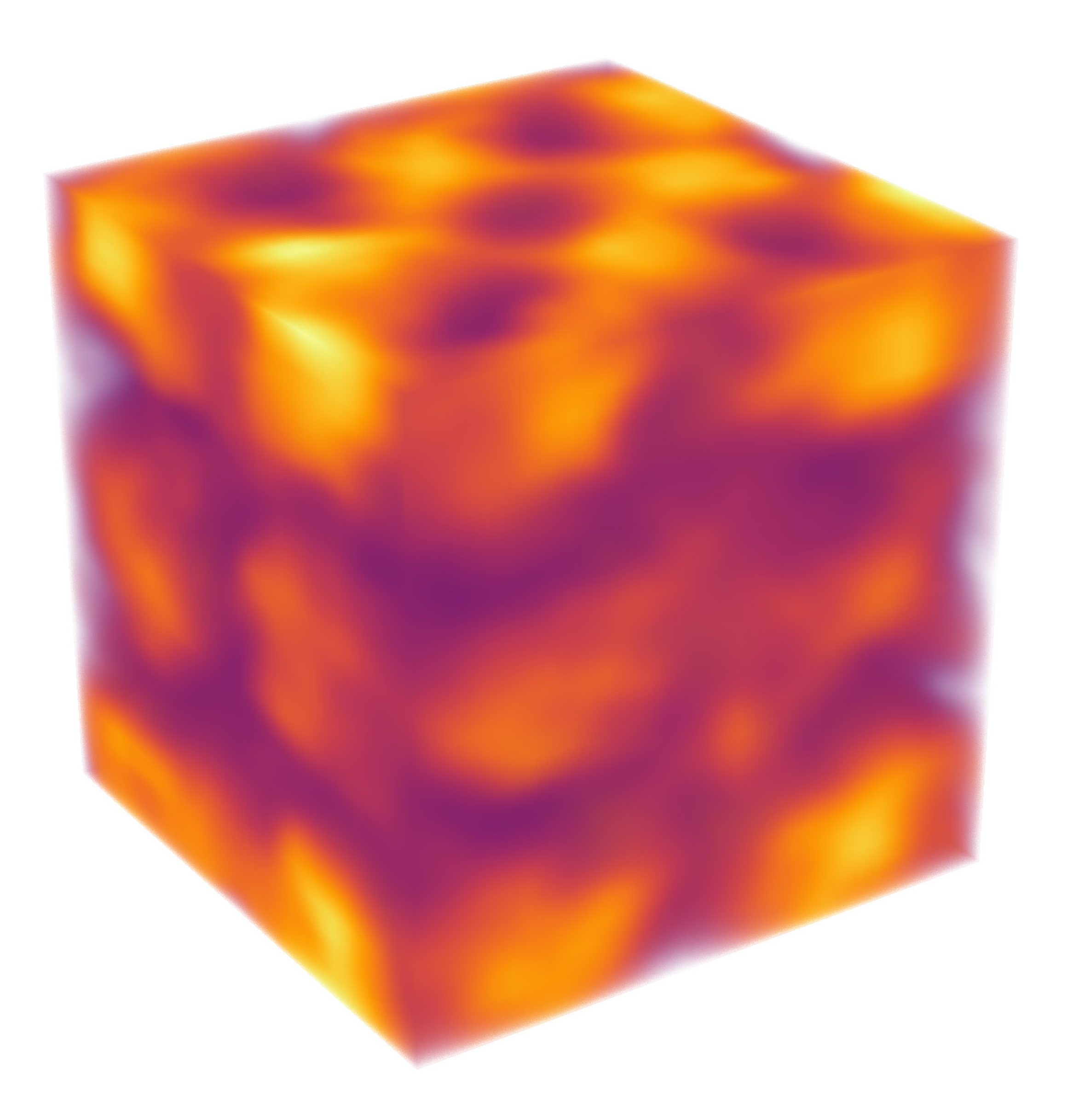}
          \label{subfig:statSolRTGV-mean-1280}}
	\subfloat[Mean, \(R\!e = 2560\)]{\includegraphics[width=0.27\textwidth,trim={0cm 1.5cm 0cm 2cm},clip]{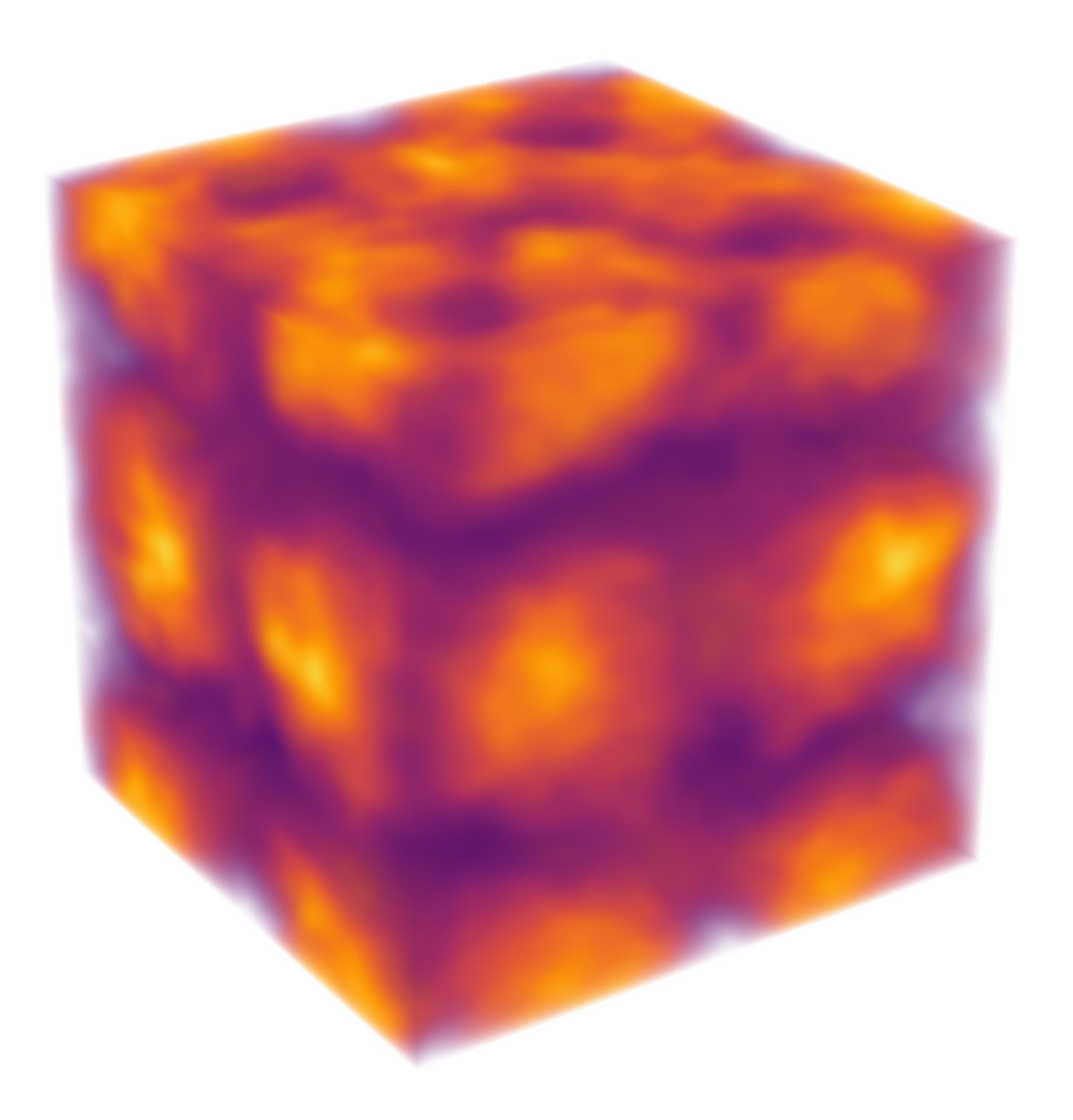}
          \label{subfig:statSolRTGV-mean-2560}}
    \subfloat[Mean, \(R\!e = 5120\)]{\includegraphics[width=0.27\textwidth,trim={0cm 1.5cm 0cm 2cm},clip]{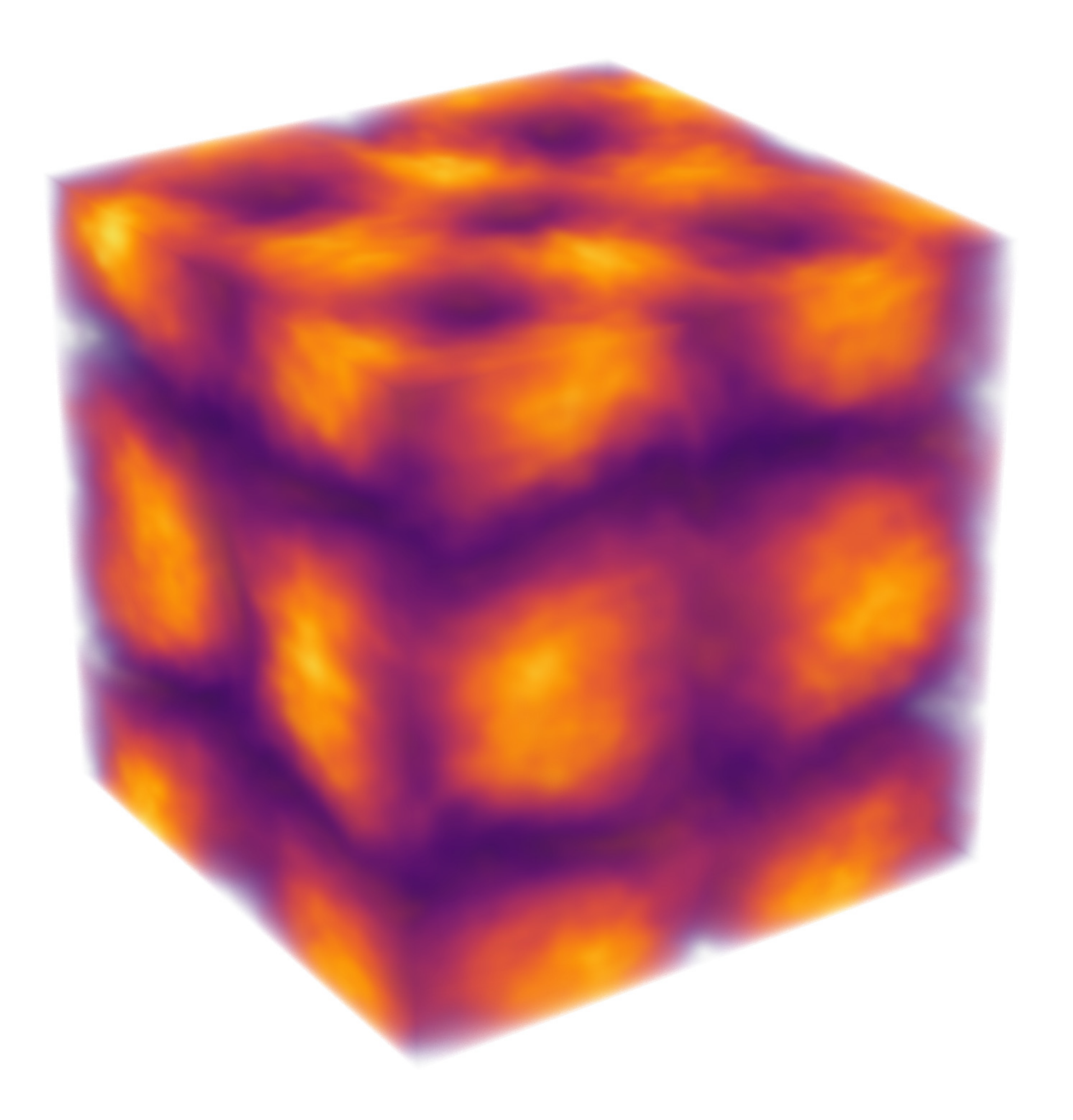}
          \label{subfig:statSolRTGV-mean-5120}}
    \hspace{.1em}
    \includegraphics[scale=1]{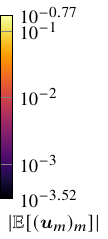}
    \\
    \subfloat[Std, \(R\!e = 1280\)]{\includegraphics[width=0.27\textwidth,trim={0cm 1.5cm 0cm 2cm},clip]{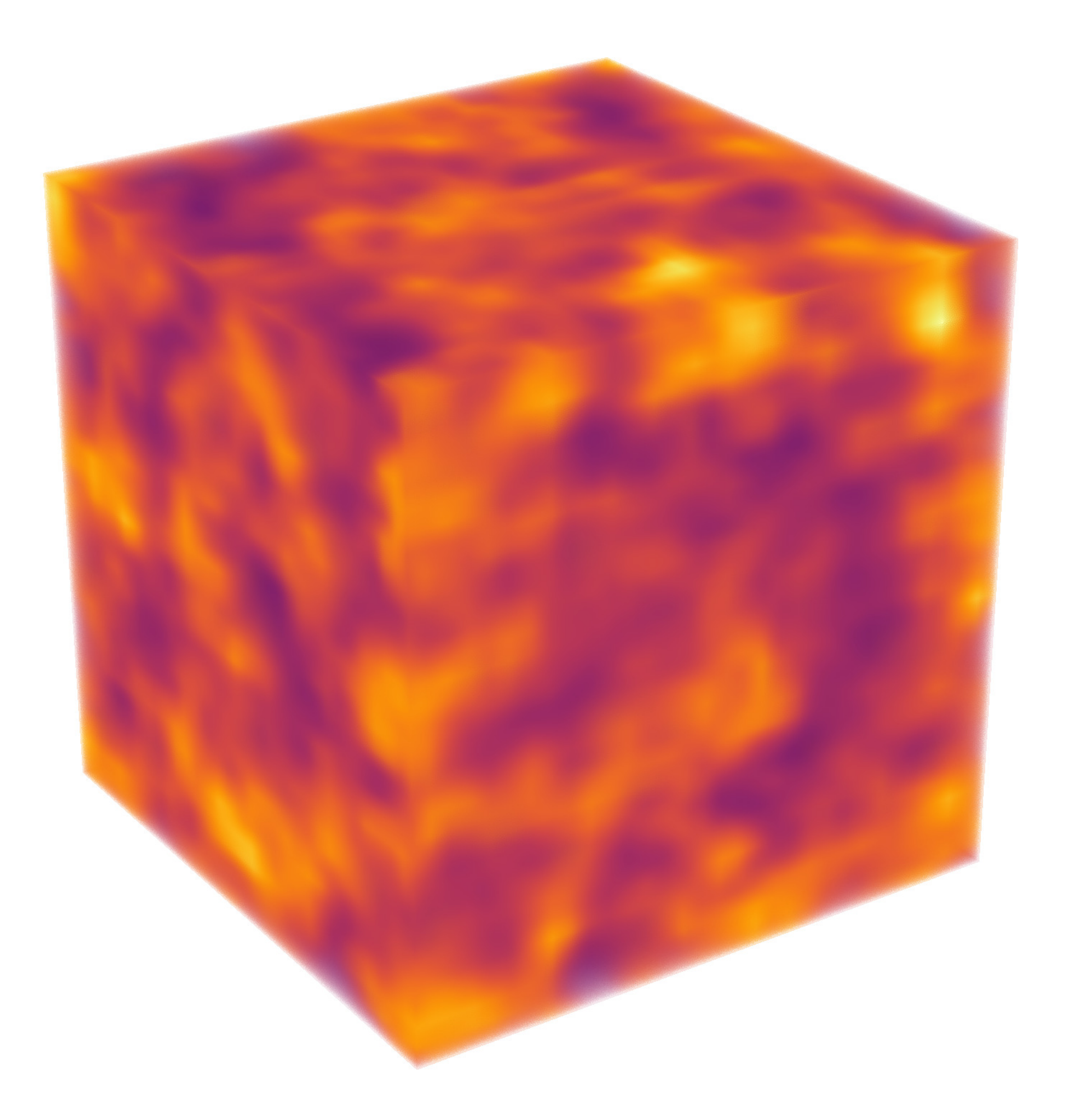}
          \label{subfig:statSolRTGV-std-1280}}
	\subfloat[Std, \(R\!e = 2560\)]{\includegraphics[width=0.27\textwidth,trim={0cm 1.5cm 0cm 2cm},clip]{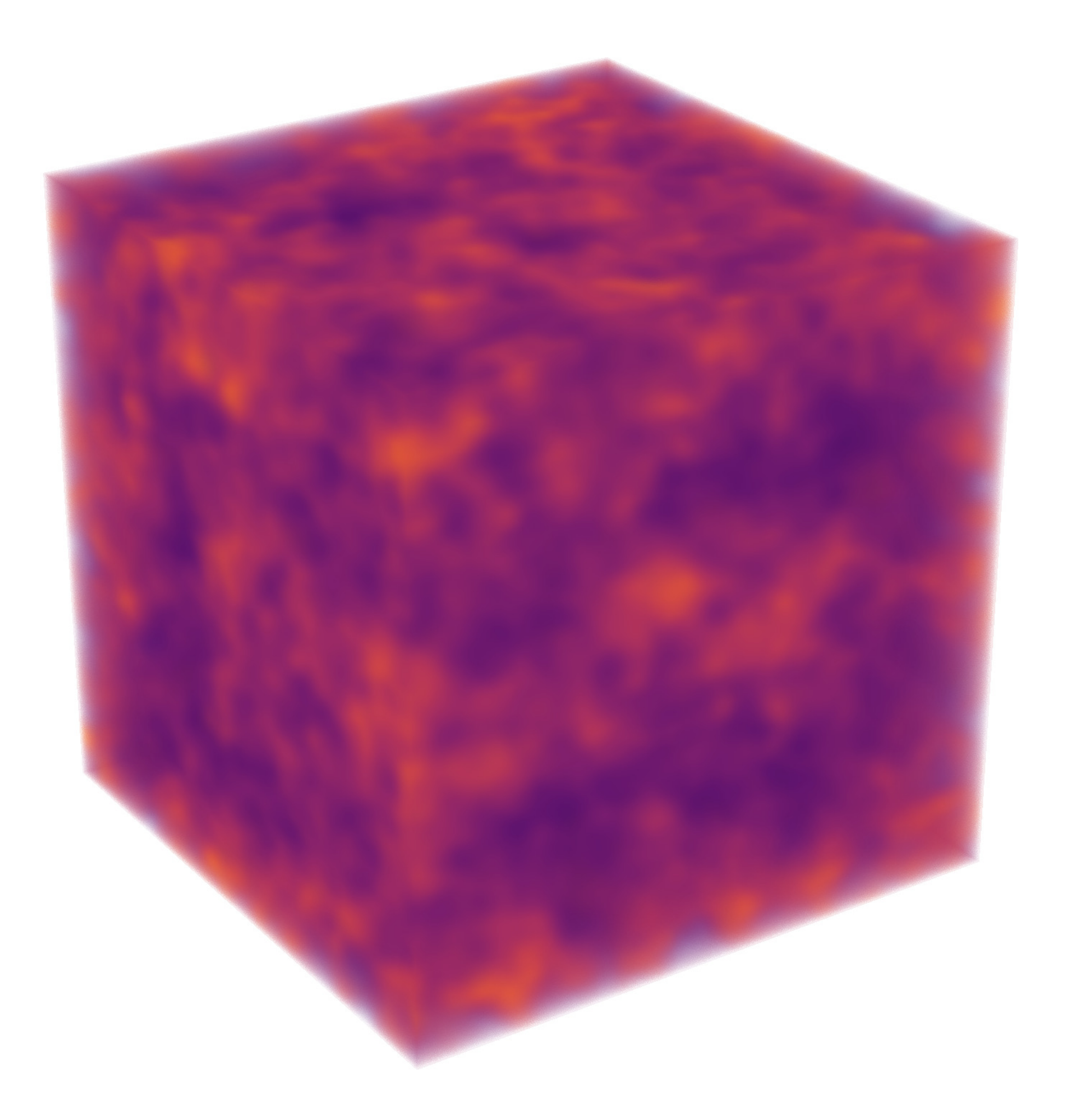}
          \label{subfig:statSolRTGV-std-2560}}
    \subfloat[Std, \(R\!e = 5120\)]{\includegraphics[width=0.27\textwidth,trim={0cm 1.5cm 0cm 2cm},clip]{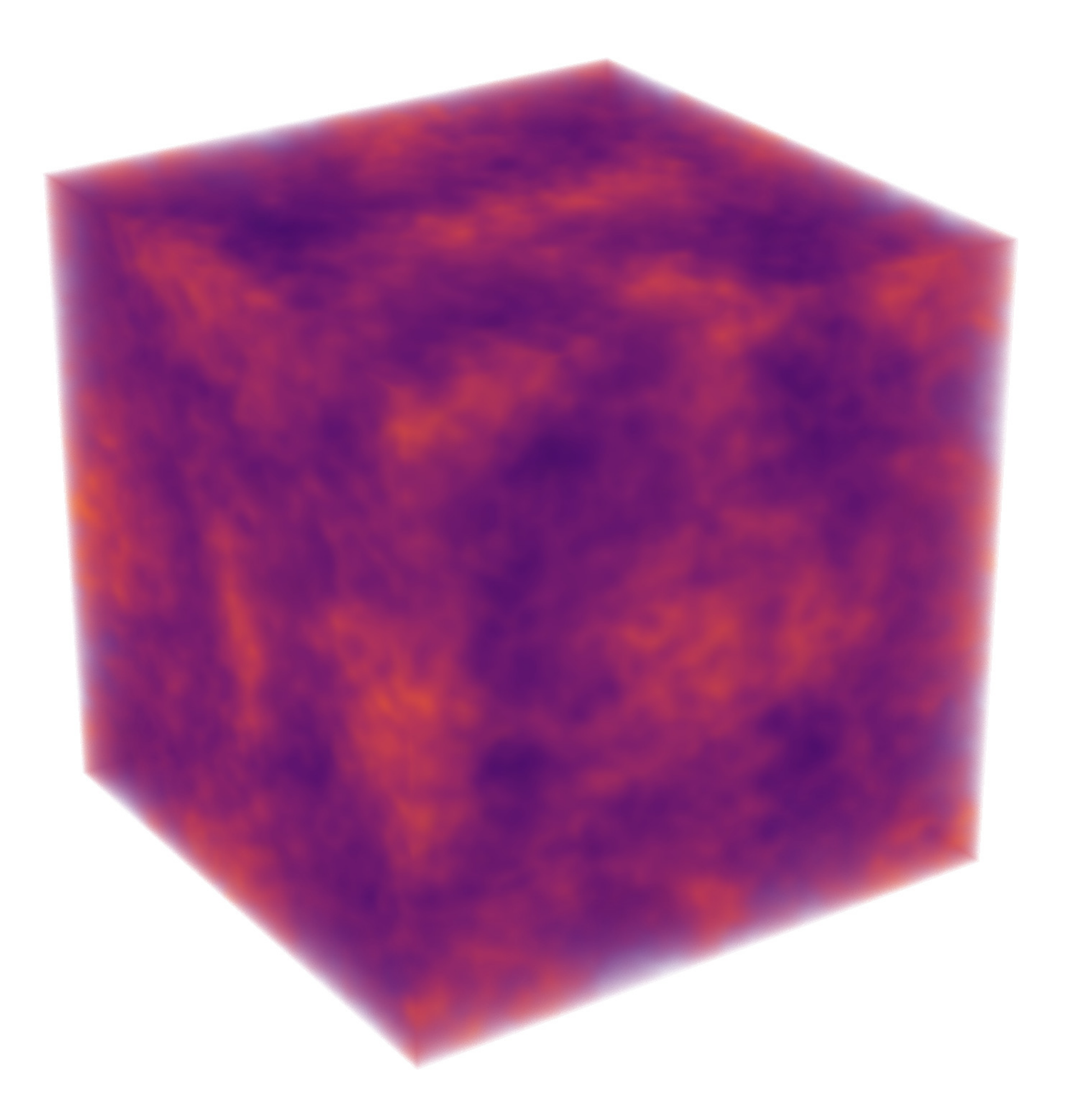}
          \label{subfig:statSolRTGV-std-5120}}
    \hspace{.1em}
    \includegraphics[scale=1]{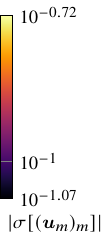}
    \\ 
	\caption{Iso-volume rendering of the velocity magnitude of the RTGV flow at \(t\approx 30\mathrm{s}\).  
    Top row: single samples; middle row: mean; bottom row: standard deviation (std); columns from left to right: \(R\!e = 1280, 2560, 5120\). 
    In the renderings, the color map is combined with an opacity transfer function that is linear in the data value, ranging from opacity zero at the minimum to one at the maximum of each colorbar. 
    The colorbars themselves omit this opacity. 
    Discretization parameters from Set~2 (Table~\ref{tab:params2}) are used. 
    Figure partly reproduced from~\cite{simonis2024computing} with permission of the authors.
    Further time steps are visualized in Appendix~\ref{appsec:flowfields}.}
    \label{fig:statSolRTGV}
\end{figure}
The sample solutions (Figures~\ref{subfig:statSolRTGV-sample-1280},~\ref{subfig:statSolRTGV-sample-2560},~\ref{subfig:statSolRTGV-sample-5120}) show no sign of convergence under grid refinement, whereas the respective mean and standard deviation approximations (Figures~\ref{subfig:statSolRTGV-mean-1280},~\ref{subfig:statSolRTGV-mean-2560},~\ref{subfig:statSolRTGV-mean-5120} and Figures~\ref{subfig:statSolRTGV-std-1280},~\ref{subfig:statSolRTGV-std-2560},~\ref{subfig:statSolRTGV-std-5120}) representing the statistical solutions behave as expected for \(\vis \searrow 0\). 
Approximations for the flow fields (single sample strain, mean, and standard deviation) at additional time steps are provided in Appendix~\ref{appsec:flowfields}. 
We note that visually, the convergence behavior of single samples and stochastic moments seems to split apart roughly at \(t\approx 10\), where the turbulent structures have started to develop in the flow field.

\subsection{Computed scaling assumption}

Based on~\cite{fjordholm2021vanishing}, the evaluation of energy spectra in the sense of K41 theory~\cite{kolmogoroff1941local,kolmogorov1991local} (testing for an asymptotic power law decay \eqref{eq:energyScaling}) is used to indicate the approximation of a statistical solution to the incompressible EE for \(\vis \searrow 0\). 
\begin{table}[ht!]
    \caption{Set~1 of MC LBM discretization parameters for RTGV flow simulations.}
    \label{tab:params1}
    \centering
    \small
    \begin{tabular}{r r l l r l}
    \hline
        \makecell[c]{$M$} & \makecell[c]{$N$} & \makecell[c]{$\triangle t$} & \makecell[c]{$M\!a$} & \makecell[c]{$R\!e$} & \makecell[c]{$\tau_{\mathrm{rel}}$} \\
        \hline
         $32$ &  $32$ & $1.17 \times 10^{-2}$ & $0.1$    &  $640$ & $0.501335$ \\
         $64$ &  $64$ & $2.87 \times 10^{-3}$ & $0.05$   & $1280$ & $0.500678$ \\
        $128$ & $128$ & $7.14 \times 10^{-4}$ & $0.025$  & $2560$ & $0.500342$ \\
        $256$ & $256$ & $1.78 \times 10^{-4}$ & $0.0125$ & $5120$ & $0.500172$ \\
    \hline
    \end{tabular}
\end{table}
To render the spectral quantities dimensionless, we introduce the sample dissipation rate per unit volume
\begin{align}
    \mathfrak{d}_{m}(t) = \frac{\vis}{\vert \Omega \vert} \|\bm{\nabla} \bm{u}_{m}(t)\|_{L^2(\Omega)}^2 ,
\end{align}
which is numerically approximated by the temporal decay of the kinetic energy per unit volume obtained from summing the energy spectrum, that is \(\mathfrak{d}_{m}(t) \approx - \frac{\mathrm{d}}{\mathrm{d} t} \sum_{\kappa} E_{m}(\kappa, t)\).
The corresponding Kolmogorov length scale 
\begin{equation} \label{eq:kolmogorov_microscale}
    \eta(t) = \left( \frac{\vis^3}{\E[\mathfrak{d}(t)]} \right)^{1/4}
\end{equation}
is computed from the ensemble-mean dissipation rate \(\E[\mathfrak{d}(t)]\) and defines the inner-scaled wavenumber \(\kappa \eta(t)\). 

Figure~\ref{fig:energySpectra9} shows the mean \(\E[\cdot]\) and standard deviation \(\sigma(\cdot)\) of the time-dependent, inner-scaled compensated energy spectrum 
\begin{align}\label{eq:compensatedE}
    E^{\mathrm{c}}_{m}(\kappa\eta(t), t) = C_{\mathrm{Kol}}^{-1} \mathfrak{d}_{m}^{\frac{-2}{3}}(t) \kappa^{\frac{5}{3}} E_{m}(\kappa, t)
\end{align}
for four consecutive Reynolds numbers \(R\!e = 20N\) in diffusive scaling, where \(N=32, 64, 128, 256\). 
The other discretization parameters for this set of simulations (Set~1) are summarized in Table~\ref{tab:params1}. 
Here, the number of samples is chosen as \(M=N\). 
By connecting the Reynolds number and the grid resolution per spatial dimension, an inviscid limit is superimposed for \(N\nearrow \infty\). 

Note that the compensation in \eqref{eq:compensatedE} is performed sample-wise with \(\mathfrak{d}_{m}\) prior to computing the mean and standard deviation, whereas the inner scaling of the wavenumber argument uses the ensemble-mean dissipation rate through \eqref{eq:kolmogorov_microscale}. 
Owing to the division by \(C_{\mathrm{Kol}}\), any wavenumber range that strictly obeys the K41 scaling \eqref{eq:energyScaling} manifests as a horizontal plateau at unity, while the inner scaling collapses the dissipative ranges of all Reynolds numbers near \(\kappa\eta = \mathcal{O}(1)\). 
The approximation of the energy spectrum is described in Appendix~\ref{sec:appendix-energySpec}. 
The K41 constant~\cite{kolmogoroff1941local,kolmogorov1991local} is \(C_{\mathrm{Kol}} = 1.5 \). 

Figure~\ref{fig:energySpectra9} shows that the mean spectrum approaches the K41 inertial subrange, which extends toward smaller \(\kappa\eta\) with increasing Reynolds number. 
\begin{figure}[ht!]
		\centering
		\includegraphics[scale=1]{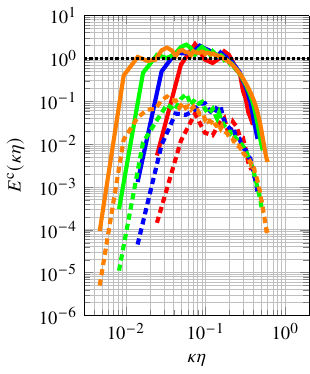}\quad\quad
	    \includegraphics[scale=1]{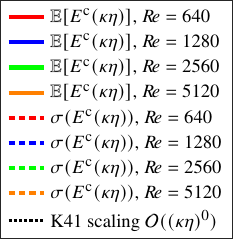}
        \caption{Inner-scaled compensated energy spectrum \eqref{eq:compensatedE} approximations of statistical solutions (mean \(\E\) and standard deviation (std) \(\sigma\)) at \(t\approx 9\mathrm{s}\) with increasing Reynolds numbers computed with MC LBM for discretization parameters from Set~1 (see Table~\ref{tab:params1}).
        The wavenumber \(\kappa\) is inner-scaled by the Kolmogorov length scale \(\eta(t)\) from \eqref{eq:kolmogorov_microscale}. 
  }
        \label{fig:energySpectra9}
\end{figure}
The time evolution of the inner-scaled compensated energy spectra is visualized with waterfall diagrams in Figure~\ref{fig:spectraWaterfalls}. 
\begin{figure}[ht!]
    \centering
    \subfloat[Mean, \(R\!e=1280\)]{
    \includegraphics[scale=1]{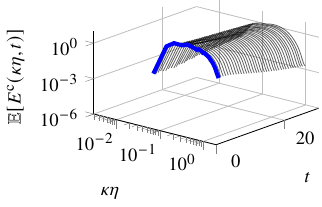}}
	\quad
    \subfloat[Std, \(R\!e=1280\)]{
    \includegraphics[scale=1]{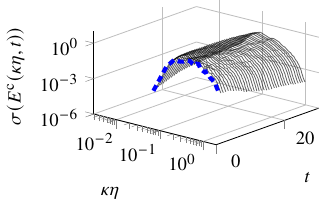}}
	\\ 
    \subfloat[Mean, \(R\!e=2560\)]{
    \includegraphics[scale=1]{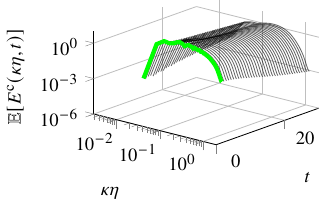}}
    \quad
    \subfloat[Std, \(R\!e=2560\)]{
    \includegraphics[scale=1]{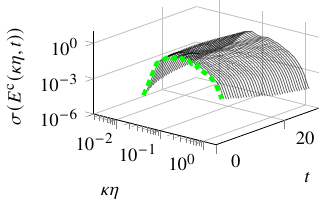}}
	\\
    \subfloat[Mean, \(R\!e=5120\)]{
    \includegraphics[scale=1]{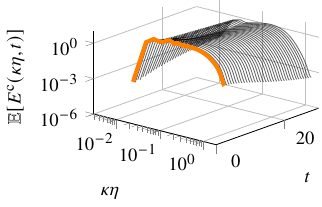}}
    \quad
    \subfloat[Std, \(R\!e=5120\)]{
    \includegraphics[scale=1]{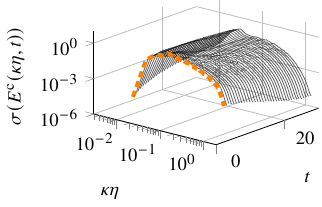}}
    \caption{Inner-scaled compensated energy spectrum \eqref{eq:compensatedE} approximations of statistical solutions (mean \(\E\) and standard deviation (std) \(\sigma\)) over time \(t\in [9,30]\) with increasing Reynolds numbers computed with MC LBM for discretization parameters from Set~1 (see Table~\ref{tab:params1}).
    Colors refer to the respective data lines in Figure~\ref{fig:energySpectra9}.
  }
    \label{fig:spectraWaterfalls}
\end{figure}

To numerically examine the scaling assumption \eqref{eq:energyScaling}, we investigate the scaling of second-order time-local structure functions \eqref{eq:structuresLocal} with increasing Reynolds numbers. 
The time-local structure functions in \eqref{eq:structuresLocal}, excluding the outer \(1/p\)-root in \eqref{eq:structures}, are approximated as described in Appendix~\ref{sec:appendix-struct} and computed from a second set of simulation runs (Set~2, summarized in Table~\ref{tab:params2}, but with enforcing \(M=N\)). 
\begin{table}[ht!]
    \caption{Set~2 of MC LBM discretization parameters for RTGV flow simulations.}
    \label{tab:params2}
    \centering
    \small
    \begin{tabular}{r r l l r l}
    \hline
        \makecell[c]{$M$} & \makecell[c]{$N$} & \makecell[c]{$\triangle t$} & \makecell[c]{$M\!a$} & \makecell[c]{$R\!e$} & \makecell[c]{$\tau_{\mathrm{rel}}$} \\
        \hline
         $1000$ &   $8$ & $1.03 \times 10^{-1}$ & $0.2$    &  $320$ & $0.501206$\\
         $1000$ &  $16$ & $2.41 \times 10^{-2}$ & $0.1$    &  $640$ & $0.500646$ \\
         $1000$ &  $32$ & $5.67 \times 10^{-3}$ & $0.05$   & $1280$ & $0.500345$ \\
         $1000$ &  $64$ & $1.42 \times 10^{-3}$ & $0.025$  & $2560$ & $0.500172$ \\
         $1000$ & $128$ & $3.54 \times 10^{-4}$ & $0.0125$ & $5120$ & $0.500086$ \\
         $1000$ & $256$ & $8.86 \times 10^{-5}$ & $0.00625$ & $10240$ & $0.500043$ \\
         $1000$ & $512$ & $2.21 \times 10^{-5}$ & $0.003125$ & $20480$ & $0.500022$ \\
      \hline
    \end{tabular}
\end{table}

To assess the development of the inertial cascade, the second-order structure functions are likewise transformed into dimensionless coordinates. 
Note that, following Appendix~\ref{sec:appendix-struct}, we evaluate the trace variant of \eqref{eq:structuresLocal} based on the full velocity increment. 
The separation distance $r$ is normalized by the Kolmogorov length scale \eqref{eq:kolmogorov_microscale}, which yields the inner-scaled separation distance $r/\eta(t)$. 
Furthermore, to visualize the K41 inertial-range scaling, the structure function is compensated sample-wise prior to computing the mean and standard deviation, in analogy to the energy spectra. 
In the inertial subrange ($\eta(t) \ll r \ll L$), K41 theory predicts that the trace of the second-order structure function scales as $S^2(r,t) \approx C_{\mathrm{trace}} (\E[\mathfrak{d}(t)] r)^{2/3}$, which corresponds to the scaling \eqref{eq:energyScaling}. 
Dividing the raw structure function of the \(m\)th sample by this theoretical scaling factor yields the compensated structure function
\begin{equation}\label{eq:compensatedSF}
    S^{2,\mathrm{c}}_{m}(r/\eta(t), t) = \frac{S^2_{m}(r,t)}{(\mathfrak{d}_{m}(t) r)^{2/3}} .
\end{equation}
Under this compensation, any spatial domain that strictly obeys the K41 $r^{2/3}$ scaling will manifest as a horizontal plateau. 
For the full 3D trace of the velocity correlation tensor, this universal plateau corresponds to the constant $C_{\mathrm{trace}} \approx 7.7$.
In addition to the ensemble mean $\E[S^{2,\mathrm{c}}(r/\eta(t), t)]$, the standard deviation of the compensated structure function, $\sigma[S^{2,\mathrm{c}}(r/\eta(t), t)]$, is computed across the statistical samples. 
Both quantities are visualized in Figure~\ref{fig:SF_2D}. 
Note that the evaluation domain is strictly truncated at a physical distance of $r = \pi$. 

As the Reynolds number increases and the inertial plateau begins to form, the standard deviation broadens, indicating that a wider range of turbulent scales is resolved below the truncation at $r = \pi$. 
Hence, Figure~\ref{fig:SF_2D} shows that the local structure functions of second order with \(S_{r,t}^{2} (\mu_{t}^{\vis,N,M})\) approximately obey a \(2/3\)-power law, which supports the scaling assumption \eqref{eq:energyScaling} underlying Assumption~\ref{ass:scaling}.
\begin{figure}[ht!]
	\centering
    \includegraphics[scale=1]{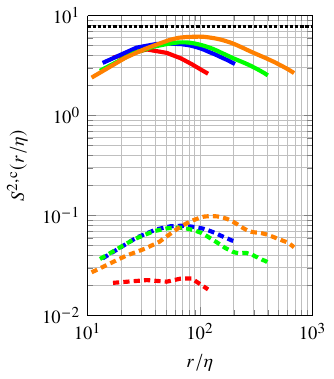}
	\quad
    \includegraphics[scale=1]{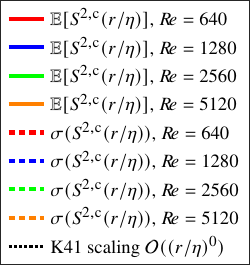}
   	\caption{Compensated structure function approximations (Algorithm~\ref{alg:SF_spectral}) of statistical solutions (mean and standard deviation) at time \(t\approx 9\mathrm{s}\) with increasing Reynolds numbers computed with MC LBM for discretization parameters from Set~2 (see Table~\ref{tab:params2}). 
    }
    \label{fig:SF_2D}
\end{figure}
Figure~\ref{fig:structureWaterfalls} visualizes the evolution of the compensated structure functions over time. 
\begin{figure}[ht!]
    \centering 
    \subfloat[Mean, \(R\!e=1280\)]{
    \includegraphics[scale=1]{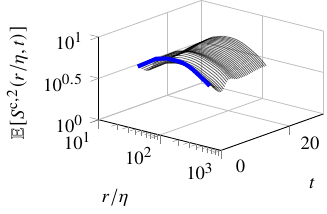}}
    \quad
    \subfloat[Std, \(R\!e=1280\)]{
    \includegraphics[scale=1]{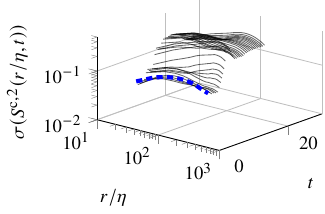}}
    \\ 
    \subfloat[Mean, \(R\!e=2560\)]{
    \includegraphics[scale=1]{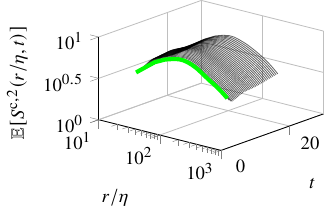}}
    \quad
    \subfloat[Std, \(R\!e=2560\)]{
    \includegraphics[scale=1]{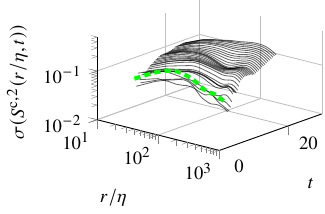}}
    \\
    \subfloat[Mean, \(R\!e=5120\)]{
    \includegraphics[scale=1]{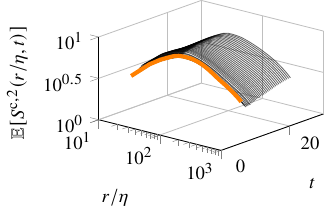}}
    \quad
    \subfloat[Std, \(R\!e=5120\)]{
    \includegraphics[scale=1]{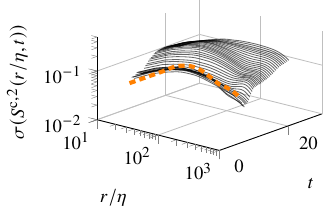}}
    \caption{Compensated structure function \eqref{eq:compensatedSF} approximations of statistical solutions (mean \(\E\) and standard deviation (std) \(\sigma\)) over time \(t\in [9,30]\) with increasing Reynolds numbers computed with MC LBM for discretization parameters from Set~2 (see Table~\ref{tab:params2}). 
    Colors refer to the respective data lines in Figure~\ref{fig:SF_2D}.
  }
    \label{fig:structureWaterfalls}
\end{figure}

\subsection{Computed sample convergence}\label{sec:sampleDivergence}

To quantify the failure of classical strong convergence in the chaotic regime, we evaluate the ensemble-averaged relative $L^{1}$ error between individual MC samples
\begin{align}
\label{eq:sample_error}
    \bar{\mathrm{e}}_{\mathrm{strong}}^{L^{1}, M} (\bm{u}^{N}(t),\bm{u}^{N_{\mathrm{ref}}}(t)) = \frac{1}{M} \sum_{m=1}^{M} \frac{ \sum_{k=1}^{G_{\mathrm{ds}}} \left\vert \bm{u}^{N}_{m}(\bm{x}_{\mathrm{ds},k},t) - \bm{u}^{N_{\mathrm{ref}}}_{m}(\bm{x}_{\mathrm{ds},k}, t) \right\vert }{ \sum_{k=1}^{G_{\mathrm{ds}}} \left\vert \bm{u}^{N_{\mathrm{ref}}}_{m}(\bm{x}_{\mathrm{ds},k},t) \right\vert } 
\end{align}
at varying resolutions $N \in \{32, 64, 128, 256\}$ against high-resolution references at $N_{\mathrm{ref}} \in \{256, 512\}$ for sample sizes $M \in \{100, 1000\}$. 
As defined in \eqref{eq:sample_error}, this sample divergence check compares the macroscopic velocity fields pointwise on a common downsampled grid $G_{\mathrm{ds}} = \vert \widehat{\Omega}_{\mathrm{ds}} \vert = N_{\mathrm{ds}}^{d} = 8^{3}$. 
As depicted in Figure \ref{fig:eosc_convergence_plots}, the temporal evolution of the experimental order of sample convergence (EOSC) mirrors the physical lifecycle of the unforced RTGV under the diagonal inviscid scaling ($\vis \sim \eps$). 
At the initial time $t=0$, the macroscopic flow field is smooth, laminar, and fully deterministic. 
The KBC LBM is formally up to second-order accurate in space, $\mathcal{O}(\triangle x^2)$, and resolves the initial macroscopic scales. 
The nonlinearity of the flow then amplifies grid-dependent truncation errors over time. 
\begin{figure}[ht!]
    \centering
    % --- Top-Left Plot: M=100, N_ref=256 ---
    \subfloat[$M=100$, $N_{\text{ref}}=256$ \label{fig:eosc_m100_256}]{
        \includegraphics[scale=1]{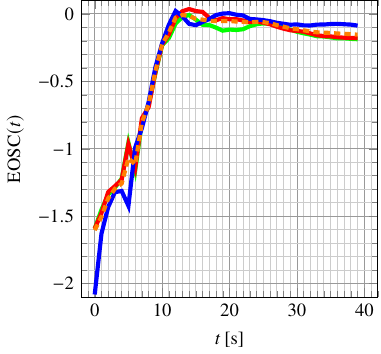}
    }
    \quad
    % --- Top-Right Plot: M=1000, N_ref=256 ---
    \subfloat[$M=1000$, $N_{\text{ref}}=256$ \label{fig:eosc_m1000_256}]{
        \includegraphics[scale=1]{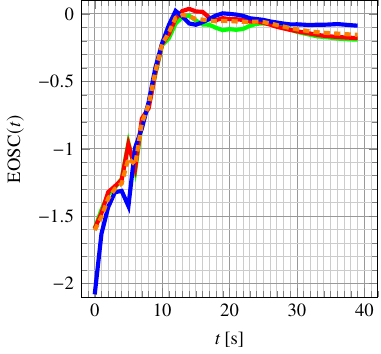}
    }\\
    \centering    
    % --- Bottom-Left Plot: M=100, N_ref=512 ---
    \subfloat[$M=100$, $N_{\text{ref}}=512$ \label{fig:eosc_m100_512}]{
		\includegraphics[scale=1]{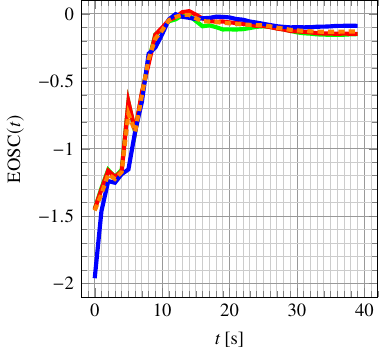}
    }
    \quad
    % --- Bottom-Right Plot: M=1000, N_ref=512 ---
    \subfloat[$M=1000$, $N_{\text{ref}}=512$ \label{fig:eosc_m1000_512}]{
		\includegraphics[scale=1]{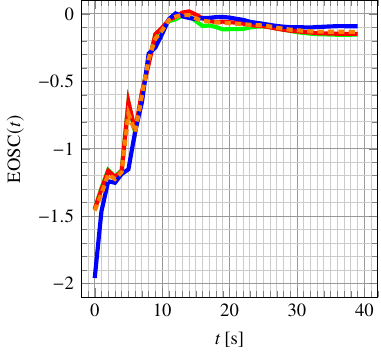}
    }\\
    \vspace{0.5em}
    \centering
	\includegraphics[scale=1]{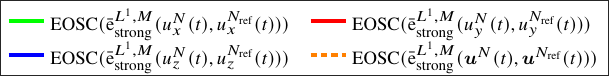}
    \caption{Experimental order of sample convergence (EOSC) for sample sizes $M\in\{ 100, 1000\}$ and for resolutions \(N\geq 32\) from Table~\ref{tab:params2} with respect to $N_{\mathrm{ref}} = 256$ (top row) and $N_{\mathrm{ref}} = 512$ (bottom row). The error $\bar{\mathrm{e}}_{\mathrm{strong}}^{L^{1}, M}$ (see \eqref{eq:sample_error}) is plotted over time for each velocity component and the velocity vector. 
    }  
    \label{fig:eosc_convergence_plots}
\end{figure}
As the flow generates small-scale structures approaching the peak turbulent dissipation ($t \gtrsim 10$), the deterministic trajectories of the individual samples separate. 
Individual realizations lose correlation with one another, pathwise convergence fails, and the relative error \eqref{eq:sample_error} saturates at $\mathcal{O}(1)$ as the turbulence develops, i.e., the EOSC approaches zero. 
This is the discrete counterpart of the loss of strong spatial compactness in the inviscid limit discussed in Section~\ref{subsec:diagonalLimit}. 
At later times ($t > 15$), as the turbulent kinetic energy depletes and the flow enters a viscous decay phase, we observe a slight, artificial recovery of a fractional convergence rate of up to around \(\mathcal{O}(N^{-0.05})\) (drifting toward $\mathcal{O}(N^{-0.15})$ from \(t>25\) onward). 
This late-stage fractional rate is an artifact of using a finite-resolution reference solution (here, $N_{\mathrm{ref}} \in \{256, 512\}$) to compute the strong errors. 
Because the reference solution retains a finite numerical viscosity, it acts as a spatial mollifier during the laminarizing decay phase. 
Thus, the error metric no longer measures the non-convergence of the samples, but the fractional spatial regularity of the smoothed reference field.

\subsection{Computed Wasserstein convergence}\label{subsec:numWasserstein}

The observed loss of strong convergence at the turbulent peak (cf.\ Section~\ref{sec:sampleDivergence}) motivates the transition to statistical solutions.
To complement Proposition~\ref{prop:weak-strong_discrete} and to determine the rates entering Hypothesis~\ref{hyp:WassersteinRate}, we measure the experimental order of Wasserstein convergence (EOWC) of the computed statistical solutions. 
We evaluate the Wasserstein distances between high-resolution reference ensembles (\(N_{\mathrm{ref}}\in\{256,512\}\)) and coarser approximations (\(N\in\{32,64,128,256\}\)). 
Details are provided in Appendix~\ref{appsec:compWasserstein}. 
To isolate the spatial truncation error from the statistical sampling error, we hold the sample size constant across all resolutions, using \(M\in\{100,1000\}\), and project the fields onto a common downsampled evaluation grid \(I_{Q}\) corresponding to the coarsest resolution in Table~\ref{tab:params2}, i.e., \(Q=8\).
We evaluate the time-local \(1\)-Wasserstein distance approximation for several time steps in increments of \(0.1\mathrm{s}\). 
In Figure~\ref{fig:eowc_256}, the component-wise computation for 1-point (\(W_{1,1}\)) and 2-point correlations (\(W_{1,2}\)), both \eqref{eq:W1approx}, is compared to the vector-valued computation \eqref{eq:W1_vector} for 2-point correlations (\(W^{\mathrm{v}}_{1,2}\)).
\begin{figure}[ht!]
    \centering
    % --- Top Left Plot: M=100, N_ref=256 ---
    \subfloat[$M=100$, $\vis_{\mathrm{ref}}=0.0001$, $N_{\mathrm{ref}}=256$ \label{fig:eowc_m100_256}]{
    	\includegraphics[scale=1]{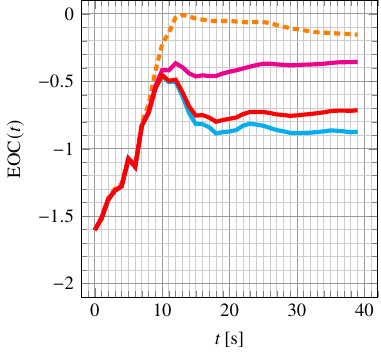}
    }
    \quad
    % --- Top Right Plot: M=1000, N_ref=256 ---
    \subfloat[$M=1000$, $\vis_{\mathrm{ref}}=0.0001$, $N_{\mathrm{ref}}=256$\label{fig:eowc_m1000_256}]{
     	\includegraphics[scale=1]{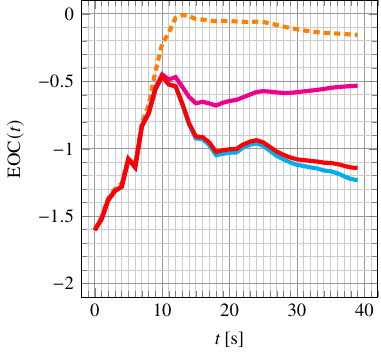}
    }\\    
 % --- Bottom Left Plot: M=100, Nref=512---
    \subfloat[$M=100$, $\vis_{\mathrm{ref}}=0.00005$, $N_{\mathrm{ref}}=512$ \label{fig:eowc_m100_512}]{
       	\includegraphics[scale=1]{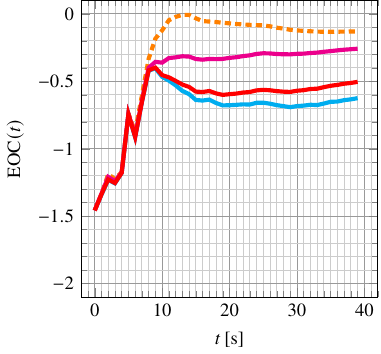}
    }
    \quad
    % --- Bottom Right Plot: M=1000, Nref=512 ---
    \subfloat[$M=1000$, $\vis_{\mathrm{ref}}=0.00005$, $N_{\mathrm{ref}}=512$ \label{fig:eowc_m1000_512}]{
        \includegraphics[scale=1]{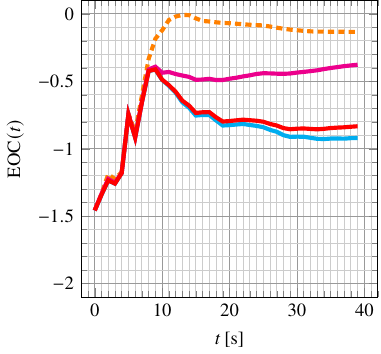}
    }\\  
    \vspace{0.5em}
    \centering
    \includegraphics[scale=1]{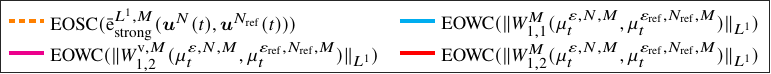}
    \caption{
        Experimental order of Wasserstein convergence (EOWC) plotted over time for \(W_{1,1}\), \(W_{1,2}\) (both \eqref{eq:W1approx}), and \(W_{1,2}^{\mathrm{v}}\) \eqref{eq:W1_vector}, respectively for $M\in\{100, 1000 \}$ samples (column-wise) and for resolutions \(N\geq 32\) from Table~\ref{tab:params2} with respect to $N_{\mathrm{ref}}=256$ (top row) and $N_{\mathrm{ref}}=512$ (bottom row). 
        EOSC of the complete velocity vector from Figures~\ref{fig:eosc_m100_256},~\ref{fig:eosc_m1000_256},~\ref{fig:eosc_m100_512}, and~\ref{fig:eosc_m1000_512}, respectively (top left to bottom right) are included for comparison.
    }
    \label{fig:eowc_256}
\end{figure}

Since the optimal coupling $\pi_{\bm{x}}$ (see \eqref{eq:wassersteinPermutations}) matches realizations across the two ensembles, $W_1$ measures the convergence of the distribution rather than of individual samples. 
As shown in Figure \ref{fig:eowc_256}, under the diagonal inviscid scaling, we observe rates $W_1 \leq C \eps^\alpha$ with $\alpha \approx 0.4$--$0.5$. 
In particular, for all the tested \(W_{1,1}\), \(W_{1,2}\), and \(W_{1,2}^{\mathrm{v}}\), the magnitude of the EOWC is larger than that of the EOSC over the computed time horizon, and the difference increases with the number of samples. 
These are the rates that motivate the choice $\zeta \approx s$ in Hypothesis~\ref{hyp:WassersteinRate}.

Besides, we observe a persistent difference between \(W_{1,2}^{\mathrm{v}}\) (vector-valued) and \(W_{1,2}\) (component-wise). 
After the dissipation peak region at \(t\approx10\), the component-wise EOWC increases back to a linear convergence rate, whereas the vector-valued EOWC stays around \(-0.5\), i.e., \(\alpha \approx 0.5\).
The difference between these two notions of Wasserstein distances is abstractly quantified in~\cite{catalano2025measuresdependencebasedwasserstein} and does not vanish in general. 
As we approximate a component-wise coupled probabilistic vector field (cf.\ \eqref{eq:incNSE}), the choice of a vector-valued Wasserstein metric seems more appropriate, since it retains the dependence between the velocity components. 

Moreover, it is evident that while the 1-point and component-wise metrics maintain a relatively stable fractional convergence, the EOWC of the $\mathbb{R}^6$-based vector metric noticeably degrades after \(t\approx 17\). 
This is a direct manifestation of the curse of dimensionality in empirical optimal transport. 
Because the statistical sampling error of MC empirical measures in the $1$-Wasserstein distance scales as $\mathcal{O}(M^{-1/d})$ for phase-space dimension $d > 2$, the six-dimensional $W_{1,2}^{\mathrm{v}}$ metric possesses a substantially higher statistical noise floor than its one- and two-dimensional counterparts $W_{1,1}$ and $W_{1,2}$.
As the macroscopic flow smoothens during the decay phase and the spatial discretization errors shrink, this dimensional statistical noise begins to dominate the total error. 
Refining the grid then reduces the empirical distance less and less, and the measured rate flattens.

In general, the EOWC, denoted as $\alpha(t)$, seems to be correlated in time with the ensemble-averaged kinetic energy dissipation rate $\E[\mathfrak{d}(t)]$. 
In fact, the EOWC mirrors the dissipation curve inversely: the convergence rate drops to its minimum during the peak of the turbulent cascade and recovers during the viscous decay phase.
We interpret our results as follows. 
In the Kuznetsov-type argument of Section~\ref{sec:formalWrate}, the convergence exponent $\alpha = \frac{s}{s+\zeta}$ depends on the regularity index $s$ of the velocity field and on the exponent $\zeta$. 
According to K41 theory, the flow exhibits a regularity of $s = 1/3$ strictly within the inertial range. 
In the diagonal EOC test, where the viscosity is scaled with the grid resolution ($\vis \sim \eps$), the numerical scheme approximates the inviscid limit. 
However, the effective physical regularity $s_{\mathrm{eff}}(t)$ captured by the Wasserstein metric remains a dynamic, time-dependent variable governed by the ratio of the grid spacing $\eps$ to the Kolmogorov microscale \eqref{eq:kolmogorov_microscale}.
The local scaling of the second-order structure function determines the effective regularity $s_{\mathrm{eff}}(t)$ observed at the grid scale $\eps$:
\begin{align} \label{eq:structure_function_regimes}
    S^{2}(\eps, t) \sim 
    \begin{cases} 
        C_1 \, \E[\mathfrak{d}(t)]^{2/3} \eps^{2/3} & \text{if } \eps \gg \eta(t) \quad (\text{inertial range, } s_{\mathrm{eff}} \to 1/3), \\
        C_2 \, \E[\mathfrak{d}(t)] \vis^{-1} \eps^2 & \text{if } \eps \ll \eta(t) \quad (\text{viscous range, } s_{\mathrm{eff}} \to 1).
    \end{cases}
\end{align}
During the peak of the turbulent cascade, $\E[\mathfrak{d}(t)]$ reaches its global maximum, causing $\eta(t)$ to shrink to its minimum. 
Although $\vis$ decreases along the diagonal, the high dissipation keeps the grid spacing large compared to the microscale ($\eps \gg \eta(t)$), so the grid samples the $2/3$-power law of the inertial range. 
This forces $s_{\mathrm{eff}}(t) \approx 1/3$ and reduces $\alpha$ to its fractional minimum.
Conversely, as the flow enters the decay phase ($t > 15$), $\E[\mathfrak{d}(t)] \to 0$ and $\eta(t)$ expands rapidly. 
Once the expanding microscale outpaces the fixed grid spacing ($\eps \ll \eta(t)$), the grid no longer resolves the rough increments but instead evaluates the smooth, relaminarizing viscous subrange. 
The effective regularity shifts toward $s_{\mathrm{eff}}(t) \approx 1$, and the EOWC increases toward integer-order rates.

\subsection{Computed robustness with respect to the numerical scheme}\label{sec:numWassersteinRef}

In addition to the consistency result in Section~\ref{subsec:numWasserstein}, we compute the Wasserstein convergence toward a reference solution obtained with an entirely different solver. 
The spectral hyperviscosity solver proposed by Rohner and Mishra~\cite{rohner2024efficient} (azeban) is used to produce \(M=992\) samples with an identical set of IID initial perturbations (see Appendix~\ref{appsec:refSol}). 
We compare against two ensembles, consisting of the first \(M=100\) samples and of all \( M=992\) samples. 
In contrast to the MC KBC LBM data, azeban includes a Leray projection and removes molecular viscosity to directly approximate a statistical Euler solution. 
Based on this difference in solution approximations, we investigate both the accuracy of the MC KBC LBM with respect to a reference statistical Euler solution and whether the computed limit depends on the regularization mechanism. 
A corresponding comparison between two solvers in two dimensions was reported in \cite[Section 5.5]{lanthaler2021statistical}. 
Details on the hyperviscosity method are provided in Appendix~\ref{appsec:refSol}. 
The sample convergence slopes and the Wasserstein convergence results with respect to this reference are summarized in Figure~\ref{fig:eowc_m128_eulerRef}. 
\begin{figure}[ht!]
    \centering
    % --- Top Left Plot: M=100, N_max=128 ---
    \subfloat[$M=100$, $N_{\mathrm{max}}=128$ \label{fig:eowc_m100_128_euler}]{
    	\includegraphics[scale=1]{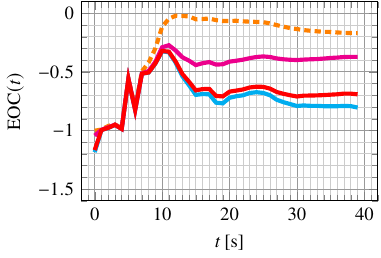}
    }
    \quad
    % --- Top Right Plot: M=992, N_max=128 ---
    \subfloat[$M=992$, $N_{\mathrm{max}}=128$\label{fig:eowc_m992_128_euler}]{
        \includegraphics[scale=1]{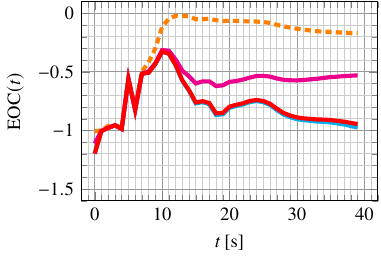}
    }\\    
 % --- Bottom Left Plot: M=100, Nmax=256---
    \subfloat[$M=100$, $N_{\mathrm{max}}=256$ \label{fig:eowc_m100_256_euler}]{
        \includegraphics[scale=1]{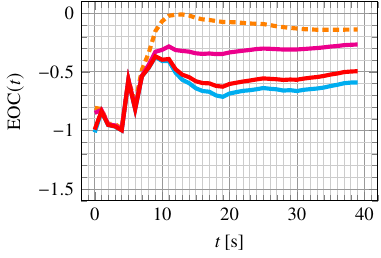}
    }
    \quad
    % --- Bottom Right Plot: M=992, Nmax=256 ---
    \subfloat[$M=992$, $N_{\mathrm{max}}=256$ \label{fig:eowc_m992_256_euler}]{
        \includegraphics[scale=1]{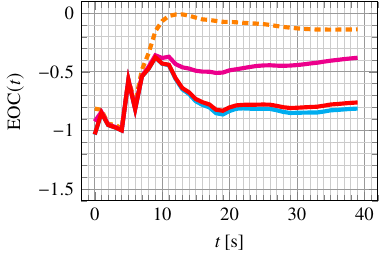}
    }\\  
    \vspace{0.5em}
    \centering
    \includegraphics[scale=1]{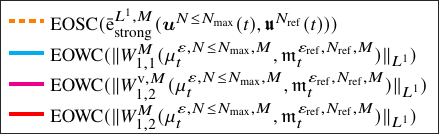}
 	\caption{
        Experimental order of Wasserstein convergence (EOWC) plotted over time for \(W_{1,1}\), \(W_{1,2}\) (both \eqref{eq:W1approx}), and \(W_{1,2}^{\mathrm{v}}\) \eqref{eq:W1_vector}, respectively for $M\in\{100, 992 \}$ samples (column-wise) and for resolutions $32\leq N\leq N_{\mathrm{max}}=128$ (top row) and $32 \leq N \leq N_{\mathrm{max}}=256$ (bottom row) from Table~\ref{tab:params2}. 
        EOSC slopes of the complete velocity vector are included for comparison.
        All error slopes are computed with respect to the spectral hyperviscosity results (see Appendix~\ref{appsec:refSol}), where \(\bm{\mathfrak{u}}^{N_{\mathrm{ref}}}\) denotes the velocity vector, and \(\mathfrak{m}_{t}^{\vis_{\mathrm{ref}},N_{\mathrm{ref}},M}\) is the computed statistical solution with \(N_{\mathrm{ref}}=256\) and \(\vis_{\mathrm{ref}} = 0.01/N_{\mathrm{ref}}^{2}\).
    }
    \label{fig:eowc_m128_eulerRef}
\end{figure}
Analogously to Section~\ref{sec:sampleDivergence}, sample convergence also fails with respect to the spectral hyperviscosity Euler approximation, indicating that the pathwise divergence is not an artifact of a particular discretization. 

Further, the shape of the EOWC rates over time in Figure~\ref{fig:eowc_m128_eulerRef} (with respect to the spectral hyperviscosity Euler approximation) and the consistency-based EOWC rates in Figure~\ref{fig:eowc_256} appear to be similar in terms of bends and overall magnitude, although the magnitude is slightly lower in the cross-solver test.

Besides, the convergence rates at early times are affected by the missing initial Leray projection in the MC KBC LBM. 
The omitted projection leaves a remainder $(\mathbf{I}_{d}-\mathbf{P})\bm{\mathfrak{s}}$ of size $\mathcal{O}(\mathfrak{w})$, independent of $N$, which is not covered by the well-preparedness of Section~\ref{sec:approximation}: Lemma~\ref{lem:wellprepared} assumes projected data, and the acoustic transient emitted by the equilibrium initialization \eqref{eq:initialPopulations} produces density fluctuations of order $\mathcal{O}(M\!a_{N}\mathfrak{w})$ rather than the $\mathcal{O}(M\!a_{N}^{2})$ required by Assumption~\ref{ass:lowMach}(ii) at early times. 
We expect its influence on the solenoidal dynamics to be small since the dilatational component carries only an $\mathcal{O}(\mathfrak{w})$ fraction of the velocity and is damped by the scheme. 
The visible imprint of the omitted projection is the initial plateau in Figure~\ref{fig:eowc_m128_eulerRef} at $t=0$. 
At this timestep, the distance to the Leray-projected reference equals the remainder itself, i.e., an $N$-independent random offset of magnitude $\mathcal{O}(\mathfrak{w})$, which explains the reduced convergence rate at initial time. 
For $t>0$, this offset no longer dominates the measured distances, and the rates approach those of Figure~\ref{fig:eowc_256}. 

We recall that a proof of uniqueness of statistical solutions to the 3D NSE and EE is not available. 
However, our comparison with the reference solution provides evidence on whether the computed limit depends on the regularization. 
Comparable convergence behavior in the Wasserstein metric between the MC KBC LBM and the spectral hyperviscosity method indicates that the two computed sequences approach a common limit in \(\mathcal{P}_{1}\). 
Since the two schemes employ different regularization mechanisms (mesoscopic entropic moment relaxation on the one hand and explicit macroscopic higher-order differential operators on the other), the agreement of the measured rates suggests that, within the range of resolutions considered here, the computed measure $\mu_t$ does not depend on the regularization. 
This is consistent with the two-dimensional observations in \cite[Section 5.5]{lanthaler2021statistical}.
\section{Conclusion}

\label{sec:conclusion}

We have combined a single level Monte Carlo method with an entropic lattice Boltzmann solver to approximate statistical solutions of the incompressible Navier--Stokes equations in three dimensions. 
The entropy-controlled, space-time dependent relaxation of the kinetic moments keeps the computations stable as the viscosity decreases. 
The randomness is carried by the initial data, and each sample is evolved with the KBC LBM. 
The implementation uses OpenLB-UQ and was run on the heterogeneous systems listed in Appendix~\ref{sec:appendix-resources}. 

Beyond the computations, we provide a conditional convergence analysis of the proposed scheme. 
Since the stability and consistency properties of the fully discrete entropic dynamics are not available as theorems in three dimensions, they are isolated in four standing assumptions, namely a uniform low Mach regime (Assumption~\ref{ass:lowMach}), coercivity of the discrete entropy production on the shear moments (Assumption~\ref{ass:coercivity}), weak consistency of the exact discrete moment balances (Assumption~\ref{ass:consistency}), and a uniform bound on the second-order structure functions (Assumption~\ref{ass:scaling}). 
Under the first three, the laws of the KBC LBM ensemble converge at fixed viscosity along a subsequence to a limit satisfying the Foias--Temam Liouville formulation of the NSE (Theorem~\ref{thm:nse_limit}), and adding the fourth, the vanishing viscosity theorem of \cite{fjordholm2021vanishing} applies to the limit families and yields the FMW multi-point statistical Euler hierarchy (Corollary~\ref{thm:eulerConsistency}). 
The diagonal scaling $\vis_{\eps} = c \eps$ underlying our computations yields the first equation of the same hierarchy without the coercivity assumption (Proposition~\ref{thm:diagonal_limit}), because no control of the discrete velocity gradients is required along that path.
Independently of these assumptions, the equilibrium initialization is shown to be well-prepared in the entropic sense (Lemma~\ref{lem:wellprepared}), the empirical measures converge for $M \to \infty$ (Lemma~\ref{lem:empirical}), and, provided they are admissible, the limit measures inherit the weak-strong uniqueness principle of \cite{brenier2011weak,lanthaler2021statistical} together with a stability estimate for as long as a strong Euler solution exists (Proposition~\ref{thm:weakStrong}). 
The structure-function scaling postulated in Assumption~\ref{ass:scaling} is precisely the quantity examined numerically in Section~\ref{sec:numerics}, so that the analytical and computational parts of this work carry complementary halves of the same statement.

The computations follow the inviscid limit of statistical Navier--Stokes solutions in three dimensions toward the statistical Euler limit for a Cauchy problem with randomized Taylor--Green vortex data. 
Preliminary runs with the same algorithm were reported in~\cite{simonis2023lattice,simonis2024computing}, and the present work adds extensive convergence measurements in the Wasserstein metric, the structure functions, and the cross-solver comparison. 
Whenever a strong Euler solution exists, the limit is singled out by Proposition~\ref{thm:weakStrong}. 
The computed energy spectra and structure functions show the K41 scaling over a range of scales that widens with the Reynolds number. 

Our numerical experiments show that deterministic pointwise tracking is not attainable in the chaotic regime. 
The strong $L^1$ error between individual sample trajectories saturates at $\mathcal{O}(1)$, as the nonlinearities of the turbulent flow amplify the underlying spatial truncation errors. 
This divergence motivates our transition to a statistical viewpoint. 
By evaluating the localized $1$-Wasserstein distance between finite-dimensional correlation marginals, we measure fractional convergence rates of $\mathcal{O}(\eps^\alpha)$ with $\alpha \approx 0.4$--$0.5$. 
Within the Kuznetsov-type argument of Section~\ref{sec:formalWrate}, these rates correspond to an amplification exponent $\zeta \approx s$ (Hypothesis~\ref{hyp:WassersteinRate}).
In addition, we compare the computed measures with a reference solution produced with the MC spectral hyperviscosity method of Rohner and Mishra~\cite{rohner2024efficient}. 
Although the two solvers rely on different regularization mechanisms, namely mesoscopic entropic moment relaxation on one hand and explicit macroscopic higher-order differential operators combined with a Leray projection on the other, the cross-solver comparison reproduces the behavior observed within the MC KBC LBM hierarchy itself. 
Sample convergence also fails with respect to the spectral hyperviscosity reference, indicating that the pathwise divergence is a property of the flow rather than an artifact of a particular discretization. 
Conversely, the experimental orders of Wasserstein convergence toward the spectral hyperviscosity reference are similar in shape and overall magnitude to the consistency-based rates measured against the high-resolution MC KBC LBM reference. 
This agreement suggests that, within the range of resolutions considered, the computed measure does not depend on the regularization mechanism.

In summary, the paper provides three-dimensional data on how statistical Navier--Stokes solutions approach a statistical Euler solution in the Wasserstein metric. 
As long as a strong Euler solution exists, that limit is unique. 
Promising future research includes establishing the standing assumptions of Section~\ref{sec:approximation} for the fully discrete entropic scheme, as well as the exploratory computation of statistical solutions to initial boundary value problems describing wall-bounded turbulent fluid flows with vanishing viscosity. 
The present results indicate that the combination of sampling techniques with efficient LBM implementations is a viable route to computing statistical solutions of turbulent flows on current HPC machinery.

\section*{Funding}

S.S.\ is supported by the PRIME programme of the German Academic Exchange Service (DAAD) with funding from the Federal Ministry of Research, Technology and Space (BMFTR). 
S.S.\ acknowledges financial support from a Networking grant and a ConYS grant by KHYS at KIT.
S.S.\ acknowledges support from the state of Baden-Württemberg through bwHPC. 
The authors gratefully acknowledge the computing time provided on the high-performance computer HoreKa by the National High-Performance Computing Center at KIT (NHR@KIT). 
This center is jointly supported with funding from the Federal Ministry of Research, Technology and Space and the Ministry of Science, Research and the Arts of Baden-Württemberg, as part of the National High-Performance Computing (NHR) joint funding program. 
HoreKa is partly funded by the German Research Foundation (DFG).
This work was supported by a grant from the Swiss National Supercomputing Centre (CSCS) under project ID 1217.

\section*{Acknowledgements}

The authors thank: M.J.\ Krause, M.\ Frank and S.\ Mishra for providing the excellent research infrastructure at KIT and ETH Zurich; S.\ Ito and A.\ Kummerländer for their specific contributions to OpenLB, upon which the numerical implementations were built; T.\ Braun for computing preliminary RTGV results; and M.\ Zhong for constructing the initial OpenLB-UQ module. 

\section*{Author contribution statement}

\textbf{J.L.G.:} 
    Methodology, 
    Software, 
    Validation, 
    Investigation, 
    Data Curation, 
    Writing - Original Draft, 
    Visualization; 
\textbf{T.R.:} 
    Conceptualization, 
    Methodology, 
    Software, 
    Validation, 
    Writing - Original Draft; 
\textbf{S.S.:} 
    Conceptualization, 
    Methodology, 
    Software, 
    Validation, 
    Formal Analysis, 
    Investigation, 
    Resources, 
    Data Curation, 
    Writing - Original Draft, 
    Writing - Review {\&} Editing, 
    Visualization, 
    Supervision, 
    Project administration, 
    Funding Acquisition. 
All authors read and approved the final version of this paper.

\section*{Data availability statement}

All MC LBM computations have been conducted using the OpenLB-UQ module~\cite{zhong2025openlbuquncertaintyquantificationframework}, which is released open source as part of OpenLB~\cite{krause2020openlb} under the GNU General Public License, version 2. 
Code contributions (also unreleased) are based on commit \href{https://gitlab.com/openlb/olb/-/commit/58a0c5d722648bfc0d0bfebb49656a7fab57c348}{58a0c5d7}. 
The statistical reference solution has been produced with the GPU-based spectral hyperviscosity solver azeban~\cite{rohner2024efficient}. 
Its source code is available under \href{https://github.com/TobiasRohner/azeban}{https://github.com/TobiasRohner/azeban}.
Visualizations of the flow fields have been performed using the open-source software ParaView~\cite{ParaView}. 
The statistical postprocessing was done using the open-source python modules NumPy~\cite{harris2020array}, the POT module \cite{flamary2021pot,flamary2024pot}, and SciPy \cite{virtanenSciPy10Fundamental2020}. 
Simulation data are available upon request.

\section*{Declaration of generative AI in the manuscript preparation process}

During the preparation of this work, the authors used Anthropic Claude and Google Gemini to assist with code development, data analysis, text drafting, and formatting. 
After using these tools, the authors reviewed and edited the content as needed and take full responsibility for the content of the publication.

\section*{Competing interests statement}

The authors declare that they have no known competing interests that could have appeared to influence the work reported in this paper.

{\printbibliography[resetnumbers]}

@book{babovsky1998boltzmann,
  title={{Die Boltzmann-Gleichung: Modellbildung-Numerik-Anwendungen}},
  author={Babovsky, Hans},
  year={1998},
  doi={10.1007/978-3-663-12034-6},
  place={Wiesbaden}, 
  publisher={Springer{\slash}Vieweg+Teubner}
}

@article{simon1986, 
	author={Simon, Jacques}, 
	year={1986}, 
	title={Compact sets in the space $L^p(0,T; B)$}, 
	journal={Annali di Matematica Pura ed Applicata},
	pages={65--96}, 
	volume={146}, 
	issue={1}, 
	doi={10.1007/BF01762360}, 
}

@book{Billingsley1999,
	author={Billingsley, Patrick},
	year={1999},
	title={Convergence of Probability Measures},
	edition={2nd},
	publisher={John Wiley \& Sons},
	doi={10.1002/9780470316962}
}

@book{Billingsley1995,
  title     = {Probability and Measure},
  author    = {Billingsley, Patrick},
  edition   = {3rd},
  year      = {1995},
  publisher = {John Wiley \& Sons},
  address   = {New York}
}

@book{Villani2003,
  title     = {{Topics in Optimal Transportation}},
  author    = {Villani, C{\'e}dric},
  year      = {2003},
  doi={10.1090/gsm/058},
  publisher = {American Mathematical Society},
  address   = {Providence, Rhode Island}
}

@article{Peyre2019,
  title     = {{Computational Optimal Transport: With Applications to Data Science}},
  author    = {Peyr{\'e}, Gabriel and Cuturi, Marco},
  journal   = {Foundations and Trends{\textregistered} in Machine Learning},
  volume    = {11},
  number    = {5-6},
  pages     = {355--607},
  year      = {2019},
  doi={10.1561/2200000073}
}

@article{Dafermos1979,
	author={Dafermos, Constantine M.},
	year={1979},
	title={The second law of thermodynamics and stability},
	journal={Archive for Rational Mechanics and Analysis},
	volume={70},
	number={2},
	pages={167--179},
	doi={10.1007/BF00250353}
}

@article{DiPerna1979,
	author={DiPerna, Ronald J.},
	year={1979},
	title={Uniqueness of solutions to hyperbolic conservation laws},
	journal={Indiana University Mathematics Journal},
	volume={28},
	number={1},
	pages={137--188},
	doi={10.1512/iumj.1979.28.28011}
}

@article{Kuznetsov1976,
	author={Kuznetsov, N. N.},
	year={1976},
	title={Accuracy of some approximate methods for computation of the weak solutions of a first-order quasi-linear equation},
	journal={USSR Computational Mathematics and Mathematical Physics},
	volume={16},
	number={6},
	pages={105--119},
	doi={10.1016/0041-5553(76)90159-4}
}

@article{zhong2025uncertaindataassimilationurban,
      title={{Uncertain data assimilation for urban wind flow simulations with OpenLB-UQ}}, 
      author={Zhong, M. and Teutscher, D. and Kummerländer, A. and Krause, M. J. and Frank, M. and Simonis, S.},
      year={2025},
      journal={arXiv preprint},
      doi={10.48550/arXiv.2508.18202}   
}

@article{zhong2025openlbuquncertaintyquantificationframework,
      title={{OpenLB-UQ: An Uncertainty Quantification Framework for Incompressible Fluid Flow Simulations}}, 
      author={Zhong, M. and Kummerländer, A. and Ito, S. and Krause, M. J. and Frank, M. and Simonis, S.},
      year={2025},
      journal={arXiv preprint}, 
      doi={10.48550/arXiv.2508.13867}  
}

@inproceedings{simonis2024computing,
    author = {Simonis, S. and Mishra, S.},
    title = {{Computing statistical Navier--Stokes solutions}},
    booktitle = {{Hyperbolic Balance Laws: Interplay between Scales and Randomness}},
    series = {Oberwolfach Report 21}, 
    year = {2024}, 
    number = {1}, 
    pages = {567-656}, 
    publisher = {EMS Press}, 
    doi={10.4171/OWR/2024/10}, 
    editor={R\'{e}mi Abgrall and Mauro Garavello and M\'{a}ria Luk\'{a}\v{c}ov\'{a}-Medvid{'}ov\'{a} and Konstantina Trivisa} 
}

@article{simonis2024spectral,
    author = {Simonis, Stephan and Dorschner, Benedikt and Karlin, Ilya V. and Krause, Mathias J.},
    title = {{Spectral effects of entropic multi-relaxation in lattice Boltzmann methods}},
    journal = {Preprint available at SSRN},
    doi = {10.2139/ssrn.4972419}, 
    year = {2024}
}

@article{leonardi2016numerical,
  title={{Numerical approximation of statistical solutions of planar, incompressible flows}},
  author={Leonardi, Filippo and Mishra, Siddhartha and Schwab, Christoph},
  journal={Mathematical Models and Methods in Applied Sciences},
  volume={26},
  number={13},
  doi={10.1142/S0218202516500597},
  pages={2471--2523},
  year={2016},
  publisher={World Scientific}
}

@article{bansal2021numerical,
  doi = {10.48550/arXiv.2107.06073},
  author = {Bansal, Pratyuksh},
  journal = {arXiv}, 
  volume = {preprint},
  title = {{Numerical approximation of statistical solutions of the incompressible Navier-Stokes Equations}},
  year = {2021}
}

@article{hunt1988eddies,
  title={Eddies, streams, and convergence zones in turbulent flows},
  author={Hunt, Julian CR and Wray, Alan A and Moin, Parviz},
  journal={Studying Turbulence Using Numerical Simulation Databases, 2. Proceedings of the 1988 Summer Program},
  year={1988}, 
  volume={N89},
  number={24555}, 
  pages={193--208},
  url={https://ntrs.nasa.gov/citations/19890015184}
}

@article{zhao2021lattice,
author = {Zhao, Weifeng and Huang, Juntao and Yong, Wen-An},
title = {{Lattice Boltzmann Method for Stochastic Convection-Diffusion Equations}},
journal = {SIAM{\slash}ASA Journal on Uncertainty Quantification},
volume = {9},
number = {2},
pages = {536--563},
year = {2021},
doi = {10.1137/19M1270665}
}

@article{zhao2019stochastic,
  title={{Stochastic modeling of the permeability of randomly generated porous media via the lattice Boltzmann method and probabilistic collocation method}},
  author={Zhao, Lei and Li, Heng},
  journal={Transport in Porous Media},
  volume={128},
  number={2},
  pages={613--631},
  year={2019},
  doi={10.1007/s11242-019-01261-7},
  publisher={Springer}
}

@article{vandenbos2017nonIntrusive,
title = {{Non-intrusive uncertainty quantification using reduced cubature rules}},
journal = {Journal of Computational Physics},
volume = {332},
pages = {418--445},
year = {2017},
doi = {10.1016/j.jcp.2016.12.011},
author = {L.M.M. {van den Bos} and B. Koren and R.P. Dwight}
}

@article{brachet1991direct,
  title = {Direct simulation of three-dimensional turbulence in the {Taylor}--{Green} vortex},
  author = {M. E. Brachet},
  journal = {Fluid Dynamics Research},
  volume = {8},
  number = {1--4},
  pages = {1--8},
  year = {1991},
  doi = {10.1016/0169-5983(91)90026-F},
  publisher={IOP Publishing}
}

@article{karlin1999perfect,
doi = {10.1209/epl/i1999-00370-1},
year = {1999},
volume = {47},
number = {2},
pages = {182},
author = {I. V. Karlin and  A. Ferrante and  H. C. Öttinger},
title = {{Perfect entropy functions of the Lattice Boltzmann method}},
journal = {Europhysics Letters}
}

@article{perthame1993weighted,
  title={{Weighted \(L^{\infty}\) bounds and uniqueness for the Boltzmann BGK model}},
  author={Perthame, B and Pulvirenti, M},
  journal={Archive for Rational Mechanics and Analysis},
  volume={125},
  pages={289--295},
  doi={10.1007/BF00383223},
  year={1993},
  publisher={Springer}
}

@article{mischler1996uniqueness,
  title={{Uniqueness for the BGK-equation in \(\mathbb{R}^{N}\) and rate of convergence for a semi-discrete scheme}},
  author={Mischler, St\'{e}phane},
  journal={Differential and Integral Equations},
  volume={9}, 
  number={5}, 
  doi={10.57262/die/1367871533},
  pages={1119--1138},
  year={1996}
}

@book{eden1995exponential,
author = {Eden, Alp and Foias, C. and Nicolaenko, B. and Temam, Roger},
title = {Exponential Attractors for Dissipative Evolution Equations},
series="Research in Applied Mathematics",
publisher="John Wiley {\&} Sons",
year="1994",
URL="https://cir.nii.ac.jp/crid/1570291224960836224"
}

@article{lallemand2021lattice,
  title={{The Lattice Boltzmann Method for Nearly Incompressible Flows}},
  author={Lallemand, Pierre and Luo, Li-Shi and Krafczyk, Manfred and Yong, Wen-An},
  journal={Journal of Computational Physics},
  volume={(In press)},
  pages={109713},
  year={2020},
  doi={10.1016/j.jcp.2020.109713},
  publisher={Elsevier}
}

@article{gorban2018hilbert,
author = {Gorban, A. N. },
title = {{Hilbert{\textquotesingle}s sixth problem: the endless road to rigour}},
journal = {Philosophical Transactions of the Royal Society A},
volume = {376},
number = {2118},
pages = {20170238},
year = {2018},
doi = {10.1098/rsta.2017.0238}
}

@article{perthame1989global,
  title={{Global existence to the BGK model of Boltzmann equation}},
  author={Perthame, B.},
  journal={Journal of Differential Equations},
  volume={82},
  number={1},
  pages={191--205},
  year={1989},
  publisher={Elsevier}, 
  doi={10.1016/0022-0396(89)90173-3}
}

@article{eden1994exponential,
author = {Eden, Alp and Foias, C. and Nicolaenko, B.},
year = {1994},
pages = {301-323},
title = {{Exponential attractors of optimal Lyapunov dimension for Navier-Stokes equations}},
volume = {6},
journal = {Journal of Dynamics and Differential Equations},
doi = {10.1007/BF02218532}
}

@article{eden1993exponential,
author = {Eden, Alp and Foias, C. and Nicolaenko, B. and She, Zhensu},
year = {1993},
pages = {350-360},
title = {Exponential attractors and their relevance to fluid dynamics systems},
volume = {63},
journal = {Physica D: Nonlinear Phenomena},
doi = {10.1016/0167-2789(93)90116-I}
}

@article{boesch2015entropic,
  title = {{Entropic multirelaxation lattice Boltzmann models for turbulent flows}},
  author = {B{\"o}sch, Fabian and Chikatamarla, Shyam S. and Karlin, Ilya V.},
  journal = {Physical Review E},
  volume = {92},
  issue = {4},
  pages = {043309},
  numpages = {15},
  year = {2015},
  publisher = {American Physical Society},
  doi = {10.1103/PhysRevE.92.043309}
}

@article{karlin2014gibbs,
  title = {{Gibbs' principle for the lattice-kinetic theory of fluid dynamics}},
  author = {Karlin, I. V. and B{\"o}sch, F. and Chikatamarla, S. S.},
  journal = {Physical Review E},
  volume = {90},
  issue = {3},
  pages = {031302},
  numpages = {5},
  year = {2014},
  publisher = {American Physical Society},
  doi = {10.1103/PhysRevE.90.031302}
}

@article{hopf1950anfangswertaufgabe,
  title={{{\"U}ber die Anfangswertaufgabe f{\"u}r die hydrodynamischen Grundgleichungen}},
  author={Hopf, Eberhard},
  journal={Mathematische Nachrichten},
  volume={4},
  number={1-6},
  pages={213--231},
  year={1950},
  doi={10.1002/mana.3210040121},
  publisher={Wiley Online Library}
}

@article{fefferman2000existence,
  title={{Existence and smoothness of the Navier--Stokes equation}},
  author={Fefferman, Charles L},
  journal={The millennium prize problems},
  volume={57},
  pages={67},
  url={https://www.claymath.org/wp-content/uploads/2022/06/navierstokes.pdf},
  year={2000}
}

@article{fjordholm2021vanishing,
    author = {Fjordholm, Ulrik Skre and Mishra, Siddhartha and Weber, Franziska},
    title = {{On the Vanishing Viscosity Limit of Statistical Solutions of the Incompressible Navier--Stokes Equations}},
    journal = {SIAM Journal on Mathematical Analysis},
    volume = {56},
    number = {4},
    pages = {5099--5143},
    year = {2024},
    doi = {10.1137/23M1566261}
}

@article{gu2024incompressible,
    doi = {10.1088/1361-6544/adca81},
    year = {2025},
    volume = {38},
    number = {5},
    pages = {055014},
    author = {Gu, Zhongyang and Hu, Xin and Matharu, Pritpal and Protas, Bartosz and Sasada, Makiko and Yoneda, Tsuyoshi},
    title = {{The incompressible Navier–Stokes limit from the discrete-velocity BGK Boltzmann equation}},
    journal = {Nonlinearity}
}

@incollection{eymard2000finite,
    title = {{Finite volume methods}},
    series = {{Handbook of Numerical Analysis}},
    publisher = {Elsevier},
    volume = {7},
    pages = {713-1018},
    year = {2000},
    booktitle = {{Solution of Equation in \(\mathbb{R}^{n}\) (Part 3), Techniques of Scientific Computing (Part 3)}},
    issn = {1570-8659},
    doi = {10.1016/S1570-8659(00)07005-8},
    author = {Robert Eymard and Thierry Gallouët and Raphaèle Herbin}
}

@inproceedings{rohner2024efficient,
    author = {Rohner, Tobias and Mishra, Siddhartha},
    title = {{Efficient Computation of Large-Scale Statistical Solutions to Incompressible Fluid Flows}},
    year = {2024},
    isbn = {9798400706394},
    publisher = {Association for Computing Machinery},
    address = {New York, NY, USA},
    doi = {10.1145/3659914.3659922},
    booktitle = {Proceedings of the Platform for Advanced Scientific Computing Conference},
    articleno = {8},
    pages = {1--11},
    location = {Zurich, Switzerland},
    series = {PASC '24}
}

@article{leray1934mouvement,
  title={Sur le mouvement d'un liquide visqueux emplissant l'espace},
  author={Leray, Jean},
  journal={Acta Mathematica},
  volume={63},
  number={1},
  pages={193--248},
  year={1934},
  doi={10.1007/BF02547354},
  publisher={Springer}
}

@article{simonis2022constructing,
title = {{Constructing relaxation systems for lattice Boltzmann methods}},
journal = {Applied Mathematics Letters},
volume = {137},
pages = {108484},
year = {2023},
doi = {10.1016/j.aml.2022.108484},
author = {Simonis, S.  and Frank, Martin and Krause, Mathias J.}  
}

@article{saint-raymond2003bgk,
     author = {Saint-Raymond, L.},
     title = {{From the {BGK} model to the {Navier-Stokes} equations}},
     journal = {Annales Scientifiques de l'\'{E}cole Normale Sup\'{e}rieure},
     pages = {271--317},
     publisher = {Elsevier},
     volume = {Ser. 4, 36},
     number = {2},
     year = {2003},
     doi = {10.1016/S0012-9593(03)00010-7},
}

@article{lohner2019towards,
  author = {L{\"{o}}hner, Rainald},
  doi = {10.1080/10618562.2019.1612052},
  journal = {International Journal of Computational Fluid Dynamics},
  number = {3},
  pages = {87--97},
  title = {{Towards overcoming the LES crisis}},
  volume = {33},
  year = {2019}
}

@article{dorschner2018particles,
  title = {{Particles on Demand for Kinetic Theory}},
  author = {Dorschner, B. and B\"osch, F. and Karlin, I. V.},
  journal = {Physical Review Letters},
  volume = {121},
  issue = {13},
  pages = {130602},
  numpages = {5},
  year = {2018},
  publisher = {American Physical Society},
  doi = {10.1103/PhysRevLett.121.130602},
}

@article{wilde2021high,
  title = {{High-order semi-Lagrangian kinetic scheme for compressible turbulence}},
  author = {Wilde, Dominik and Kr\"amer, Andreas and Reith, Dirk and Foysi, Holger},
  journal = {Physical Review E},
  volume = {104},
  issue = {2},
  pages = {025301},
  numpages = {15},
  year = {2021},
  publisher = {American Physical Society},
  doi = {10.1103/PhysRevE.104.025301}
}

@article{kallikounis2022particles,
  title = {{Particles on demand for flows with strong discontinuities}},
  author = {Kallikounis, N. G. and Dorschner, B. and Karlin, I. V.},
  journal = {Physical Review E},
  volume = {106},
  issue = {1},
  pages = {015301},
  numpages = {18},
  year = {2022},
  doi = {10.1103/PhysRevE.106.015301}
}

@article{simonis2020relaxation,
  title={{On relaxation systems and their relation to discrete velocity Boltzmann models for scalar advection--diffusion equations}},
  author={Simonis, S. and Frank, Martin and Krause, Mathias J},
  journal={Philosophical Transactions of the Royal Society A},
  volume={378},
  pages={20190400},
  doi={10.1098/rsta.2019.0400},
  year={2020}  
}

@article{simonis2021linear, 
  title={{Linear and brute force stability of orthogonal moment multiple-relaxation-time lattice Boltzmann methods applied to homogeneous isotropic turbulence}},
  author={Simonis, S. and Haussmann, Marc and Kronberg, Louis and D{\"o}rfler, Willy and Krause, Mathias J},
  journal={Philosophical Transactions of the Royal Society A},
  volume={379},
  pages={20200405},
  doi={10.1098/rsta.2020.0405},
  year={2021}  
}

@article{Skorokhod1956,
  title   = {{Limit theorems for stochastic processes}},
  author  = {Skorokhod, Anatoliy V.},
  journal = {Theory of Probability \& Its Applications},
  volume  = {1},
  number  = {3},
  pages   = {261--290},
  year    = {1956},
  doi = {10.1137/1101022},
  publisher={SIAM}
}

@article{simonis2022limit, 
title = {{Limit Consistency of Lattice Boltzmann Equations}}, 
author = {S. Simonis and M. J. Krause}, 
DOI= "10.1051/m2an/2025026", 
journal = {ESAIM: M2AN},
	year = 2025,
	volume = 59,
	number = 3,
	pages = "1271-1299",
}

@article{catalano2025measuresdependencebasedwasserstein,
      title={{Measures of Dependence based on Wasserstein distances}}, 
      author={Marta Catalano and Hugo Lavenant},
      year={2025},
      journal={arXiv}, 
      volume={preprint},
      doi={10.48550/arXiv.2510.06034}
}

@Article{harris2020array,
 title         = {Array programming with {NumPy}},
 author        = {Charles R. Harris and K. Jarrod Millman and St{\'{e}}fan J.
                 van der Walt and Ralf Gommers and Pauli Virtanen and David
                 Cournapeau and Eric Wieser and Julian Taylor and Sebastian
                 Berg and Nathaniel J. Smith and Robert Kern and Matti Picus
                 and Stephan Hoyer and Marten H. van Kerkwijk and Matthew
                 Brett and Allan Haldane and Jaime Fern{\'{a}}ndez del
                 R{\'{i}}o and Mark Wiebe and Pearu Peterson and Pierre
                 G{\'{e}}rard-Marchant and Kevin Sheppard and Tyler Reddy and
                 Warren Weckesser and Hameer Abbasi and Christoph Gohlke and
                 Travis E. Oliphant},
 year          = {2020},
 journal       = {Nature},
 volume        = {585},
 number        = {7825},
 pages         = {357--362},
 doi           = {10.1038/s41586-020-2649-2},
}

@article{zhong2024stochastic,
    author = {Zhong, Mingliang and Xiao, Tianbai and Krause, Mathias J. and Frank, Martin and Simonis, Stephan},
    title = {{A stochastic Galerkin lattice Boltzmann method for incompressible fluid flows with uncertainties}},
    journal = {Journal of Computational Physics},
    volume = {517}, 
    pages = {113344}, 
    doi = {10.1016/j.jcp.2024.113344},
    year = {2024}
}

@article{flamary2021pot,
  author  = {R{\'e}mi Flamary and Nicolas Courty and Alexandre Gramfort and Mokhtar Z. Alaya and Aur{\'e}lie Boisbunon and Stanislas Chambon and Laetitia Chapel and Adrien Corenflos and Kilian Fatras and Nemo Fournier and L{\'e}o Gautheron and Nathalie T.H. Gayraud and Hicham Janati and Alain Rakotomamonjy and Ievgen Redko and Antoine Rolet and Antony Schutz and Vivien Seguy and Danica J. Sutherland and Romain Tavenard and Alexander Tong and Titouan Vayer},
  title   = {{POT: Python Optimal Transport}},
  journal = {Journal of Machine Learning Research},
  year    = {2021},
  volume  = {22},
  number  = {78},
  pages   = {1-8},
  url     = {http://jmlr.org/papers/v22/20-451.html}
}

@misc{flamary2024pot,
  author = {Flamary, R{\'e}mi and Vincent-Cuaz, C{\'e}dric and Courty, Nicolas and Gramfort, Alexandre and Kachaiev, Oleksii and Quang Tran, Huy and David, Laurène and Bonet, Cl{\'e}ment and Cassereau, Nathan and Gnassounou, Th{\'e}o and Tanguy, Eloi and Delon, Julie and Collas, Antoine and Mazelet, Sonia and Chapel, Laetitia and Kerdoncuff, Tanguy and Yu, Xizheng and Feickert, Matthew and Krzakala, Paul and Liu, Tianlin and Fernandes Montesuma, Eduardo},
  title = {{POT Python Optimal Transport (version 0.9.5)}},
  url = {https://github.com/PythonOT/POT},
  year = {2024}
}

@phdthesis{simonis2023lattice,
    author = {Stephan Simonis},
    title = {{Lattice Boltzmann Methods for Partial Differential Equations}},
    school = {Karlsruhe Institute of Technology (KIT)},
    doi          = {10.5445/IR/1000161726},
    year = {2023}
}

@article{siodlaczek2021numerical,
  title={{Numerical evaluation of thermal comfort using a large eddy lattice Boltzmann method}},
  author={Siodlaczek, Marc and Gaedtke, Maximilian and Simonis, S. and Schweiker, Marcel and Homma, Naohiko and Krause, Mathias J},
  journal={Building and Environment},
  volume={192},
  pages={107618},
  year={2021},
  doi={10.1016/j.buildenv.2021.107618},
  publisher={Elsevier}  
}

@article{coreixas2020compressible,
author = {Coreixas,Christophe  and Latt,Jonas },
title = {{Compressible lattice Boltzmann methods with adaptive velocity stencils: An interpolation-free formulation}},
journal = {Physics of Fluids},
volume = {32},
number = {11},
pages = {116102},
year = {2020},
doi = {10.1063/5.0027986}
}

@InCollection{ParaView,
  author    = {Ahrens, James and Geveci, Berk and Law, Charles},
  title   = {{ParaView: An End-User Tool for Large Data Visualization}},
  editor    = {Hansen, Charles D. and Johnson, Christopher R.},
  pages     = {717--731},
  publisher = {Elsevier Inc.},
  booktitle     = {Visualization Handbook},
  year      = {2005},
  address   = {Burlington, MA, USA},
  url       = {https://www.sciencedirect.com/book/9780123875822/visualization-handbook},
}

@article{coreixas2019comprehensive,
  title = {Comprehensive comparison of collision models in the lattice {Boltzmann} framework: {Theoretical} investigations},
  author = {Coreixas, Christophe and Chopard, Bastien and Latt, Jonas},
  journal = {Physical Review E},
  volume = {100},
  issue = {3},
  pages = {033305},
  numpages = {46},
  year = {2019},
  publisher = {American Physical Society},
  doi = {10.1103/PhysRevE.100.033305}
}

@book{foias2001navier,
  title={{Navier-Stokes equations and turbulence}},
  author={Foias, Ciprian and Manley, Oscar and Rosa, Ricardo and Temam, Roger},
  volume={83},
  year={2001},
  doi={10.1017/CBO9780511546754},
  publisher={Cambridge University Press}
}

@article{foias1976solutions,
  title={{Sur les solutions statistiques des {\'e}quations de Navier-Stokes}},
  author={Foias, Ciprian and Prodi, G},
  journal={Annali di Matematica Pura ed Applicata},
  volume={111},
  number={1},
  pages={307--330},
  year={1976},
  doi={10.1007/BF02411822},
  publisher={Springer}
}

@article{kolmogoroff1941local,
  title={{The Local Structure of Turbulence in Incompressible Viscous Fluid for Very Large Reynolds Numbers}},
  author={Kolmogorov, A. N.},
  journal={Proceedings of the USSR Academy of Sciences},
  volume={30},
  pages={301--305},
  year={1941}
}

@article{kolmogorov1991local,
author = {Kolmogorov, Andrei Nikolaevich and Levin, V. and Hunt, Julian Charles Roland  and Phillips, Owen Martin  and Williams, David},
title = {{The local structure of turbulence in incompressible viscous fluid for very large Reynolds numbers}},
journal = {Proceedings of the Royal Society A},
volume = {434},
number = {1890},
pages = {9--13},
year = {1991},
doi = {10.1098/rspa.1991.0075}
}

@article{lanthaler2021statistical,
author = {Lanthaler, S. and Mishra, Siddhartha and Par\'{e}s-Pulido, C.},
title = {{Statistical solutions of the incompressible Euler equations}},
journal = {Mathematical Models and Methods in Applied Sciences},
volume = {31},
number = {02},
pages = {223--292},
year = {2021},
doi = {10.1142/S0218202521500068}
}

@article{lanthaler2021conservation,
doi = {10.1088/1361-6544/abb452},
year = {2021},
publisher = {IOP Publishing},
volume = {34},
number = {2},
pages = {1084},
author = {S Lanthaler and S Mishra and C Par{\'{e}}s-Pulido},
title = {{On the conservation of energy in two-dimensional incompressible flows}},
journal = {Nonlinearity}
}

@article{chae1991vanishing,
title = {{The vanishing viscosity limit of statistical solutions of the Navier-Stokes equations. I. 2-D periodic case}},
journal = {Journal of Mathematical Analysis and Applications},
volume = {155},
number = {2},
pages = {437--459},
year = {1991},
doi = {10.1016/0022-247X(91)90012-O},
author = {Dongho Chae},
}

@article{glimm2023smooth,
      title={{Smooth vs. Physical Solutions of the Navier-Stokes Equation}}, 
      author={J. Glimm and J. Petrillo and M. C. Lee},
      year={2023},
      journal={arXiv}, 
      volume ={preprint},  
      doi={10.48550/arXiv.2212.14734}, 
      url={https://arxiv.org/abs/2212.14734v3}
}

@article{he1997theory,
  title = {Theory of the lattice {Boltzmann} method: {From the Boltzmann equation to the lattice Boltzmann equation}},
  author = {He, Xiaoyi and Luo, Li-Shi},
  journal = {Physical Review E},
  volume = {56},
  issue = {6},
  pages = {6811--6817},
  numpages = {0},
  year = {1997},
  publisher = {American Physical Society},
  doi = {10.1103/PhysRevE.56.6811}
}

@article{junk2003rigorous,
  title={Rigorous {Navier--Stokes} limit of the lattice {Boltzmann} equation},
  author={Junk, Michael and Yong, Wen-An},
  journal={Asymptotic Analysis},
  volume={35},
  doi={10.3233/ASY-2003-563},
  number={2},
  pages={165--185},
  year={2003},
  publisher={IOS Press}
}

@article{junk2005asymptotic,
title = {{Asymptotic analysis of the lattice Boltzmann equation}},
journal = {Journal of Computational Physics},
volume = {210},
number = {2},
pages = {676--704},
year = {2005},
doi = {10.1016/j.jcp.2005.05.003},
author = {M. Junk and A. Klar and L.-S. Luo}
}

@article{hou2024nearly,
      title={{Nearly self-similar blowup of generalized axisymmetric Navier-Stokes and Boussinesq equations}}, 
      author={Thomas Y. Hou},
      year={2024},
      journal={arXiv}, 
      volume={preprint},
      doi={10.48550/arXiv.2405.10916}
}

@article{wang2023discovering,
  title = {{Asymptotic Self-Similar Blow-Up Profile for Three-Dimensional Axisymmetric Euler Equations Using Neural Networks}},
  author = {Wang, Y. and Lai, C.-Y. and G\'omez-Serrano, J. and Buckmaster, T.},
  journal = {Physical Review Letters},
  volume = {130},
  issue = {24},
  pages = {244002},
  numpages = {6},
  year = {2023},
  doi = {10.1103/PhysRevLett.130.244002}
}

@article{jia2014local,
  title={{Local-in-space estimates near initial time for weak solutions of the Navier-Stokes equations and forward self-similar solutions}},
  author={Jia, Hao and {\v{S}}ver{\'a}k, Vladim{\'\i}r},
  journal={Inventiones mathematicae},
  volume={196},
  issue={1},
  doi={10.1007/s00222-013-0468-x},
  pages={233--265},
  year={2014},
  publisher={Springer}
}

@article{guillod2023numerical,
  title={{Numerical investigations of non-uniqueness for the Navier--Stokes initial value problem in borderline spaces}},
  author={Guillod, Julien and {\v{S}}ver{\'a}k, Vladim{\'\i}r},
  journal={Journal of Mathematical Fluid Mechanics},
  volume={25},
  number={3},
  doi={10.1007/s00021-023-00789-5},
  pages={46},
  year={2023},
  publisher={Springer}
}

@article{buckmaster2019nonuniqueness,
author = {Tristan Buckmaster and Vlad Vicol},
title = {{Nonuniqueness of weak solutions to the Navier-Stokes equation}},
volume = {189},
journal = {Annals of Mathematics},
number = {1},
publisher = {Department of Mathematics of Princeton University},
pages = {101--144},
year = {2019},
doi = {10.4007/annals.2019.189.1.3}
}

@article{malaspinas2012consistent,
  title={Consistent subgrid scale modelling for lattice {Boltzmann} methods},
  author={Malaspinas, Orestis and Sagaut, Pierre},
  journal={Journal of Fluid Mechanics},
  volume={700},
  pages={514--542},
  year={2012},
  publisher={Cambridge University Press}, 
  doi={10.1017/jfm.2012.155}
}

@Article{aidun2010lattice,
author = {Aidun, Cyrus K. and Clausen, Jonathan R.},
title = {{Lattice-Boltzmann Method for Complex Flows}},
journal = {Annual Review of Fluid Mechanics},
volume = {42},
number = {1},
pages = {439--472},
year = {2010},
doi = {10.1146/annurev-fluid-121108-145519},
}

@article{jahanshaloo2013review,
  author = {Jahanshaloo, L. and Pouryazdanpanah, E. and {Che Sidik}, N. A.},
  doi = {10.1080/10407782.2013.807690},
  journal = {Numerical Heat Transfer; Part A: Applications},
  number = {11},
  pages = {938--953},
  title = {{A review on the application of the lattice Boltzmann method for turbulent flow simulation}},
  volume = {64},
  year = {2013}
}

@Article{bhatnagar1954model,
  title = {{A Model for Collision Processes in Gases. I. Small Amplitude Processes in Charged and Neutral One-Component Systems}},
  author = {Bhatnagar, P. L. and Gross, E. P. and Krook, M.},
  journal = {Physical Review},
  volume = {94},
  issue = {3},
  pages = {511--525},
  numpages = {0},
  year = {1954},
  publisher = {American Physical Society},
  doi = {10.1103/PhysRev.94.511}
}

@article{krause2020openlb,
title = {{OpenLB{\textemdash}Open source lattice Boltzmann code}},
journal = {Computers {\&} Mathematics with Applications},
volume = {81},
pages = {258--288},
year = {2021},
doi = {10.1016/j.camwa.2020.04.033},
author = {Mathias J. Krause and Adrian Kummerländer and Samuel J. Avis and Halim Kusumaatmaja and Davide Dapelo and Fabian Klemens and Maximilian Gaedtke and Nicolas Hafen and Albert Mink and Robin Trunk and Jan E. Marquardt and Marie-Luise Maier and Marc Haussmann and S. Simonis}  
}

@article{dreher2012compact,
title = {{Compact families of piecewise constant functions in $L^p(0,T;B)$}},
journal = {Nonlinear Analysis: Theory, Methods \& Applications},
volume = {75},
number = {6},
pages = {3072-3077},
year = {2012},
doi = {10.1016/j.na.2011.12.004},
author = {Michael Dreher and Ansgar Jüngel}
}

@Article{brenier2011weak,
  author = {Brenier, Yann and De Lellis, Camillo and Székelyhidi, László Jr.},
  title = {{Weak-strong uniqueness for measure-valued Solutions}},
  journal = {Communications in Mathematical Physics},
  year = {2011},
  volume = {305},
  number = {2},
  pages = {351--361},
  doi={10.1007/s00220-011-1267-0}
}

@article{varadarajan1958convergence,
  author={Varadarajan, V. S.},
  title={On the Convergence of Sample Probability Distributions},
  journal={Sankhyā: The Indian Journal of Statistics (1933-1960)},
  volume={19},
  number={1/2},
  year={1958},
  pages={23-–26},
  url={http://www.jstor.org/stable/25048365}
}

@Article{haussmann2020evaluation,
AUTHOR = {Haussmann, Marc and Ries, Florian and Jeppener-Haltenhoff, Jonathan B. and Li, Yongxiang and Schmidt, Marius and Welch, Cooper and Illmann, Lars and Böhm, Benjamin and Nirschl, Hermann and Krause, Mathias J. and Sadiki, Amsini},
title = {{Evaluation of a Near-Wall-Modeled Large Eddy Lattice Boltzmann Method for the Analysis of Complex Flows Relevant to IC Engines}},
JOURNAL = {Computation},
VOLUME = {8},
YEAR = {2020},
NUMBER = {2},
ARTICLE-NUMBER = {43},
DOI = {10.3390/computation8020043}
}

@article{simonis2022temporal,
title = {{Temporal large eddy simulation with lattice Boltzmann methods}},
journal = {Journal of Computational Physics},
volume = {454},
pages = {110991},
year = {2022},
doi = {10.1016/j.jcp.2022.110991},
author = {S. Simonis and D. Oberle and M. Gaedtke and P. Jenny and M. J. Krause}  
}

@inproceedings{kajzer2014large,
  title={{Large-eddy simulations of 3D Taylor-Green vortex: Comparison of smoothed particle hydrodynamics, lattice Boltzmann and finite volume methods}},
  author={Kajzer, A and Pozorski, J and Szewc, K},
  booktitle={Journal of Physics: Conference Series},
  doi = {10.1088/1742-6596/530/1/012019},
  volume={530},
  pages={012019},
  year={2014}
}

@article{lallemand2000theory,
  author={Lallemand, Pierre and Luo, Li-Shi},
  title={Theory of the lattice {Boltzmann} method: {Dispersion}, dissipation, isotropy, {Galilean} invariance, and stability},
  journal={Physical Review E},
  volume={61},
  number={6},
  pages={6546},
  year={2000},
  doi={10.1103/PhysRevE.61.6546},
  publisher={APS}
}

@article{Tadmor1989,
    author = {Tadmor, Eitan},
    title = {{Convergence of Spectral Methods for Nonlinear Conservation Laws}},
    journal = {SIAM Journal on Numerical Analysis},
    volume = {26},
    number = {1},
    pages = {30--44},
    year = {1989},
    doi = {10.1137/0726003}
}

@article{Tadmor2004,
    author = {Eitan Tadmor},
    title = {{Burgers' Equation with Vanishing Hyper-Viscosity}},
    volume = {2},
    journal = {Communications in Mathematical Sciences},
    number = {2},
    doi={10.4310/CMS.2004.v2.n2.a9},
    publisher = {International Press of Boston},
    pages = {317--324},
    year = {2004},
}

@article{virtanenSciPy10Fundamental2020,
  title = {{{SciPy}} 1.0: Fundamental Algorithms for Scientific Computing in {{Python}}},
  author = {Virtanen, Pauli and Gommers, Ralf and Oliphant, Travis E. and Haberland, Matt and Reddy, Tyler and Cournapeau, David and Burovski, Evgeni and Peterson, Pearu and Weckesser, Warren and Bright, Jonathan and Van Der Walt, Stéfan J. and Brett, Matthew and Wilson, Joshua and Millman, K. Jarrod and Mayorov, Nikolay and Nelson, Andrew R. J. and Jones, Eric and Kern, Robert and Larson, Eric and Carey, C. J. and Polat, Ilhan and Feng, Yu and Moore, Eric W. and VanderPlas, Jake and Laxalde, Denis and Perktold, Josef and Cimrman, Robert and Henriksen, Ian and Quintero, E. A. and Harris, Charles R. and Archibald, Anne M. and Ribeiro, Antônio H. and Pedregosa, Fabian and Van Mulbregt, Paul and {SciPy 1.0 Contributors} and Vijaykumar, Aditya and Bardelli, Alessandro Pietro and Rothberg, Alex and Hilboll, Andreas and Kloeckner, Andreas and Scopatz, Anthony and Lee, Antony and Rokem, Ariel and Woods, C. Nathan and Fulton, Chad and Masson, Charles and Häggström, Christian and Fitzgerald, Clark and Nicholson, David A. and Hagen, David R. and Pasechnik, Dmitrii V. and Olivetti, Emanuele and Martin, Eric and Wieser, Eric and Silva, Fabrice and Lenders, Felix and Wilhelm, Florian and Young, G. and Price, Gavin A. and Ingold, Gert-Ludwig and Allen, Gregory E. and Lee, Gregory R. and Audren, Hervé and Probst, Irvin and Dietrich, Jörg P. and Silterra, Jacob and Webber, James T and Slavič, Janko and Nothman, Joel and Buchner, Johannes and Kulick, Johannes and Schönberger, Johannes L. and De Miranda Cardoso, José Vinícius and Reimer, Joscha and Harrington, Joseph and Rodríguez, Juan Luis Cano and Nunez-Iglesias, Juan and Kuczynski, Justin and Tritz, Kevin and Thoma, Martin and Newville, Matthew and Kümmerer, Matthias and Bolingbroke, Maximilian and Tartre, Michael and Pak, Mikhail and Smith, Nathaniel J. and Nowaczyk, Nikolai and Shebanov, Nikolay and Pavlyk, Oleksandr and Brodtkorb, Per A. and Lee, Perry and McGibbon, Robert T. and Feldbauer, Roman and Lewis, Sam and Tygier, Sam and Sievert, Scott and Vigna, Sebastiano and Peterson, Stefan and More, Surhud and Pudlik, Tadeusz and Oshima, Takuya and Pingel, Thomas J. and Robitaille, Thomas P. and Spura, Thomas and Jones, Thouis R. and Cera, Tim and Leslie, Tim and Zito, Tiziano and Krauss, Tom and Upadhyay, Utkarsh and Halchenko, Yaroslav O. and Vázquez-Baeza, Yoshiki},
  year = {2020},
  journal = {Nature Methods},
  volume = {17},
  number = {3},
  pages = {261--272},
  doi = {10.1038/s41592-019-0686-2}
}

\appendix

\section{Comparative summary of contributions}\label{sec:summary}

A summary of the contributions in this work is provided in Table~\ref{tab:overview}. 
\begin{table}[ht!]
    \caption{Overview of the analytical statements of Section~\ref{sec:approximation} and of the quantities computed in Section~\ref{sec:numerics}. 
    (A1)--(A4) abbreviate Assumptions~\ref{ass:lowMach}--\ref{ass:scaling} on the fully discrete dynamics.}
    \label{tab:overview}
    \centerline{
    \footnotesize
    \begin{tabular}{l l p{0.42\textwidth}}
    \hline\hline
    \textbf{Object} & \textbf{Requirements} & \textbf{Result} \\
    \hline\hline
    \multicolumn{3}{l}{\textit{Statements proven without assumptions on the scheme}} \\
    \hline
    Lemma~\ref{lem:wellprepared} & --- & $\mathcal{H}_{\mathrm{rel}}(0) \leq C_{\mathrm{wp}} \eps^{2}$ almost surely \\
    Lemma~\ref{lem:empirical} & --- & $\mu^{\vis,N,M}_{t} \rightharpoonup \mu^{\vis,N}_{t}$ for $M \to \infty$ \\
    Lemma~\ref{lem:kuznetsovRate} & Assumption~\ref{ass:kuznetsov} & $W_{1}(\mu^{\eps}_{t},\mu_{t}) \leq (C_{1}+C_{2})\eps^{s/(s+\zeta)}$ \\
    \hline\hline
    \multicolumn{3}{l}{\textit{Known results applied to the limit objects}} \\
    \hline
    Proposition~\ref{thm:weakStrong}, \cite{brenier2011weak,lanthaler2021statistical} & \eqref{eq:admissibility}, strong $\bar{\bm{u}}$, $\nu^{1}_{0,\bm{x}} = \delta_{\bar{\bm{u}}_{0}(\bm{x})}$ & $\nu^{1}_{t,\bm{x}} = \delta_{\bar{\bm{u}}(t,\bm{x})}$, and stability for $\mathfrak{w} > 0$ \\
    Corollary~\ref{thm:eulerConsistency}, \cite[Theorem 4.8]{fjordholm2021vanishing} & (A1)--(A4), \cite[Definition 3.6]{fjordholm2021vanishing} & FMW Euler hierarchy and energy admissibility \eqref{eq:admissibility} in the iterated limit $\vis \searrow 0$ \\
    \hline\hline
    \multicolumn{3}{l}{\textit{Proven conditional on the standing assumptions}} \\
    \hline
    Lemma~\ref{lem:energy} & (A1) & $\sup_{t} \|\tilde{\bm{u}}^{\eps}\|_{L^{\infty}} \leq C_{G}$, $\|\rho^{\eps}-1\|_{L^{\infty}} \leq G\eps^{2}$ \\
    Lemma~\ref{lem:energy} & (A1), (A2) & $\vis \int_{0}^{T} \|\bm{\nabla}^{\eps} \tilde{\bm{u}}^{\eps}\|_{L^{2}}^{2} \,\mathrm{d}t^{\prime} \leq C$ \\
    Lemma~\ref{lem:time_deriv} & (A1), (A3) & $\partial_{t}^{\eps} \bm{m}^{\eps}$ bounded in $L^{2}(0,T;H^{-3}(\Omega))$ \\
    Theorem~\ref{thm:nse_limit} & (A1)--(A3) & subsequential limit solves the Foias--Temam Liouville equation \\
    Proposition~\ref{thm:diagonal_limit} & (A1), (A3), (A4) & first equation of the FMW Euler hierarchy along the diagonal $\vis_{\eps} = c\eps$ \\
    Proposition~\ref{prop:weak-strong_discrete} & (A1)--(A4), \cite[Definition 3.6]{fjordholm2021vanishing} & iterated limits $M \to \infty$, $N \to \infty$, $\vis \searrow 0$ along subsequences \\
    \hline\hline
    \multicolumn{3}{l}{\textit{Conjectured}} \\
    \hline
    Hypothesis~\ref{hyp:WassersteinRate} & --- & (K2) holds with $\zeta \approx s \approx 1/3$ ($\zeta$ plausible with measured rates), hence $\alpha \approx 0.5$ \\
    \hline\hline
    \multicolumn{3}{l}{\textit{Computed}} \\
    \hline
    Spectra $E^{\mathrm{c}}(\kappa\eta)$, Figure~\ref{fig:energySpectra9} & Set~1 & inertial plateau consistent with K41 \\
    Structure functions $S^{2,\mathrm{c}}$, Figure~\ref{fig:SF_2D} & Set~2 & $2/3$ power law, evidence for (A4) \\
    EOSC, Section~\ref{sec:sampleDivergence} & Set~2 & pathwise convergence fails, relative error saturates at $\mathcal{O}(1)$ \\
    EOWC, Section~\ref{subsec:numWasserstein} & Set~2 & $\alpha \approx 0.4$--$0.5$ \\
    Cross-solver EOWC, Section~\ref{sec:numWassersteinRef} & azeban reference & comparable rates, evidence that $\mu_{t}$ is insensitive to the regularization \\
    \hline\hline
    \end{tabular}}
\end{table}

\section{Algorithmic details}\label{sec:appendix-algorithms}

\subsection{Implementation in OpenLB-UQ}

To compute approximate statistical solution candidates for the incompressible NSE~\eqref{eq:incNSE}, we make use of OpenLB-UQ~\cite{zhong2025openlbuquncertaintyquantificationframework}, which enables highly parallel sample production for any application case currently available in OpenLB~\cite{krause2020openlb}. 
OpenLB-UQ includes several submodules for large-scale sampling using established UQ methods, such as MC, quasi-MC (QMC), Stochastic Collocation (SC), and Latin hypercube sampling. 
The module has been validated for incompressible fluid flow benchmarks \cite{zhong2025openlbuquncertaintyquantificationframework}, including statistical solution convergence in the 2D incompressible TGV test case. 
The tests demonstrate efficient scalability over thousands of CPU cores and thousands of samples with both sample-level (probability space) and domain-level (position space) parallelization \cite{zhong2025openlbuquncertaintyquantificationframework}. 

Here, for single sample computation, we use the KBC collision kernel proposed in~\cite{simonis2024spectral}. 
The single level MC wrapper in OpenLB-UQ acts as a CPU-based pre- and postprocessor for multiple executables with random input data and evolves LBM instances on multiple CPUs and/or GPUs in parallel over time. 
The I/O for writing downsampled velocity fields to disk is partially scheduled in the background. 
A CPU-based post-processor then uses the output stack of the samples to compute statistical quantities from the approximated hydrodynamic moments.

\subsection{Computational resources and statistics for the largest campaign} \label{sec:appendix-resources}

The smaller MC LBM campaign (Set~1, Table~\ref{tab:params1}) was computed on uc3 at SCC, KIT. 
The comparative statistical solution with the spectral hyperviscosity method was computed on Alps at CSCS (see Appendix~\ref{appsec:refSol}). 
Since it was the most demanding computational task in this work, we focus the description of the computational resources and statistics on the larger campaign for data production (Set~2, Table~\ref{tab:params2}) computed on HoreKa Green at SCC, KIT. 
Each of the samples in Set~2 (clipped to resolutions \(N=32\) to \(N=512\), \(5000\) samples in total) was computed using a single NVIDIA A100 GPU with 40~GiB of main memory and a single mapped core on an Intel Xeon Platinum 8368 CPU. 
For this clipped Set~2, approximately $2.5 \times 10^4$ GPU hours were consumed in total. 
Batching a varying number of samples per job in an array depending on the cluster's capacity, on average, we ran \(50\) batch jobs in parallel. 
Due to the queuing, the completion of this campaign required $33.4$ days of physical time. 
Table~\ref{tab:runtime_statistics} summarizes the averaged runtime statistics of the mapped hardware per resolution. 
It is noteworthy that the included time for I/O operations was on the order of milliseconds due to the in-situ downsampling, which required a similarly negligible amount of time compared to the data production.
\begin{table}[ht!]
    \caption{Mean and standard deviation over $M=1000$ samples of measured wall clock time and CPU-time on heterogeneous allocation per sample for each resolution including I/O. 
    }
    \label{tab:runtime_statistics}
    \centering
    \small
    \begin{tabular}{r r r}
    \hline
        \makecell[c]{$N$} & 
        \makecell[c]{Wall clock time $[\text{h}]$} & 
        \makecell[c]{CPU-time $[\text{h}]$} \\
        \hline
         $32$ & $1.01 \times 10^{-2} \pm 8.25 \times 10^{-4}$ & $2.78 \times 10^{-4} \pm 5.04 \times 10^{-6}$ \\
         $64$ & $1.16 \times 10^{-2} \pm 1.13 \times 10^{-3}$ & $1.83 \times 10^{-3} \pm 2.47 \times 10^{-5}$ \\
         $128$ & $4.10 \times 10^{-2} \pm 2.53 \times 10^{-3}$ & $3.00 \times 10^{-2} \pm 1.90 \times 10^{-4}$ \\
         $256$ & $8.20 \times 10^{-1} \pm 1.17 \times 10^{-2}$ & $8.02 \times 10^{-1} \pm 1.17 \times 10^{-2}$ \\
         $512$ & $2.43 \times 10^{+1} \pm 3.75 \times 10^{-1}$ & $2.42 \times 10^{+1} \pm 3.70 \times 10^{-1}$ \\
      \hline
    \end{tabular}
\end{table}
The computation of the vector-valued and component-wise Wasserstein distances (see Figure~\ref{fig:eowc_256}) for $M=1000$ samples at reference resolutions $N \in \{256, 512\}$ consumed $1.8 \times 10^4$ CPU hours, taking $2.6 \times 10^2$ wall-clock hours when parallelized across $76$ cores on a dual-socket Intel Xeon Platinum 8368 node with 512 GiB of RAM.

\subsection{Computation of the energy spectrum}\label{sec:appendix-energySpec}

The scaling assumption \eqref{eq:energyScaling} is formulated for the time-integrated expected energy spectrum $E_{T}\left( \bm{\mu}^{\vis}, \kappa\right)$, which is the natural object for statistically stationary flows. 
The RTGV flow considered here is unsteady, so all spectral quantities are evaluated at the time-indexed measures $\mu_{t}^{\vis}$ and the resulting ensemble statistics remain time-dependent. 
We therefore describe the approximation of the time-local expected spectrum $E\left( \mu_{t}^{\vis}, \kappa\right)$ first and recover the time-integrated quantity by an additional integration where it is required. 
For a discrete ensemble of $M$ MC realizations $\{\bm{u}^m\}_{m=1}^M$, the discrete Fourier coefficients of the $m$th sample on the grid nodes \(\bm{x}_{\bm{n}} = 2\pi \bm{n}/N\), \(\bm{n} \in \{0, 1, \ldots, N-1\}^{d}\), are
\begin{align}\label{eq:dftCoeff}
    \tilde{u}_{m,\alpha} \left( \bm{k}, t \right) = \frac{1}{N^{d}} \sum \limits_{\bm{n} \in \{0, 1, \ldots, N-1\}^{d}} u_{m,\alpha} (\bm{x}_{\bm{n}}, t ) \exp \left( - \frac{2\pi\mathsf{i}}{N}  \bm{k}\cdot\bm{n} \right) , 
    \qquad \bm{k} \in \mathcal{K}_{N} \coloneqq \{ -N/2 + 1, \ldots, N/2 \}^{d} ,
\end{align}
for each component \(\alpha = 1, 2, \ldots, d\), so that \(N^{-d} \sum_{\bm{n}} \vert \bm{u}_{m}(\bm{x}_{\bm{n}}, t) \vert^{2} = \sum_{\bm{k} \in \mathcal{K}_{N}} \Phi_{m}(\bm{k}, t)\) with \(\Phi_{m}(\bm{k}, t) = \vert \tilde{\bm{u}}_{m}(\bm{k}, t) \vert^{2}\). 
Compared to the continuous coefficient in \eqref{eq:fourierVelocityEnergy}, this normalization measures energy per unit volume, \(\Phi_{m} = \vert \Omega \vert^{-1} \Vert \tilde{\bm{u}}_{m} \Vert_{2}^{2}\), which is the convention in which the Kolmogorov constant \(C_{\mathrm{Kol}}\) is defined. 
The surface integral in \eqref{eq:energySpec} is approximated by the shell average times the surface area of the sphere of radius \(\kappa\),
\begin{align}\label{eq:energySpecDisc}
    E_m \left(\kappa, t\right)  \approx 
    \frac{4\pi}{\vert S_{\kappa} \vert} \sum\limits_{\bm{k} \in S_{\kappa}} \frac{1}{2} \vert \bm{k} \vert^{2} \Phi_m (\bm{k}, t) , 
    \qquad 
    S_{\kappa} \coloneqq \{ \bm{k} \in \mathcal{K}_{N} : \lfloor \vert \bm{k} \vert \rceil = \kappa \} , 
\end{align}
for \(\kappa = 0, 1, \ldots, N/2\), where \(\lfloor \cdot \rceil\) denotes rounding to the nearest integer and \(\vert S_{\kappa} \vert\) is the number of modes in the shell. 
Since \(\vert \bm{k} \vert^{2} \approx \kappa^{2}\) on \(S_{\kappa}\), \eqref{eq:energySpecDisc} is \(4 \pi \kappa^{2}\) times the mean of \(\frac{1}{2} \Phi_{m}\) over the shell, and \(\sum_{\kappa} E_{m}(\kappa, t)\) approximates the kinetic energy per unit volume. 
Since the input is real, \(\tilde{\bm{u}}_{m}(-\bm{k}, t) = \overline{\tilde{\bm{u}}_{m}(\bm{k}, t)}\), so that \(\Phi_{m}(-\bm{k}, t) = \Phi_{m}(\bm{k}, t)\), and the shell sums are evaluated on the half-space \(k_{1} \geq 0\) returned by the real-to-complex transform, counting the modes with \(0 < k_{1} < N/2\) twice.
The time-local expected spectrum is then approximated by the empirical mean over the ensemble
\begin{equation} \label{eq:spectrum_mean}
    E\left( \mu_{t}^{\vis}, \kappa\right) \approx \E[ E(\mu_{t}^{\vis, N, M}, \kappa)] = \frac{1}{M} \sum_{m=1}^{M} E_m(\kappa, t).
\end{equation}
The statistical fluctuations of the energy distribution across different realizations are quantified by the empirical variance and the corresponding empirical standard deviation
\begin{align} \label{eq:spectrum_var}
    \Var (E(\mu_{t}^{\vis, N, M}, \kappa)) &= \frac{1}{M-1} \sum_{m=1}^{M} \left( E_m(\kappa, t) - \E[ E(\mu_{t}^{\vis, N, M}, \kappa)]\right)^2, \nonumber \\
    \sigma (E(\mu_{t}^{\vis, N, M}, \kappa)) &= \sqrt{ \Var (E(\mu_{t}^{\vis, N, M}, \kappa)) } .
\end{align}
Throughout Section~\ref{sec:numerics}, the standard deviation $\sigma$ is the plotted quantity. 
Showing it alongside the mean spectrum $\E[ E(\mu_{t}^{\vis, N, M}, \kappa)]$ provides direct insight into the sensitivity of the turbulent cascade at specific wavenumbers to the initial random perturbations. 
Whenever the time-integrated spectrum of \eqref{eq:energyScaling} is required, we exploit the linearity of both the time integration and the expectation operator. 
By Fubini's theorem, the integration over time commutes with the integration over the statistical measure, so that the time-integrated spectrum is first computed for each individual sample,
\begin{equation}\label{eq:energySampleSpec}
    E_T^m(\kappa) = \int_{0}^{T} E_m(\kappa, t) \,\mathrm{d}t,
\end{equation}
and the ensemble average is taken afterward, which yields $E_{T}( \bm{\mu}^{\vis}, \kappa) \approx \frac{1}{M} \sum_{m=1}^{M} E_T^m(\kappa)$. 
Evaluating the integrals in this order also provides access to the sample-to-sample variability of the time-integrated spectra.

\subsection{Computation of the structure functions}\label{sec:appendix-struct}

With the sphere $\partial B_r(\bm{x})$ of radius $r\in\{1,2,\ldots,N\}$ and center $\bm{x}\in\Omega$, let 
\begin{align}
    \fint_{\partial B_r(\bm{x})}:= \frac{1}{|\partial B_r(\bm{x})|}\int_{\partial B_r(\bm{x})}
\end{align}
denote the mean over $\partial B_r(\bm{x})$.
\begin{lemma}[Spectral approximation of structure functions]\label{prop:structureFunctionSpectral}
    The local second order structure functions for \(r>0\) and \(t\in (0,T]\) of a statistical solution $\mu_t^{\vis}$ on a homogeneous, periodic domain (the trace variant of \eqref{eq:structuresLocal}) can be approximated with
    \begin{align}\label{eq:SF_spectral}
    S_{r,t}^{2} \left( \mu_{t} \right) 
        & \coloneqq  \int_{L^{2}_{x}} \int_{\Omega} \fint_{\partial B_{r}(\bm{x}_{1})} \left\vert \bm{u}\left(\bm{x}_{2}\right) - \bm{u} \left( \bm{x}_{1} \right) \right\vert^{2} \,\mathrm{d}\bm{x}_{2} \,\mathrm{d}\bm{x}_{1} \,\mathrm{d}\mu_{t}\left(\bm{u}\right)  \\
        & \approx \frac{1}{M} \sum_{m=1}^{M} \frac{1}{\vert C_{r} \vert} \sum_{\bm{h} \in C_{r}} 2 \left( R_{\bm{0}}^{N,m} - R_{\bm{h}}^{N,m} \right), 
    \end{align}
    where $R_{\bm{h}}^{N,m} = N^{-d} \, \mathcal{F}^{-1} \bigl( \sum_{i=1}^d \vert \mathcal{F}(u_i^{N,m}) \vert^2 \bigr)_{\bm{h}}$ is the discrete spatial autocorrelation, $\mathcal{F}$ denotes the unnormalized discrete Fourier transform and $\mathcal{F}^{-1}$ its inverse including the factor $N^{-d}$, $C_{r}$ is the discrete Euclidean shell of radius $r$, and \(\mu_{t}^{\vis, N, M}\) is the numerical statistical solution at the spatial resolution $N$ and a sample size $M$.
\end{lemma}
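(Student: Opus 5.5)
The plan is to expand the squared increment, use translation invariance on the periodic domain to reduce the spatial integral to the autocorrelation, evaluate that autocorrelation via the Wiener--Khinchin (convolution) theorem, and finally replace the exact objects by their empirical and lattice counterparts. Each step is elementary; the only nontrivial point is interpreting the final ``$\approx$''.

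\textbf{Step 1: reduction to the autocorrelation.} Fix a sample $\bm{u}$ and write $\bm{x}_2 = \bm{x}_1 + \bm{h}$ with $\vert \bm{h} \vert = r$. Then
\begin{align}
\vert \bm{u}(\bm{x}_1+\bm{h}) - \bm{u}(\bm{x}_1)\vert^2 = \vert \bm{u}(\bm{x}_1+\bm{h})\vert^2 + \vert \bm{u}(\bm{x}_1)\vert^2 - 2\,\bm{u}(\bm{x}_1+\bm{h})\cdot\bm{u}(\bm{x}_1).
\end{align}
Integrating over $\bm{x}_1\in\Omega$ and using periodicity (invariance of Lebesgue measure under translations of $\mathbb{T}^3$), both quadratic terms equal $R_{\bm{0}}$, where $R_{\bm{h}} := \int_\Omega \bm{u}(\bm{x}+\bm{h})\cdot\bm{u}(\bm{x})\,\mathrm{d}\bm{x}$. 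The spatial integral therefore equals $2(R_{\bm{0}} - R_{\bm{h}})$. By Fubini, which is legitimate since the integrand is nonnegative, the order of the sphere average and the $\bm{x}_1$-integral may be swapped. Hence
\begin{align}
S^2_{r,t}(\mu_t) = \int \fint_{\partial B_r(\bm{0})} 2\,(R_{\bm{0}} - R_{\bm{h}})\,\mathrm{d}\bm{h}\,\mathrm{d}\mu_t(\bm{u}).
\end{align}
Finiteness of this expression follows from the mean energy bound.

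\textbf{Step 2: spectral evaluation of $R_{\bm{h}}$.} $R_{\bm{h}}$ is a correlation, i.e.\ a convolution of $u_i$ with its reflection, summed over components $i$. On the lattice $\{\bm{x}_{\bm{n}}\}$ the discrete analogue is
\begin{align}
N^{-d}\sum_{\bm{n}}\sum_i u_i(\bm{x}_{\bm{n}}+\bm{h})\,u_i(\bm{x}_{\bm{n}}),
\end{align}
whose DFT is $N^{-d}\sum_i \vert \mathcal{F}(u_i)\vert^2$ by the discrete convolution theorem together with reality of $u_i$. This gives exactly $R^{N,m}_{\bm{h}}$ as defined in the statement, with normalization consistent with \eqref{eq:dftCoeff}. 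Up to the volume factor $\vert\Omega\vert$, it is the Riemann sum of the continuous $R_{\bm{h}}$; that constant factor only rescales the compensated quantity.

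\textbf{Step 3: the three approximations behind ``$\approx$''.}
(a) The measure $\mu_t$ is replaced by the empirical measure $\mu^{\vis,N,M}_t$. This is justified by Lemma~\ref{lem:empirical}: the integrand $\bm{u}\mapsto \fint 2(R_{\bm{0}}-R_{\bm{h}})$ is continuous on $L^2$ with quadratic growth, and the uniform $L^\infty$ bound of Lemma~\ref{lem:energy} makes it bounded on the support, so weak convergence applies.
(b) The $N$-resolved field replaces the continuum field. The lattice Riemann sum converges for $L^2$-continuous translates.
(c) The sphere average $\fint_{\partial B_r}$ is replaced by the average over the discrete Euclidean shell $C_r$ of lattice vectors with $\vert\bm{h}\vert\approx r$.

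I expect (c) to be the main obstacle. It is a quadrature of a continuous function of $\bm{h}$ on a sphere by lattice points in a shell of width $\mathcal{O}(\eps)$. I would first argue through continuity of $\bm{h}\mapsto R_{\bm{h}}$ (a continuous function of $\bm{h}$ for $L^2$ fields) and equidistribution of lattice points on spheres as $r/\eps\to\infty$. Since the statement only asserts an approximation, not a rate, a qualitative consistency argument suffices: each of (a)--(c) converges in the appropriate limit, and the identity of Steps 1--2 is exact.
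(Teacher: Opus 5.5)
Your proposal is correct and follows essentially the paper's derivation: the same expansion of the squared increment with translation invariance and Fubini, the same discrete autocorrelation evaluated by the discrete Wiener--Khinchin identity, and the same replacement of $\mu_t$ by the empirical measure and of the sphere average by the shell $C_r$, only reordered (the paper evaluates at $\mu_t^{\vis,N,M}$ first). Your remark on the factor $\vert\Omega\vert$ is also accurate, since the paper's Step~3 replaces $\int_\Omega$ by the grid average $N^{-d}\sum$ without comment, which is consistent only in units normalized by the domain length.

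One precision point concerns your step (a). Lemma~\ref{lem:empirical} gives convergence of the empirical measures to the lattice law $\mu_t^{\vis,N}$, not to $\mu_t$, so step (a) controls only the Monte Carlo error at fixed $N$. The passage $\mu_t^{\vis,N}\to\mu_t$ is a separate step, conditional on the standing assumptions and only along a subsequence, and the paper's proof likewise leaves it implicit.
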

\begin{proof}
    The approximation is derived in six steps. 
    \begin{enumerate}
        \item Evaluating the defining functional \eqref{eq:SF_spectral} at the empirical measure $\mu_{t}^{\vis,N,M} = \frac{1}{M}\sum_{m=1}^{M} \delta_{\bm{u}^{N,m}(t)}$ from \eqref{eq:statSolDisc}, the outer integral over $L^{2}_{x}$ reduces exactly to the ensemble average
            \begin{align}\label{eq:SF_approx_spec1}
                S_{r,t}^2\left(\mu_{t}^{\vis,N,M}\right) = \frac{1}{M} \sum_{m=1}^M \int_{\Omega} \fint_{\partial B_{r}(\bm{x}_{1})} \left\vert \bm{u}^{N,m}\left(\bm{x}_{2}\right) - \bm{u}^{N,m} \left( \bm{x}_{1} \right) \right\vert^{2} \,\mathrm{d}\bm{x}_{2} \,\mathrm{d}\bm{x}_{1}.
            \end{align}
            Note that this step involves no approximation. The statistical error of replacing $\mu_{t}$ by $\mu_{t}^{\vis,N,M}$ in the lemma is the Monte Carlo sampling error, which is controlled by the law of large numbers as $M \to \infty$ and enters independently of the spatial discretization derived in the remaining steps.
        \item Expanding the squared velocity increment inside the spatial integral yields $\vert \bm{u}(\bm{x}_2) \vert^2 + \vert \bm{u}(\bm{x}_1) \vert^2 - 2\bm{u}(\bm{x}_1)\cdot\bm{u}(\bm{x}_2)$. 
        Let $\bm{h} = \bm{x}_2 - \bm{x}_1$ be the separation vector. 
        By the periodicity of $\Omega$, the spatial integral is translation invariant, i.e., $\int_{\Omega} \vert \bm{u}(\bm{x}_{1}+\bm{h}) \vert^{2} \,\mathrm{d}\bm{x}_{1} = \int_{\Omega} \vert \bm{u}(\bm{x}_{1}) \vert^{2} \,\mathrm{d}\bm{x}_{1}$ for every fixed $\bm{h}$. 
        Hence, substituting $\bm{x}_{2} = \bm{x}_{1} + \bm{h}$ and exchanging the order of integration by Fubini's theorem, we exactly obtain  
            \begin{align}\label{eq:SF_approx_spec2}
                & S_{r,t}^2\left(\mu_{t}^{\vis,N,M}\right) \nonumber \\
                &= 
                \frac{1}{M} \sum_{m=1}^M \int_{\Omega} \fint_{\partial B_{r}(\bm{0})} \left( 2\vert \bm{u}^{N,m}\left(\bm{x}_{1}\right) \vert^{2} - 2\bm{u}^{N,m}\left(\bm{x}_1\right) \cdot \bm{u}^{N,m}\left(\bm{x}_1 + \bm{h}\right) \right) \,\mathrm{d}\bm{h} \,\mathrm{d}\bm{x}_{1}.
            \end{align}
        \item On the discrete space domain $\Omega_{\triangle x}$ with total points $\vert\Omega_{\triangle x}\vert = N^d$, we approximate the spatial integral over $\bm{x}_1$ by the discrete average, which defines the discrete spatial autocorrelation $R^{N,m}$ for a given sample $m$ at a lag vector $\bm{h}$, i.e., 
            \begin{align}\label{eq:SF_approx_spec3}
                R_{\bm{h}}^{N,m} \coloneqq \frac{1}{N^d}\sum_{\bm{x}_1\in \Omega_{\triangle x}} \bm{u}^{N,m}\left(\bm{x}_1\right) \cdot \bm{u}^{N,m}\left(\bm{x}_1 + \bm{h}\right).
            \end{align}
            Consequently, the structure function simplifies to the difference between the zero-lag correlation $R_{\bm{0}}^{N,m}$ (the spatial mean of the squared velocity magnitude, i.e., twice the mean kinetic energy density) and the correlation at lag $\bm{h}$, 
            \begin{align}\label{eq:SF_approx_spec4}
                S_{r,t}^2\left(\mu_{t}^{\vis,N,M}\right)
                \approx 
                \frac{1}{M}\sum_{m=1}^M \fint_{\partial B_r(\bm{0})} 2 \left( R_{\bm{0}}^{N,m} - R_{\bm{h}}^{N,m} \right) \,\mathrm{d}\bm{h}.
            \end{align}
        \item To circumvent the computationally prohibitive $\mathcal{O}(N^{2d})$ operations required to evaluate $R_{\bm{h}}^{N,m}$ in physical space, we apply the discrete Wiener--Khinchin theorem, which holds exactly for periodic grid functions. 
        The spatial autocorrelation is computed via the inverse discrete Fourier transform ($\mathcal{F}^{-1}$) of the power spectral density (PSD), where $\mathcal{F}$ denotes the unnormalized forward transform and $\mathcal{F}^{-1}$ carries the factor $N^{-d}$, i.e.,
            \begin{align}\label{eq:SF_approx_spec5}
                R_{\bm{h}}^{N,m} = \frac{1}{N^{d}} \, \mathcal{F}^{-1} \left( \sum_{i=1}^d \left\vert \mathcal{F} \left( u_i^{N,m} \right) \right\vert^2 \right)_{\bm{h}}.
            \end{align}
        \item The continuous spherical shell $\partial B_r(\bm{0})$ of radius $r$ is replaced by the discrete Euclidean neighborhood $C_r$, which contains all discrete lag vectors $\bm{h} \in \mathbb{Z}^d$ whose $L^2$ norm strictly rounds to $r$, such that $C_r = \{ \bm{h} \in \mathbb{Z}^d : \lfloor \vert \bm{h} \vert_2 \rceil = r \}$. 
        \item The continuous spherical average is approximated by an arithmetic mean over the discrete elements in the Euclidean shell $C_r$, where $\vert C_r \vert$ denotes the cardinality of the discrete shell,
            \begin{align}\label{eq:SF_approx_spec6}
                \fint_{\partial B_r(\bm{0})} \cdot \,\mathrm{d}\bm{h} \approx \frac{1}{\vert C_r \vert} \sum_{\bm{h} \in C_r} \cdot . 
            \end{align}
    \end{enumerate}
    To complete the proof, we insert \eqref{eq:SF_approx_spec6} and \eqref{eq:SF_approx_spec5} into \eqref{eq:SF_approx_spec4}, and combine the result with \eqref{eq:SF_approx_spec1} and \eqref{eq:SF_approx_spec2}.
\end{proof}
We continue with splitting up the spectral approximation in Lemma~\ref{prop:structureFunctionSpectral} to obtain an efficiently computable ensemble implementation. 
Let $\Psi^{m}_{\bm{k}}$ denote the discrete power spectral density at wavenumber index $\bm{k}$. 
We define the single sample autocorrelation mapping
\begin{align}
    \Psi^{m}_{\bm{k}} &:= \sum_{i=1}^d \left\vert \mathcal{F} \left( u_i^{m} \right)_{\bm{k}} \right\vert^2, \nonumber \\
    R_{\bm{h}}^{m} &:= \frac{1}{N^{d}} \, \mathcal{F}^{-1} \left( \Psi^{m} \right)_{\bm{h}}. \label{eq:PSD_def}
\end{align}
The single sample structure function contribution $S_{r,t}^{2,m}$ evaluated at a discrete radius $r$ is then given by the radial average
\begin{align}
    S_{r,t}^{2,m}\left(\mu_{t}^{\vis,N,m}\right) 
    &:= \frac{1}{\vert C_r \vert} \sum_{\bm{h} \in C_r} 2 \left( R_{\bm{0}}^{m} - R_{\bm{h}}^{m} \right), \label{eq:SF_single_spec2}
\end{align}
such that the full statistical ensemble is recovered by
\begin{align}\label{eq:SF_single_spec}
    S_{r,t}^2\left(\mu_{t}^{\vis,N,M}\right) &= \frac{1}{M}\sum_{m=1}^M S_{r,t}^{2,m}\left(\mu^{N,m}_{t}\right). 
\end{align}
Based on the $\mathcal{O}(N^d \log N^d)$ complexity enabled by the discrete Fourier transform, we summarize the global spectral implementation for approximating structure functions in Algorithm~\ref{alg:SF_spectral}.
\begin{algorithm}[ht]
    \caption{Spectral structure function (SF) for all discrete radii $r$ at timestep $t$.}
    \label{alg:SF_spectral}
    \begin{algorithmic}[1]
        \Procedure{computeSFspectral}{$t$} \Comment{\textbf{Input:} $\mu_{t}^{\vis,N,M}$}
            \vspace{0.5em}
            \For{$m \gets 1$ to $M$}
                \State $\Psi^{m}_{\bm{k}} \gets \sum_{i=1}^d \left\vert \mathcal{F} \left( u_i^{N,m} \right)_{\bm{k}} \right\vert^2$ \Comment{Compute PSD via forward FFT} 
                \vspace{0.5em}
                \State $R_{\bm{h}}^{m} \gets \frac{1}{N^{d}} \, \mathcal{F}^{-1} \left( \Psi^{m} \right)_{\bm{h}}$ \Comment{Compute 3D autocorrelation via inverse FFT, cf.\ \eqref{eq:SF_approx_spec5}}
                \vspace{0.5em}
                \State $S_{r,t}^{2,m} \gets \frac{1}{\vert C_r \vert} \sum_{\bm{h} \in C_r} 2 \left( R_{\bm{0}}^{m} - R_{\bm{h}}^{m} \right)$ \Comment{Radially average into 1D Euclidean bins for every $r \leq N/2$ \eqref{eq:SF_single_spec2}}
            \EndFor
            \vspace{0.5em}
            \State $S_{r,t}^2\left(\mu_{t}^{N,M}\right) \gets \frac{1}{M}\sum_{m=1}^M S_{r,t}^{2,m}\left(\mu_{t}^{\vis,N,m}\right)$ \Comment{Ensemble average \eqref{eq:SF_single_spec}}
        \EndProcedure \Comment{\textbf{Output:} $S_{r,t}^2\left(\mu_{t}^{\vis,N,M}\right)$}
    \end{algorithmic}
\end{algorithm}

\subsection{Computation of the Wasserstein distance}\label{appsec:compWasserstein}

This section provides a description of the algorithms used for the computations of the EOWC over time for the Wasserstein distances $W_{1,1}$, $W_{1,2}$, and $W_{1,2}^{\mathrm{v}}$ shown in Figure~\ref{fig:eowc_256} of Section~\ref{subsec:numWasserstein}.
Let $\Omega_N$ denote the discretized spatial domain consisting of $N^d$ nodes for a given resolution $N \in \mathcal{N}$, where $\mathcal{N} = \{ 32,64,128,256,512\}$ represents the set of all evaluated grid resolutions. 
To compute the Wasserstein distances consistently across varying mesh sizes, we project all empirical measures onto a common downsampled domain $\widehat{\Omega}_{\mathrm{ds}}$ consisting of $N_{\mathrm{ds}} = Q = 8$ equidistant nodes per spatial direction, that is, $G_{\mathrm{ds}} = \vert \widehat{\Omega}_{\mathrm{ds}} \vert = N_{\mathrm{ds}}^{d} = 8^{3}$ evaluation points in total, matching the index set $I_{Q}$ of \eqref{eq:W1approx}. 
Since the constant number of evaluation nodes only rescales all distances uniformly and cancels in the experimental orders of convergence, this choice affects the absolute magnitude of the reported distances but not the measured rates. 
The Wasserstein distances are computed separately for the empirical measure for each resolution with respect to the reference solutions $N_{\mathrm{ref}} \in \{256, 512\}$. 
To ensure the projected solutions correspond to physically collocated spatial nodes without requiring interpolation, we restrict the downsampled domain to be a common subset of all discretized grids, such that $\widehat{\Omega}_{\mathrm{ds}} \subseteq \bigcap_{N \in \mathcal{N}} \Omega_N$. 
Consequently, the downsampled Monte Carlo samples are obtained via the exact spatial restriction $\hat{\bm{u}}_m^N = \bm{u}_m^N\big|_{\widehat{\Omega}_{\mathrm{ds}}}$. 
The same node set underlies the sample convergence study in Section~\ref{sec:sampleDivergence}, where its evaluation points are denoted by $\bm{x}_{\mathrm{ds},k}$, $k = 1,\dots,G_{\mathrm{ds}}$.
\newcommand{\uhat}{\hat{\bm{u}}}
\newcommand{\Ndown}{N_{\mathrm{ds}}}
\begin{algorithm}[ht!]
    \caption{Computation of $\|W_1(\nu^{1,N},\nu^{1,N_{\mathrm{ref}}})\|_{L^1(\Omega)}$, i.e., the quantity $W_{1,1}$ of \eqref{eq:W1approx} for $k=1$, based on MC samples.}
    \label{alg:WassersteinDistance1}
    \begin{algorithmic}[1]
        \vspace{0.5em}
        \Procedure{ComputeW1\_1pt}{$\{\uhat^N_m\}_{m=1}^M, \{\uhat^{N_{\mathrm{ref}}}_m\}^M_{m=1}$} 
            \State Initialize distance array $\mathcal{D}_c \gets 0$ for components $c \in \{1,\dots,d\}$
            \For{$i \gets 1$ to $\Ndown^d$}
                \For{$c \gets 1$ to $d$}
                    \State $\mathcal{D}_c \gets \mathcal{D}_c + W_1(\{\hat{u}_{c,m}^N(\bm{x}_i)\}_{m=1}^M, \{\hat{u}_{c,m}^{N_{\mathrm{ref}}}(\bm{x}_i)\}_{m=1}^M)$ \Comment{$W_1$ for component $c$ \eqref{eq:wassersteinPermutations} using \texttt{scipy.stats.wasserstein\_distance} function \cite{virtanenSciPy10Fundamental2020}}
                \EndFor
            \EndFor
            \State \Return $\frac{1}{\Ndown^d} \sum_{c=1}^d \mathcal{D}_c$ \Comment{Average integrated marginal distance \eqref{eq:W1approx}}
        \EndProcedure
        \vspace{0.5em}
    \end{algorithmic}
\end{algorithm}
\begin{algorithm}[ht!]
    \caption{Vector-valued computation of $\|W_1(\nu^{2,N},\nu^{2,N_{\mathrm{ref}}})\|_{L^1(\Omega^2)}$, i.e., the quantity $W_{1,2}^{\mathrm{v}}$ of \eqref{eq:W1_vector} with the ground metric on $\mathbb{R}^{2d}$, based on MC samples.}
    \label{alg:WassersteinDistance2_vector}
    \begin{algorithmic}[1]
        \Procedure{ComputeW1\_2pt\_Vector}{$\{\uhat^N_m\}_{m=1}^M, \{\uhat^{N_{\mathrm{ref}}}_m\}_{m=1}^M$} 
        %\Comment{2-point correlations (Joint velocity vectors)}
            \State Initialize total distance $W\gets 0$
            \State Uniform mass vectors $\bm{a} \gets \frac{1}{M}\bm{1}_M, \quad \bm{b} \gets \frac{1}{M}\bm{1}_M$
            \For{$i \gets 1$ to $\Ndown^d$}
                \For{$j \gets i$ to $\Ndown^d$} \Comment{Exploit symmetry of the upper triangle}
                    \State $w \gets 1$ \textbf{if} $i=j$ \textbf{else} $2$
                    \State $\bm{X}^N \gets [\{\uhat_{m}^N(\bm{x}_i)\}_{m=1}^M, \{\uhat_{m}^N(\bm{x}_j)\}_{m=1}^M]$
                    \Comment{$\bm{X}^{N} \in \mathbb{R}^{M \times 2d}$}
                    \State $\bm{X}^{N_{\mathrm{ref}}} \gets [\{\uhat_{m}^{N_{\mathrm{ref}}}(\bm{x}_i)\}_{m=1}^M, \{\uhat_{m}^{N_{\mathrm{ref}}}(\bm{x}_j)\}_{m=1}^M]$
                    \Comment{$\bm{X}^{N_{\mathrm{ref}}} \in \mathbb{R}^{M \times 2d}$}
                    \State $\mathcal{C} \gets \texttt{ot.dist}(\bm{X}^N, \bm{X}^{N_{\mathrm{ref}}})$ \Comment{Compute cost matrix}
                    \State $W \gets W + w \times \texttt{ot.emd2}(\bm{a}, \bm{b}, \mathcal{C})$ \Comment{\texttt{ot.emd2} returns the loss of the optimal transport plan \cite{flamary2024pot}}
                \EndFor
            \EndFor
            \State \Return $\frac{1}{\Ndown^{2d}} W$ \Comment{Riemann sum approximation \eqref{eq:W1_vector}}
        \EndProcedure
    \end{algorithmic}
\end{algorithm}
\begin{algorithm}[ht!]
    \caption{Component-wise computation of $\|W_1(\nu^{2,N},\nu^{2,N_{\mathrm{ref}}})\|_{L^1(\Omega^2)}$, i.e., the quantity $W_{1,2}$ of \eqref{eq:W1approx} for $k=2$, summed over the velocity components $c$, based on MC samples.}
    \label{alg:WassersteinDistance2}
    \begin{algorithmic}[1]
        \Procedure{ComputeW1\_2pt\_Componentwise}{$\{\uhat^N_m\}_{m=1}^M, \{\uhat^{N_{\mathrm{ref}}}_m\}_{m=1}^M$} 
        %\Comment{2-point correlations (Separate components)}
            \State Initialize distance array $\mathcal{D}_c \gets 0$ for components $c \in \{1,\dots,d\}$
            \State Uniform mass vectors $\bm{a} \gets \frac{1}{M}\bm{1}_M, \quad \bm{b} \gets \frac{1}{M}\bm{1}_M$
            \For{$c \gets 1$ to $d$}
                \For{$i \gets 1$ to $\Ndown^d$}
                    \For{$j \gets i$ to $\Ndown^d$} 
                        \State $w \gets 1$ \textbf{if} $i=j$ \textbf{else} $2$
                        \State $\bm{X}^N \gets [\{{\hat{u}}_{c,m}^N(\bm{x}_i)\}_{m=1}^M, \{\hat{u}_{c,m}^N(\bm{x}_j)\}_{m=1}^M]$
                        \Comment{$\bm{X}^{N} \in \mathbb{R}^{M \times 2}$}
                        \State $\bm{X}^{N_{\mathrm{ref}}} \gets [\{\hat{u}_{c,m}^{N_{\mathrm{ref}}}(\bm{x}_i)\}_{m=1}^M, \{\hat{u}_{c,m}^{N_{\mathrm{ref}}}(\bm{x}_j)\}_{m=1}^M]$
                        \Comment{$\bm{X}^{N_{\mathrm{ref}}} \in \mathbb{R}^{M \times 2}$}
                        \State $\mathcal{C} \gets \texttt{ot.dist}(\bm{X}^N, \bm{X}^{N_{\mathrm{ref}}})$ \Comment{Compute cost matrix 
                        %using \texttt{dist} function of POT python package 
                        \cite{flamary2024pot}}
                        \State $\mathcal{D}_c \gets \mathcal{D}_c + w \times \texttt{ot.emd2}(\bm{a}, \bm{b}, \mathcal{C})$ \Comment{ \texttt{ot.emd2} returns the loss of the optimal transport plan \cite{flamary2024pot}}
                    \EndFor
                \EndFor
            \EndFor
            \State \Return $\frac{1}{\Ndown^{2d}} \sum_{c=1}^d \mathcal{D}_c$
        \EndProcedure
    \end{algorithmic}
\end{algorithm}
\begin{algorithm}[ht!]
    \caption{Evaluation of the time-local Wasserstein distances across resolutions and least-squares computation of the EOWC slopes for each timestep.}
    \label{alg:ComputeSlopes}
    \begin{algorithmic}[1]
        \Require{Set of available resolutions $\mathcal{N}$, reference resolution $N_{\text{ref}} \in \mathcal{N}$, set of timesteps $I_{\triangle t}$, and chosen distance metric $W(\cdot, \cdot)$.}
        
        \vspace{0.5em}
        \Procedure{ComputeDistancesAndSlopes}{$W, \mathcal{N}, N_{\text{ref}}, I_{\triangle t}$}
            \State Initialize output directories and file paths
            \For{$t \in I_{\triangle t}$} \Comment{Loop over chosen timesteps}
                \State $\{\uhat^{N_{\text{ref}}}_m\}_{m=1}^{M} \gets \text{ExtractData}(N_{\text{ref}}, t)$ \Comment{Load reference sample ensemble}
                \State Initialize distance mapping $\mathcal{D} \gets \emptyset$
                
                \vspace{0.5em}
                \For{$N \in \mathcal{N} \setminus \{N_{\text{ref}}\}$} \Comment{Loop over chosen resolutions}
                    \State $\{\uhat^{N}_m\}_{m=1}^{M} \gets \text{ExtractData}(N, t)$ \Comment{Load sample ensemble at resolution $N$}
                    \State $\mathcal{D}_N \gets W(\{\uhat^{N}_m\}_{m=1}^{M}, \{\uhat^{N_{\text{ref}}}_m\}_{m=1}^{M})$ \Comment{Evaluate chosen Wasserstein distance between the correlation marginals $\nu^{k,N}$ and $\nu^{k,N_{\mathrm{ref}}}$ induced by the loaded ensembles (see Alg. \ref{alg:WassersteinDistance1}, Alg. \ref{alg:WassersteinDistance2} and Alg. \ref{alg:WassersteinDistance2_vector})}
                    \State $\mathcal{D} \gets \mathcal{D} \cup \{ (N, \mathcal{D}_N) \}$
                \EndFor
                
                \vspace{0.5em}
                \State $\text{SaveToFile}(\mathcal{D}, t)$ \Comment{Save distances for current timestep}
                \State $\alpha_t \gets \texttt{np.polyfit}(\log N, \log \mathcal{D}_N,1)$ \Comment{Compute slope in EOC plot using \cite{harris2020array}}
                \State $\text{AppendToFile}(t, \alpha_t)$ \Comment{Log convergence rate}
            \EndFor
        \EndProcedure
    \end{algorithmic}
\end{algorithm}

\section{Approximation of reference statistical Euler solutions}\label{appsec:refSol}

The incompressible Euler equations are discretized in Fourier space as \cite{lanthaler2021statistical}
\begin{equation} \label{eq:shv_disc}
    \begin{aligned}
        \partial_t \bm{u}^{\triangle} + \mathcal{P}_N(\bm{u}^{\triangle}\cdot\bm{\nabla}\bm{u}^{\triangle}) + \bm{\nabla} p^{\triangle} &= \vis_{N}|\bm{\nabla}|^{2s_{\mathrm{hv}}}(Q_N*\bm{u}^{\triangle}), \\
        \bm{\nabla}\cdot\bm{u}^{\triangle} &= 0, \\
        \bm{u}^{\triangle}|_{t=0} &= \mathcal{P}_N\bm{u}_0,
    \end{aligned}
\end{equation}
where $\mathcal{P}_N$ is the spatial Fourier projection operator mapping a function $f(\bm{x},t)$ to its first $N$ Fourier modes: 
\begin{align}
    \mathcal{P}_N f = \sum_{|\bm{k}|_{\infty}\leq N} \hat{f}_{\bm{k}}(t)e^{\mathsf{i}\bm{k}\cdot \bm{x}} . 
\end{align} 
Note that the right hand side of the equation includes a viscosity-like term for stabilization. 
This artificial viscosity term consists of a resolution-dependent viscosity $\vis_{N}$ and a Fourier multiplier $Q_N$ controlling the strength at which different Fourier modes are dampened. 
This allows us to avoid dampening the low frequency modes while applying some diffusion to the problematic higher frequencies. 
Additionally, the term includes a hyperviscosity parameter $s_{\mathrm{hv}} \geq 1$ (not to be confused with the regularity index $s$ of Section~\ref{sec:formalWrate}), which can be tuned to strengthen the dissipation in higher modes and thus increase the stability of the method. 
Note that these properties make it difficult to discuss the usual properties of the flow, such as the Reynolds number, because the viscosity is unphysical and purely for stabilization. 
All solutions computed with the above discretization scheme should be regarded as an approximation of the inviscid flow and not as an approximation of the incompressible Navier--Stokes equations with some given small viscosity.
The Fourier multiplier $Q_N$ is of the form
\begin{equation}
    Q_N(\bm{x}) = \sum_{\bm{k}\in\mathbb{Z}^d, |\bm{k}|\leq N} \hat{Q}_{\bm{k}} e^{\mathsf{i}\bm{k}\cdot\bm{x}},
\end{equation}
and its Fourier coefficients $\hat{Q}_{\bm{k}}$ fulfill \cite{Tadmor1989,Tadmor2004,lanthaler2021statistical}
\begin{align}
     & \hat{Q}_{\bm{k}} = 0 \quad \text{ for }~ |\bm{k}|\leq m_N, \\
     & 1-\left(\frac{m_N}{|\bm{k}|}\right)^{\frac{2s_{\mathrm{hv}}-1}{\theta}} \leq \hat{Q}_{\bm{k}} \leq 1,
\end{align}
where we have introduced an additional parameter $\theta > 0$. 
This form of $Q_N$ makes the equation completely dissipation free for all wave numbers smaller than $m_N$. 
Therefore, all modes below this cutoff will evolve exactly according to the Euler equations, while only higher frequency terms need some small artificial viscosity for stabilization. 
With increasing mesh resolution, the cutoff will also increase, enabling the simulation of even higher frequency features without any artificial viscosity. 
The quantities $m_N$ and $\vis_{N}$ are required to scale as
\begin{equation}
    m_N \sim N^{\theta},\quad \vis_{N} = \frac{\vis_{0}}{N^{2s_{\mathrm{hv}}-1}},\quad 0 < \theta < \frac{2s_{\mathrm{hv}}-1}{2s_{\mathrm{hv}}}.
\end{equation}
These requirements also motivate the presence of the hyperviscosity parameter $s_{\mathrm{hv}}$, as increasing it enables the cutoff $m_N$ to be chosen higher, leading to a larger portion of the Fourier modes being dissipation free.
Equation~\eqref{eq:shv_disc} is solved in Fourier space, where the pressure Poisson equation reduces to pointwise operations on each Fourier mode separately. 
This leads to the expression
\begin{equation}
    \partial_t\hat{\bm{u}}_{\bm{k}} = -\left( \mathbf{1} - \frac{\bm{k}\bm{k}^T}{|\bm{k}|^2} \right) \cdot \hat{\bm{b}}_{\bm{k}} - \vis_{N}\left|\bm{k}\right|^{2s_{\mathrm{hv}}}\hat{Q}_{\bm{k}}\hat{\bm{u}}_{\bm{k}},
\end{equation}
where $\hat{\bm{b}}_{\bm{k}} = \mathsf{i}\bm{k}^T \cdot \hat{\bm{B}}_{\bm{k}}$ and $\bm{B} = \bm{u}\otimes\bm{u}$. 
This time derivative can equivalently be viewed as the $L^2$-projection of the nonlinear term onto divergence-free vector fields.
The spectral hyperviscosity scheme was used to compute $M = 992$ Monte Carlo samples of a reference solution for the TGV at a resolution of $N = 256$ Fourier modes in each dimension. 
For each sample, 401 equispaced snapshots were stored.  
The viscosity was chosen to be $\vis_{N} = 0.01/N^2$, and the spectral hyperviscosity parameter was set to $s_{\mathrm{hv}} = 1.5$. 
Furthermore, $\theta$ was set to $\theta = 2/3$, i.e., to the endpoint of the admissible open range $0 < \theta < (2s_{\mathrm{hv}}-1)/(2s_{\mathrm{hv}}) = 2/3$ for $s_{\mathrm{hv}} = 1.5$, which is formally excluded by the convergence theory in \cite{lanthaler2021statistical}. 
We did not observe an effect of this choice on the reported quantities.
These parameters lead to a computational time of approximately 2.7 GPU minutes per sample on a single NVIDIA GH200 Grace Hopper Superchip of the Alps supercomputer at the Swiss National Supercomputing Centre (CSCS). 
Each node contains four superchips, leading to a total of 11.25 node hours required for the full 992 samples.
For details about the high-performance implementation of this numerical scheme, the reader is referred to \cite{rohner2024efficient}. 
The obtained statistical Euler solution was then downsampled to match the spatial grid of the MC LBM in the error computations reported in Section~\ref{sec:numWassersteinRef}.

\section{Further visualizations of statistical flow fields}\label{appsec:flowfields}

We provide the respective velocity visualizations of single sample strains, the mean, and the standard deviation at times \(t=0, 5, 10, 15, 20\) in Figures~\ref{fig:statSolRTGV_00},~\ref{fig:statSolRTGV_05},~\ref{fig:statSolRTGV_10},~\ref{fig:statSolRTGV_15}~\ref{fig:statSolRTGV_20}, respectively. 

\begin{figure}[ht!]
    %\centering
    \subfloat[Sample, \(R\!e = 1280\)]{\includegraphics[width=0.27\textwidth,trim={0cm 1.5cm 0cm 2cm},clip]{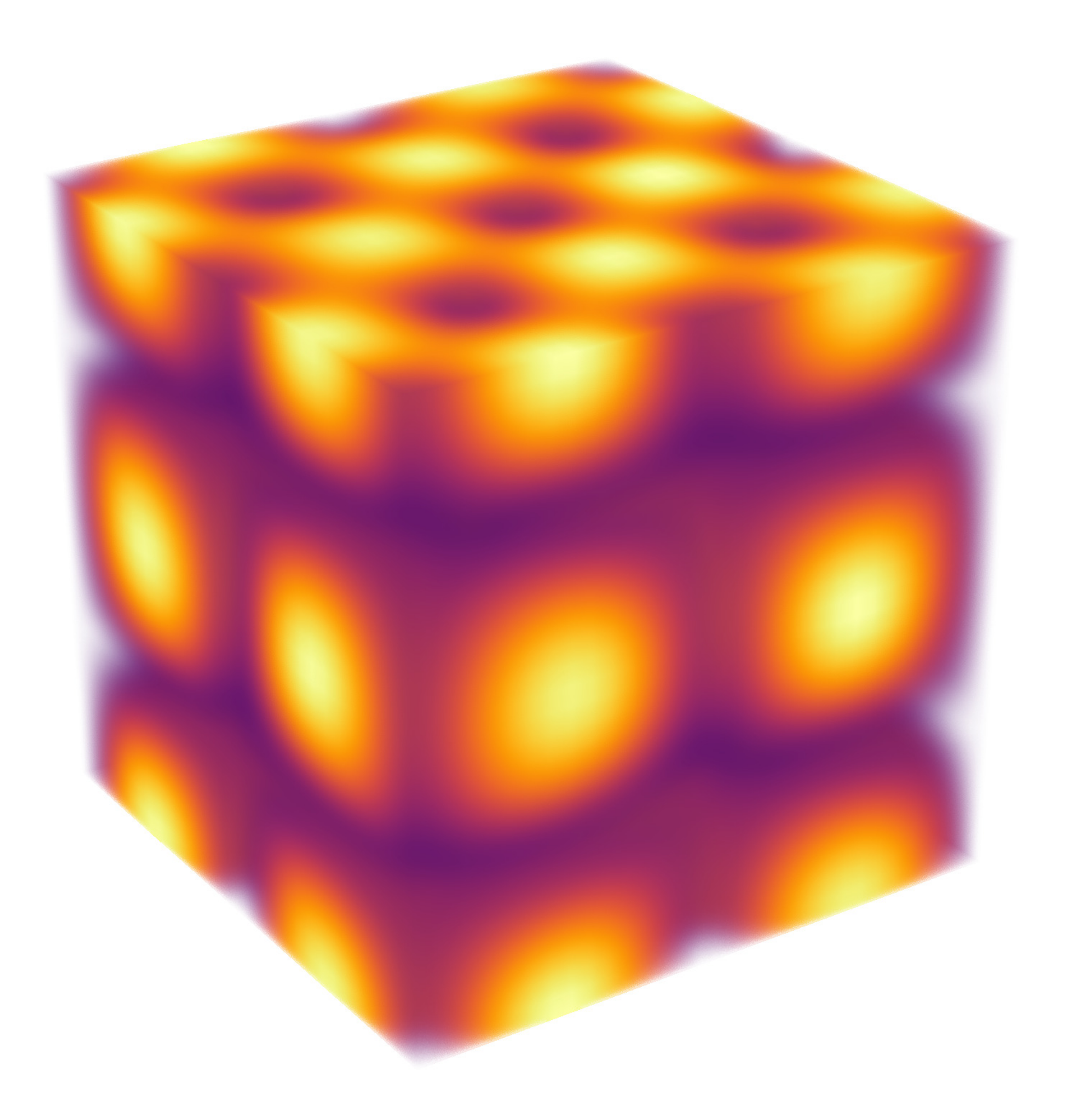}
          \label{subfig:statSolRTGV-sample00-1280}}
    \subfloat[Sample, \(R\!e = 2560\)]{\includegraphics[width=0.27\textwidth,trim={0cm 1.5cm 0cm 2cm},clip]{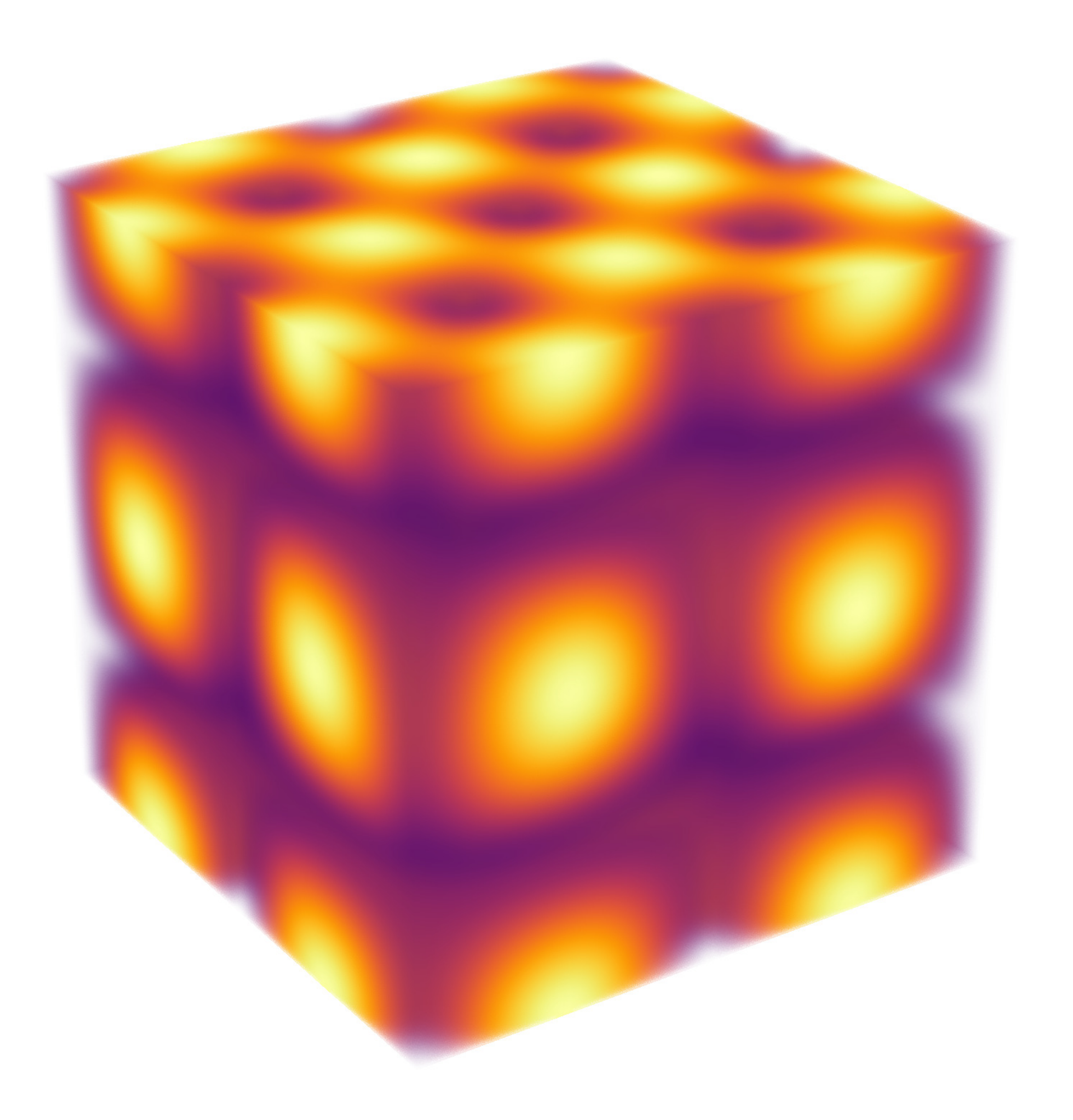}
          \label{subfig:statSolRTGV-sample00-2560}}
   	\subfloat[Sample, \(R\!e = 5120\)]{\includegraphics[width=0.27\textwidth,trim={0cm 1.5cm 0cm 2cm},clip]{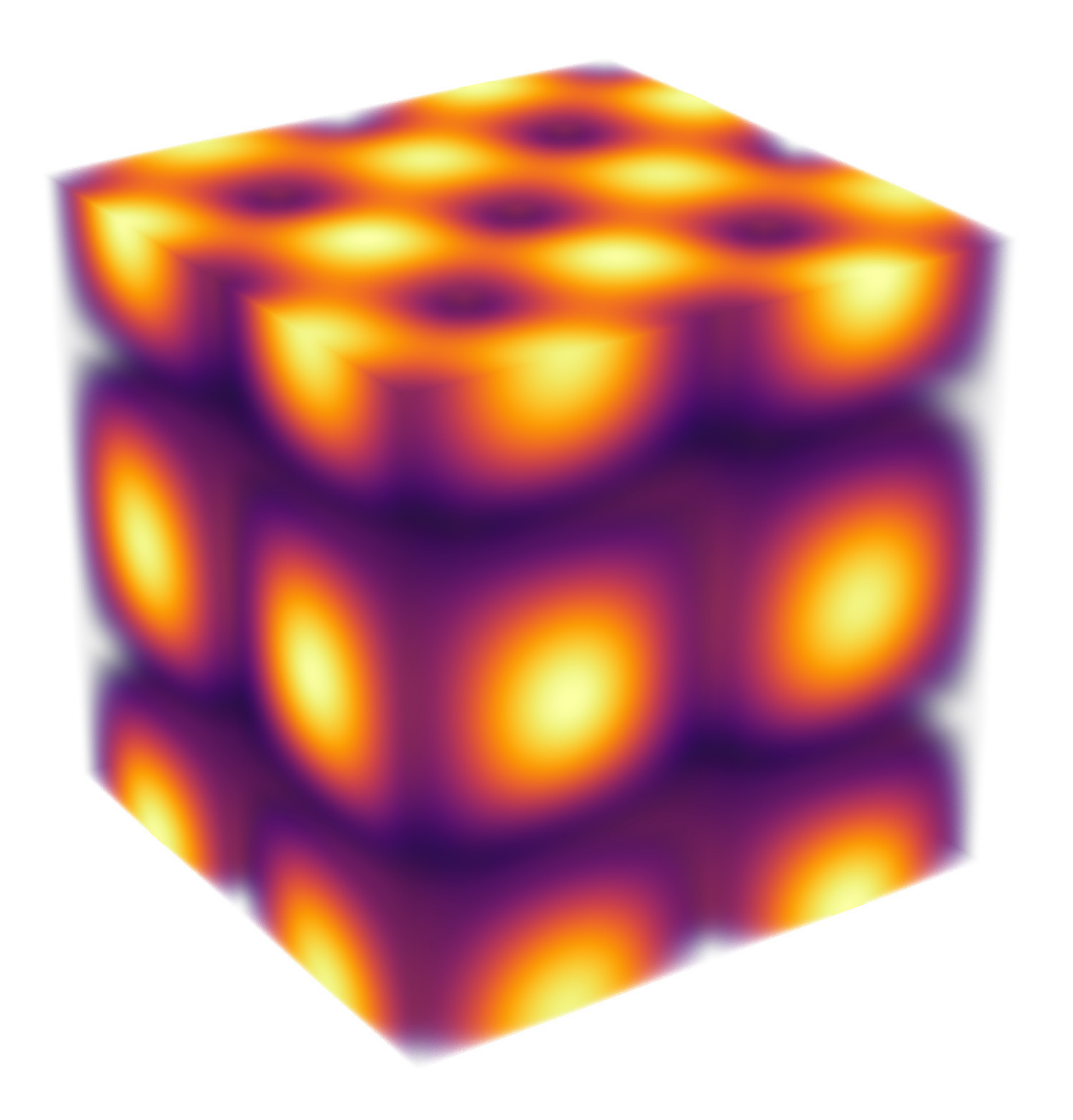}
          \label{subfig:statSolRTGV-sample00-5120}} 
    \hspace{.1em}
    \includegraphics[scale=1]{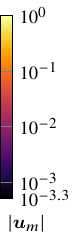}
    \\
    \subfloat[Mean, \(R\!e = 1280\)]{\includegraphics[width=0.27\textwidth,trim={0cm 1.5cm 0cm 2cm},clip]{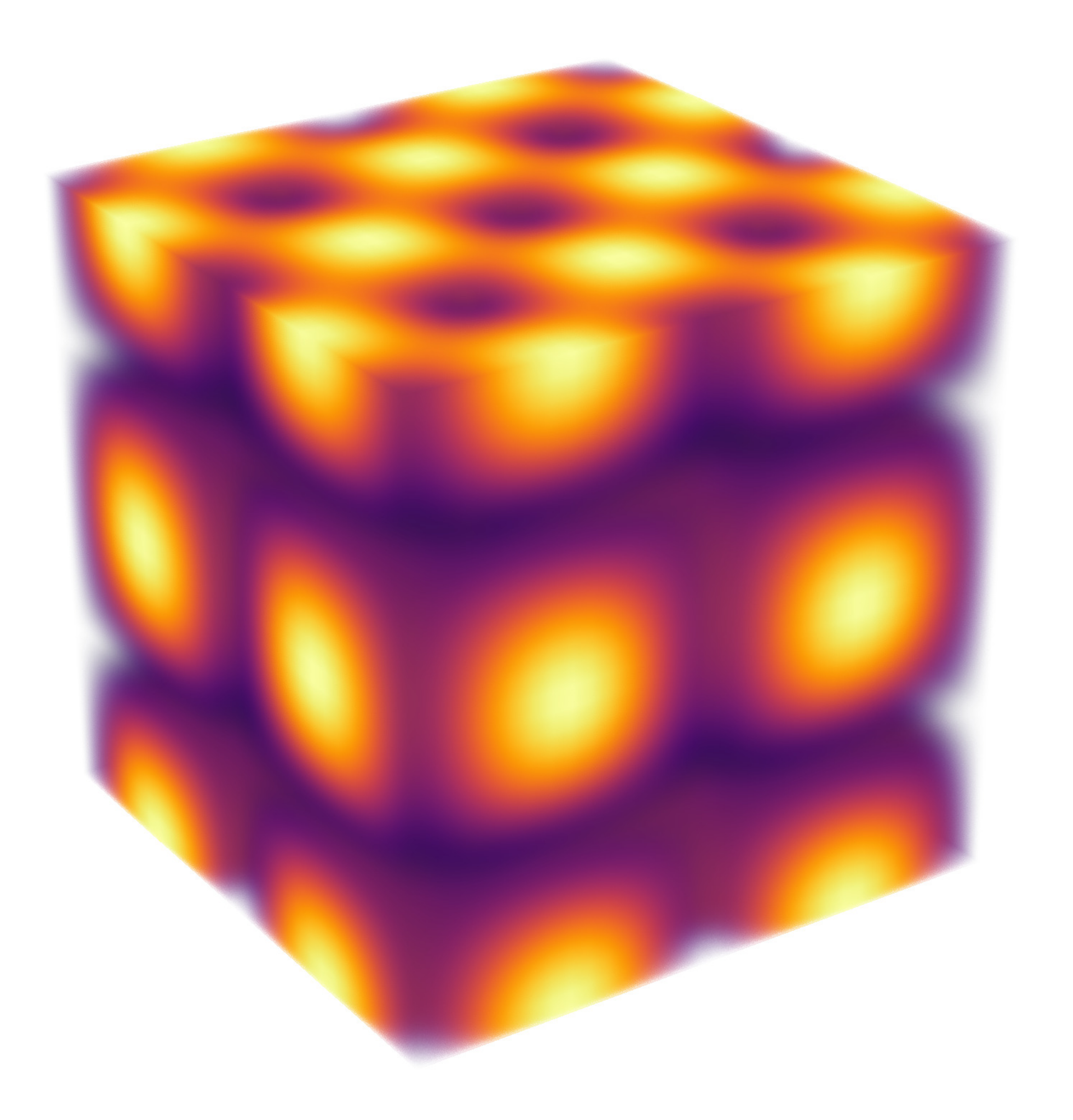}
          \label{subfig:statSolRTGV-mean00-1280}}
	\subfloat[Mean, \(R\!e = 2560\)]{\includegraphics[width=0.27\textwidth,trim={0cm 1.5cm 0cm 2cm},clip]{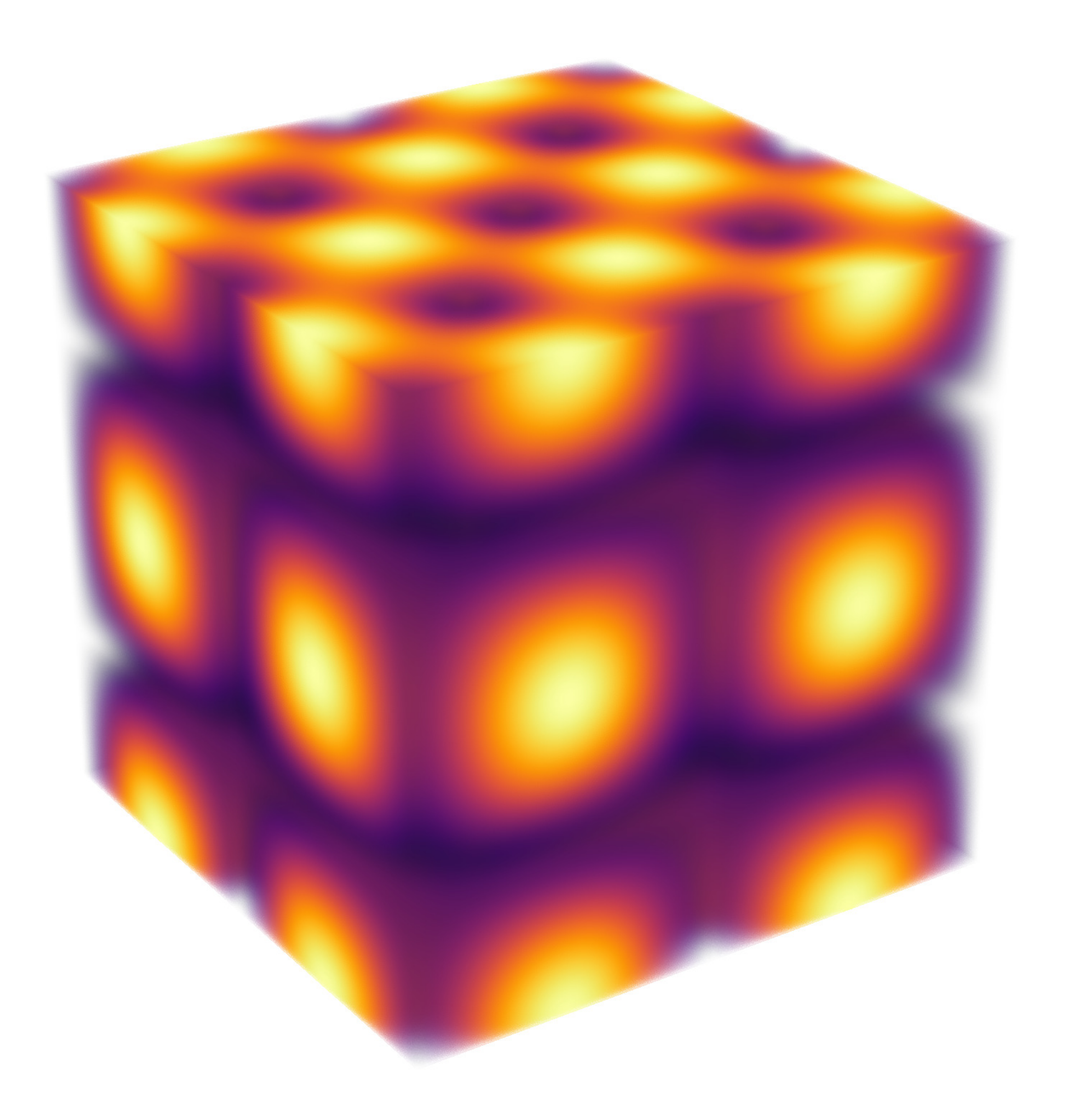}
          \label{subfig:statSolRTGV-mean00-2560}}
    \subfloat[Mean, \(R\!e = 5120\)]{\includegraphics[width=0.27\textwidth,trim={0cm 1.5cm 0cm 2cm},clip]{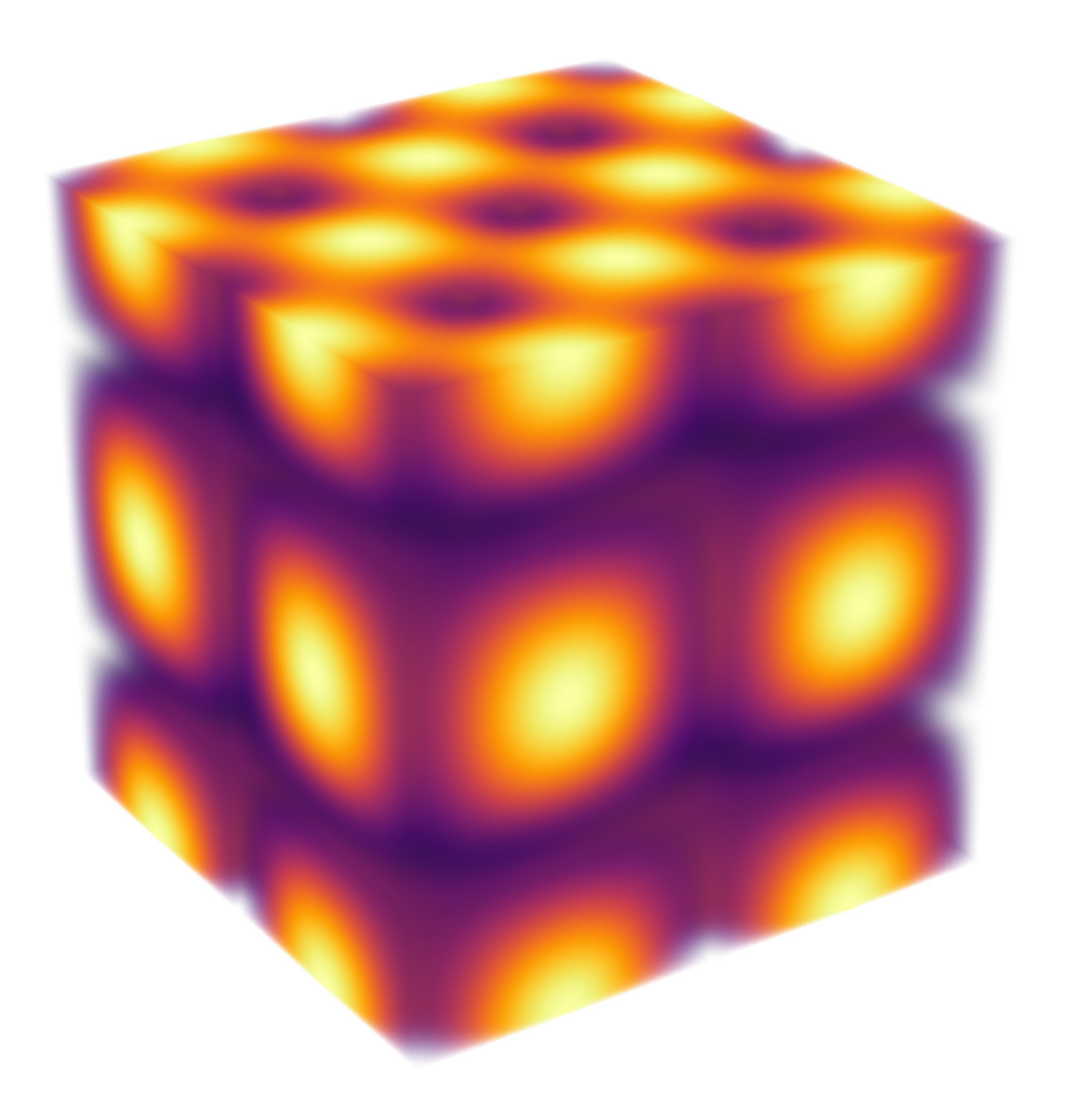}
          \label{subfig:statSolRTGV-mean00-5120}}
    \hspace{.1em}
    \includegraphics[scale=1]{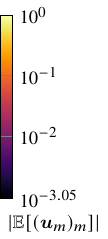}
    \\
    \subfloat[Std, \(R\!e = 1280\)]{\includegraphics[width=0.27\textwidth,trim={0cm 1.5cm 0cm 2cm},clip]{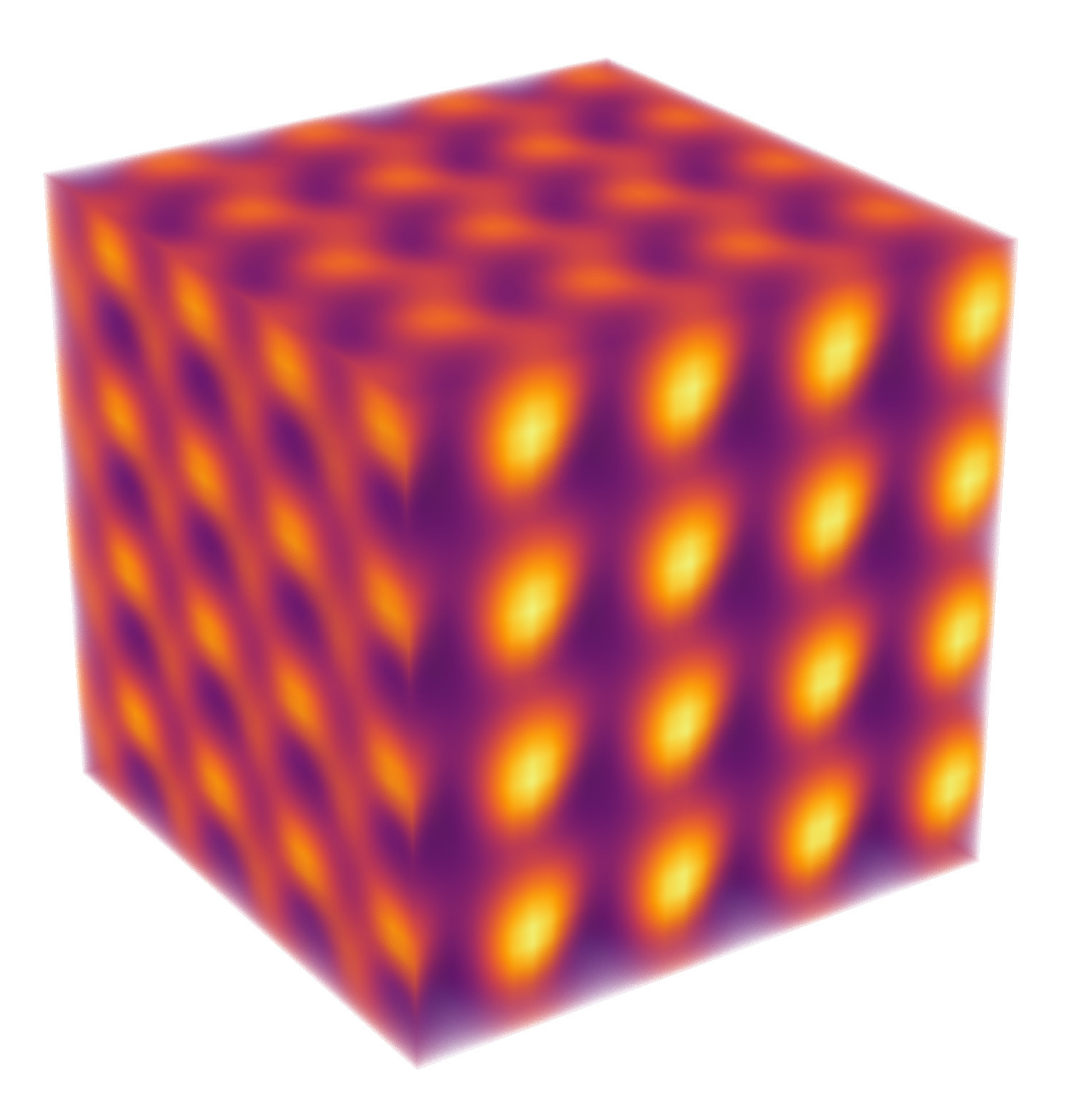}
          \label{subfig:statSolRTGV-std00-1280}}
	\subfloat[Std, \(R\!e = 2560\)]{\includegraphics[width=0.27\textwidth,trim={0cm 1.5cm 0cm 2cm},clip]{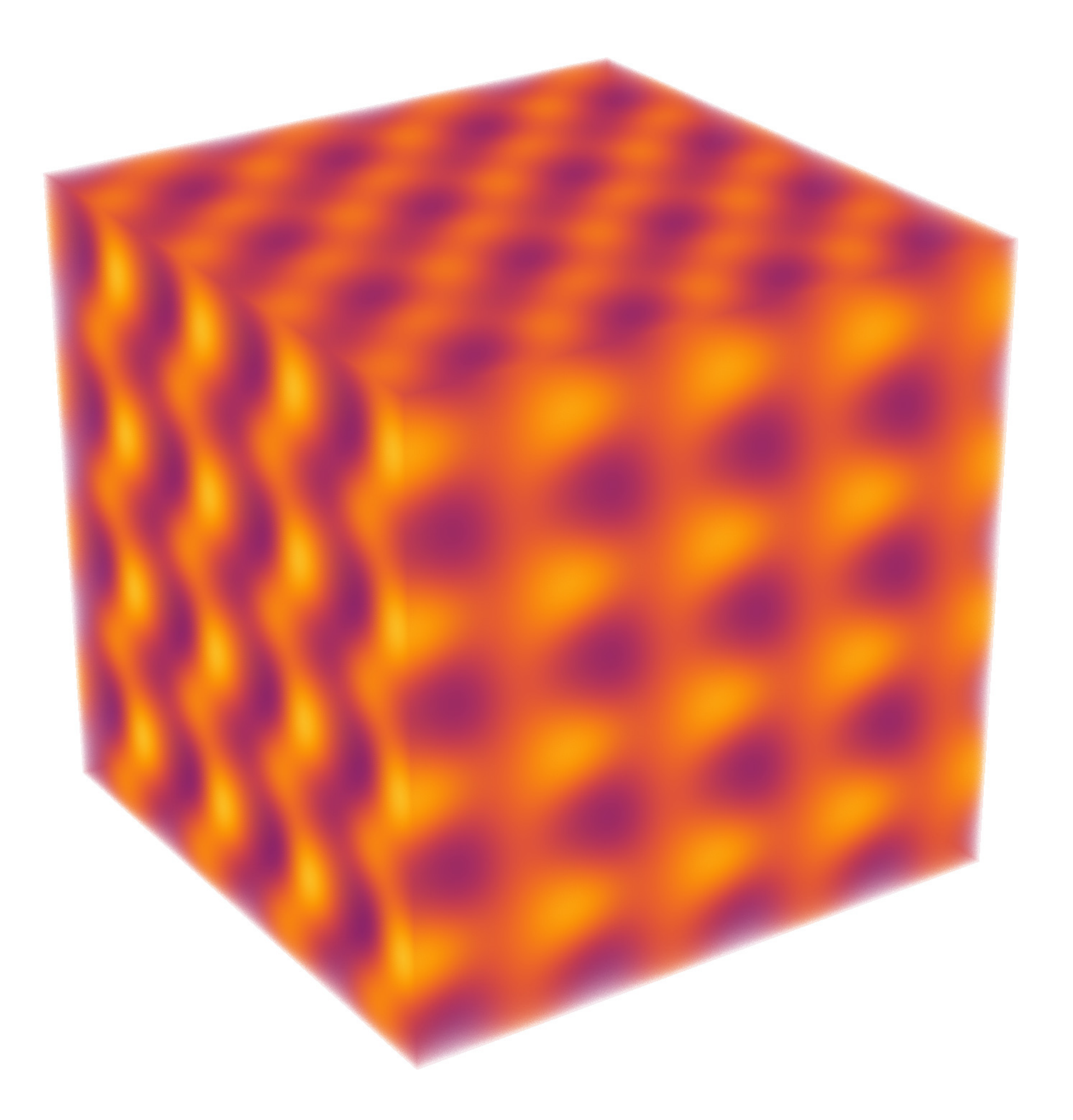}
          \label{subfig:statSolRTGV-std00-2560}}
    \subfloat[Std, \(R\!e = 5120\)]{\includegraphics[width=0.27\textwidth,trim={0cm 1.5cm 0cm 2cm},clip]{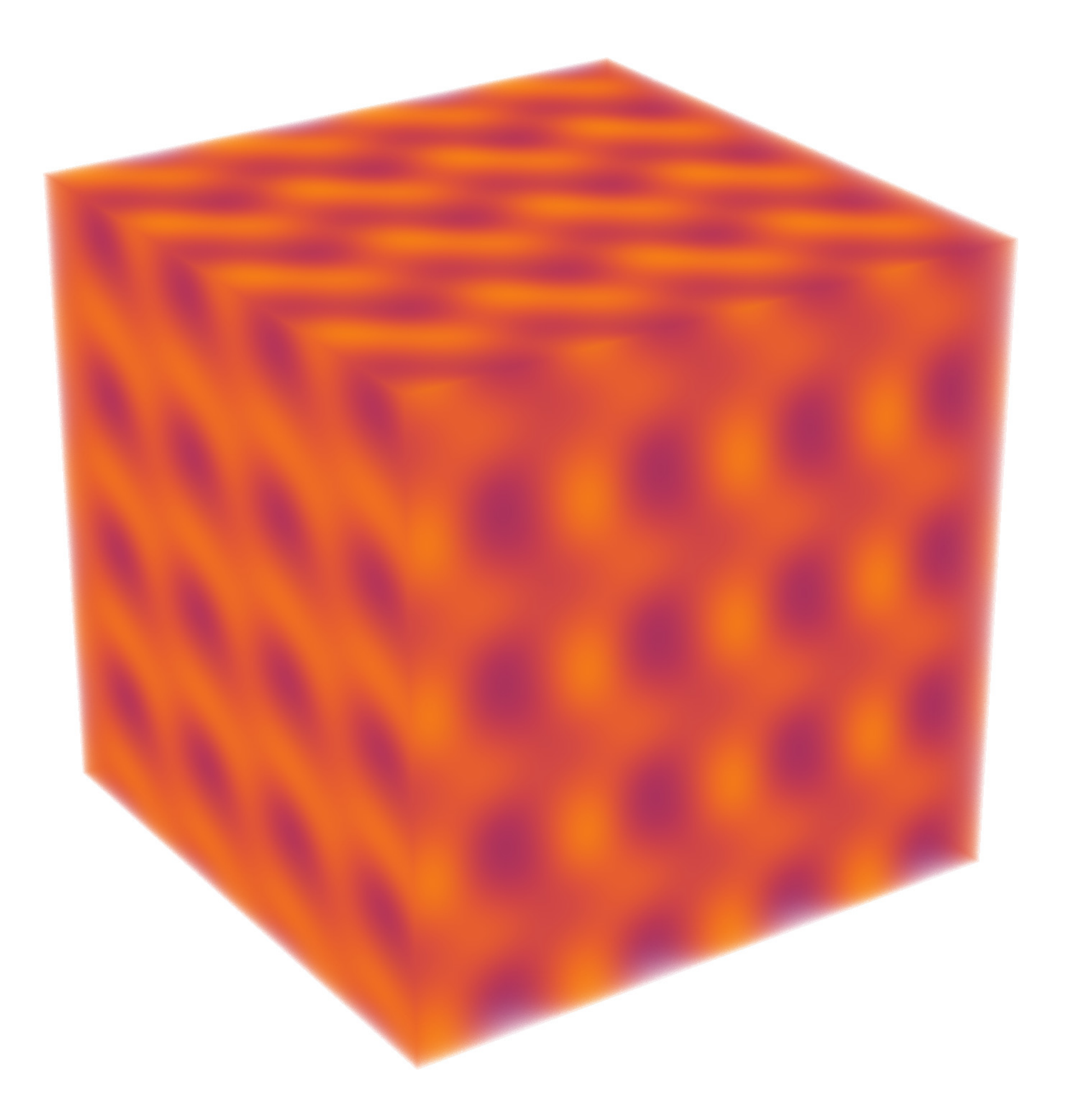}
          \label{subfig:statSolRTGV-std00-5120}}
    \hspace{.1em}
    \includegraphics[scale=1]{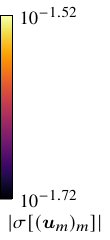}
    \\
	\caption{Iso-volume rendering of the velocity magnitude of the RTGV flow at \(t = 0\mathrm{s}\).  
    Top row: single samples; middle row: mean; bottom row: standard deviation (std); columns from left to right: \(R\!e = 1280, 2560, 5120\). 
    In the renderings, the color map is combined with an opacity transfer function that is linear in the data value, ranging from opacity zero at the minimum to one at the maximum of each colorbar. 
    The colorbars themselves omit this opacity. 
    Discretization parameters from Set~2 (Table~\ref{tab:params2}) are used. 
    }
    \label{fig:statSolRTGV_00}
\end{figure}

\begin{figure}[ht!]
    %\centering
    \subfloat[Sample, \(R\!e = 1280\)]{\includegraphics[width=0.27\textwidth,trim={0cm 1.5cm 0cm 2cm},clip]{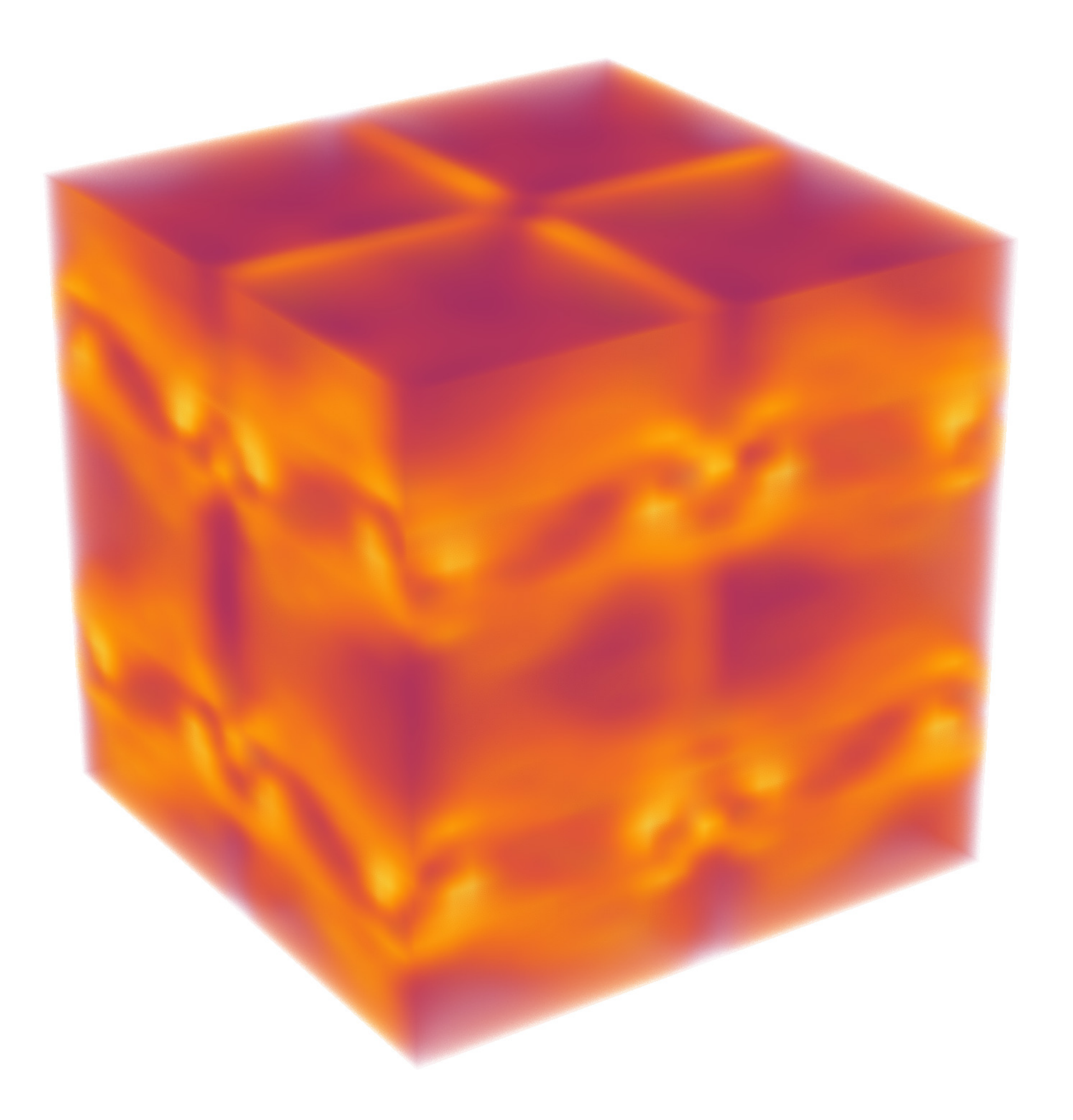}
          \label{subfig:statSolRTGV-sample05-1280}}
    \subfloat[Sample, \(R\!e = 2560\)]{\includegraphics[width=0.27\textwidth,trim={0cm 1.5cm 0cm 2cm},clip]{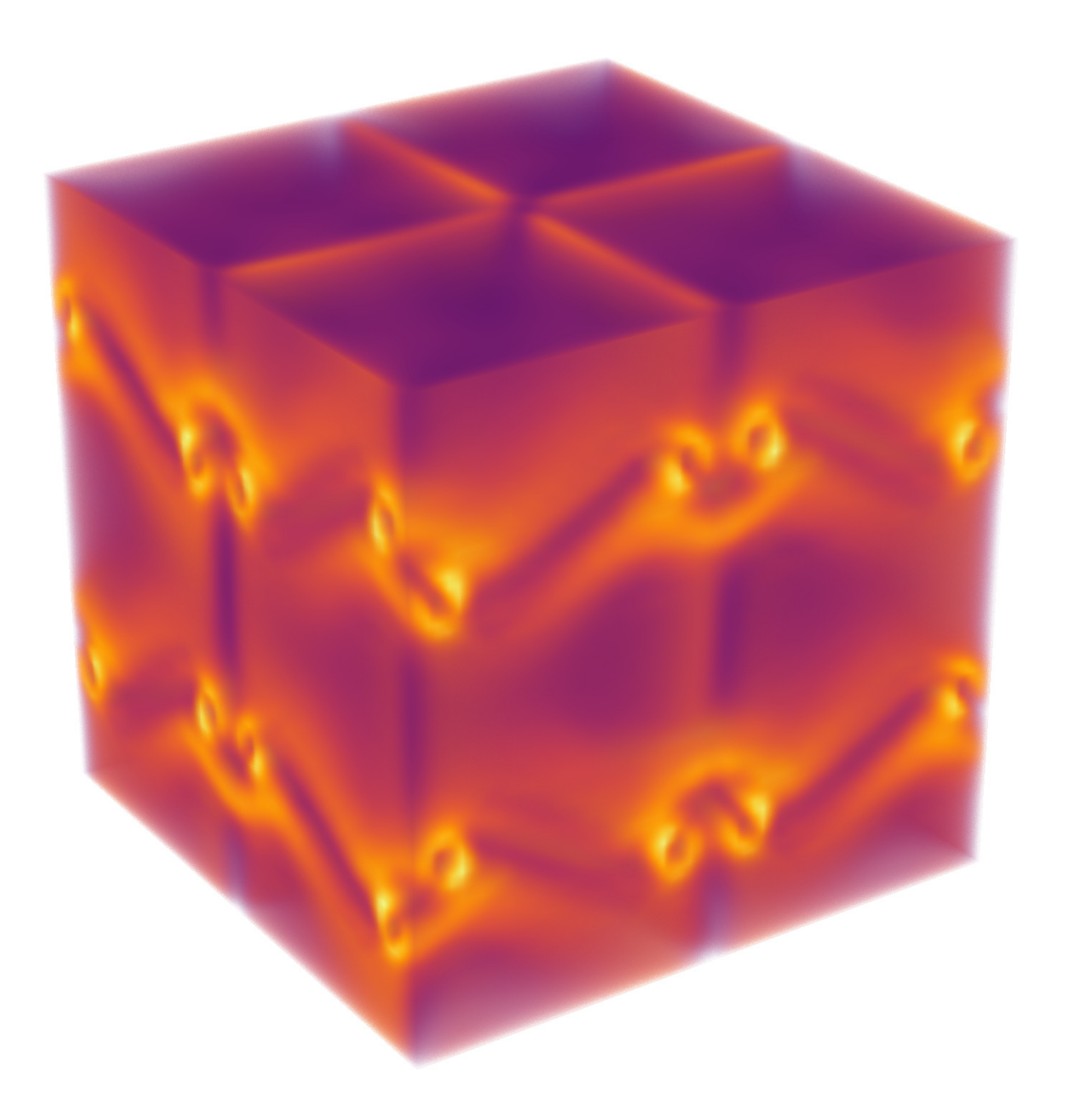}
          \label{subfig:statSolRTGV-sample05-2560}}
   	\subfloat[Sample, \(R\!e = 5120\)]{\includegraphics[width=0.27\textwidth,trim={0cm 1.5cm 0cm 2cm},clip]{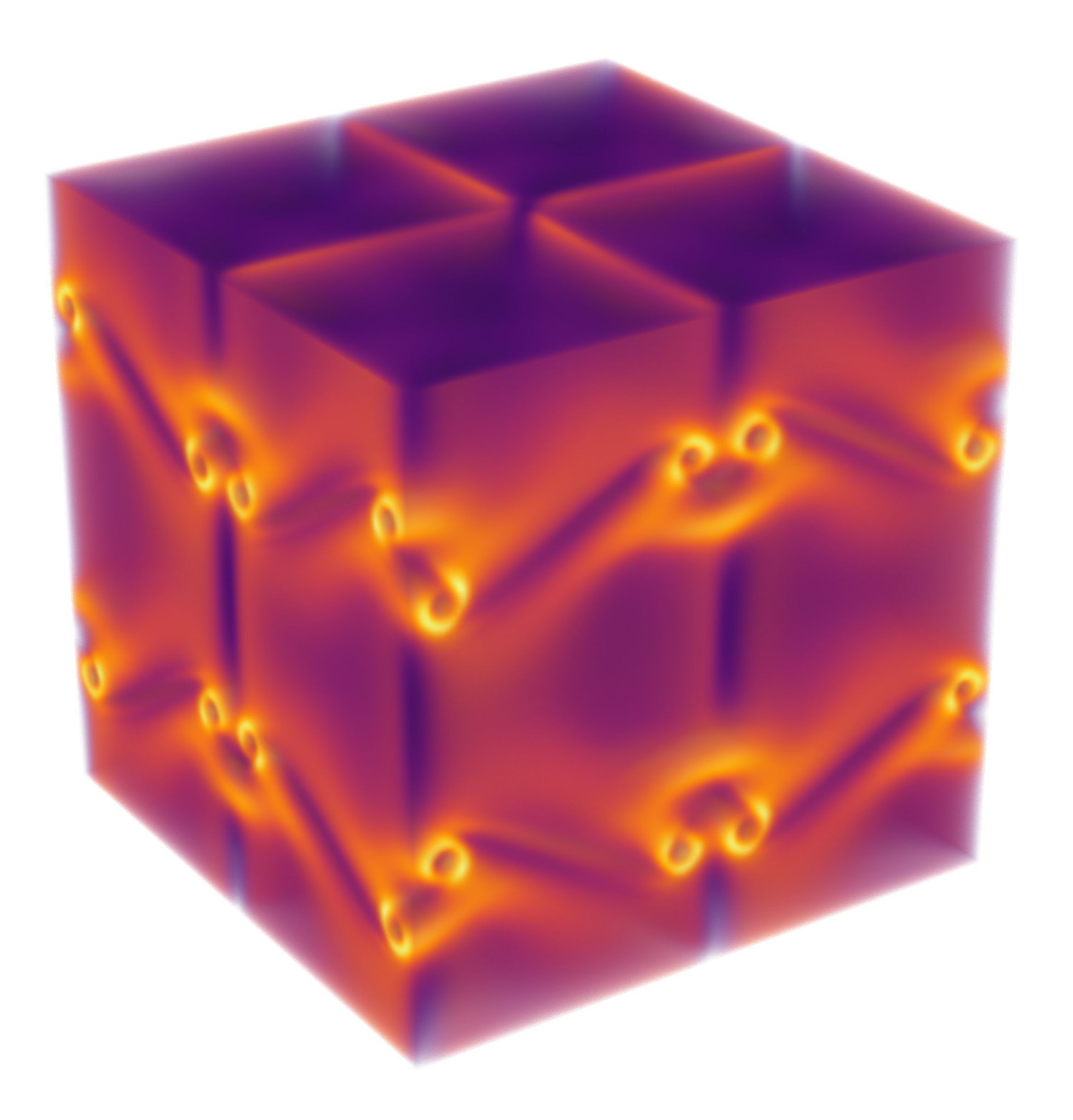}
          \label{subfig:statSolRTGV-sample05-5120}} 
    \hspace{.1em}
    \includegraphics[scale=1]{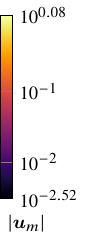}
    \\
    \subfloat[Mean, \(R\!e = 1280\)]{\includegraphics[width=0.27\textwidth,trim={0cm 1.5cm 0cm 2cm},clip]{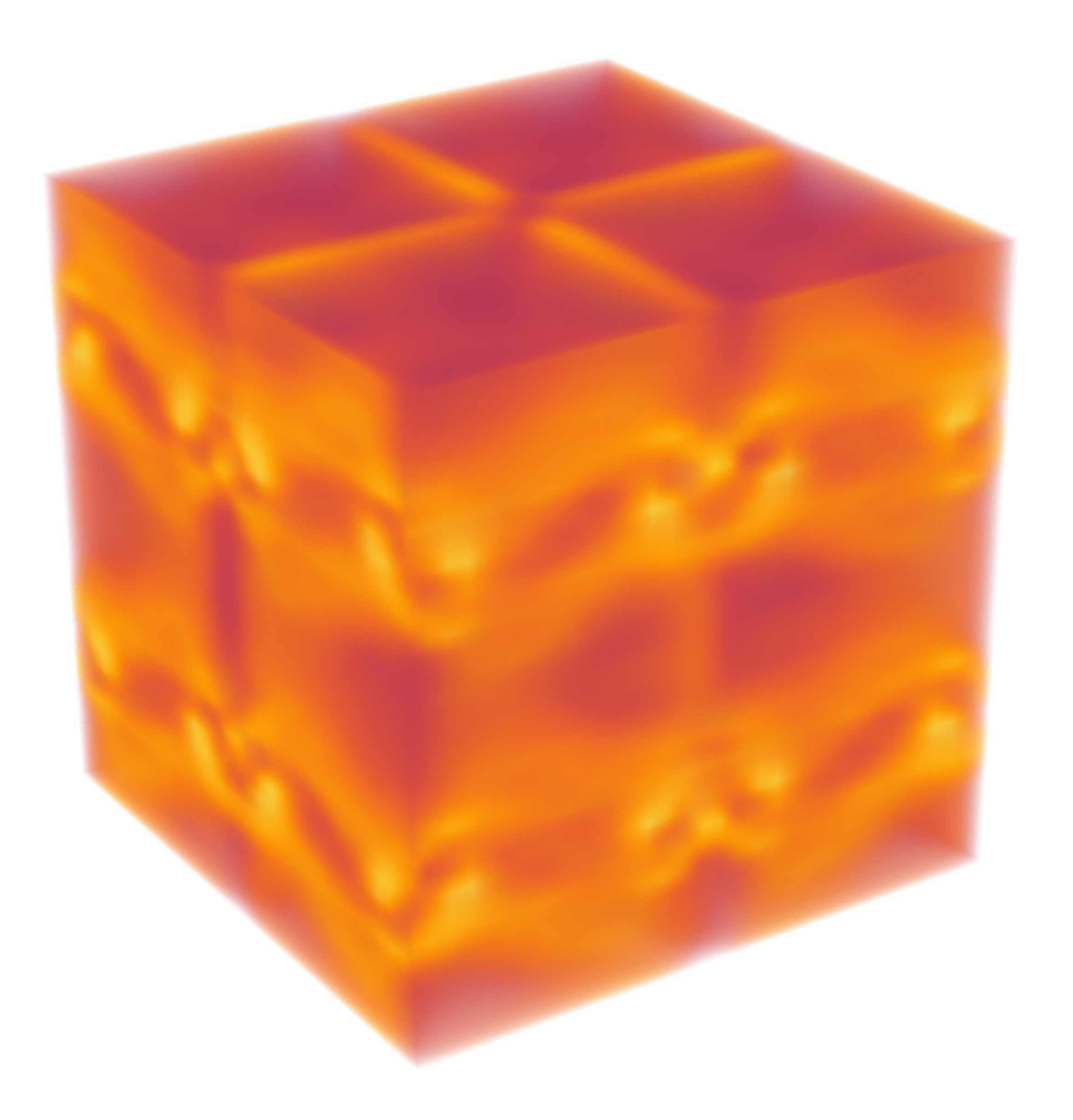}
          \label{subfig:statSolRTGV-mean05-1280}}
	\subfloat[Mean, \(R\!e = 2560\)]{\includegraphics[width=0.27\textwidth,trim={0cm 1.5cm 0cm 2cm},clip]{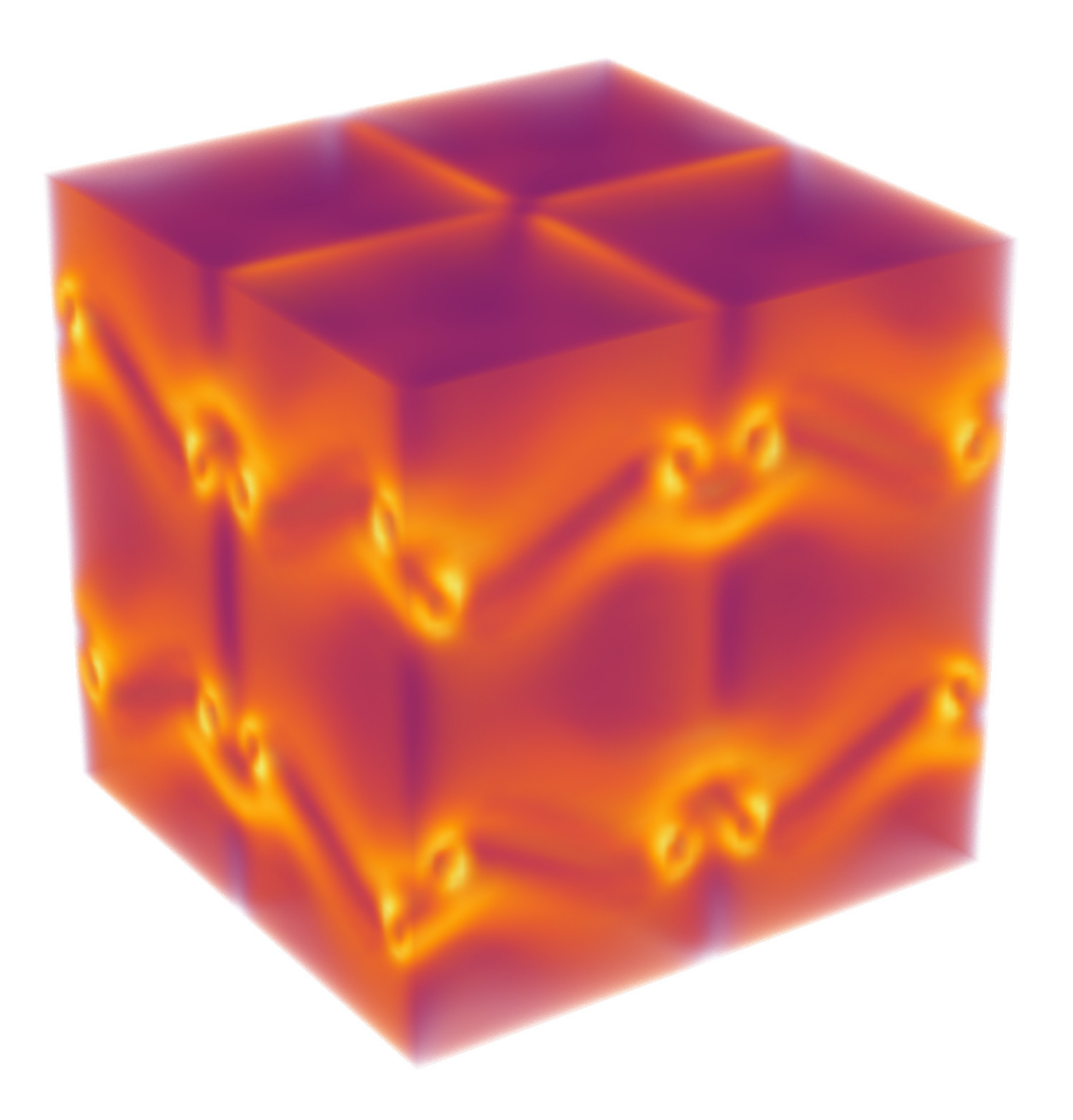}
          \label{subfig:statSolRTGV-mean05-2560}}
    \subfloat[Mean, \(R\!e = 5120\)]{\includegraphics[width=0.27\textwidth,trim={0cm 1.5cm 0cm 2cm},clip]{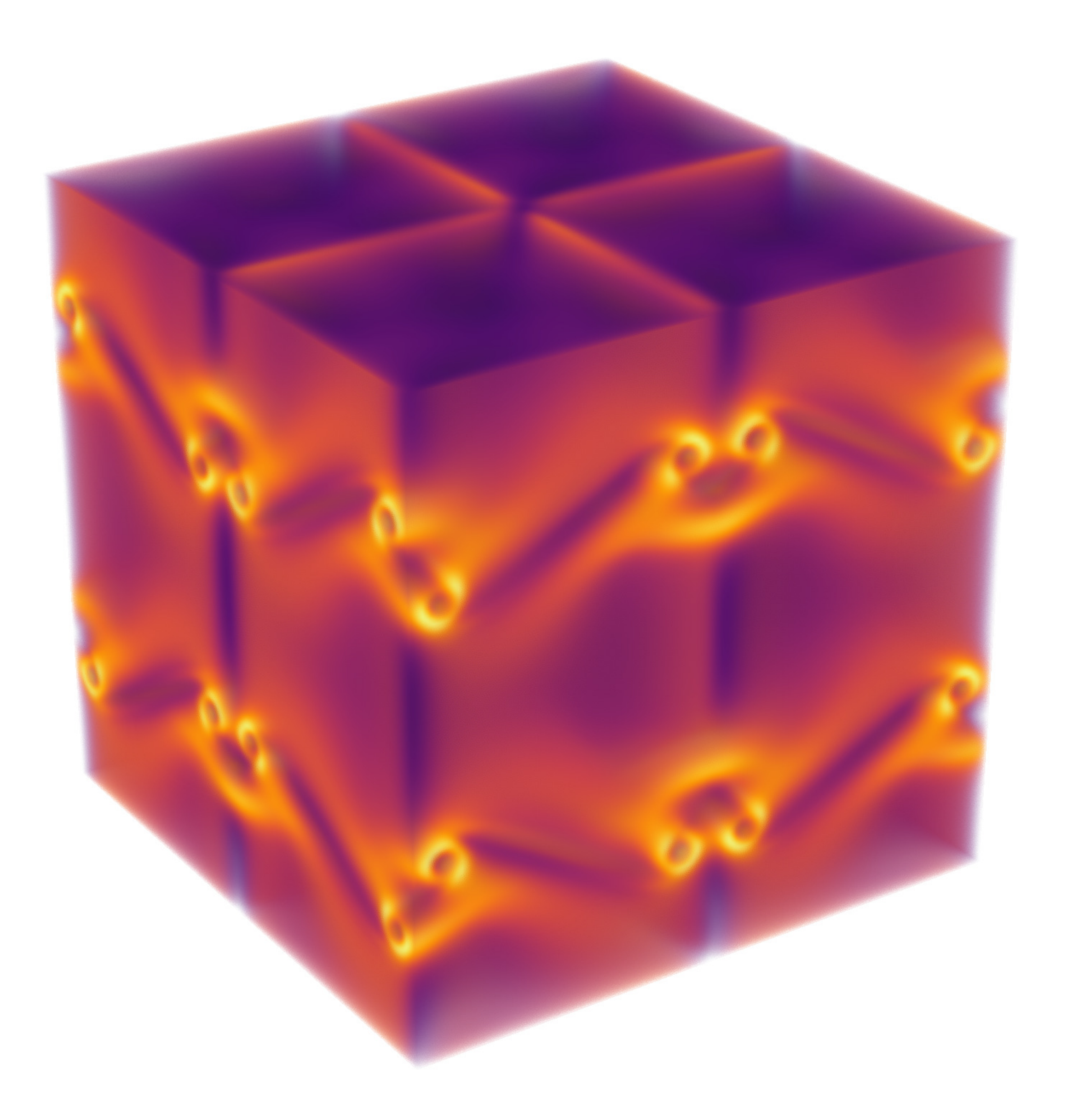}
          \label{subfig:statSolRTGV-mean05-5120}}
    \hspace{.1em}
    \includegraphics[scale=1]{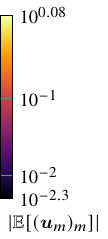}    
    \\
    \subfloat[Std, \(R\!e = 1280\)]{\includegraphics[width=0.27\textwidth,trim={0cm 1.5cm 0cm 2cm},clip]{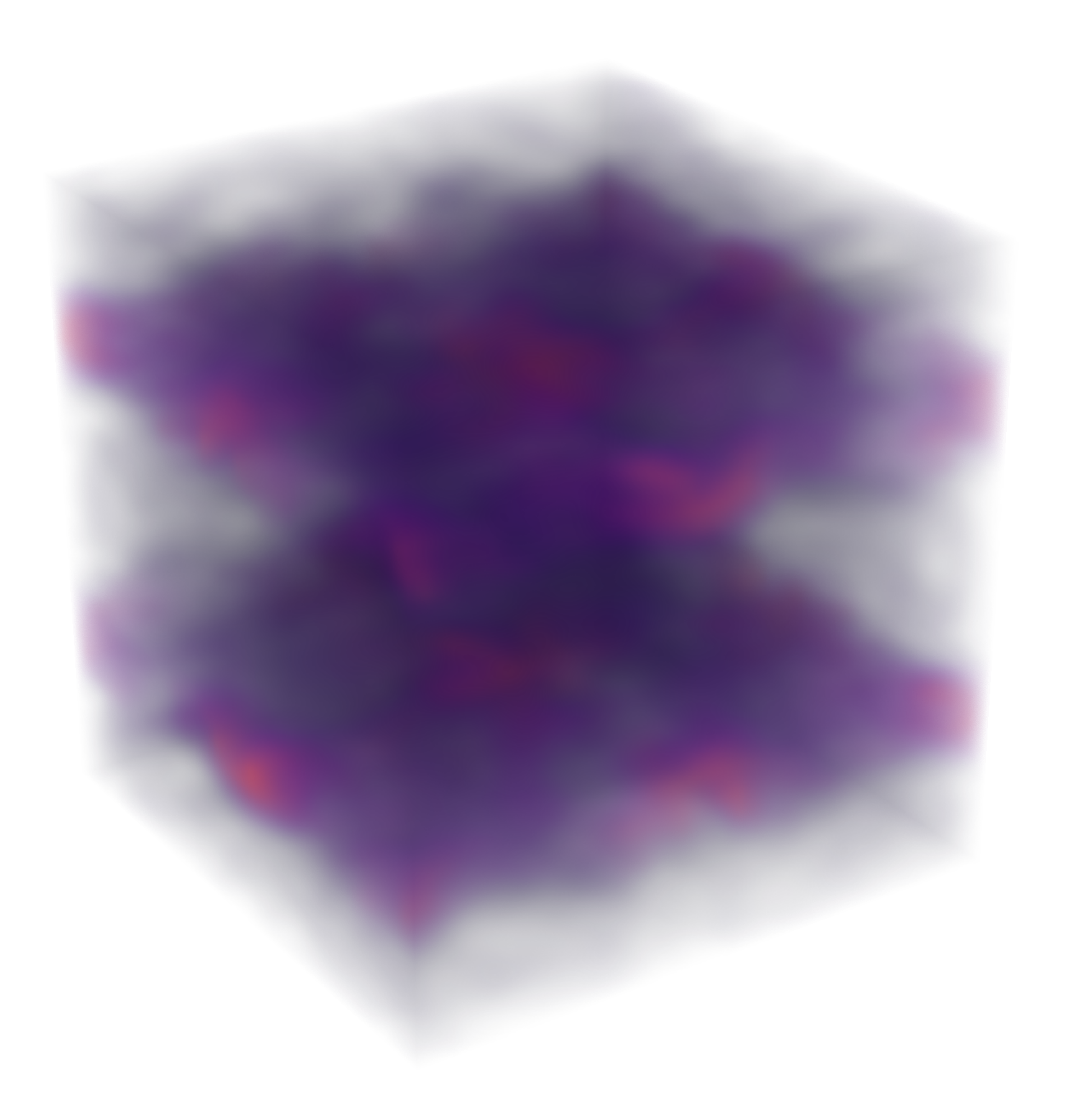}
          \label{subfig:statSolRTGV-std05-1280}}
	\subfloat[Std, \(R\!e = 2560\)]{\includegraphics[width=0.27\textwidth,trim={0cm 1.5cm 0cm 2cm},clip]{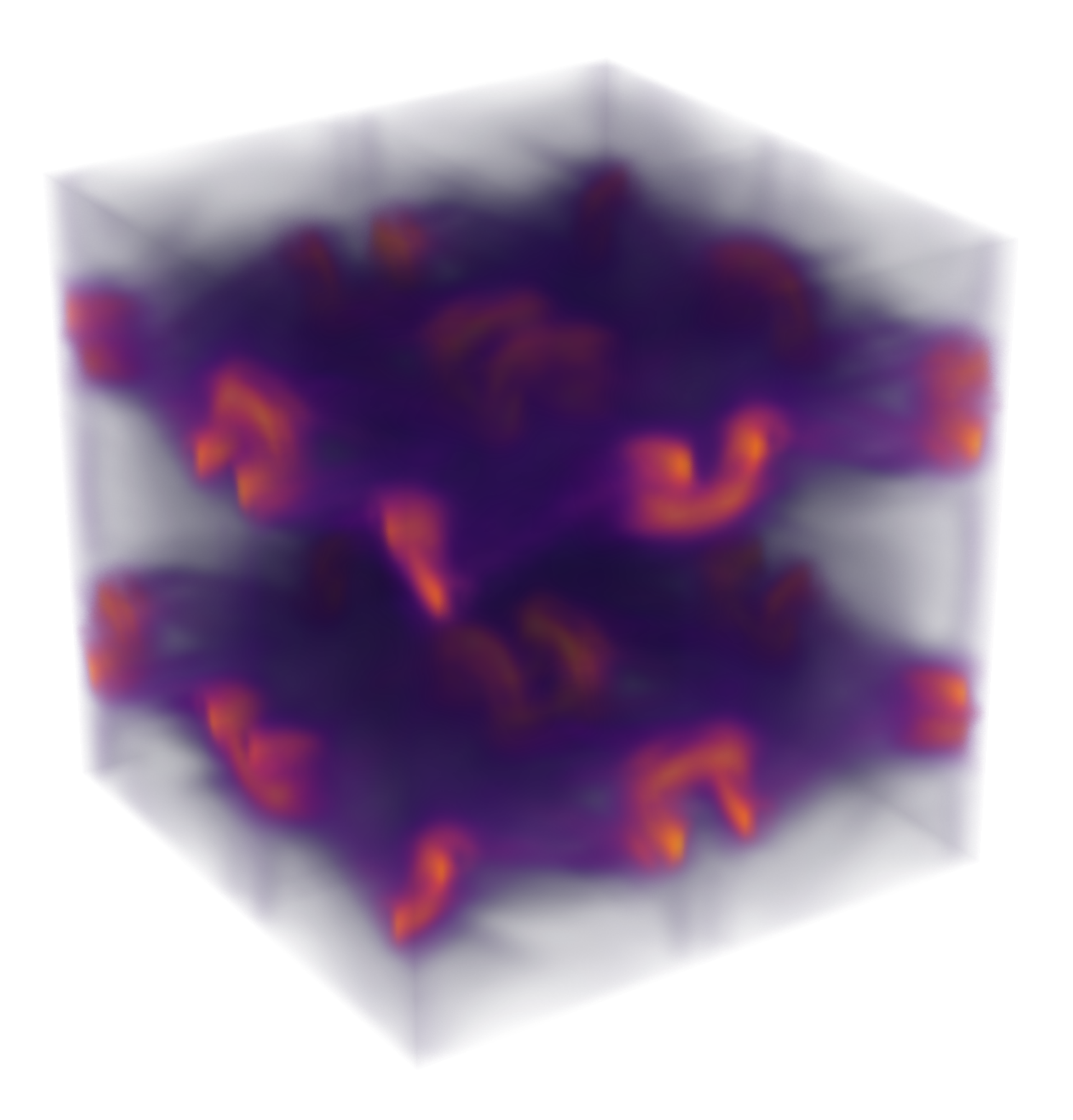}
          \label{subfig:statSolRTGV-std05-2560}}
    \subfloat[Std, \(R\!e = 5120\)]{\includegraphics[width=0.27\textwidth,trim={0cm 1.5cm 0cm 2cm},clip]{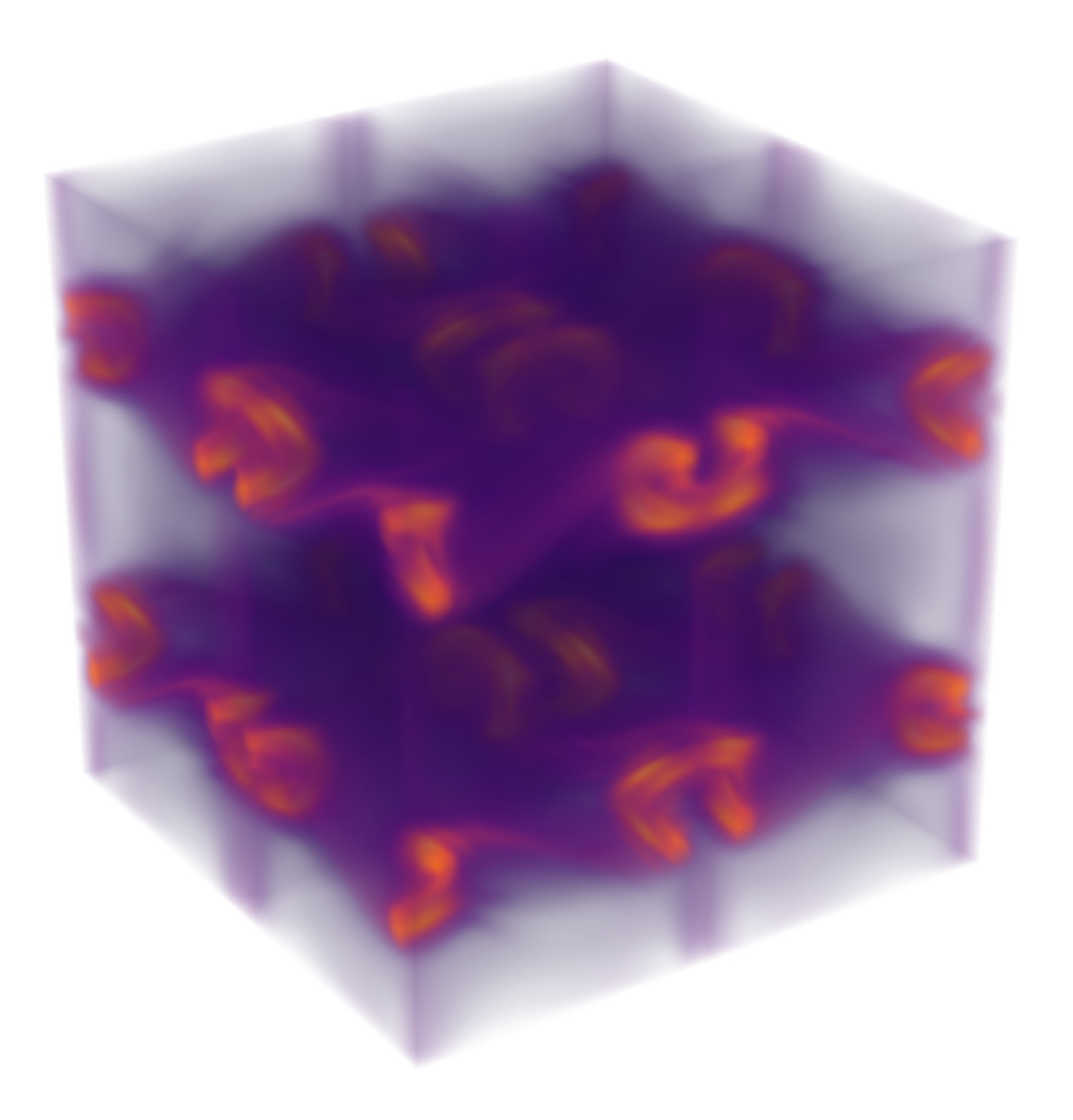}
          \label{subfig:statSolRTGV-std05-5120}}
    \hspace{.1em}
    \includegraphics[scale=1]{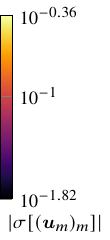}
    \\
	\caption{Iso-volume rendering of the velocity magnitude of the RTGV flow at \(t = 5\mathrm{s}\).  
    Top row: single samples; middle row: mean; bottom row: standard deviation (std); columns from left to right: \(R\!e = 1280, 2560, 5120\). 
    In the renderings, the color map is combined with an opacity transfer function that is linear in the data value, ranging from opacity zero at the minimum to one at the maximum of each colorbar. 
    The colorbars themselves omit this opacity. 
    Discretization parameters from Set~2 (Table~\ref{tab:params2}) are used.}
    \label{fig:statSolRTGV_05}
\end{figure}

\begin{figure}[ht!]
    %\centering
    \subfloat[Sample, \(R\!e = 1280\)]{\includegraphics[width=0.27\textwidth,trim={0cm 1.5cm 0cm 2cm},clip]{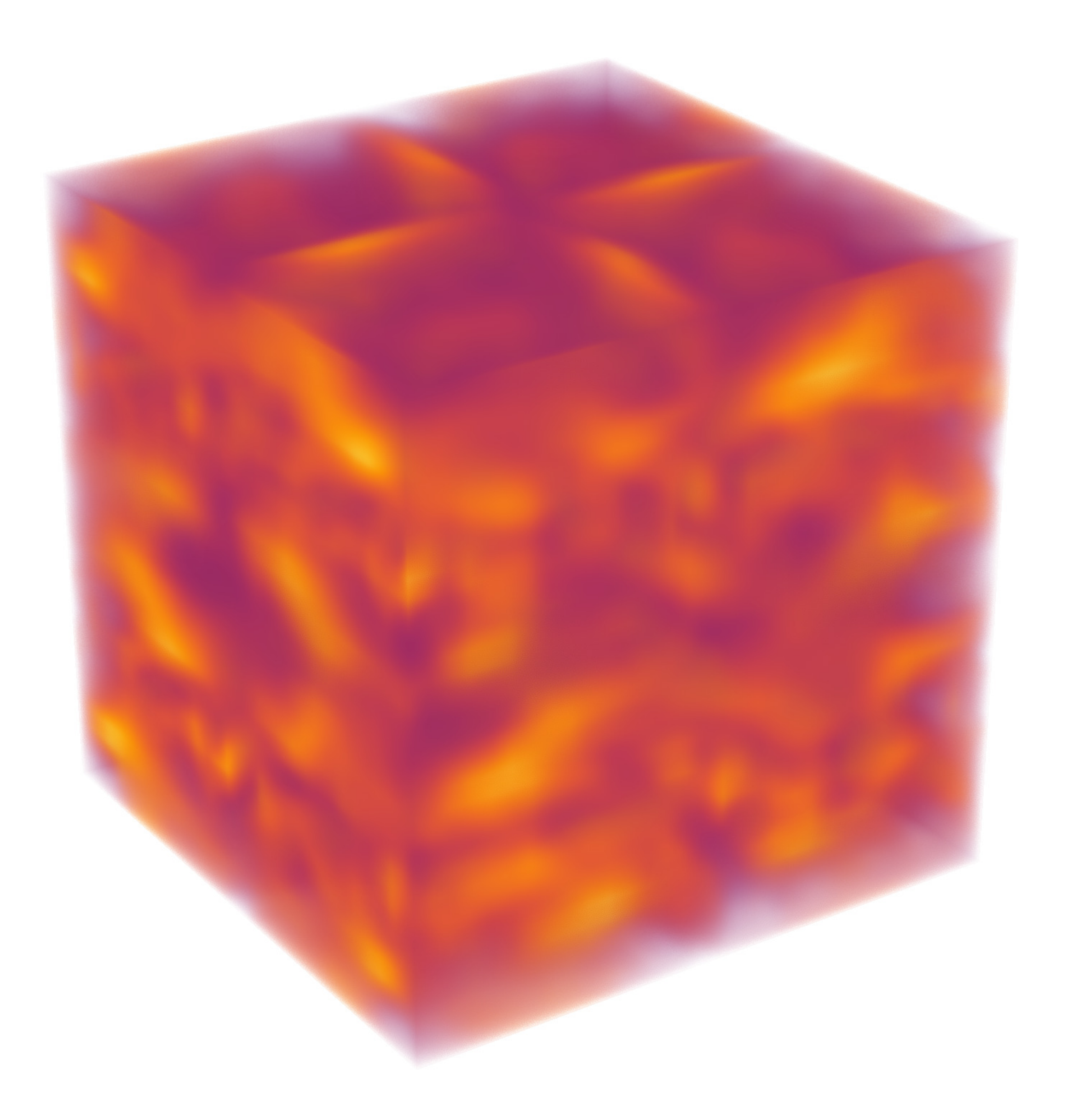}
          \label{subfig:statSolRTGV-sample10-1280}}
    \subfloat[Sample, \(R\!e = 2560\)]{\includegraphics[width=0.27\textwidth,trim={0cm 1.5cm 0cm 2cm},clip]{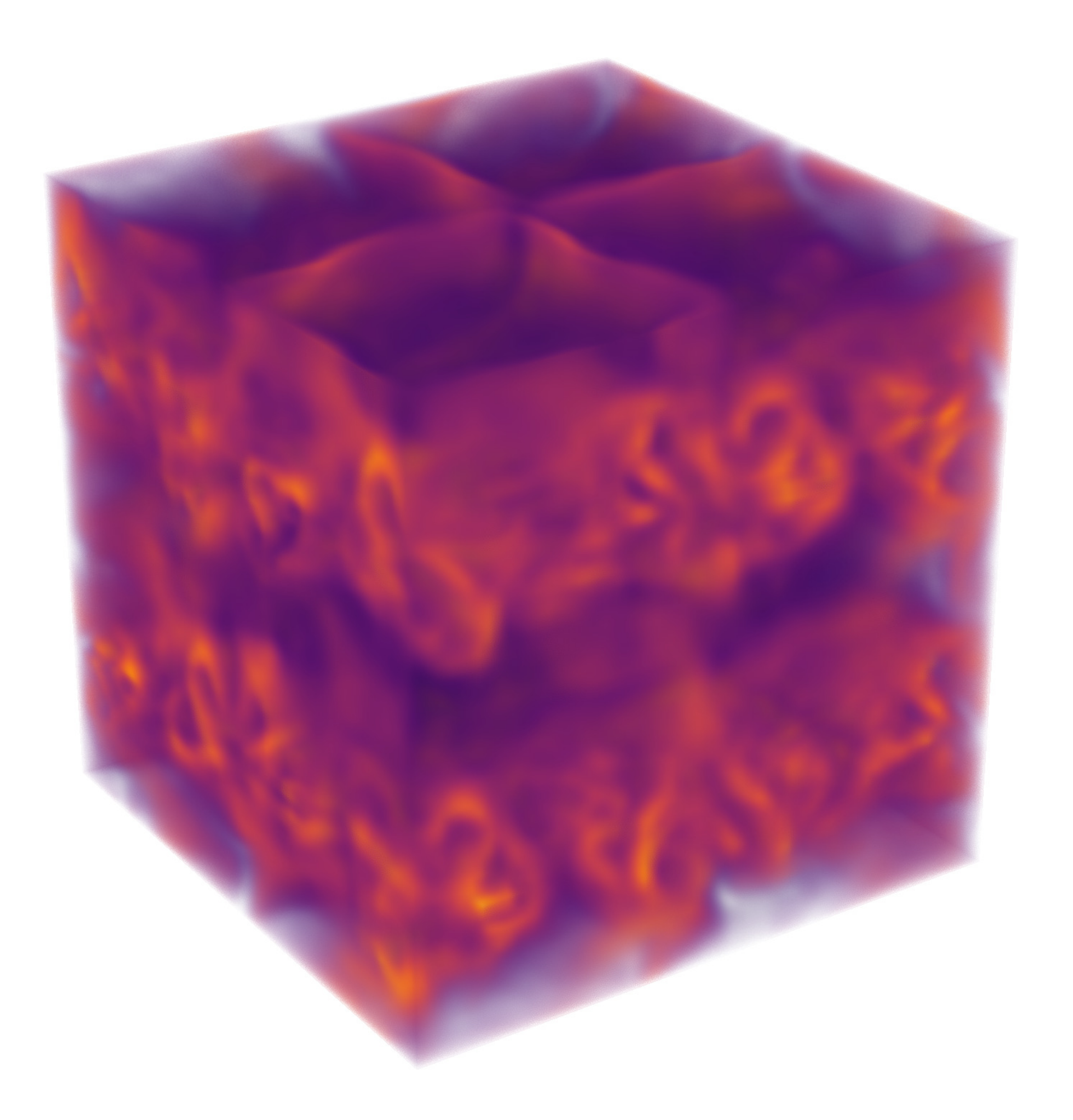}
          \label{subfig:statSolRTGV-sample10-2560}}
   	\subfloat[Sample, \(R\!e = 5120\)]{\includegraphics[width=0.27\textwidth,trim={0cm 1.5cm 0cm 2cm},clip]{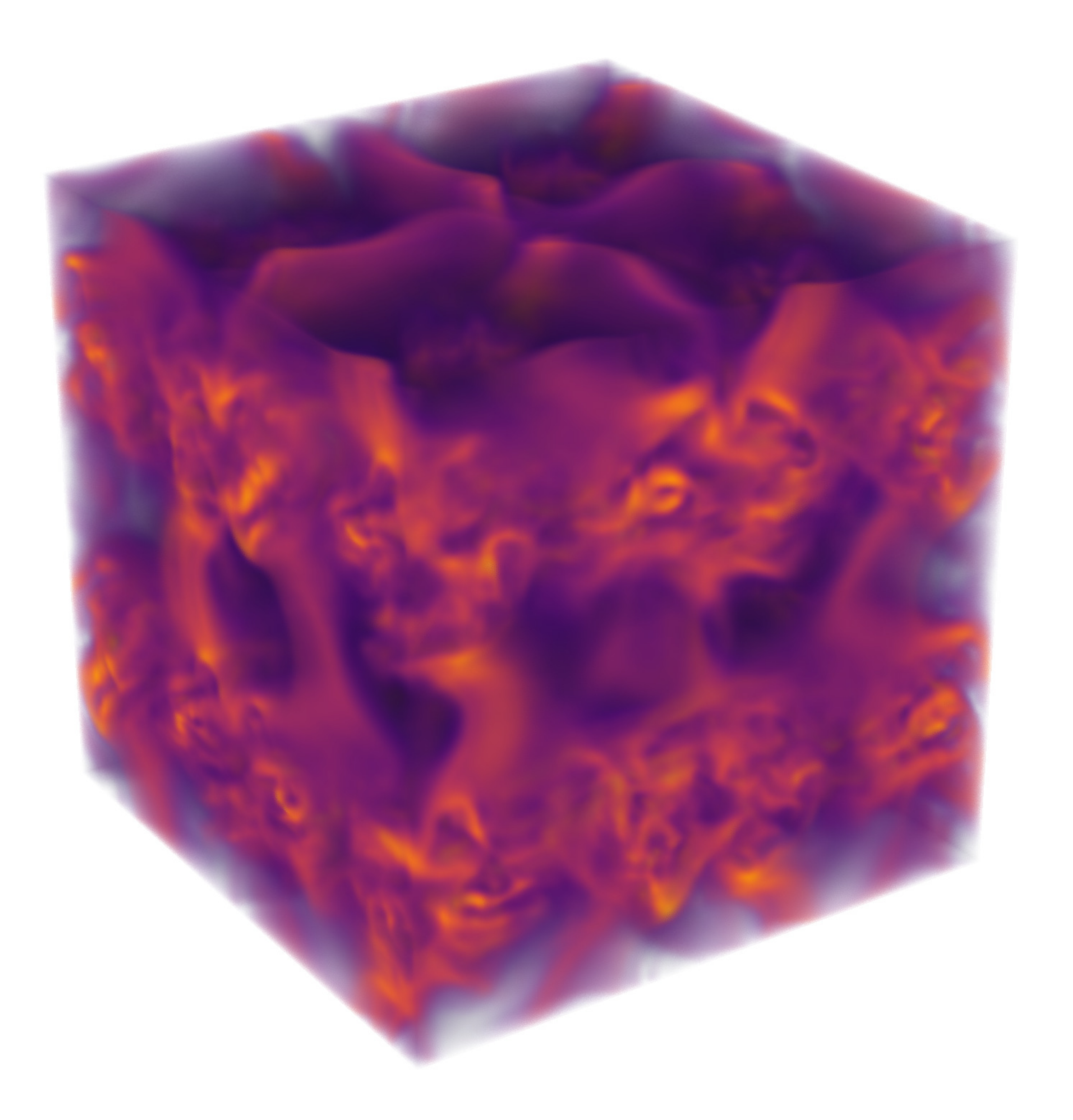}
          \label{subfig:statSolRTGV-sample10-5120}} 
    \hspace{.1em}
    \includegraphics[scale=1]{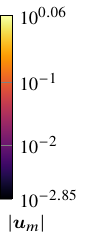}
    \\
    \subfloat[Mean, \(R\!e = 1280\)]{\includegraphics[width=0.27\textwidth,trim={0cm 1.5cm 0cm 2cm},clip]{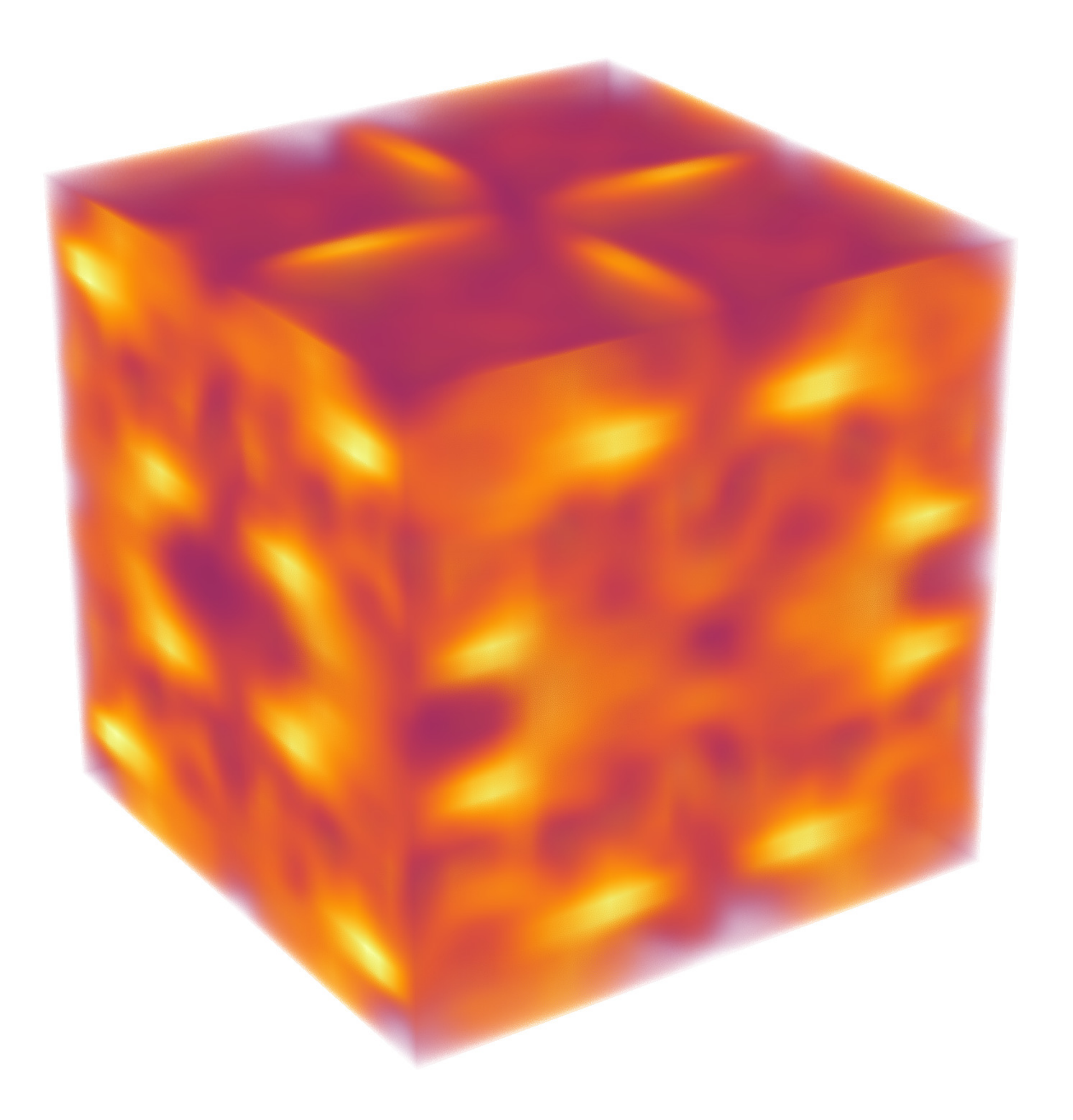}
          \label{subfig:statSolRTGV-mean10-1280}}
	\subfloat[Mean, \(R\!e = 2560\)]{\includegraphics[width=0.27\textwidth,trim={0cm 1.5cm 0cm 2cm},clip]{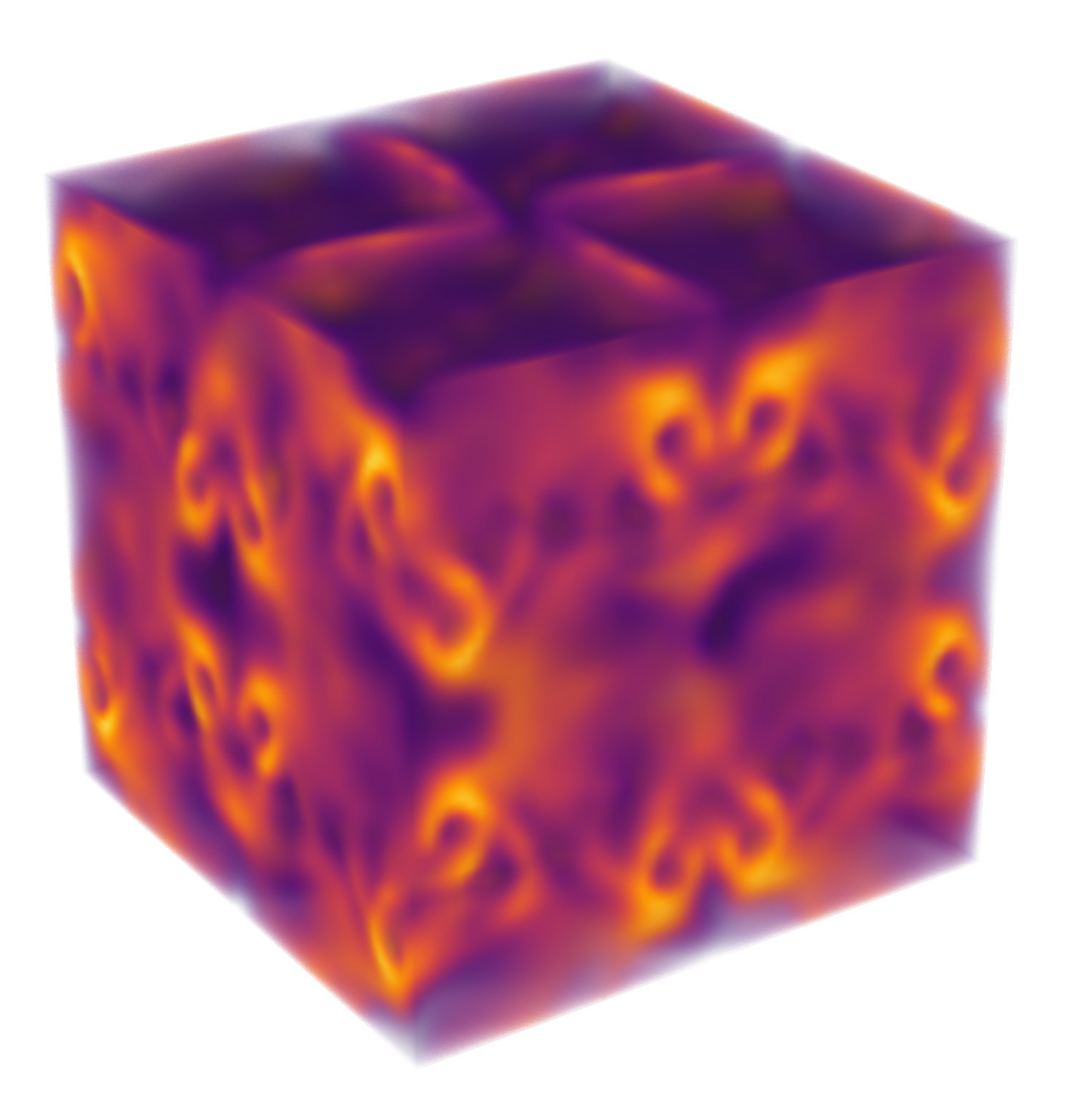}
          \label{subfig:statSolRTGV-mean10-2560}}
    \subfloat[Mean, \(R\!e = 5120\)]{\includegraphics[width=0.27\textwidth,trim={0cm 1.5cm 0cm 2cm},clip]{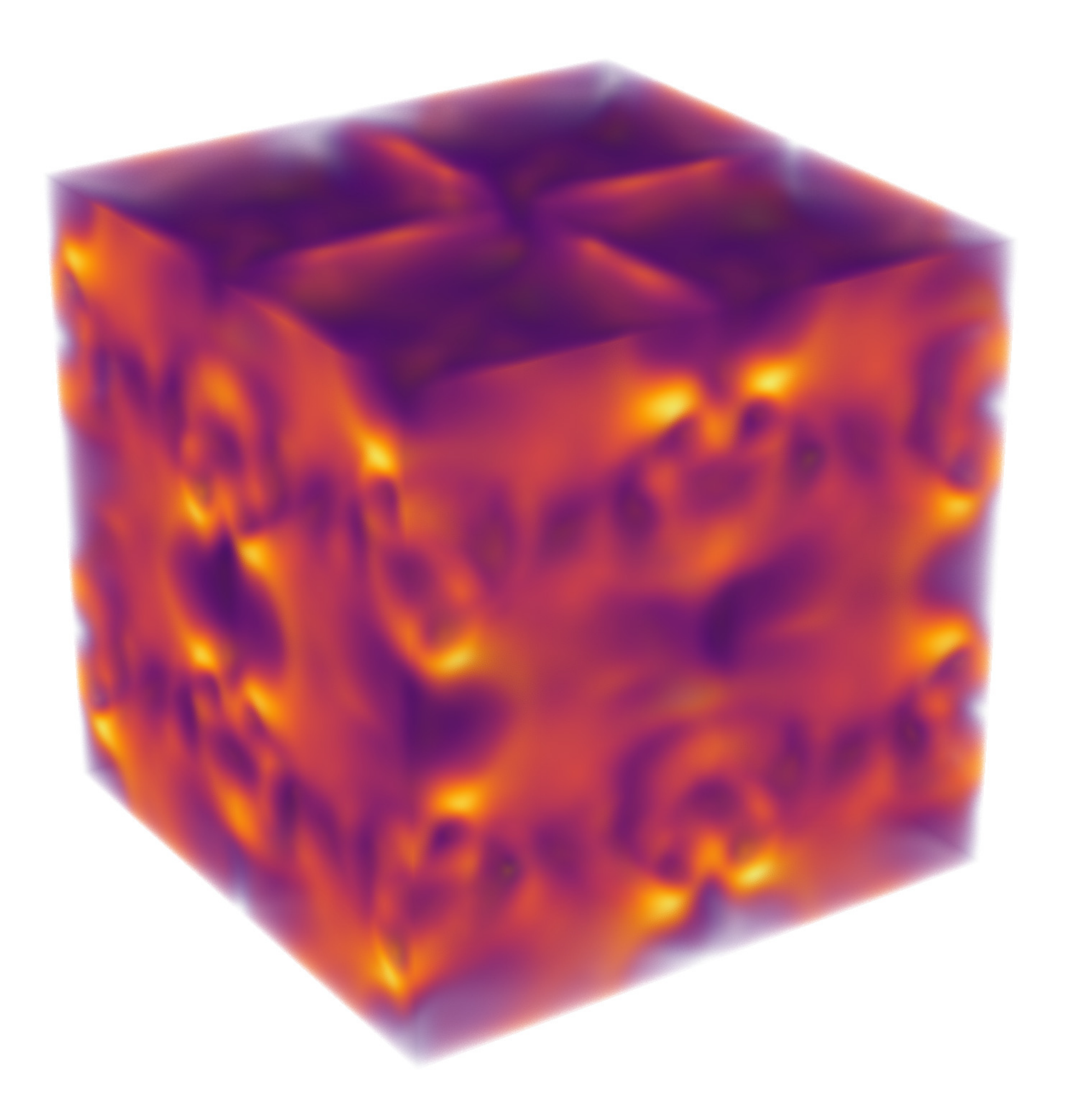}
          \label{subfig:statSolRTGV-mean10-5120}}
    \hspace{.1em}
    \includegraphics[scale=1]{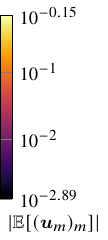}
    \\
    \subfloat[Std, \(R\!e = 1280\)]{\includegraphics[width=0.27\textwidth,trim={0cm 1.5cm 0cm 2cm},clip]{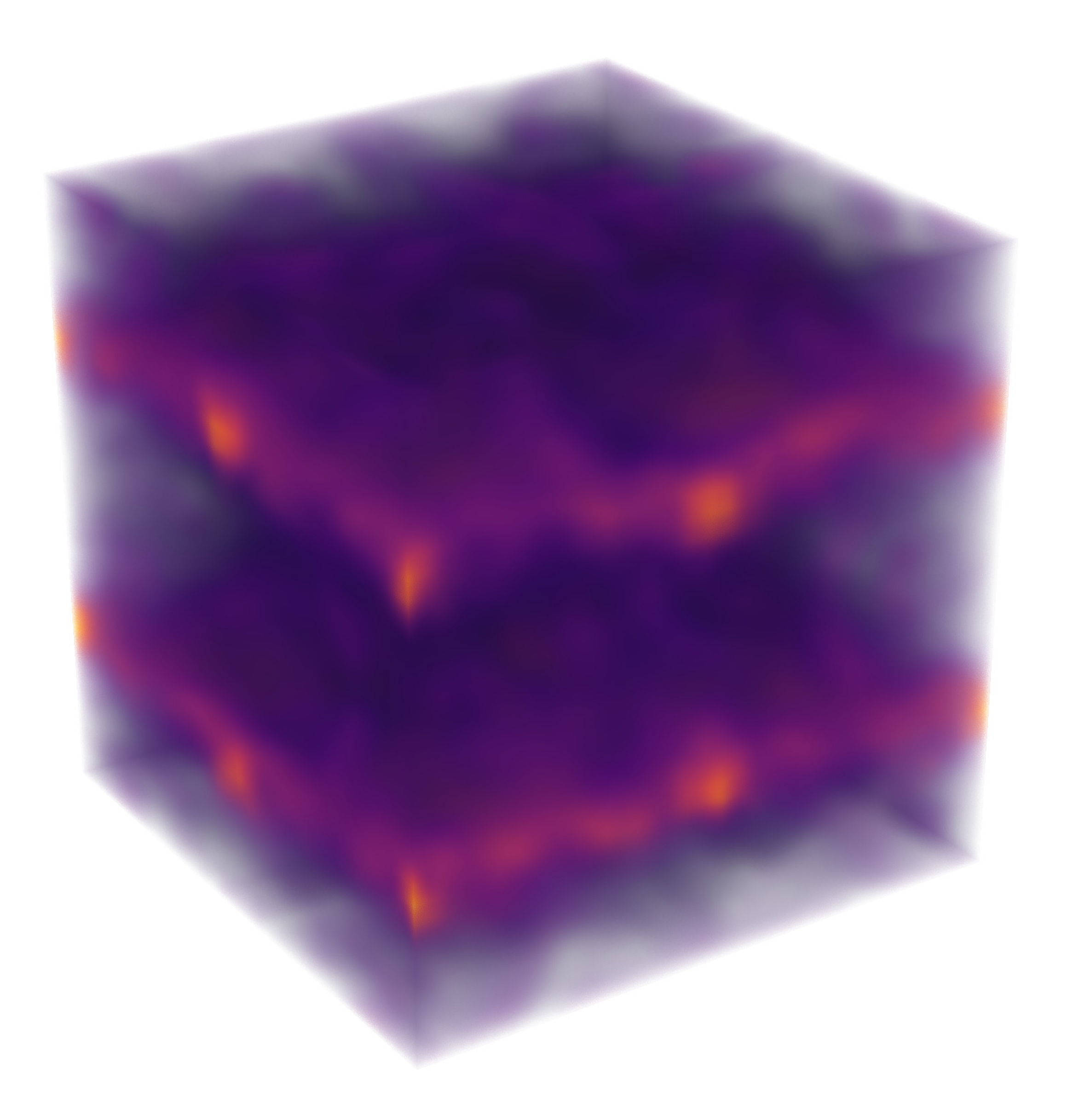}
          \label{subfig:statSolRTGV-std10-1280}}
	\subfloat[Std, \(R\!e = 2560\)]{\includegraphics[width=0.27\textwidth,trim={0cm 1.5cm 0cm 2cm},clip]{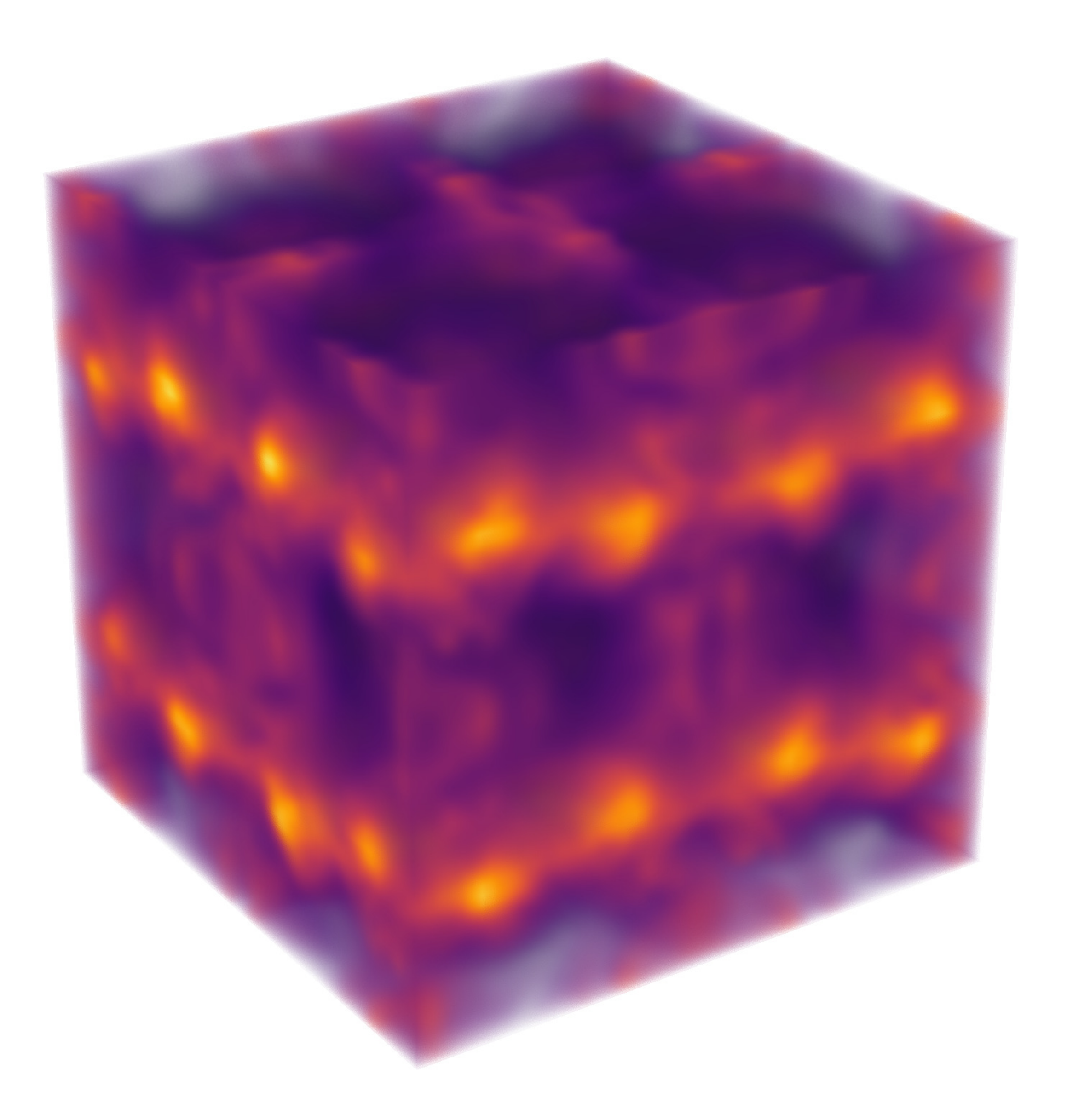}
          \label{subfig:statSolRTGV-std10-2560}}
    \subfloat[Std, \(R\!e = 5120\)]{\includegraphics[width=0.27\textwidth,trim={0cm 1.5cm 0cm 2cm},clip]{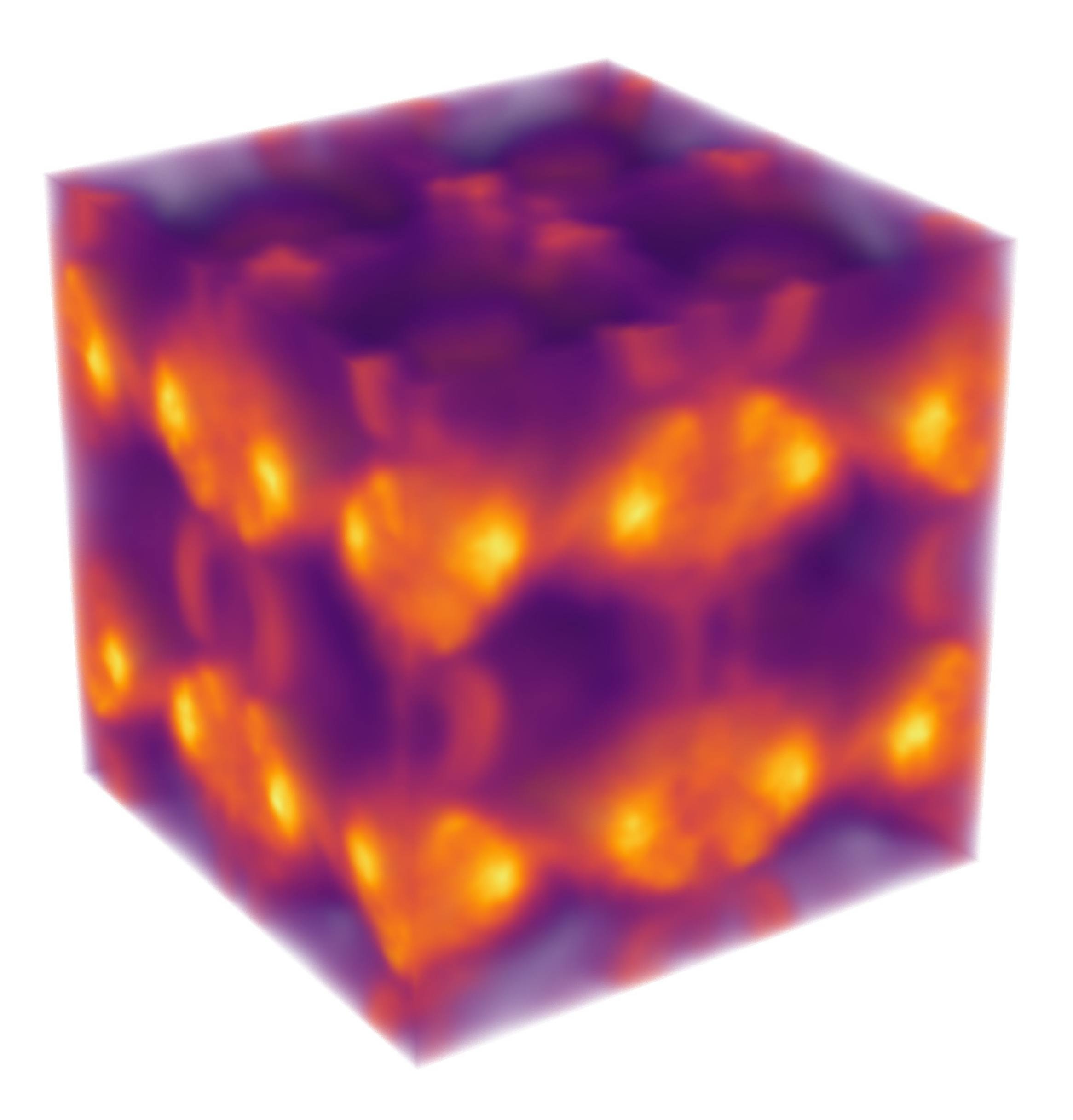}
          \label{subfig:statSolRTGV-std10-5120}}
    \hspace{.1em}
    \includegraphics[scale=1]{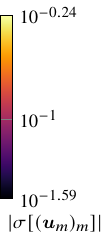}
    \\
	\caption{Iso-volume rendering of the velocity magnitude of the RTGV flow at \(t = 10\mathrm{s}\).  
    Top row: single samples; middle row: mean; bottom row: standard deviation (std); columns from left to right: \(R\!e = 1280, 2560, 5120\). 
    In the renderings, the color map is combined with an opacity transfer function that is linear in the data value, ranging from opacity zero at the minimum to one at the maximum of each colorbar. 
    The colorbars themselves omit this opacity. 
    Discretization parameters from Set~2 (Table~\ref{tab:params2}) are used.}
    \label{fig:statSolRTGV_10}
\end{figure}

\begin{figure}[ht!]
    %\centering
    \subfloat[Sample, \(R\!e = 1280\)]{\includegraphics[width=0.27\textwidth,trim={0cm 1.5cm 0cm 2cm},clip]{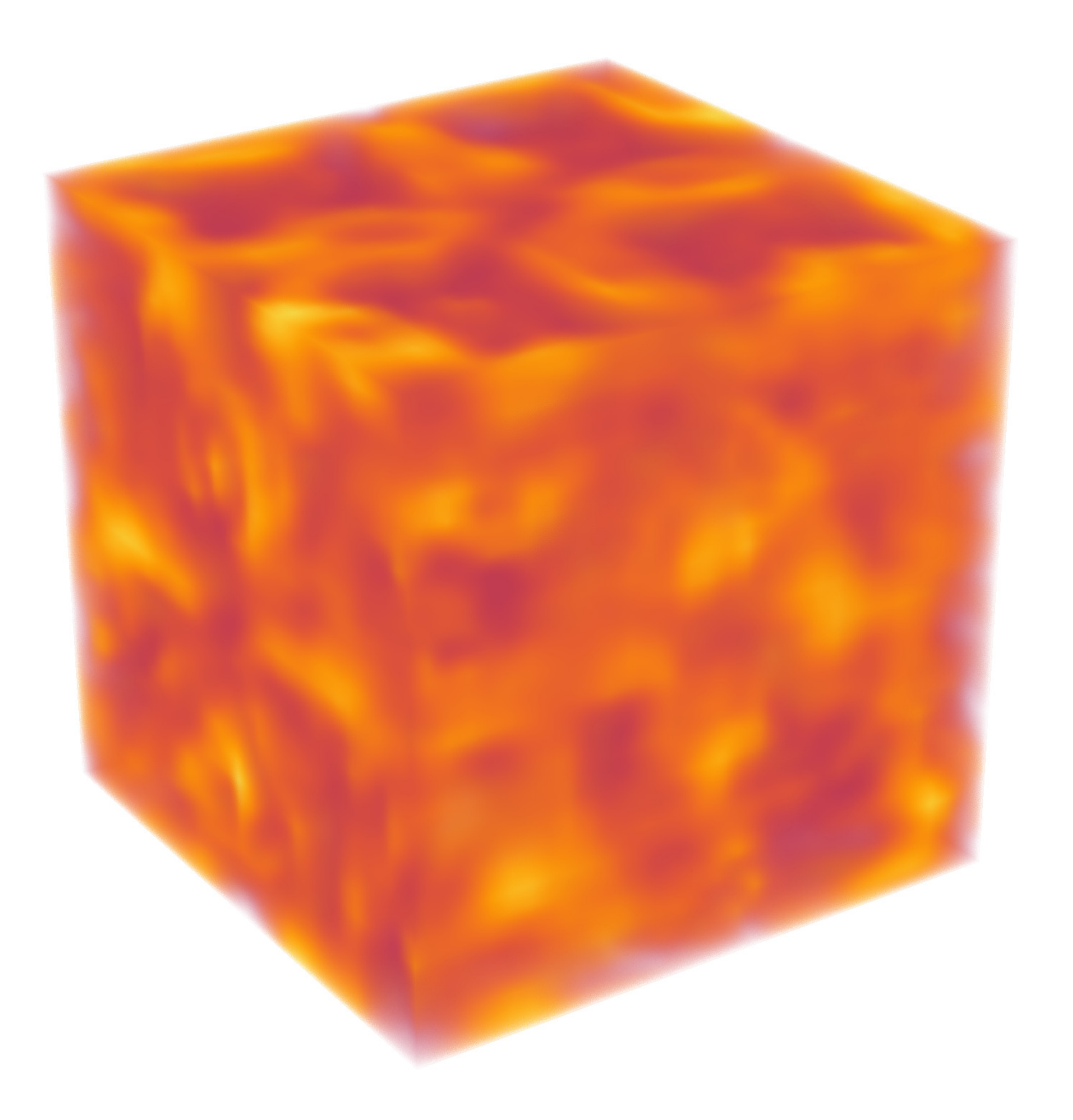}
          \label{subfig:statSolRTGV-sample15-1280}}
    \subfloat[Sample, \(R\!e = 2560\)]{\includegraphics[width=0.27\textwidth,trim={0cm 1.5cm 0cm 2cm},clip]{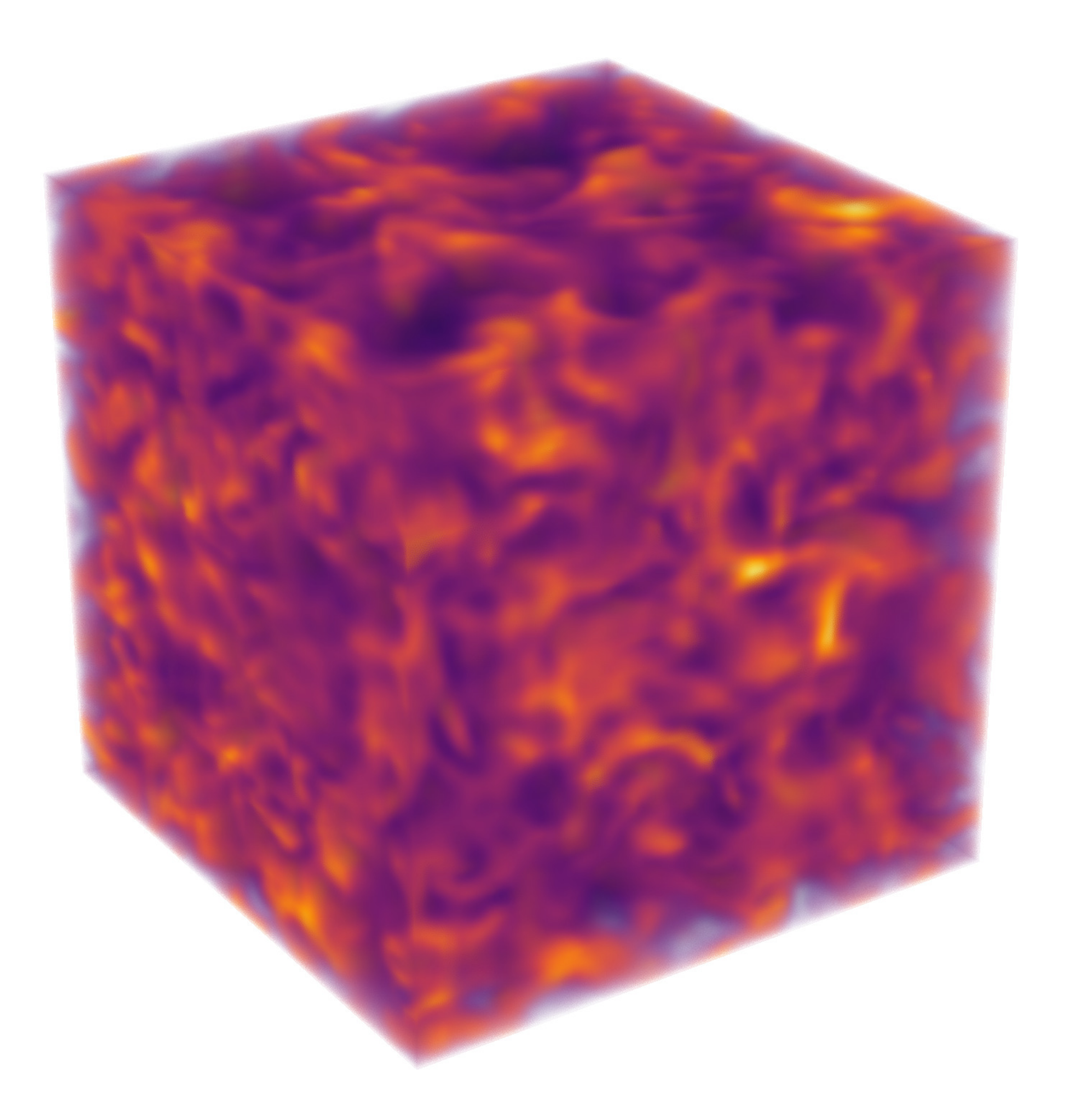}
          \label{subfig:statSolRTGV-sample15-2560}}
   	\subfloat[Sample, \(R\!e = 5120\)]{\includegraphics[width=0.27\textwidth,trim={0cm 1.5cm 0cm 2cm},clip]{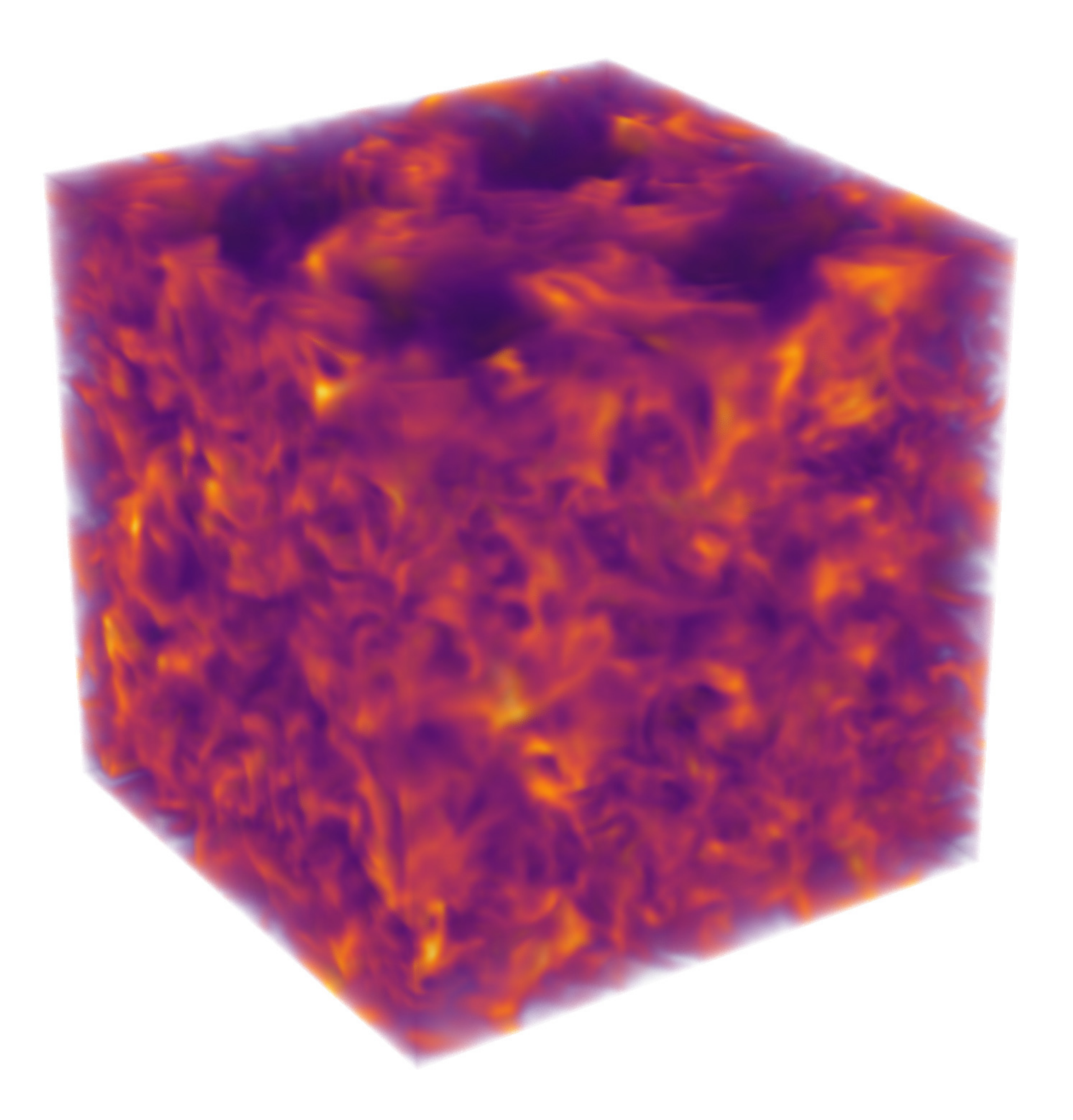}
          \label{subfig:statSolRTGV-sample15-5120}} 
    \hspace{.1em}
    \includegraphics[scale=1]{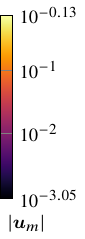}
    \\
    \subfloat[Mean, \(R\!e = 1280\)]{\includegraphics[width=0.27\textwidth,trim={0cm 1.5cm 0cm 2cm},clip]{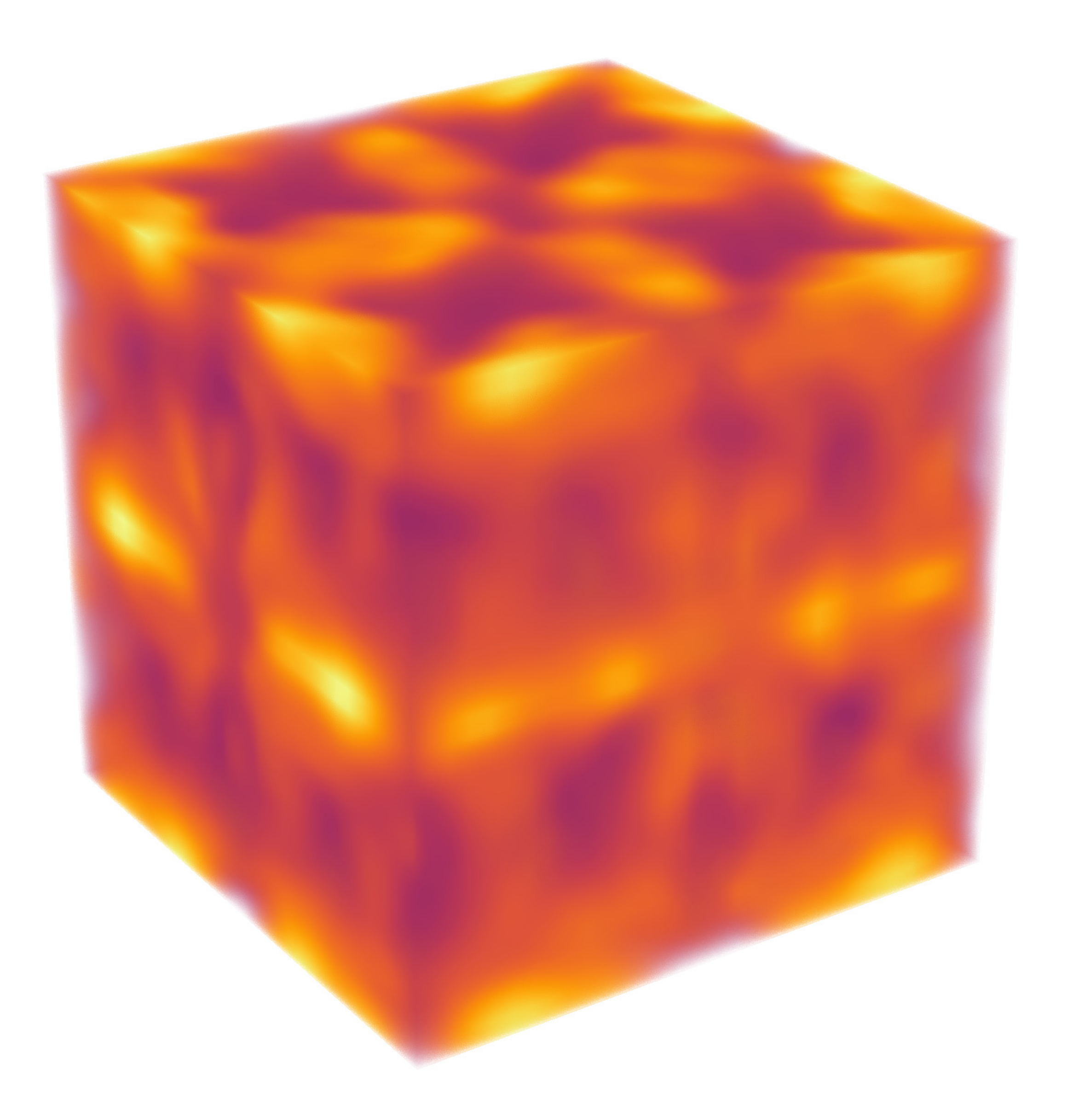}
          \label{subfig:statSolRTGV-mean15-1280}}
	\subfloat[Mean, \(R\!e = 2560\)]{\includegraphics[width=0.27\textwidth,trim={0cm 1.5cm 0cm 2cm},clip]{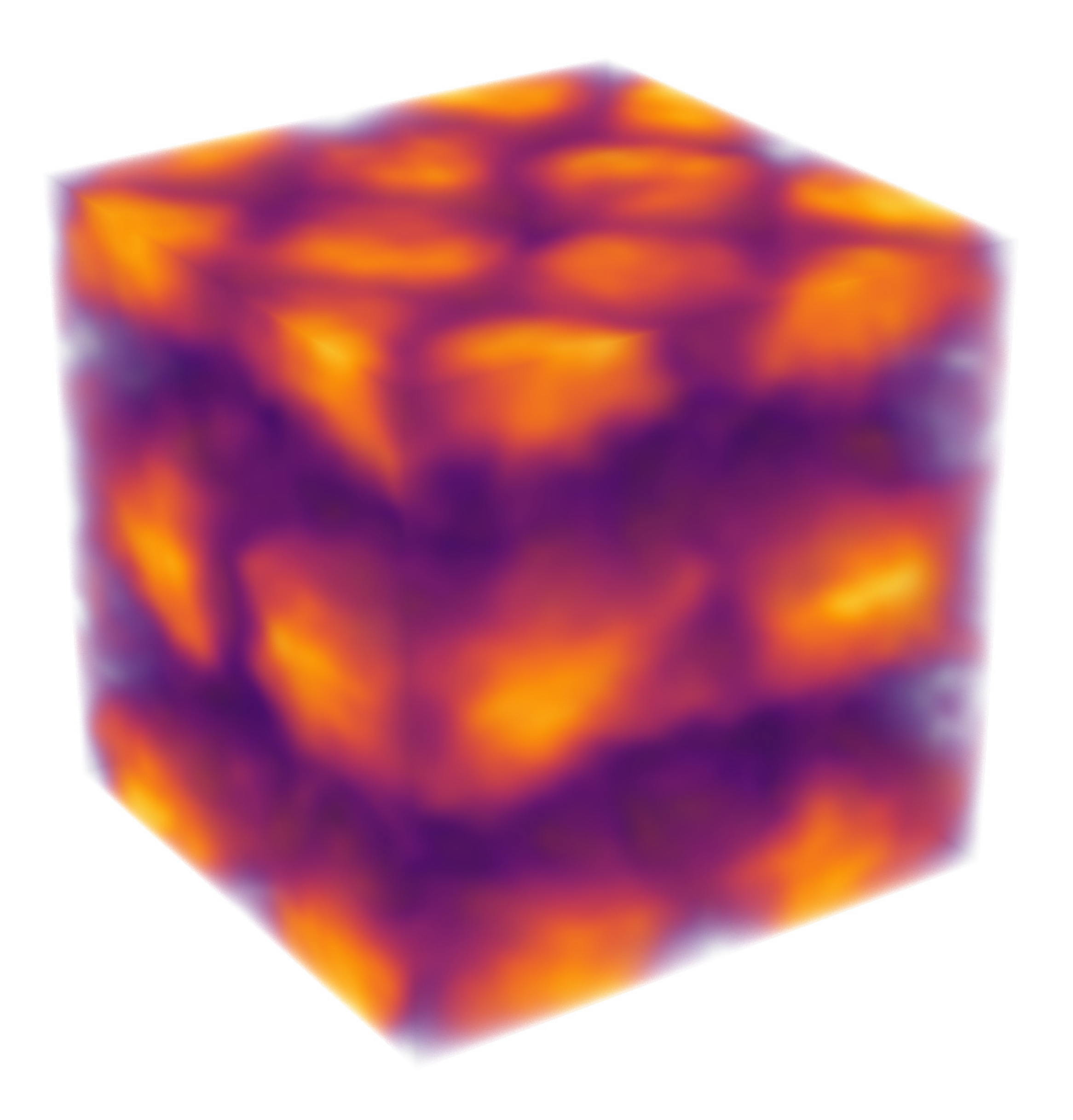}
          \label{subfig:statSolRTGV-mean15-2560}}
    \subfloat[Mean, \(R\!e = 5120\)]{\includegraphics[width=0.27\textwidth,trim={0cm 1.5cm 0cm 2cm},clip]{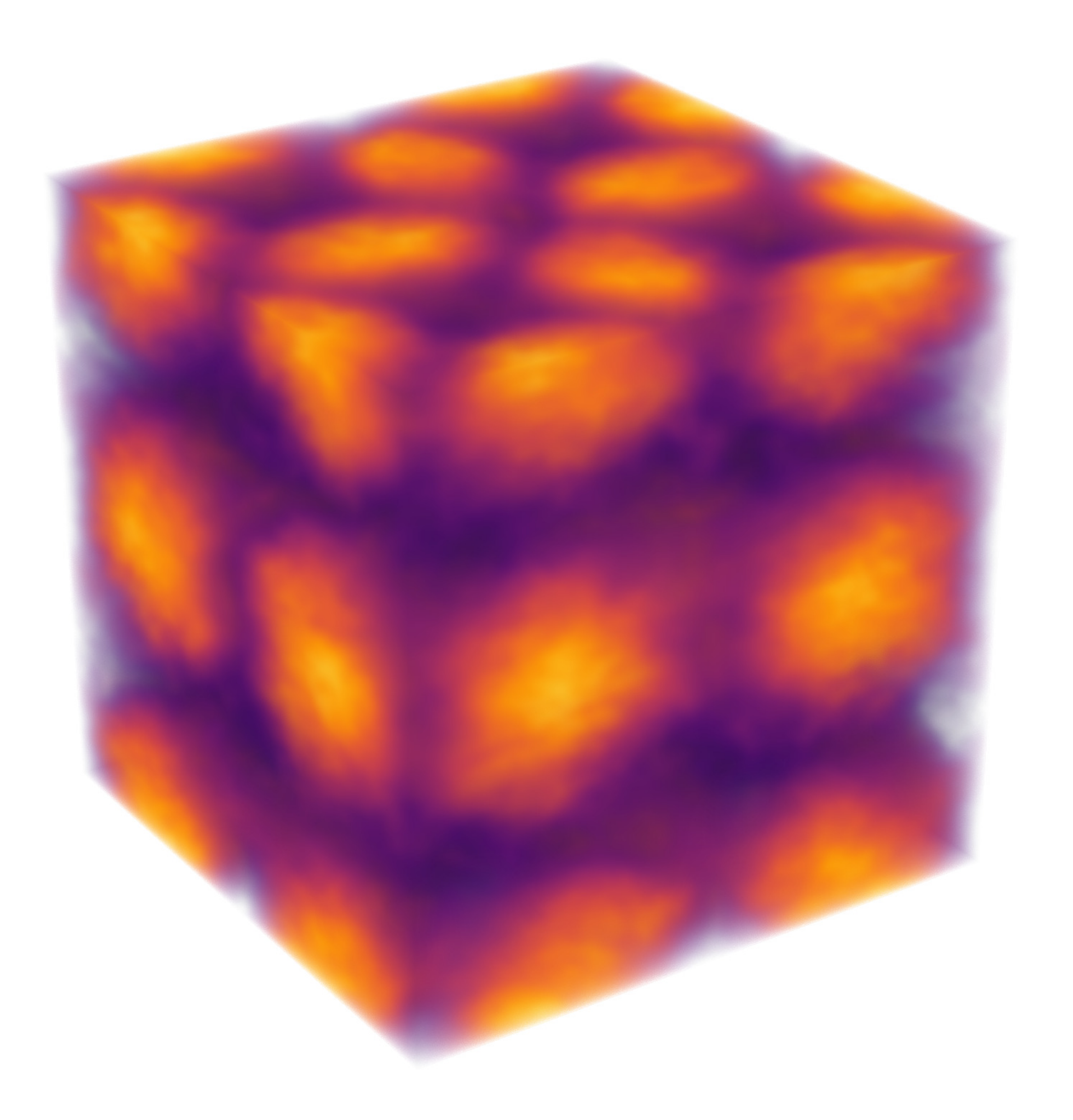}
          \label{subfig:statSolRTGV-mean15-5120}}
    \hspace{.1em}
    \includegraphics[scale=1]{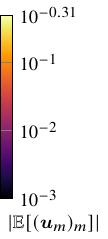}    
    \\
    \subfloat[Std, \(R\!e = 1280\)]{\includegraphics[width=0.27\textwidth,trim={0cm 1.5cm 0cm 2cm},clip]{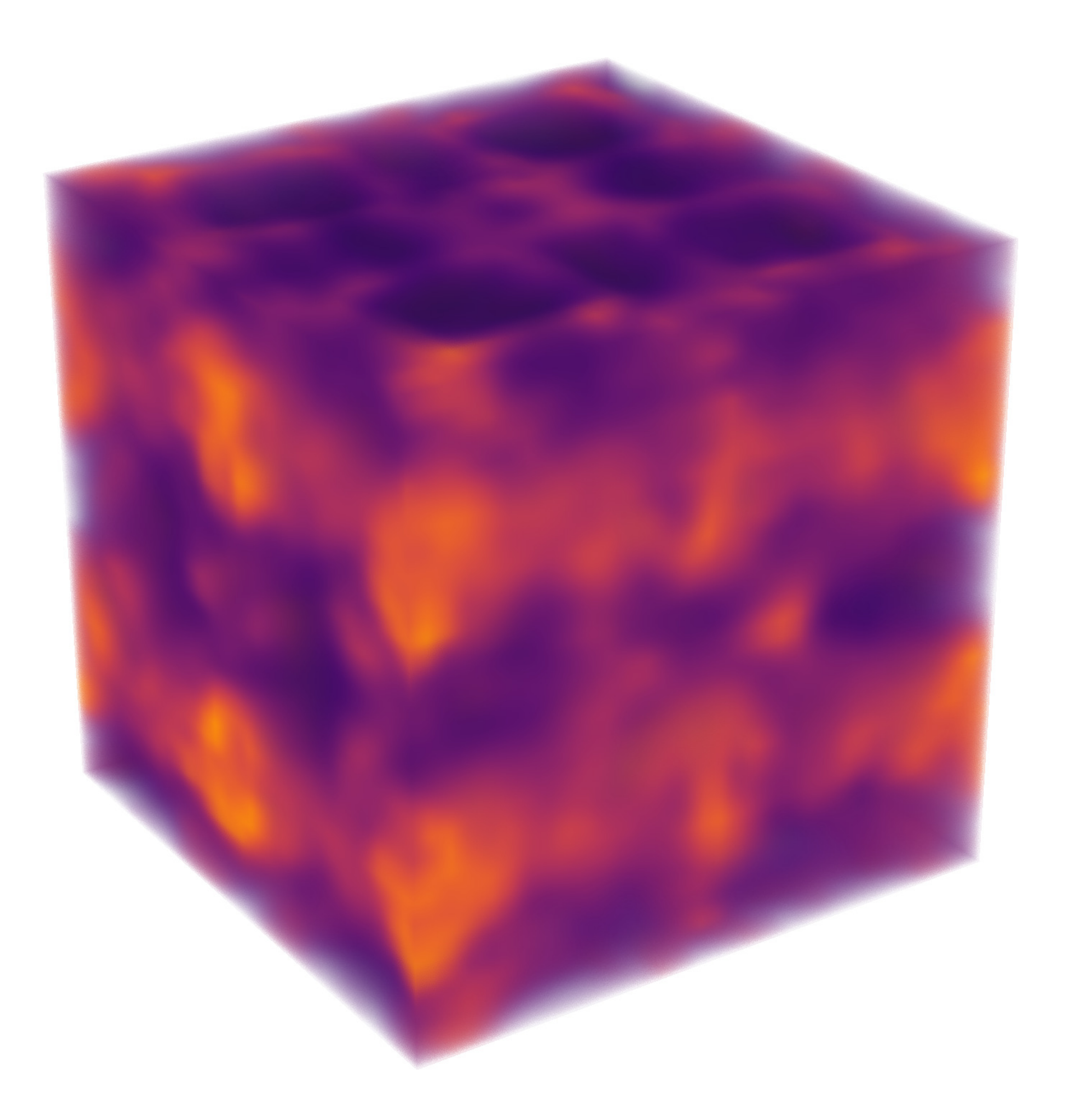}
          \label{subfig:statSolRTGV-std15-1280}}
	\subfloat[Std, \(R\!e = 2560\)]{\includegraphics[width=0.27\textwidth,trim={0cm 1.5cm 0cm 2cm},clip]{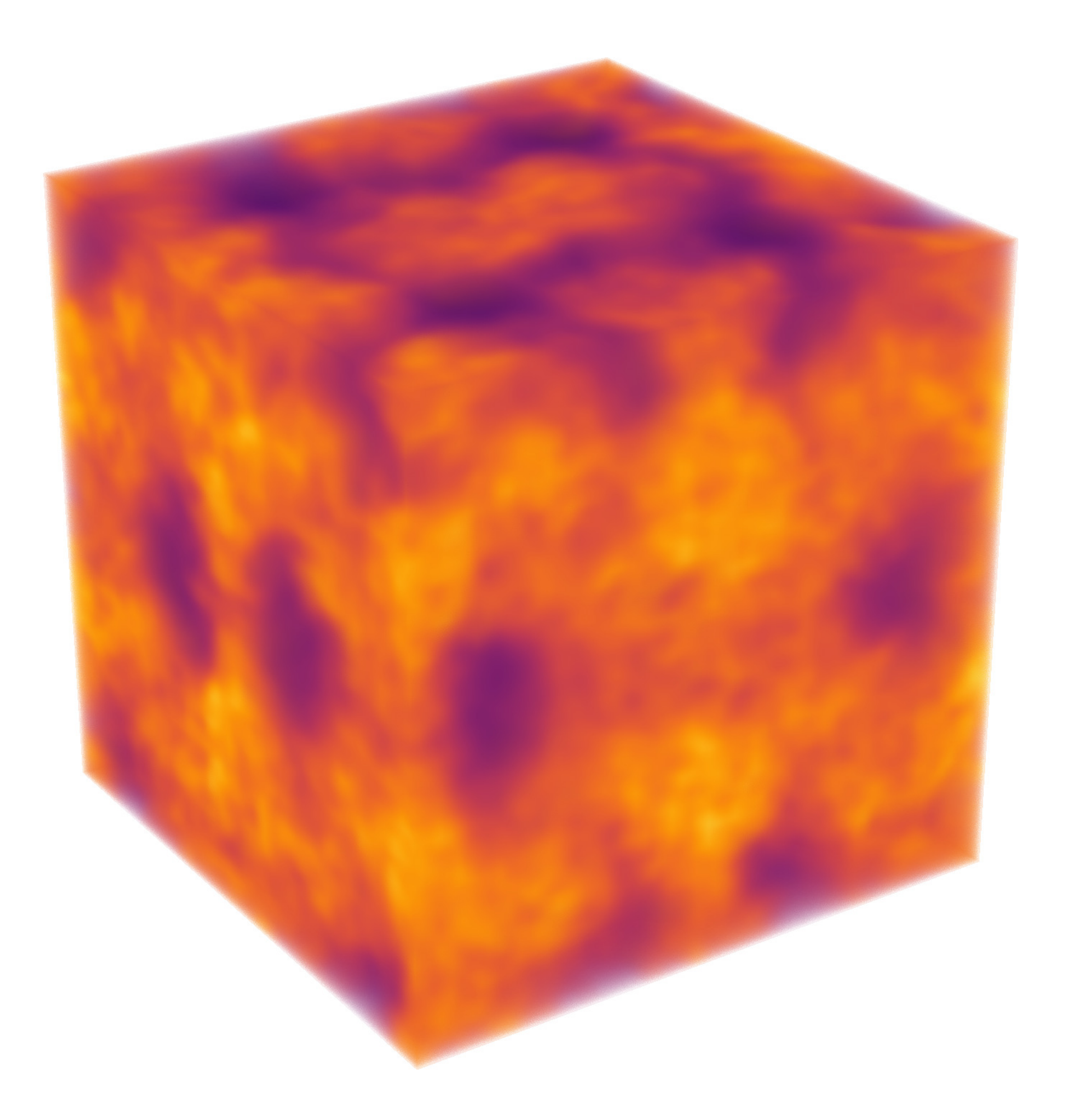}
          \label{subfig:statSolRTGV-std15-2560}}
    \subfloat[Std, \(R\!e = 5120\)]{\includegraphics[width=0.27\textwidth,trim={0cm 1.5cm 0cm 2cm},clip]{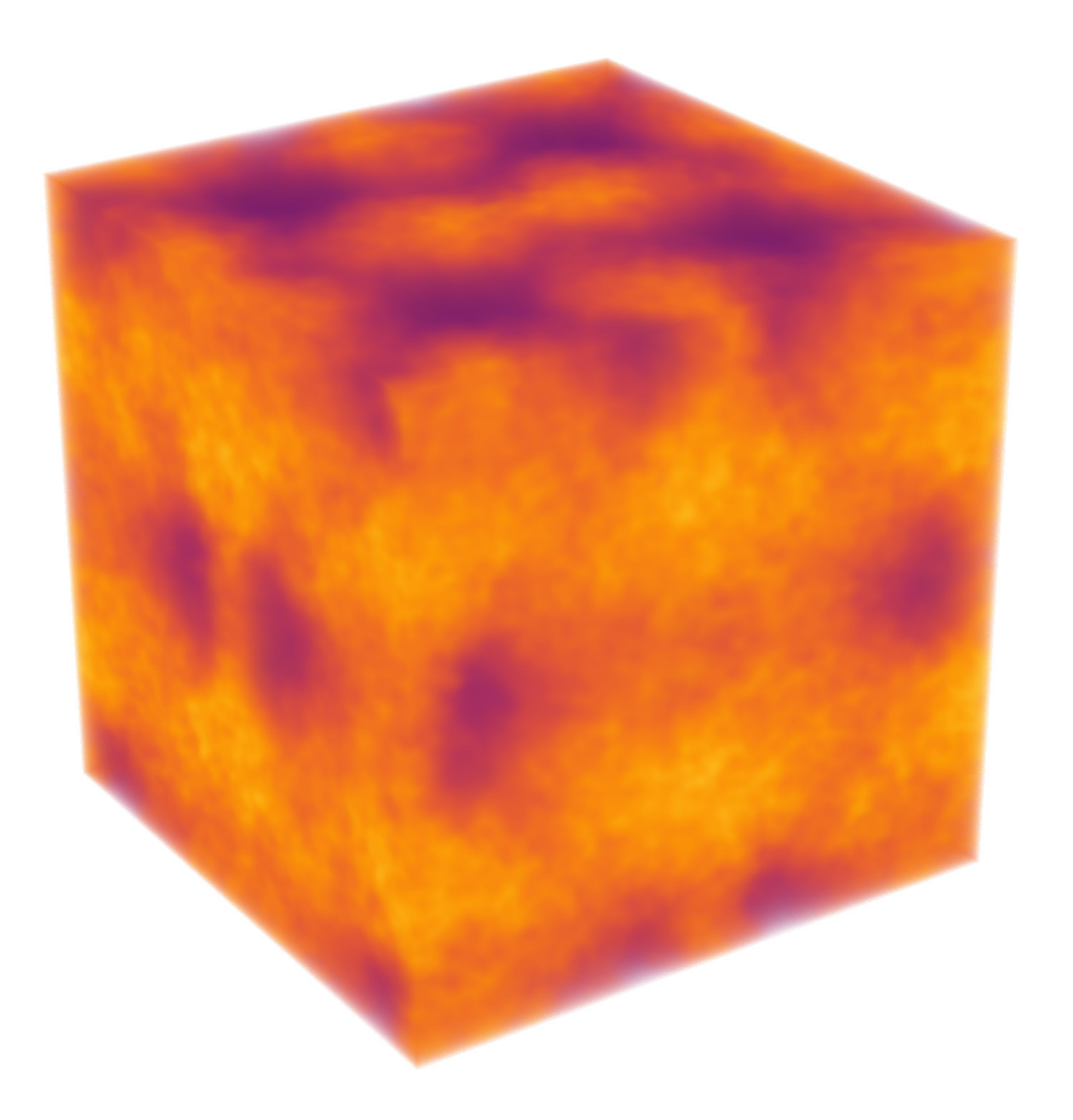}
          \label{subfig:statSolRTGV-std15-5120}}
    \hspace{.1em}
    \includegraphics[scale=1]{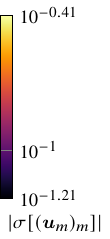}
    \\
	\caption{Iso-volume rendering of the velocity magnitude of the RTGV flow at \(t = 15\mathrm{s}\).  
    Top row: single samples; middle row: mean; bottom row: standard deviation (std); columns from left to right: \(R\!e = 1280, 2560, 5120\). 
    In the renderings, the color map is combined with an opacity transfer function that is linear in the data value, ranging from opacity zero at the minimum to one at the maximum of each colorbar. 
    The colorbars themselves omit this opacity. 
    Discretization parameters from Set~2 (Table~\ref{tab:params2}) are used.}
    \label{fig:statSolRTGV_15}
\end{figure}

\begin{figure}[ht!]
    %\centering
    \subfloat[Sample, \(R\!e = 1280\)]{\includegraphics[width=0.27\textwidth,trim={0cm 1.5cm 0cm 2cm},clip]{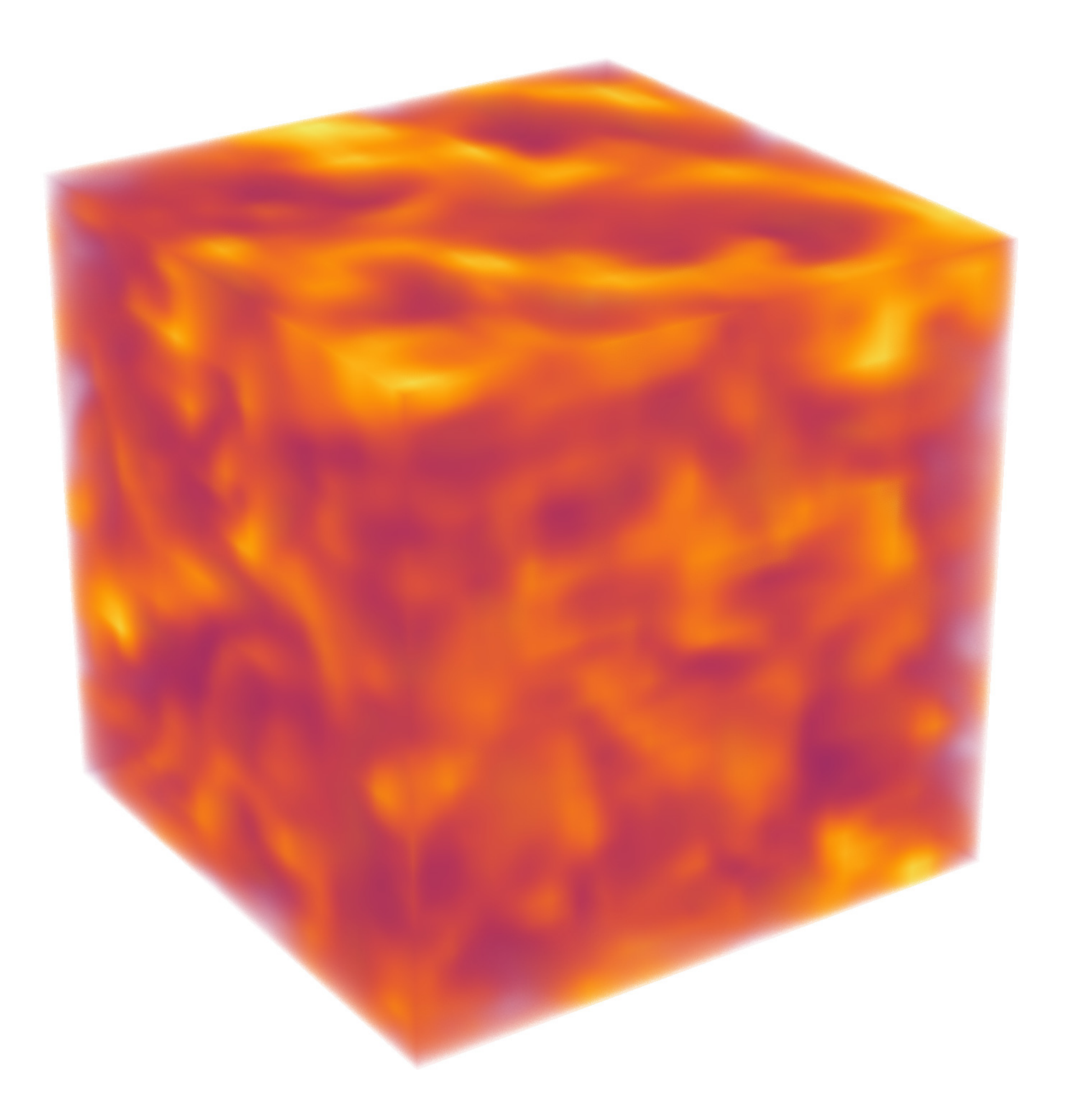}
          \label{subfig:statSolRTGV-sample20-1280}}
    \subfloat[Sample, \(R\!e = 2560\)]{\includegraphics[width=0.27\textwidth,trim={0cm 1.5cm 0cm 2cm},clip]{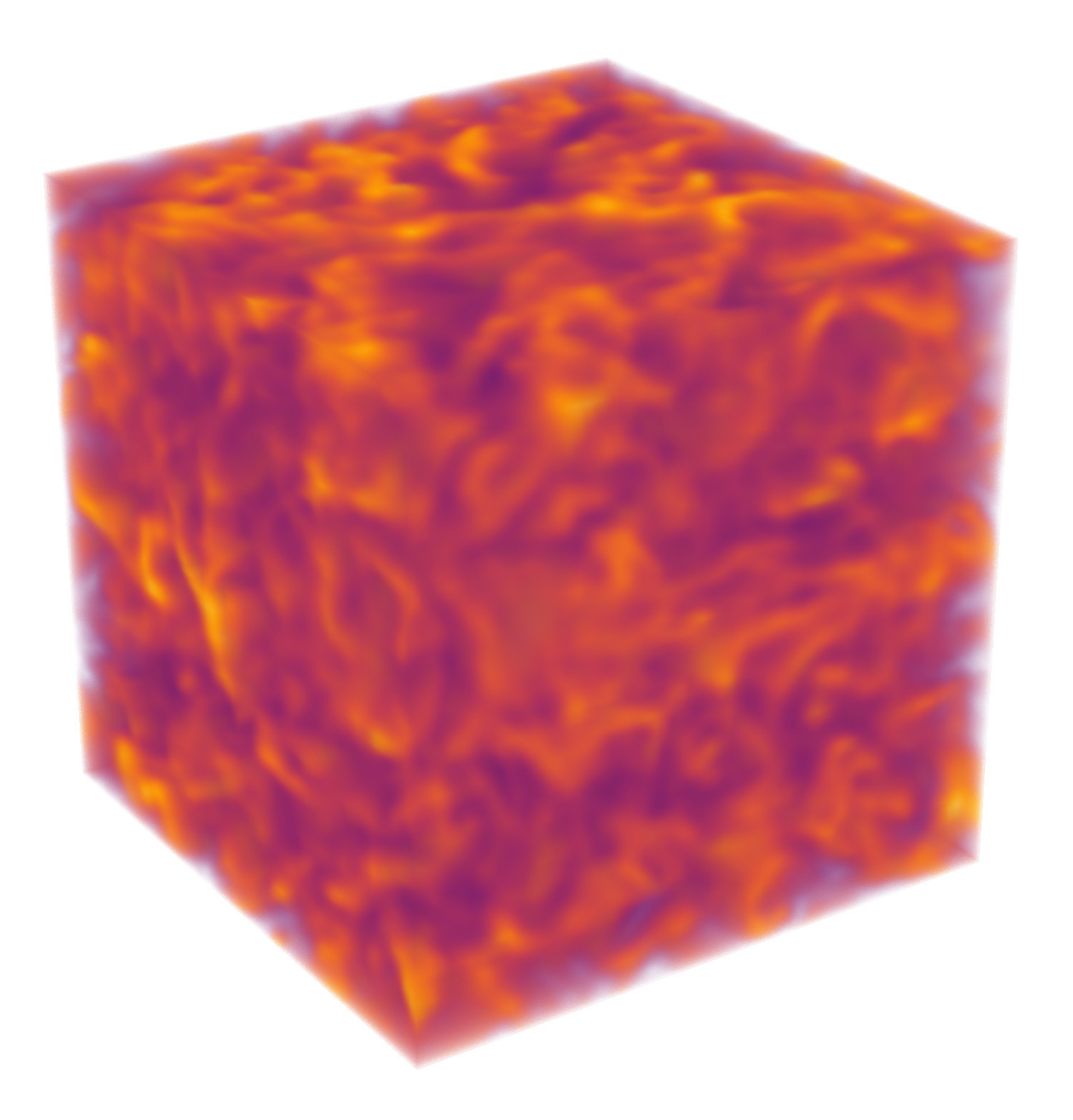}
          \label{subfig:statSolRTGV-sample20-2560}}
   	\subfloat[Sample, \(R\!e = 5120\)]{\includegraphics[width=0.27\textwidth,trim={0cm 1.5cm 0cm 2cm},clip]{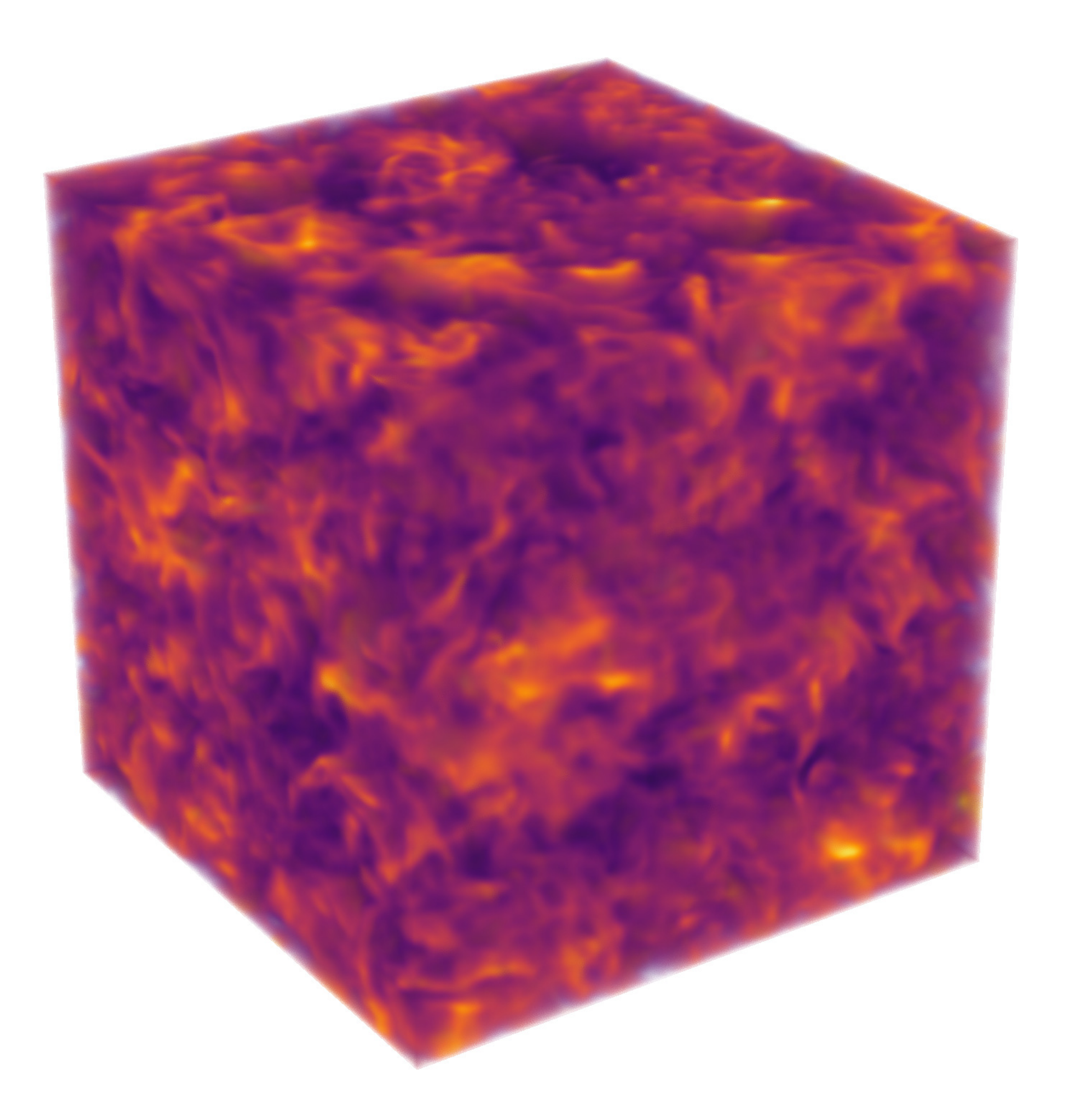}
          \label{subfig:statSolRTGV-sample20-5120}} 
    \hspace{.1em}
    \includegraphics[scale=1]{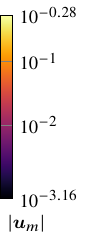}
    \\
    \subfloat[Mean, \(R\!e = 1280\)]{\includegraphics[width=0.27\textwidth,trim={0cm 1.5cm 0cm 2cm},clip]{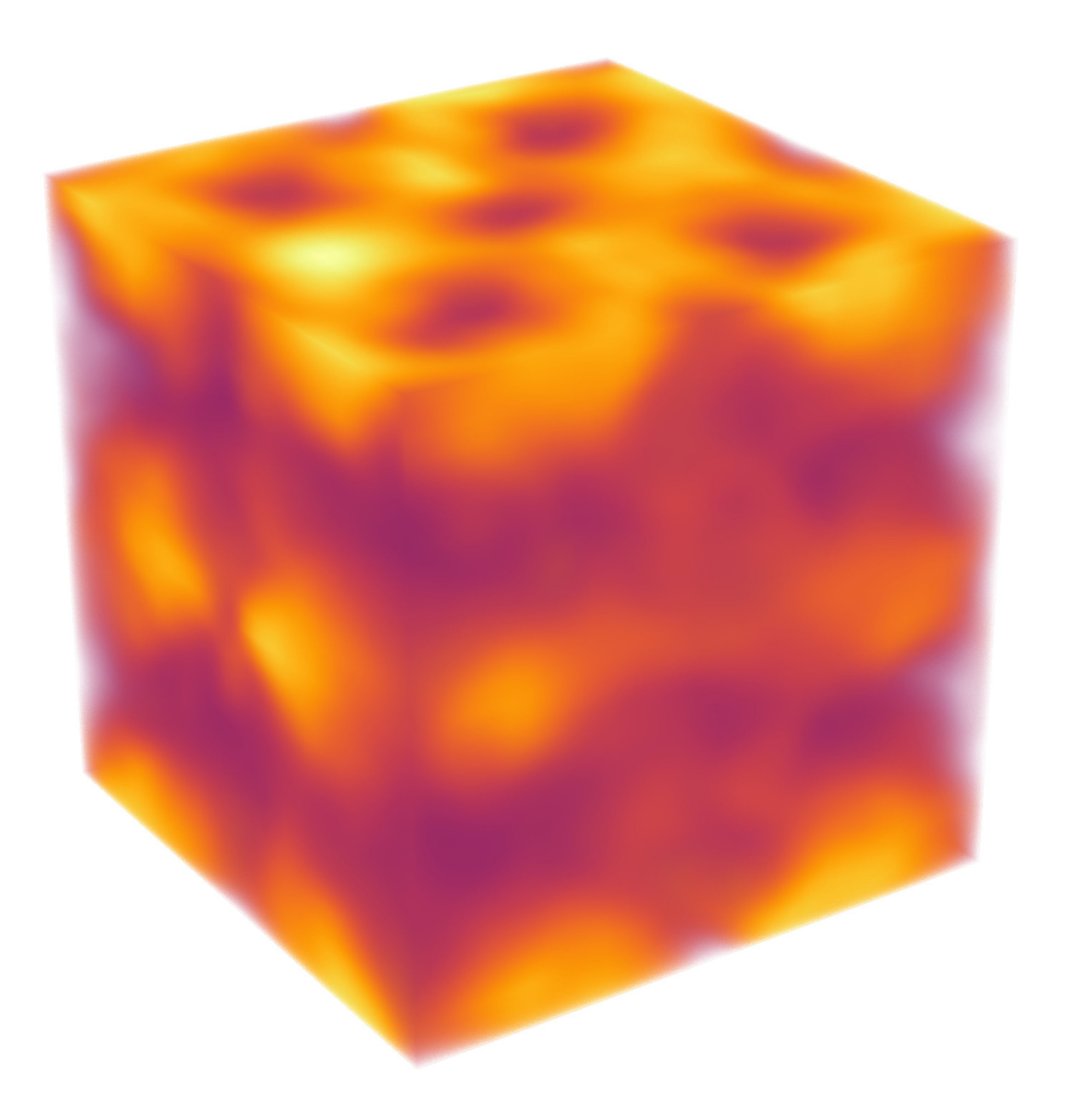}
          \label{subfig:statSolRTGV-mean20-1280}}
	\subfloat[Mean, \(R\!e = 2560\)]{\includegraphics[width=0.27\textwidth,trim={0cm 1.5cm 0cm 2cm},clip]{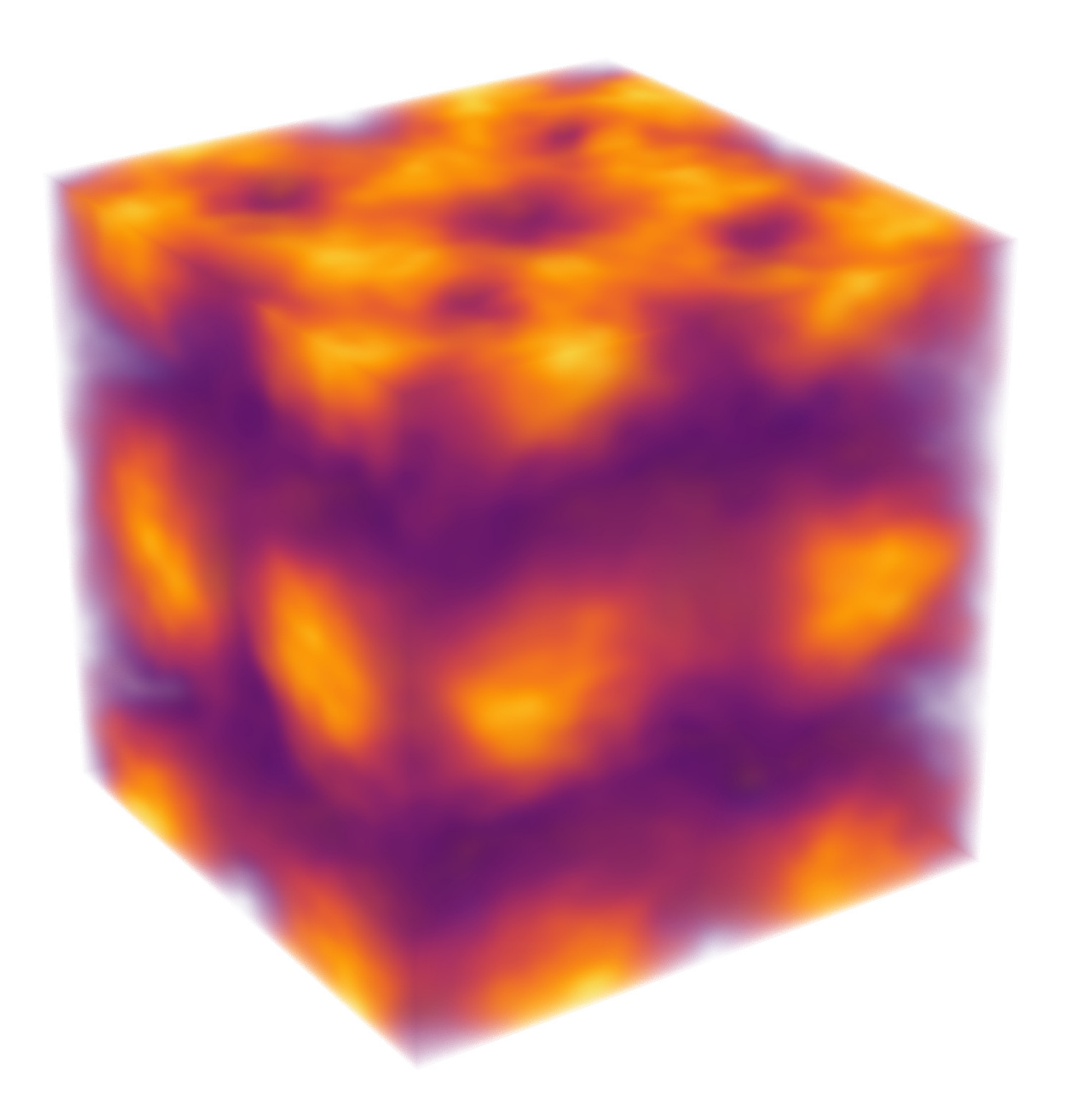}
          \label{subfig:statSolRTGV-mean20-2560}}
    \subfloat[Mean, \(R\!e = 5120\)]{\includegraphics[width=0.27\textwidth,trim={0cm 1.5cm 0cm 2cm},clip]{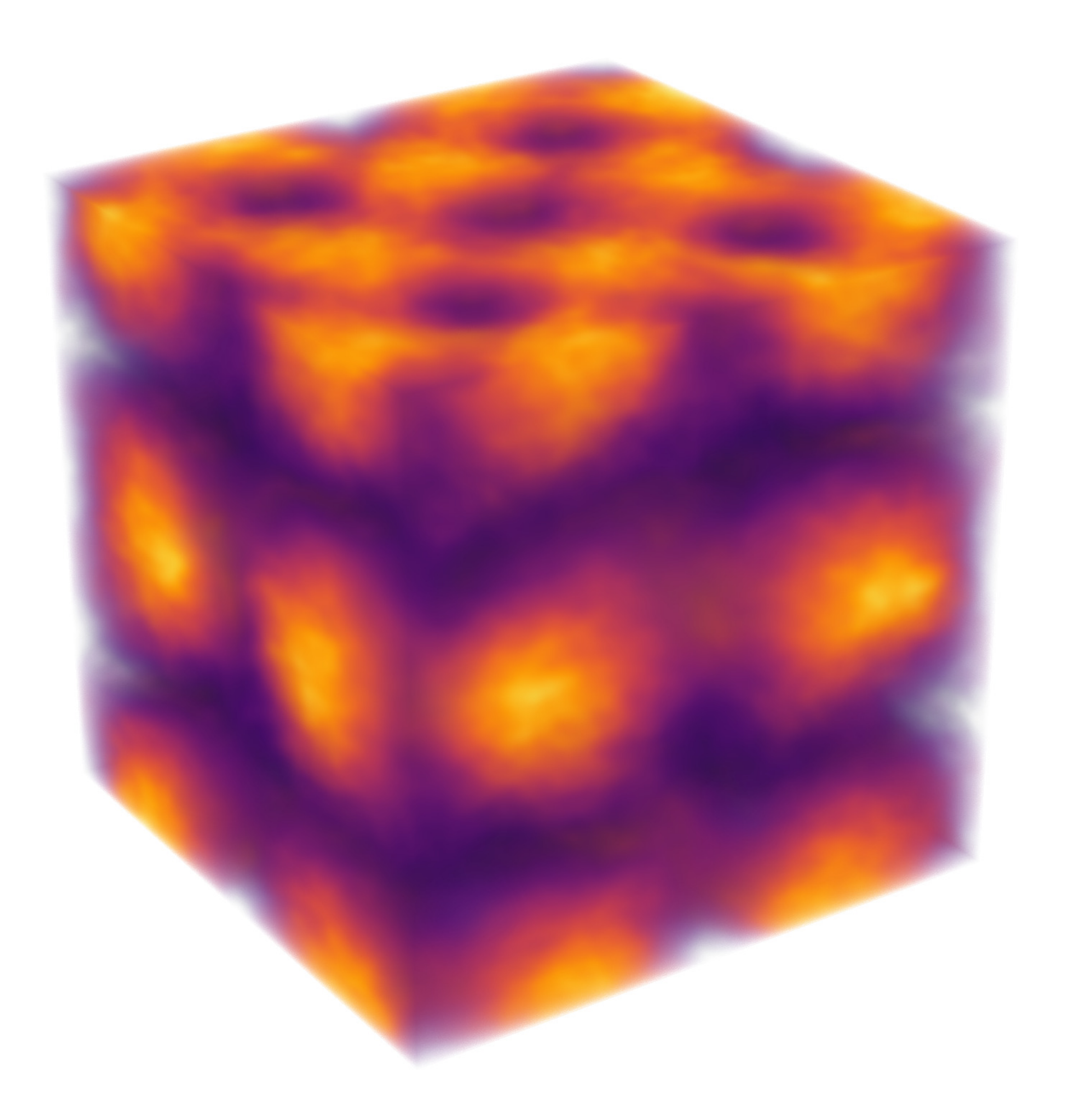}
          \label{subfig:statSolRTGV-mean20-5120}}
    \hspace{.1em}
    \includegraphics[scale=1]{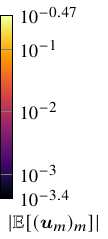}
    \\
    \subfloat[Std, \(R\!e = 1280\)]{\includegraphics[width=0.27\textwidth,trim={0cm 1.5cm 0cm 2cm},clip]{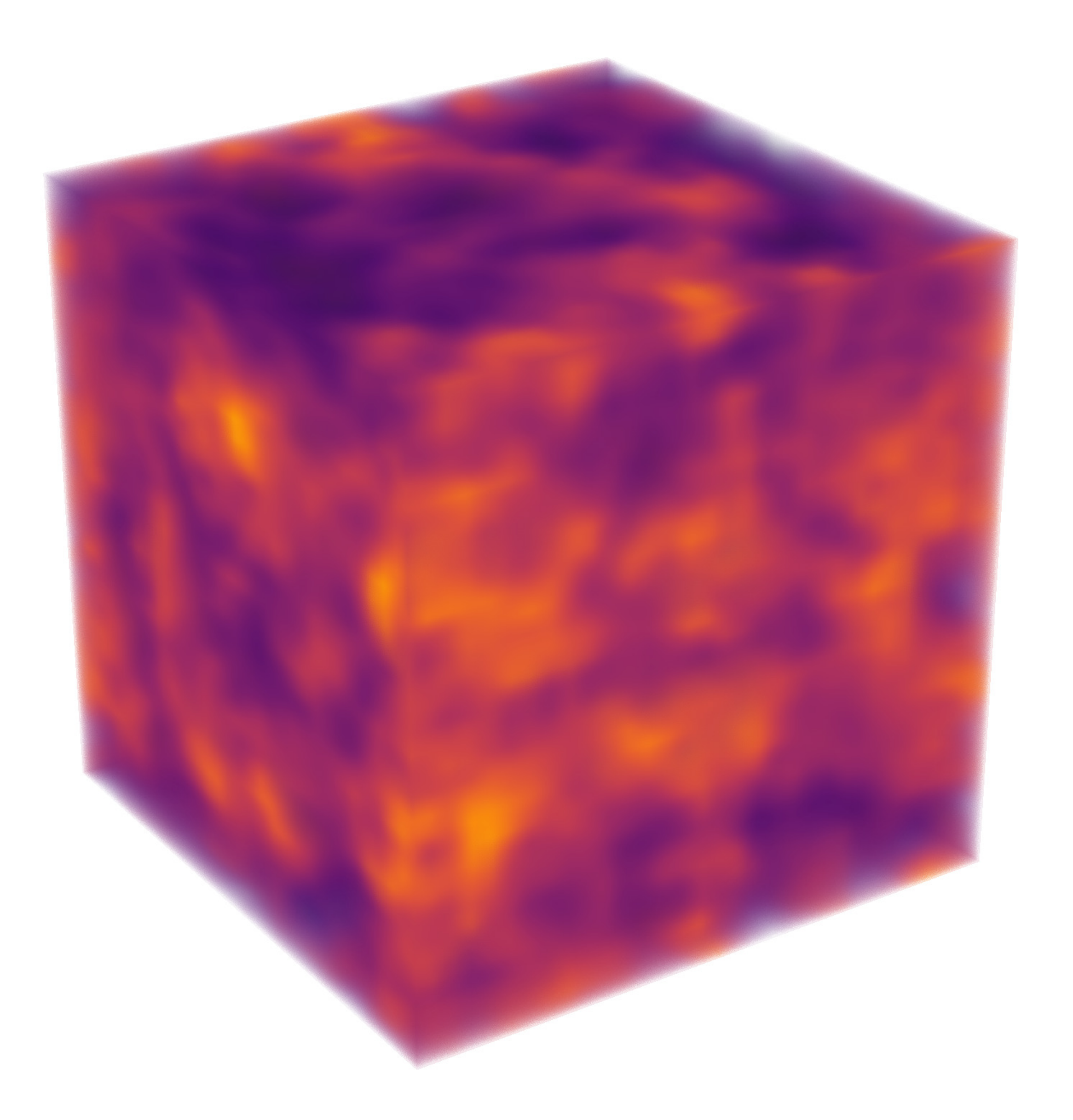}
          \label{subfig:statSolRTGV-std20-1280}}
	\subfloat[Std, \(R\!e = 2560\)]{\includegraphics[width=0.27\textwidth,trim={0cm 1.5cm 0cm 2cm},clip]{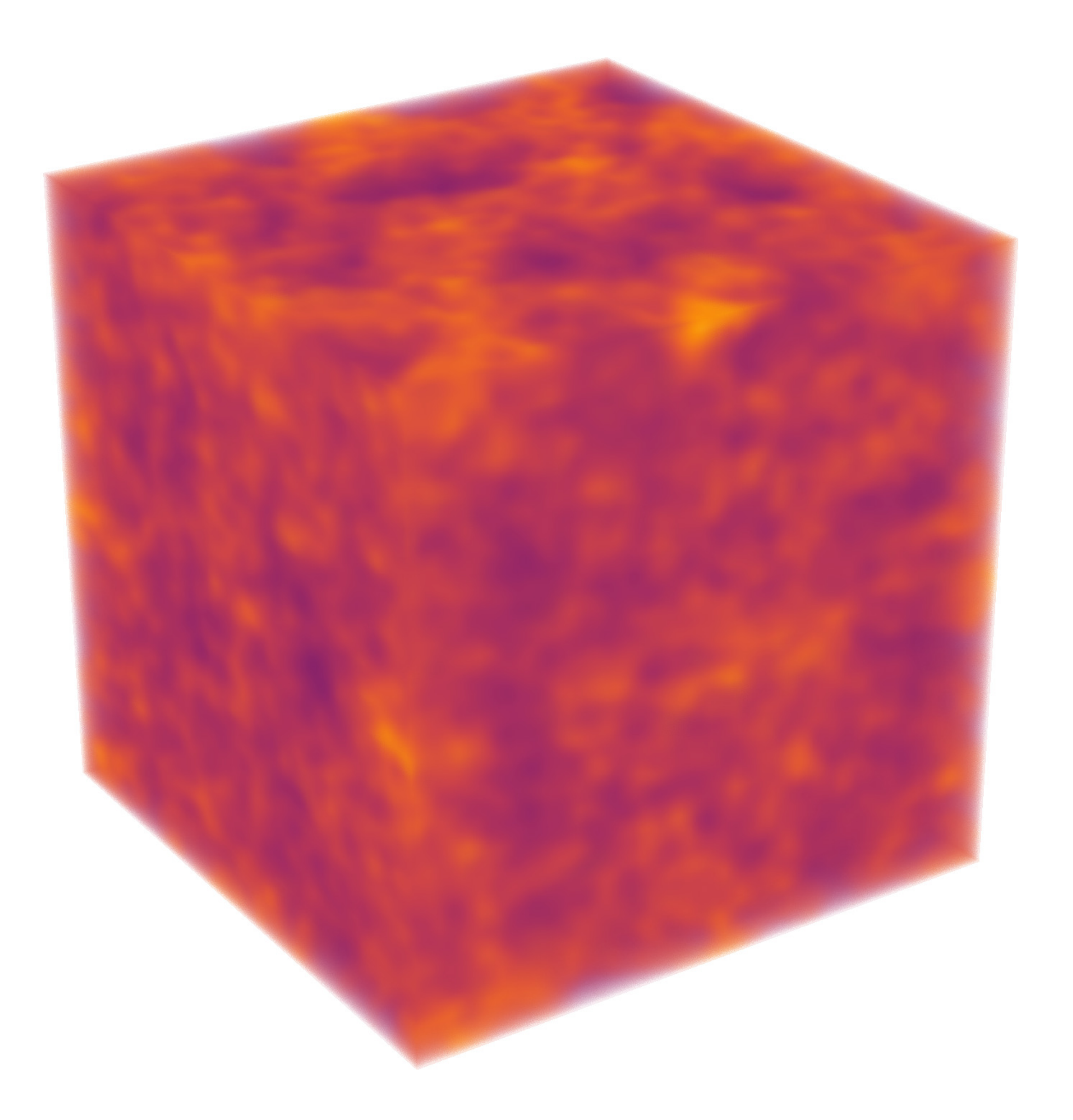}
          \label{subfig:statSolRTGV-std20-2560}}
    \subfloat[Std, \(R\!e = 5120\)]{\includegraphics[width=0.27\textwidth,trim={0cm 1.5cm 0cm 2cm},clip]{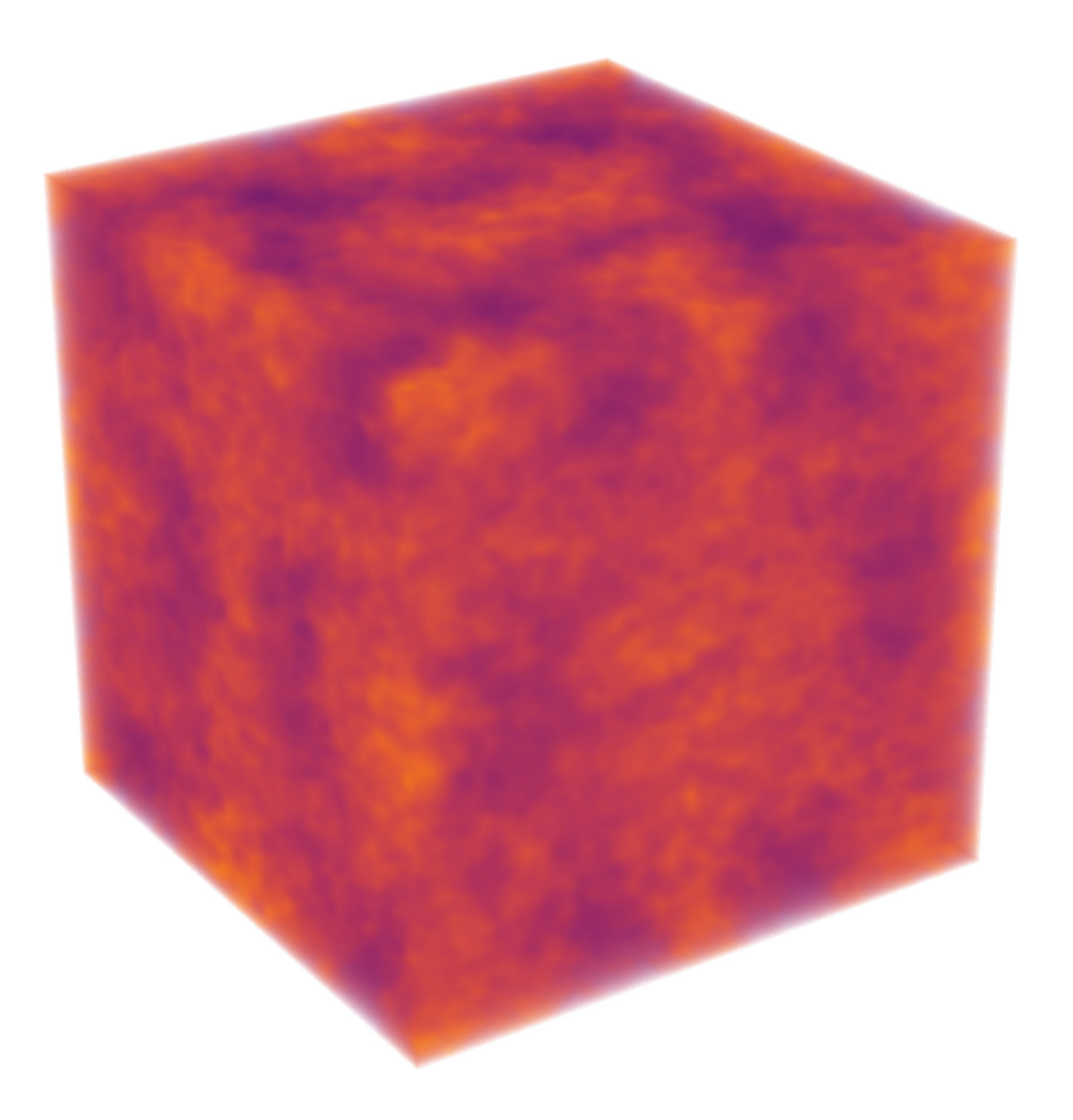}
          \label{subfig:statSolRTGV-std20-5120}}
    \hspace{.1em}
    \includegraphics[scale=1]{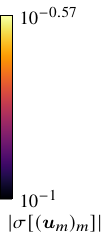}
    \\
	\caption{Iso-volume rendering of the velocity magnitude of the RTGV flow at \(t = 20\mathrm{s}\).  
    Top row: single samples; middle row: mean; bottom row: standard deviation (std); columns from left to right: \(R\!e = 1280, 2560, 5120\). 
    In the renderings, the color map is combined with an opacity transfer function that is linear in the data value, ranging from opacity zero at the minimum to one at the maximum of each colorbar. 
    The colorbars themselves omit this opacity. 
    Discretization parameters from Set~2 (Table~\ref{tab:params2}) are used.}
    \label{fig:statSolRTGV_20}
\end{figure}

\end{document}